\newtheorem{theorem}{\textbf{Theorem}}[section]
\newtheorem{lemma}{\textbf{Lemma}}[section]
\newtheorem{proposition}{\textbf{Proposition}}[section]
\newtheorem{corollary}{\textbf{Corollary}}[section]
\newtheorem{remark}{\textbf{Remark}}[section]
\newtheorem{definition}{\textbf{Definition}}[section]
\def\be{\begin{equation}}
\def\ee{\end{equation}}
\def\bea{\begin{eqnarray}}
\def\eea{\end{eqnarray}}
\def\bt{\begin{theorem}}
\def\et{\end{theorem}}
\def\bl{\begin{lemma}}
\def\el{\end{lemma}}
\def\br{\begin{remark}}
\def\er{\end{remark}}
\def\bp{\begin{proposition}}
\def\ep{\end{proposition}}
\def\bc{\begin{corollary}}
\def\ec{\end{corollary}}
\def\bd{\begin{definition}}
\def\ed{\end{definition}}
\def\non{\nonumber }
\def\d{\mathrm{d}}
\begin{document}

\title{An Energetic Variational Approach for the Cahn--Hilliard Equation with Dynamic Boundary Condition:\\ Model Derivation and Mathematical Analysis}

\author{
{\sc Chun Liu}
\footnote{Department of Applied Mathematics, Illinois Institute of Technology,
Chicago, IL 60616, USA, Email: \texttt{cliu124@iit.edu}}
\ and
{\sc Hao Wu} \footnote{School of Mathematical Sciences; Key Laboratory of Mathematics for Nonlinear Sciences (Fudan University), Ministry of Education; and Shanghai
Key Laboratory for Contemporary Applied Mathematics, Fudan
University, Shanghai 200433, China, Email:
\texttt{haowufd@fudan.edu.cn, haowufd@yahoo.com}}
}
\date{\today}
\maketitle


\begin{abstract}
The Cahn--Hilliard equation is a fundamental model that describes phase separation processes of binary mixtures.
In recent years, several types of dynamic boundary conditions have been proposed in order to account for possible short-range interactions of the material with the solid wall.
Our first aim in this paper is to propose a new class of dynamic boundary conditions for the Cahn--Hilliard equation in a rather general setting.
The derivation is based on an energetic variational approach that combines the least action principle and Onsager's principle of maximum energy dissipation.
One feature of our model is that it naturally fulfills three important physical constraints such as conservation of mass, dissipation of energy and force balance relations.
Next, we provide a comprehensive analysis of the resulting system of partial differential equations.
Under suitable assumptions, we prove the existence and uniqueness of global weak/strong solutions to the initial boundary value problem with or without surface diffusion.
Furthermore, we establish the uniqueness of asymptotic limit as $t\to+\infty$ and characterize the stability of local energy minimizers for the system.

\medskip

\textbf{Keywords}: Cahn--Hilliard equation, dynamic boundary condition, energetic variational approach, well-posedness, long-time behavior, stability.

\medskip

\textbf{AMS Subject Classification}: 35K25, 35K60, 35B40, 35A15, 37E35, 49S05, 80A22
\end{abstract}

\tableofcontents

\section{Introduction}
\setcounter{equation}{0}

The hydrodynamics of mixtures of materials plays an increasingly important role in scientific and engineering applications \cite{AN97}.
Different approaches can be found in the literature for the modelling and simulation of multi-phase problems.
The conventional sharp interface model consists of separate hydrodynamic system for each component, together with free interfaces that separate them.
However, this interface-capturing method breaks down when the interfaces experience topological changes like merging and splitting \cite{JT98}.
On the other hand, the so-called diffuse interface models replace the hyper-surface description of interfaces with a thin layer, where microscopic mixing of the macroscopically distinct components of matter are allowed.
This not only yields systems of partial differential equations that are
better amenable to further analysis, but possible topological changes of interfaces can also be
handled in a natural way \cite{AN97,JT98}. The Cahn--Hilliard equation is a fundamental diffuse interface model for multi-phase systems.
It was first proposed in the materials science to describe the pattern formation evolution of microstructures
during the phase separation process in binary alloys \cite{CH58, CH61}.
Later on,  because of the interesting features of its dynamics with an early
linear spinodal regime followed by a late coarsening regime, the Cahn--Hilliard equation and its variants have been successfully applied in a wide variety of segregation-like phenomena in science, see for instance
\cite{BE95, LS03, FM97, FR98, K-etal, KS08, Gur93, Gur96, NC88, NC08, OP13, Q1} and the references therein.

Assume that $T\in (0,+\infty)$, $\Omega\subset \mathbb{R}^d$ ($d=2,3$) is a bounded domain with smooth boundary $\Gamma=\partial \Omega$ and $\mathbf{n}=\mathbf{n}(x)$ is the unit outer normal vector on $\Gamma$. The classical Cahn--Hilliard equation can be written in the following (dimensionless) form:
\begin{equation}
\left\{
\begin{aligned}
&\phi_t=\Delta \mu,&\text{in}\ \Omega \times (0,T),\\
&\mu=-\Delta \phi +F'(\phi),&\text{in}\ \Omega \times (0,T),
\end{aligned}
\right.\label{claCH}
\end{equation}
where the subscript $t$ denotes the partial derivative with respect to time and $\Delta$ represents the usual Laplace
operator acting on the spatial variables in $\Omega$.
The phase-field order parameter $\phi$ represents the the difference of local relative concentrations for two components of the binary mixture such that $\phi=\pm 1$ correspond to the pure phases of the material, while
$\phi\in (-1, 1)$ corresponds to the transition between them.
$\mu$ stands for the chemical potential that equals to the Fr\'echet derivative of certain bulk free energy given by
\begin{align}
E^\text{bulk}(\phi)=\int_\Omega \frac12 |\nabla \phi|^2+F(\phi) \, \d x.\label{eebulk}
\end{align}
  $F'$ denotes the first derivative of the ``bulk" potential $F$ that usually has a double-well structure, with two minima and a local unstable maximum in between.
 A typical thermodynamically relevant potential $F$ is the following logarithmic potential \cite{CH58}
\begin{align*}
&F(y)=\frac{\theta}{2}(1+y)\ln (1+y)+(1-y)\ln(1-y)-\frac{\theta_c}{2}y^2,\quad y\in[-1,1],\ \ 0<\theta<\theta_c,
\end{align*}
where $\theta$ is the temperature of the system and $\theta_c$ is the critical temperature of phase separation. Besides, the above singular potential is very often approximated by regular ones, typically $$F(y)=\frac14(y^2-1)^2,\quad y\in \mathbb{R}.$$
The Cahn--Hilliard equation \eqref{claCH} can be formally derived as the conserved dynamics generated by the variational derivative of the free energy with respect to the phase function $\phi$ (i.e., the $H^{-1}$-gradient flow) or based on the second law of thermodynamics (see, e.g., \cite{CH58,NC88,Gur96} and references therein).

Suitable boundary conditions should be taken into account for system \eqref{claCH} when the evolution is confined in a bounded domain $\Omega$.
Classical choices are the following homogeneous Neumann boundary conditions
\begin{align}
&\partial_\mathbf{n}\mu=0, \quad \text{on}\ \Gamma\times (0,T),\label{bdN1}\\
&\partial_\mathbf{n}\phi=0, \quad \text{on}\ \Gamma\times (0,T),\label{bdN2}
\end{align}
where $\partial_\mathbf{n}$ denotes the outward normal derivative on $\Gamma$.
The no-flux boundary condition \eqref{bdN1} for the chemical potential $\mu$ guarantees the conservation of mass in the bulk
$$
\int_\Omega \phi(t) \,\d x=\int_\Omega \phi(0)\,\d x,\quad \forall\, t\in [0,T],
$$
while the second boundary condition \eqref{bdN2} has the physical interpretation that the diffused interface, which separates the two phases of the
material, intersects the solid wall (i.e., the boundary $\Gamma$) at a perfect static contact angle of $\frac{\pi}{2}$. In other words, the interaction between the material and wall of the container is simply neglected (see \cite{BF93}). Another important consequence of \eqref{bdN1}--\eqref{bdN2} is that the bulk free energy $E^\text{bulk}(\phi)$ given by \eqref{eebulk} is decreasing in time,
namely,
$$
\frac{\d}{\d t} E^\text{bulk}(\phi(t)) + \int_\Omega |\nabla \mu|^2 \, \d x=0,\quad \forall\, t\in (0,T).
$$

We note that \eqref{bdN2} turns out to be a rather restrictive assumption on the boundary for many materials and possible influence of the boundary to the bulk dynamics is neglected. In order to describe certain effective short-range interactions between the solid wall and the mixture, suitable surface free energy functional was introduced by physicists into the system (see, e.g., \cite{FM97, FR98, K-etal})
\begin{align}
E^\text{surf}(\phi)=\int_\Omega \frac{\kappa}{2} |\nabla_\Gamma \phi|^2+G(\phi) \, \d S,\quad \quad \text{for some }\kappa\geq 0,
\label{eesurf}
\end{align}
where $\nabla_\Gamma$ stands for the tangential (surface) gradient operator defined on $\Gamma$ and $G$ is a certain surface potential function
that for instance, characterizes the possible preferential attraction (or repulsion) of one of the components
by the wall. The coefficient $\kappa$ is related to the surface diffusion. When $\kappa=0$, the model is closely related to the evolution of an interface in contact with the solid boundary, i.e., the well-known moving contact line problem \cite{TR89}.

For the Cahn--Hilliard system \eqref{claCH}, the following dynamic boundary condition was proposed (see e.g., \cite{K-etal}) to replace the homogeneous Neumann one \eqref{bdN2}:
\begin{align}
\phi_t-\kappa \Delta_\Gamma \phi +\partial_\mathbf{n}\phi +G'(\phi)=0, \quad \text{on}\ \Gamma\times (0,T),\label{bdN2d}
\end{align}
where $\Delta_\Gamma$ denotes the Laplace--Beltrami operator on $\Gamma$ and the term $\Delta_\Gamma \phi$ corresponds to the possible surface diffusion phenomenon along the boundary (see also \cite{Q1,MH13} for the more complicated case with fluid interactions). Formally speaking, the dynamic boundary condition \eqref{bdN2d} can be viewed as an $L^2$-gradient flow of the surface free energy $E^\text{surf}$ on $\Gamma$ (with interaction from the bulk presented by the term $\partial_\mathbf{n}\phi$).
It is sometimes referred to as a variational boundary condition, since the total free energy (i.e., sum of the bulk and surface free energies \eqref{eebulk}, \eqref{eesurf}) is decreasing in time under the choice of \eqref{bdN2d} (combined with \eqref{bdN1}):
\begin{align}
\frac{\d}{\d t} \Big[E^\text{bulk}(\phi(t))+E^\text{surf}(\phi(t))\Big] + \int_\Omega |\nabla \mu|^2 \,\d x+\int_\Gamma |\phi_t|^2\, \d S=0,\quad \forall\, t\in (0,T).
\label{bbela}
\end{align}

Other types of dynamic boundary conditions can be derived under different considerations in physics.
For instance, if mass exchange between the bulk and the boundary is allowed, the following boundary condition was proposed in \cite{GMS11} in place of the no-flux boundary condition \eqref{bdN1}:
\begin{align}
\phi_t-\sigma \Delta_\Gamma \mu + \partial_\mathbf{n} \mu=0, \quad \text{on}\ \Gamma\times (0,T),\quad \text{for some }\sigma \geq 0.
\label{bdN1a}
\end{align}
When $\sigma =0$, \eqref{bdN1a} reduces to the so-called Wentzell boundary boundary that was introduced in \cite{Ga06, GaW08} for permeable walls.
It easily follows from \eqref{bdN1a} that the total (i.e., bulk plus boundary) mass is conserved
$$
\int_\Omega \phi(t) \,\d x+ \int_\Gamma \phi(t) \,\d S =\int_\Omega \phi(0)\,\d x+\int_\Gamma \phi(0) \,\d S, \quad \forall\, t\in [0,T],
$$
in contrast with what happens under the usual boundary condition \eqref{bdN1}.
As a complement to \eqref{bdN1a}, a different type of variational boundary condition was derived in \cite{GMS11},
by taking derivative of the total free energy in the product space $L^2(\Omega)\times L^2(\Gamma)$ such that
\begin{align}
\mu=-\kappa \Delta_\Gamma \phi +\partial_\mathbf{n}\phi +G'(\phi),\quad \text{on}\ \Gamma\times (0,T).\label{bdN2a}
\end{align}
From \eqref{bdN1a} and \eqref{bdN2a}, we infer that the total energy of the Cahn--Hilliard system \eqref{claCH} still decreases in time
\begin{align}
\frac{\d}{\d t} \Big[E^\text{bulk}(\phi(t))+E^\text{surf}(\phi(t))\Big] + \int_\Omega |\nabla \mu|^2 \,\d x + \sigma \int_\Gamma |\nabla_\Gamma \mu|^2\, \d S=0,\quad \forall\, t\in (0,T).
\label{bbelb}
\end{align}
Comparing with the dynamic boundary condition \eqref{bdN2d} that yields a relaxation dynamics for $\phi$ on the boundary $\Gamma$, \eqref{bdN2a} turns out to be a boundary equation for $\phi$ with a given source term, i.e., the trace of the bulk chemical potential $\mu$ on $\Gamma$. Hence, in some sense \eqref{bdN2a} can also be viewed as a compatibility condition for $\mu$ on the boundary. In the literature, \eqref{bdN1a} (with $\sigma>0$) together with \eqref{bdN2a} is also referred to as a dynamic boundary condition of Cahn--Hilliard type, since it has a very similar structure like the Cahn--Hilliard system \eqref{claCH} in the bulk, after neglecting those two normal derivatives for $\mu$ and $\phi$ (cf. \cite{CF15a} and see also \cite{CGS17} for possible generalizations).

In summary, we see that under both choices of boundary conditions (\eqref{bdN1} with \eqref{bdN2d}, or \eqref{bdN1a} with \eqref{bdN2a}), the Cahn--Hilliard system \eqref{claCH} satisfies two important physical constraints, namely, the mass conservation and energy dissipation.
Among them, \eqref{bdN1}, or respectively \eqref{bdN1a} is proposed to keep suitable mass conservation property in the physical domain, while the so-called variational boundary conditions (i.e., \eqref{bdN2d} or \eqref{bdN2a}) are chosen in a phenomenological way so that the validity of some specific energy dissipation relation is guaranteed (see \eqref{bbela} or \eqref{bbelb}). Thus,  \eqref{bdN2d} or \eqref{bdN2a} can be viewed as sufficient conditions for the energy dissipation of the system. However, such choice may not be unique.

Our first aim in this paper is to introduce a systematic scheme for deriving hydrodynamic boundary conditions for (isothermal) complex systems based on an energetic variational approach,
which combines the least action principle and Onsager's principle of maximum energy dissipation in continuum mechanics \cite{Ray, On31, On31a}.
Within this general framework, one can easily include different physical processes by choosing specific  bulk/surface free energies as well as bulk/surface dissipations for the system.
Then based on suitable kinematic and energetic assumptions, the corresponding partial differential equations in the bulk and their associated boundary conditions will be uniquely determined by force balance relations.
Therefore, one novelty of our approach is that the resulting models naturally fulfill three important physical constraints such as
 (A) \emph{conservation of mass}, (B) \emph{dissipation of energy} and (C) \emph{force balance}. In particular, since the force balance relation arising from the dynamics of energetic and dissipative processes is ensured, the derived differential equations together with their boundary conditions can lead to hydrodynamics that predicts the most probable course of motion \cite{QQS08}. This approach has been successfully applied to derive complex hydrodynamic systems in fluid dynamics, electrokinetics, visco-elasticity, liquid crystals and so on, we refer to \cite{DLRW, FJ13, KLG, LS03, EHL10, HKL10, XSL14, WXL} and the references therein.

In Section 2, we apply the energetic variational approach to derive a new class of dynamic boundary conditions for the Cahn--Hilliard equation in a general setting (see the full system \eqref{full}). It is worth mentioning that there are some subtle issues in our derivation, since we have to calculate the domain variation of action functionals not only in the bulk but also on the boundary. Furthermore, the variation on the boundary should be performed only in the tangential directions and one has to use a Riemannian metric expression, which turns out to be quite involved (cf. \cite{KLG} for the derivation of fluid systems on an evolving surface). As an illustrating example, for system \eqref{claCH} under the same choice of bulk/surface free energies like above, we keep the no-flux boundary condition \eqref{bdN1} and then derive the following dynamic boundary condition
\begin{equation}
\phi_t=\Delta_\Gamma \mu_\Gamma \quad \text{with} \quad \mu_\Gamma =-\kappa \Delta_\Gamma \phi  +\partial_\mathbf{n}\phi + G'(\phi),\quad \text{on}\ \Gamma\times (0,T).
\label{bdN2b}
\end{equation}
In this case, we preserve the mass conservation both in bulk and on the boundary (cf. \eqref{massbulk} and \eqref{massbd})
$$
\int_\Omega \phi(t) \,\d x=\int_\Omega \phi(0)\,\d x \quad \text{and} \quad \int_\Gamma \phi(t) \,\d S=\int_\Gamma \phi(0) \,\d S, \quad \forall\, t\in [0,T],
$$
as well as the energy dissipation property (cf. \eqref{aBEL})
\begin{align}
\frac{\d}{\d t} \Big[E^\text{bulk}(\phi(t))+E^\text{surf}(\phi(t))\Big] + \int_\Omega |\nabla \mu|^2\,\d x +  \int_\Gamma |\nabla_\Gamma \mu_\Gamma|^2\, \d S=0,\quad \forall\, t\in (0,T).
\label{bbelc}
\end{align}
The new dynamic boundary condition \eqref{bdN2b} turns out to be a Cahn--Hilliard type equation for $\phi$ defined on $\Gamma$ (provided that $\kappa>0$), comparing with the previous Allen--Cahn type like \eqref{bdN2d}. On the other hand, it is also different from \eqref{bdN1a} with \eqref{bdN2a} in the sense that here we impose \eqref{bdN1} for the bulk chemical potential $\mu$ instead of \eqref{bdN1a} and the ``boundary chemical potential" $\mu_\Gamma$ defined in \eqref{bdN2b} is no longer necessary to coincide with the trace of $\mu$ on $\Gamma$ as indicated by \eqref{bdN2a} (cf. \cite{GMS11,CF15a}). Moreover, although the energy dissipation relation \eqref{bbelc} looks very similar to \eqref{bbelb}, it actually follows from a rather different mechanism. Besides these differences, one important and interesting issue is that, as we shall see in Section 2, \eqref{bdN2b} can be uniquely determined through a force balance relation on the boundary. This physical property is not explicitly fulfilled in all the other dynamical boundary conditions mentioned above.

In the second part of this paper, we focus on the mathematical analysis for an initial boundary value problem of the Cahn--Hilliard system \eqref{claCH} subject to the boundary conditions \eqref{bdN1} and \eqref{bdN2b}:
\begin{equation}
\left\{
\begin{aligned}
&\phi_t=\Delta \mu, &\text{in}\ \Omega \times (0,T),\\
&\mu=-\Delta \phi +F'(\phi), &\text{in}\ \Omega \times (0,T),\\
&\partial_\mathbf{n}\mu=0, &\text{on}\ \Gamma\times (0,T),\\
&\phi_t=\Delta_\Gamma \mu_\Gamma,&\text{on}\ \Gamma\times (0,T),\\
&\mu_\Gamma =-\kappa \Delta_\Gamma \phi  +\partial_\mathbf{n}\phi + G'(\phi), &\text{on}\ \Gamma\times (0,T),\\
&\phi|_{t=0}=\phi_0(x),&\text{in}\ \Omega.
\end{aligned}
\right.
\label{newCH}
\end{equation}
More precisely, we provide a comprehensive treatment of problem \eqref{newCH} by proving
\begin{itemize}
\item[(i)] existence, uniqueness and regularity of global weak/strong solutions under suitable assumptions on the nonlinear potentials $F$ and $G$, for both physically relevant cases with or without surface diffusion (see Theorems \ref{main1}, \ref{main1a});
\item[(ii)] long-time behavior of the system, i.e., the uniqueness of asymptotic limit as $t\to+\infty$ for any bounded weak/strong solution (see Theorem \ref{main2}), and a characterization for the stability of local energy minimizers (see Theorem \ref{main3}).
\end{itemize}

The Cahn--Hilliard equation has attracted noteworthy interests of mathematicians for long time (see \cite{CMZ11, Mi2017} for recent reviews on this subject).
For instance, the system \eqref{claCH} with standard boundary conditions \eqref{bdN1}--\eqref{bdN2} has been well-studied from various viewpoints in the literature since the 1980s, see \cite{AW07, BF93, Chen, EZ86, GGM17, GN95, HR99, pego, CMZ11, Z86} and the references cited therein.
We also refer to \cite{BE95, ES96, NC88} for results on the so-called viscous Cahn--Hilliard equation, in which there is an additional regularizing term $\alpha \phi_t\ (\alpha>0)$ in the bulk chemical potential such that $\mu=-\Delta \phi+F'(\phi)+\alpha \phi_t$. As far as the dynamic boundary condition \eqref{bdN2d} is concerned, the analysis turns out to be more involved. The first well-posedness result (with $\kappa>0$) was proven in \cite{RZ03} by employing a suitable approximating phase-field equations of Caginalp type (a similar argument was used in \cite{CWX14} for the case $\kappa=0$ related to the problem of moving contact lines, neglecting the fluid interaction). Since then rigorous mathematical analysis for the Cahn--Hilliard system and its variants subject to the same dynamic boundary condition \eqref{bdN2d} have been performed. We refer to \cite{CGW14, CF15, CGS14, PRZ06, Ga07, MH15} for well-posedness results via different approaches, to \cite{CGG, Ga08, GM1, GMS09, MZ05, MZ10} for regularity properties and long-time behavior in terms of global or exponential attractors, and to \cite{CGW14, WZ04, CFJ06, GM2, GMS} for global asymptotic stability of single trajectories as time goes to infinity.
Next, for the Cahn--Hilliard system subject to the second type of dynamic boundary conditions \eqref{bdN1a}--\eqref{bdN2a}, well-posedness and long-time behavior was first investigated in \cite{GMS11} (see also \cite{CGM13}). Later in \cite{CF15a}, well-posedness of the same system with more general potentials was analyzed and a unified treatment was provided recently in \cite{CGS17} for both viscous and pure Cahn--Hilliard equations. We also refer to \cite{Ga06,GaW08,WH07} for analysis results on the special case with $\sigma=0$ in the boundary condition \eqref{bdN1a} (indeed, the fact $\sigma=0$ yields a weaker dissipative mechanism of the system, see \eqref{bbelb}). Quite recently, in \cite{NK17} the author obtained the strong well-posedness in maximal $L^p$-regularity spaces.

The new feature of our problem \eqref{newCH} is that the dynamic boundary condition \eqref{bdN2b} (with $\kappa>0$) turns out to be a surface Cahn--Hilliard type equation for the trace of $\phi$ on $\Gamma$ (cf. \cite{RV06}), which is further coupled with the bulk evolution in terms of the normal derivative $\partial_\mathbf{n} \phi$. When $\kappa=0$, the situation seems even worse since \eqref{bdN2b} becomes an evolution equation on $\Gamma$ that is not of standard parabolic type and may be ill-posed, with the highest spatial order of three. The linearized system of problem \eqref{newCH} does not satisfy the abstract framework discussed in \cite{DPZ} (e.g., the Lopatinskii--Shapiro condition therein), in which a general $L^p$-theory for parabolic problems with boundary dynamics of relaxation type was developed. Thus, it is not clear whether the maximal $L^p$-regularity results as in \cite{PRZ06,NK17} can be extended to our current problem. On the other hand, we recall that for the Cahn--Hilliard system \eqref{claCH} subject to boundary conditions \eqref{bdN1} and \eqref{bdN2d}, by exploiting the weak formulation of the system, a Faedo--Galerkin discretization scheme was introduced in \cite{GMS09} (indeed for more general cases with irregular potentials) to prove the existence of global weak solutions (cf. \cite{Ga08} for a slightly different approximation). Later, in \cite{GMS11} a similar idea was used to deal with the dynamic boundary conditions \eqref{bdN1a}--\eqref{bdN2a}. Unfortunately, it seems that a suitable Galerkin approximating scheme is not available for the current problem \eqref{newCH} due to its different variational structure.

Inspired by a third approach introduced in \cite{MZ05} for the dynamic boundary condition \eqref{bdN2d}, we treat the boundary condition \eqref{bdN2b} as a separate evolution equation on $\Gamma$  and try to solve our problem \eqref{newCH} as a bulk/surface coupled system. Our strategy is as follows. We first regularize the original problem \eqref{newCH} with surface diffusion (i.e., $\kappa>0$) by adding viscous terms $\alpha \phi_t$ ($\alpha> 0$) in both of the bulk and surface chemical potentials $\mu$ and $\mu_\Gamma$. This leads to a viscous Cahn--Hilliard equation (cf. \cite{NC88, BE95}) subject to a dynamic boundary condition of viscous Cahn--Hilliard type (see \eqref{ACH}). The solvability of its corresponding linearized system (see \eqref{Lpa}) can be obtained by transforming this fourth-order problem into second-order parabolic equations (see Lemma \ref{exLpa}). Then one can prove local well-posedness of the regularized system \eqref{ACH} by using the contraction mapping theorem (see Proposition \ref{exeACH}). Afterwards, the key step is to obtain uniform global-in-time a priori estimates that are independent of the parameter $\alpha$, which allow us to pass to the limit as $\alpha\to 0^+$ to obtain the global existence of weak (or strong) solutions of the original problem \eqref{newCH} with $\kappa>0$. The uniqueness of solutions can be proved by a standard energy method. In order to prove well-posedness results for the case without surface diffusion, i.e., $\kappa=0$, we view $\kappa\Delta_\Gamma \phi$ as a regularizing term in the surface equation \eqref{bdN2b}. The conclusion can be drawn by taking the limit as $\kappa \to 0^+$ provided that proper uniform estimates independent of $\kappa$ are available, which turn out to be more delicate.
Next, we study the global regularity of solutions and prove for arbitrary large initial datum, every global bounded weak/strong solution to problem \eqref{newCH} will converge to a single steady state as $t\to +\infty$ and an estimate on the convergence rate is also given. This issue is nontrivial, since both the bulk/surface free energies of the Cahn--Hilliard system are in general non-convex and thus the set of solutions to the stationary problem (see \eqref{sta}--\eqref{staL}) may have a quite complicated structure. The goal is achieved by employing the \L ojasiewicz--Simon approach (see \cite{S83}, and \cite{AW07, GA16, Ch03, CFJ06, FS00, GaW08, GGW18, GMS, HR99, HJ01, HT01, WH07, WZ04,ZWH} for its various extensions). At last, by applying the \L ojasiewicz--Simon approach in a different way, we further give a characterization on the Lyapunov stability of steady states of the system (in particular, the local energy minimizers) that are allowed to be non-isolated.

 The rest of this paper is organized as follows.
 In Section 2, we derive a new class of dynamic boundary conditions for the Cahn--Hilliard equation by using the energetic variational approach.
 Next, in Section 3, we introduce the function spaces, problem settings and state the main results on mathematical analysis of problem \eqref{newCH}.
 Section 4 is devoted to prove the well-posedness of a regularized Cahn--Hilliard system with viscous terms.
 Then in Section 5, we first derive uniform a priori estimates that are independent of the regularizing parameter $\alpha$ as well as the surface diffusion coefficient $\kappa$.
 Then we give the proof for our main results Theorems \ref{main1}, \ref{main1a} on global well-posedness of the original problem \eqref{newCH}. The convergence of global solutions to a single equilibrium as $t\to +\infty$ (Theorem \ref{main2}) and the stability criterion (Theorem \ref{main3}) are proved in Section 6.
 In the Appendix, we provide detailed calculations for the model derivation that are omitted in Section 2 and prove a fundamental result on the well-posedness of an auxiliary linear fourth-order parabolic equation subject to a fourth-order dynamic boundary condition.

\section{Model Derivation}
\setcounter{equation}{0}

In this section, we derive a general class of Cahn--Hilliard type equations subject to dynamic boundary conditions that fulfill several important physical properties:
\begin{itemize}
\item[(A)] Kinematics: conservation of mass both in the bulk $\Omega$ and on the boundary $\Gamma$;
\item[(B)] Energetics: dissipation of the total free energy;
\item[(C)] Force balance: both in the bulk $\Omega$ and on the boundary $\Gamma$.
\end{itemize}
Hereafter, the (smooth) boundary $\Gamma$ is assumed to be either a closed surface ($d=3$) or a closed curve ($d=2$). Let $\phi$ be the state variable, which for instance, stands for the difference of relative concentrations for the components of binary mixtures \cite{CH58} or a labeling function presenting the smooth transition between different phases \cite{AN97, LS03}.

\subsection{The energetic variational approach}

\textbf{(A) Mass conservation.} In the bulk $\Omega$, $\phi$ is assumed to be a locally
conserved quantity that satisfies the continuity equation
\begin{equation}
\phi_t+\nabla\cdot(\phi \mathbf{u})=0,
\quad (x, t)\in \Omega\times (0,T),
\label{con1}
\end{equation}
where $\mathbf{u}: \Omega \to \mathbb{R}^d$ stands for the microscopic effective velocity (e.g., due to the diffusion process).
We assume that $\mathbf{u}$  satisfies the no-flux boundary condition
\begin{equation}
\mathbf{u}\cdot \mathbf{n}=0,
\quad (x, t)\in \Gamma\times (0,T).
\label{bd1}
\end{equation}
The boundary condition \eqref{bd1} implies the simple fact that there is no mass exchange between the bulk $\Omega$ and the boundary $\Gamma$. In the remaining part of this paper, we shall always confine ourselves to this special case for the sake of simplicity. Next, we consider the possible evolution on the boundary $\Gamma$. Instead of taking the usual $L^2$-gradient flow for certain surface energy (cf. \eqref{bdN2d}), we assume that the boundary dynamics is characterized by a local mass conservation law analogous to \eqref{con1} such that (see, e.g., \cite{DE07})
\begin{equation}
\phi_t+\nabla_{\Gamma}\cdot (\phi \mathbf{v})=0,
\quad (x, t)\in \Gamma\times (0,T),
\label{con2}
\end{equation}
where $\mathbf{v}: \Gamma \to \mathbb{R}^d$ denotes the microscopic effective tangential velocity field on the boundary $\Gamma$.
We note that there is no need to impose any boundary condition on $\mathbf{v}$, since $\Gamma$ is assumed to be a closed manifold.

\begin{remark}
\label{Rem1}
Within this section, we regard the phase function $\phi$ to be regular enough (e.g., $\phi\in C^m(\overline{\Omega})$ for sufficiently large $m\in \mathbb{N}$).
Then by the continuity of $\phi$ to the boundary, the surface equation \eqref{con2} should be understood for its trace denoted by $\phi|_\Gamma$, namely,
\begin{equation}
\partial_t \phi|_\Gamma+\nabla_{\Gamma}\cdot (\phi|_\Gamma \mathbf{v})=0,
\quad (x, t)\in \Gamma\times (0,T).
\label{con2a}
\end{equation}
  Without ambiguity, on the boundary $\Gamma$ we simply use the surface equation in the form of \eqref{con2}.
  Nevertheless, the formulation \eqref{con2a} will turn out to be helpful when we perform mathematical analysis on the related partial differential equations.
\end{remark}

It easily follows from the kinematic relations \eqref{con1}--\eqref{con2} that the mass is conserved in the bulk $\Omega$ and on the boundary $\Gamma$, respectively.
 To this end, integrating \eqref{con1} over $\Omega$ and using the no-flux boundary condition \eqref{bd1}, we have
\begin{equation}
\frac{\d}{\d t}\int_\Omega \phi(\cdot,t)\, \d x =0,
\quad \forall\, t\in(0,T),
\label{massbulk}
\end{equation}
while integrating \eqref{con2} over $\Gamma$ gives
\begin{equation}
\frac{\d}{\d t}\int_{\Gamma} \phi(\cdot,t)\, \d S =0,
\quad \forall\, t\in(0,T).
\label{massbd}
\end{equation}

\medskip

\noindent \textbf{(B) Energy dissipation.}
We are interested in the case that on the boundary there is certain non-trivial dynamics induced by a surface potential, e.g., the short-range interactions between the material and the wall \cite{K-etal, Q1, TR89}. This can also be formally justified by the surface layer scaling approach within a thin layer next to the solid wall (cf. \cite{QQS08}).

For an isothermal closed system, evolution of the  binary mixtures is characterized by the following energy dissipation law (cf. \cite{HKL10})
\begin{align}
 \frac{\d}{\d t}E^{\text{total}}(t)=-\mathcal{D}^{\text{total}}(t),
 \label{ABEL}
\end{align}
which is a consequence of the first and second laws of thermodynamics.
$E^{\text{total}}$ is the total Helmholtz free energy of the system.
For the sake of simplicity, macroscopic kinetic energy of the binary mixture is neglected throughout the paper.
Then we assume that
\begin{equation}
E^{\text{total}}(t)= E^{\text{bulk}}(t)+E^{\text{surf}}(t),
\label{fenergy}
\end{equation}
which is the sum of free energies in the bulk $\Omega$ and on the boundary $\Gamma$:
\begin{equation}
E^{\text{bulk}}(t)
=\int_\Omega W_\text{b}(\phi, \nabla \phi)\, \d x,
\quad E^{\text{surf}}(t)=\int_\Gamma W_\text{s}(\phi, \nabla_\Gamma \phi)\, \d S.
\label{bsenergy}
\end{equation}
The energy density functions $W_\text{b}$ and $W_\text{s}$ are scalar functions that can take different forms under various physical considerations.
Next, the rate of energy dissipation $\mathcal{D}^{\text{total}}$ (related to the entropy production in thermodynamics) is chosen as
\begin{align}
\mathcal{D}^{\text{total}}(t)
=\mathcal{D}^{\text{bulk}}(t)+\mathcal{D}^{\text{surf}}(t),
\label{diss}
\end{align}
which also consists of contributions from the bulk $\Omega$ and the boundary $\Gamma$. Here, we assume that
\begin{align}
\mathcal{D}^{\text{bulk}}(t)= \int_\Omega \phi^2 (\mathbb{M}_\text{b}^{-1}\mathbf{u})\cdot \mathbf{u} \, \d x,
\quad \mathcal{D}^{\text{surf}}(t)= \int_{\Gamma} \phi^2 (\mathbb{M}_\text{s}^{-1} \mathbf{v})\cdot \mathbf{v} \, \d S,
\label{diss1}
\end{align}
where $\mathbb{M}_\text{b}$, $\mathbb{M}_\text{s}$ are $d\times d$ mobility matrices that are assumed to be symmetric and positive definite.
Their entries may depend on $x$, $t$ as well as $\phi$.

In summary, the basic energy law we are going to consider reads as follows:
\begin{align}
&\frac{\d}{\d t}\left(\int_\Omega W_\text{b}(\phi, \nabla \phi)\, \d x
+\int_{\Gamma} W_\text{s}(\phi, \nabla_{\Gamma}\phi)\, \d S\right)\nonumber\\
&\quad = -\left(\int_\Omega \phi^2 (\mathbb{M}_\text{b}^{-1}\mathbf{u})\cdot \mathbf{u} \, \d x
+\int_{\Gamma} \phi^2 (\mathbb{M}_\text{s}^{-1} \mathbf{v})\cdot \mathbf{v} \, \d S\right).
\label{BEL}
\end{align}

\medskip

\noindent \textbf{(C) Force balance.} In order to derive a closed system of partial differential equations, it remains to determine the microscopic velocities $\mathbf{u}$, $\mathbf{v}$ in equations \eqref{con1} and \eqref{con2}. The derivation will be carried out through a unified energetic variational approach \cite{HKL10} motivated by the seminal work \cite{Ray, On31, On31a} that ensures certain force balance relations from the dynamics of conservative and dissipative processes, based on the energy dissipation relation \eqref{BEL}.

To apply the variational principle, we need to calculate the variation of the action integral with respect to the flow maps, as well as the variation of the dissipation functional with respect to the velocities. To this end, let $\Omega_0^X, \Omega_t^x\subset\mathbb{R}^d$ be bounded domains with smooth boundaries $\Gamma_0^X, \Gamma_t^x$, respectively.
 Then we introduce the (bulk) \emph{flow map}  $x(X,t): \Omega_0^X\to \Omega_t^x$, which is defined as a solution to the system of ordinary differential equations
\begin{equation}
\begin{cases}
\displaystyle{\frac{\d}{\d t}}x(X,t)=\mathbf{w}(x(X,t),t),\quad t>0,\\
x(X,0)=X,
\end{cases}
\label{flowmap}
\end{equation}
where $X=(X_1,...,X_d)^T\in \Omega_0^X$, $x=(x_1,...,x_d)^T\in \Omega_t^x$, and  $\mathbf{w}(x,t)\in \mathbb{R}^d$ is a (smooth) velocity field.
The coordinate system $X$ is called the Lagrangian coordinate system and refers
to $\Omega_0^X$ that we call the \emph{reference configuration}; the coordinate system
$x$ is called the Eulerian coordinate system and refers to $\Omega_x^t$
that we call the \emph{deformed configuration}. In a similar manner, we can introduce the \emph{surface flow map} $x_\text{s}=x_\text{s}(X_\text{s},t): \Gamma_0^X\to \Gamma_t^x$ (cf. \cite[Definition 2.9]{KLG}).
Within this section, we shall denote $\nabla_x$ and $\nabla_\Gamma^x$ the gradient in $\Omega$ and the tangential gradient on $\Gamma$ under the Eulerian coordinate system, respectively. In the Lagrangian coordinate system, we shall use the notations $\nabla_X$ and $\nabla^X_\Gamma$ correspondingly.

The Lagrangian framework of continuum mechanics writes
energies of the system in terms of the motions $x(X,t)$, $x_\text{s}(X_\text{s},t)$ using action functionals.
To this end, we introduce the Lagrangians in the bulk $\Omega$ and on the boundary $\Gamma$ that are associated with the bulk/surface free energies \eqref{bsenergy}, respectively:
\begin{align}
L^{\text{bulk}}(x(t))=-\int_{\Omega_t^x} W_\text{b}(\phi, \nabla_x \phi)\, \d x,\quad L^{\text{surf}}(x_\text{s}(t))=-\int_{\Gamma_t^x} W_\text{s}(\phi, \nabla_\Gamma^x \phi)\, \d S_x.\nonumber
\end{align}
Then the corresponding action functionals are given by
\begin{align}
\mathcal{A}^{\text{bulk}}(x(X,t))  = \int_0^T  L^{\text{bulk}}(x(t))\,\d t,
\quad \mathcal{A}^{\text{surf}}(x_\text{s}(X,t))  = \int_0^T L^{\text{surf}}(x_\text{s}(t))\,\d t.
\label{ac}
\end{align}
For the total action functional $\mathcal{A}^{\text{total}}=\mathcal{A}^{\text{bulk}}+\mathcal{A}^{\text{surf}}$, taking the variation with respect to the spatial variables $x$ in $\Omega$ and $x_\text{s}$ on $\Gamma$, respectively, we deduce that (see Appendix for detailed calculations)
\begin{align}
\delta_{(x, x_\text{s})} \mathcal{A}^{\text{total}}
&  = -\int_0^T\int_{\Omega_t^x} (\phi \nabla_x \mu_\text{b}) \cdot \tilde{y}\, \d x \d t
  \nonumber\\
&\quad -\int_0^T\int_{\Gamma_t^x} \left[\phi \nabla_\Gamma^x \left(\mu_\text{s}+ \frac{\partial W_\text{b}}{\partial \nabla_x \phi}\cdot \mathbf{n}\right)\right] \cdot \tilde{y}_\text{s}\, \d S_x\d t,
\label{LAP}
\end{align}
for arbitrary smooth vectors $\tilde{y}(x,t)$ ($x\in \Omega_t^x$) and $\tilde{y}_\text{s}(x_\text{s},t)$ ($x_\text{s}\in \Gamma_t^x$) satisfying $\tilde{y}\cdot \mathbf{n}=\tilde{y}_\text{s}\cdot \mathbf{n}=0$ on $\Gamma_t^x$.
The notions $\mu_\text{b}$ and $\mu_\text{s}$ in \eqref{LAP} stand for the chemical potentials in the bulk and on the boundary such that
\begin{align}
&\mu_\text{b}=\frac{\delta W_\text{b}(\phi, \nabla_x\phi)}{\delta \phi}
=-\nabla_x\cdot \frac{\partial W_\text{b}}{\partial \nabla_x \phi}
+\frac{\partial W_\text{b}}{\partial\phi},
\label{bumu}\\
&\mu_\text{s} =\frac{\delta W_\text{s}(\phi, \nabla_\Gamma^x\phi)}{\delta \phi}
=-\nabla_\Gamma^x\cdot \frac{\partial W_\text{s}}{\partial \nabla_\Gamma^x \phi}
+\frac{\partial W_\text{s}}{\partial\phi}.
\label{bdmu}
\end{align}

The principle of least action (LAP) optimizes the action functional $\mathcal{A}$ with respect to all trajectories $x(X,t)$ by setting the variation
with respect to $x$ to zero, i.e., $\delta_x \mathcal{A}=0$. From this, one can obtain the conservative force balance equation
of classical Hamiltonian mechanics, i.e., the conservation of momentum.
In other words, the LAP gives the Hamiltonian part of a mechanical system that corresponds to its conservative
forces, formally speaking (see e.g., \cite[Section 2.2.1]{SV}),
$$
\delta_x L(x,x_t)=(F_{\text{inertial}}+F_{\text{conv}})\cdot \delta x.
$$
Since the macroscopic kinetic energy is neglected in our current system, then the inertial forces in the bulk $\Omega$ and on the boundary $\Gamma$ simply vanish, i.e., $F_{\text{inertial}}=0$.
Thus in view of \eqref{LAP}--\eqref{bdmu}, we deduce the generalized conservative
forces in the bulk and on the boundary for $t\in (0,T)$ (written in the strong form):
\begin{align}
F_{\text{conv}}^{\text{bulk}}= -\phi \nabla_x \mu_\text{b},
\quad
F_{\text{conv}}^{\text{surf}}= -\phi \nabla_\Gamma^x \left(\mu_\text{s}+ \frac{\partial W_\text{b}}{\partial \nabla_x \phi}\cdot \mathbf{n}\right).
\label{FOCO}
\end{align}

On the other hand, energy dissipation functionals will be treated by extending the classical
treatment of the Hamiltonian to include dissipative forces, through Onsager's maximum dissipation principle (MDP) that describes the irreversible dissipative processes in the regime of linear response.
This can be done by taking variation of the Rayleigh dissipation functional $\mathcal{R}=\frac12\mathcal{D}$ with respect to the rate functions, for instance, the velocity $x_t$
(see e.g., \cite[Section 2.2.1]{SV}): $$
\delta_{x_t}\mathcal{R}=-F_{\text{diss}}\cdot \delta x_t.
$$
We note that the factor $\frac12$ is used since the energy dissipation $\mathcal{D}$ is usually chosen to be quadratic in ``rates" within the linear response theory for long-time near equilibrium dynamics (cf. \cite{Ray}).
If $\delta_{x_t} \mathcal{R}$ is set to zero, the resulting equation gives a weak variational form
of the dissipative force balance law that is again equivalent to the conservation of momentum.
Recalling the energy dissipation functions introduced in \eqref{diss1}, we have for $t\in (0,T)$
\begin{align}
\delta_{(\mathbf{u},\mathbf{v})}\left(\frac12 \mathcal{D}^{\text{total}} \right)
&= \frac12\delta_{\mathbf{u}} \mathcal{D}^{\text{bulk}}+\frac12 \delta_{\mathbf{v}} \mathcal{D}^{\text{surf}}\non\\
&= \frac12\left.\frac{\d}{\d \epsilon} \right|_{\epsilon=0}  \mathcal{D}^{\text{bulk}}(\mathbf{u}+\epsilon \tilde{\mathbf{u}})
+\frac12\left.\frac{\d}{\d \epsilon} \right|_{\epsilon=0}  \mathcal{D}^{\text{surf}}(\mathbf{v}+\epsilon \tilde{\mathbf{v}})\non\\
&= \frac12 \left.\frac{\d}{\d \epsilon} \right|_{\epsilon=0} \int_{\Omega_t^x} \phi^2\big[\mathbb{M}_\text{b}^{-1}(\mathbf{u}+\epsilon \tilde{\mathbf{u}})\big] \cdot (\mathbf{u}+\epsilon \tilde{\mathbf{u}})\,\d x\nonumber\\
&\quad +\frac12 \left.\frac{\d}{\d\epsilon} \right|_{\epsilon=0} \int_{\Gamma_t^x} \phi^2 \big[\mathbb{M}_\text{s}^{-1}(\mathbf{v}+\epsilon \tilde{\mathbf{v}})\big]\cdot (\mathbf{v}+\epsilon \tilde{\mathbf{v}})\,\d S_x\non\\
&= \int_{\Omega_t^x} \phi^2 (\mathbb{M}_\text{b}^{-1}\mathbf{u})\cdot \tilde{\mathbf{u}}\, \d x
   +\int_{\Gamma_t^x} \phi^2 (\mathbb{M}_\text{s}^{-1}\mathbf{v})\cdot \tilde{\mathbf{v}}\, \d S_x,\non
\end{align}
where $\tilde{\mathbf{u}}: \Omega_t^x\to \mathbb{R}^d$, $\tilde{\mathbf{v}}: \Gamma_t^x\to \mathbb{R}^d$ are arbitrary smooth vectorial functions satisfying $\tilde{\mathbf{u}}\cdot \mathbf{n}=\tilde{\mathbf{v}}\cdot \mathbf{n}=0$ on $\Gamma_t^x$.
Thus, we can derive the generalized dissipative forces both in the bulk $\Omega$ and on the boundary $\Gamma$ (again written in the strong form):
\begin{align}
F_{\text{diss}}^{\text{bulk}}= -\phi^2 (\mathbb{M}_\text{b}^{-1}\mathbf{u}),\quad F_{\text{diss}}^{\text{surf}}=-\phi^2 (\mathbb{M}_\text{s}^{-1}\mathbf{v}).
\label{FODI}
\end{align}

From \eqref{FOCO}, \eqref{FODI} and the classical Newton's force balance law $F_{\text{inertial}}+F_{\text{conv}}+F_{\text{diss}}=0$ (recalling that here we have $F_{\text{inertial}}=0$), we deduce the following force balance relations in the bulk $\Omega$ and on the boundary $\Gamma$ that further yield an exact expression of the microscopic velocities $\mathbf{u}$ and $\mathbf{v}$:
\begin{equation}
\left\{
\begin{aligned}
&\phi \nabla_x \mu_\text{b} +\phi^2 (\mathbb{M}_\text{b}^{-1}\mathbf{u})=0, &\text{in}\ \Omega_t^x,\\
&\phi \nabla_\Gamma^x \left(\mu_\text{s}+ \displaystyle{\frac{\partial W_\text{b}}{\partial \nabla_x \phi}}\cdot \mathbf{n}\right)
+\phi^2 (\mathbb{M}_\text{s}^{-1}\mathbf{v})=0, &\text{on}\ \Gamma_t^x.
\end{aligned}
\right.
\label{fba}
\end{equation}

\subsection{The Cahn--Hilliard equation with new dynamic boundary conditions}
In summary, we can deduce from the above relations \eqref{con1}, \eqref{bd1}, \eqref{con2}, \eqref{bumu}, \eqref{bdmu} and \eqref{fba} the following general Cahn--Hilliard system subject to a new type of dynamic boundary condition (an initial condition for the phase function $\phi$ is also implemented):
\begin{equation}
\left\{
\begin{aligned}
&\phi_t=\nabla \cdot(\mathbb{M}_\text{b} \nabla \mu_\text{b}), &\text{in}\ \Omega\times(0,T),\\
&\displaystyle{\mu_\text{b}=-\nabla\cdot \frac{\partial W_\text{b}}{\partial \nabla \phi}+\frac{\partial W_\text{b}}{\partial\phi}},&\text{in}\ \Omega\times(0,T),\\
&\displaystyle{\partial_\mathbf{n}\mu_\text{b} =0},&\text{on}\ \Gamma\times(0,T),\\
&\displaystyle{\phi_t=\nabla_\Gamma \cdot\left[\mathbb{M}_\text{s} \nabla_\Gamma \left(\mu_\text{s}+\frac{\partial W_\text{b}}{\partial \nabla \phi}\cdot \mathbf{n}\right)\right]},&\text{on}\ \Gamma\times(0,T),\\
&\displaystyle{\mu_\text{s}=-\nabla_\Gamma\cdot \frac{\partial W_\text{s}}{\partial \nabla_\Gamma \phi}+\frac{\partial W_\text{s}}{\partial \phi}},&\text{on}\ \Gamma\times(0,T),\\
&\phi|_{t=0}=\phi_0(x),&\text{in}\ \Omega,
\end{aligned}
\right.
\label{full}
\end{equation}
where $T\in (0,+\infty)$. Here, all the derivatives are taken in the Eulerian coordinates and thus the subscript (or superscript) $x$ is simply omitted.

\begin{remark}
(1) It is straightforward to verify that any smooth solution to the Cahn--Hilliard system \eqref{full} satisfies the energy dissipation law \eqref{BEL}.
To see this, multiplying the first equation of \eqref{full} by $\mu_\mathrm{b}$, integrating over $\Omega$ by using the no-flux boundary condition, then multiplying the dynamic boundary condition in \eqref{full} by $\mu_\mathrm{s}+\frac{\partial W_\mathrm{b}}{\partial \nabla \phi}\cdot \mathbf{n}$ and integrating over $\Gamma$, adding the resultants together, we arrive at
\begin{align}
&\frac{\d}{\d t}\left(\int_\Omega W_\mathrm{b}(\phi, \nabla \phi)\, \d x
+\int_{\Gamma} W_\mathrm{s}(\phi, \nabla_\Gamma \phi)\, \d S\right) \nonumber\\
&\quad = -\int_\Omega (\mathbb{M}_\mathrm{b} \nabla \mu_\mathrm{b})\cdot \nabla \mu_\mathrm{b} \, \d x\nonumber\\
&\qquad -\int_{\Gamma}  \left[\mathbb{M}_\mathrm{s} \nabla_\Gamma \left(\mu_\mathrm{s}+\frac{\partial W_\mathrm{b}}{\partial \nabla \phi}\cdot \mathbf{n}\right)\right]\cdot \nabla_\Gamma \left(\mu_\mathrm{s}+\frac{\partial W_\mathrm{b}}{\partial \nabla \phi}\cdot \mathbf{n}\right)\, \d S,
\label{aBEL}
\end{align}
which coincides with \eqref{BEL} in view of the force relation \eqref{fba}.
The energy dissipation property \eqref{aBEL} serves as a starting point for mathematical analysis of the Cahn--Hilliard system \eqref{full}.

(2) By taking time derivative of the total energy directly, we see that the dynamic boundary condition in \eqref{full} can also be viewed as a sufficient condition for the decreasing of energy.
Nevertheless, our model derivation reveals further physical relations behind this consideration on energy dissipation, i.e., the mass conservation and in particular, the force balance.
\end{remark}


\section{Mathematical Analysis}
\setcounter{equation}{0}

\subsection{The initial boundary value problem}

Different physical considerations can be naturally incorporated into the general Cahn--Hilliard system \eqref{full}
by choosing free energy densities $W_\text{b}, W_\text{s}$ as well as mobilities $\mathbb{M}_\text{b}, \mathbb{M}_\text{s}$.
As a first step towards the corresponding mathematical analysis, in the remaining part of this paper, we focus on (possibly) the simplest case with regular potentials and positive constant mobilities. To this end, we assume that
$$
\mathbb{M}_\text{b}=\mathbb{M}_\text{s}=\mathbb{I}_d,
$$
where $\mathbb{I}_d$ is the $d\times d$ identity matrix, while the bulk/surface energy densities take the following specific forms:
\begin{align}
W_\text{b}(\phi, \nabla \phi)=\frac12|\nabla \phi|^2 + F(\phi),
\quad W_\text{s}(\phi, \nabla_\Gamma\phi)=\frac{\kappa}{2}|\nabla_\Gamma \phi|^2+\frac{1}{2}\phi^2+G(\phi),
\label{WbWs}
\end{align}
where $\kappa\geq 0$ is a nonnegative constant, and $F$, $G$ are bulk/surface potential functions that satisfy proper assumptions (see below).
We take the above particular form of $W_\text{s}(\phi, \nabla_\Gamma\phi)$ that contains a quadratic term $\frac{1}{2}\phi^2$ just for convenience in the subsequent analysis.

Inspired by \cite{MZ05}, it will be more convenient to view the trace of the order parameter $\phi$ as an unknown function on the boundary $\Gamma$ (cf. Remark \ref{Rem1}). Thus, after introducing the new variable
$$
\psi:=\phi|_\Gamma,
$$
 the initial boundary value problem that we are going to analyze (see \eqref{newCH}) can be written in the following form:
\begin{equation}
\left\{
\begin{aligned}
&\phi_t=\Delta \mu,\quad \text{with}\ \ \mu=-\Delta \phi +F'(\phi),
&\text{in}\  \Omega\times (0,T),\\
&\partial_\mathbf{n} \mu=0,
&\text{on}\ \Gamma\times (0,T),\\
&\phi|_\Gamma=\psi,
&\text{on}\ \Gamma\times (0,T),\\
&\psi_t=\Delta_\Gamma \mu_\Gamma,\quad\! \text{with}\ \ \mu_\Gamma= -\kappa \Delta_\Gamma \psi +  \psi +\partial_\mathbf{n}\phi + G'(\psi),
&\text{on}\ \Gamma\times (0,T),\\
&\phi|_{t=0}=\phi_0(x), &\text{in}\ \Omega,\\
&\psi|_{t=0}=\psi_0(x):=\phi_0(x)|_{\Gamma},&\text{on}\ \Gamma.
\end{aligned}
\right.
\label{CH}
\end{equation}
Problem \eqref{CH} can be viewed as a Cahn--Hilliard equation for $\phi$ in $\Omega$ subject to a homogeneous Neumann boundary condition for $\mu$ together with a nonhomogeneous Dirichlet boundary condition for $\phi$, coupled with an evolution equation for $\psi$ on the boundary $\Gamma$. The bulk-surface coupling structure is in terms of the Dirichlet boundary condition for the order parameter $\phi$ and its outer normal derivative on the boundary.

Next, we state some basic assumptions on the nonlinearities $F$, $G$.
\begin{itemize}
\item[(\textbf{A1})] $F$, $G\in C^4(\mathbb{R})$.
\end{itemize}
\begin{itemize}
\item[(\textbf{A2})] There exist nonnegative constants $C_F$, $\widetilde{C}_F$, $C_G$, $\widetilde{C}_G\geq  0$ independent of $y\in \mathbb{R}$ such that
  \begin{align*}
  F(y)\geq -C_F,\quad F''(y)\geq -\widetilde{C}_F,\quad G(y)\geq -C_G, \quad G''(y)\geq -\widetilde{C}_G,\quad \forall\, y\in \mathbb{R}.
  \end{align*}
\end{itemize}
\begin{remark}
Assumption (\textbf{A2}) can be regarded as a dissipative condition that guarantees the existence of global weak/strong solutions.
It implies that both $F$ and $G$ are quadratic perturbations of some strictly convex functions $\widetilde{F}$, $\widetilde{G}$, for instance, we can put
\begin{align}
&\widetilde{F}(y)=F(y)+\frac{\widetilde{C}_F+1}{2}y^2-F'(0)y-F(0),\quad \forall\, y\in \mathbb{R},
\label{tF}\\
&\widetilde{G}(y)=G(y)+\frac{\widetilde{C}_G+1}{2}y^2-G'(0)y-G(0),\quad \forall\, y\in \mathbb{R}.
\label{tG}
\end{align}
\end{remark}

When global weak solutions are concerned, we impose the following (subcritical) growth condition:
\begin{itemize}
\item[(\textbf{A3})] there exist positive constants $\widehat{C}_F$, $\widehat{C}_G>0$ independent of $y\in \mathbb{R}$ such that
  \begin{align*}
  &|F''(y)|\leq \widehat{C}_F(1+|y|^{p}),\quad |G''(y)|\leq \widehat{C}_G(1+|y|^{q}),\;\;\;\; \forall\,y\in \mathbb{R},
  \end{align*}
  where the exponents $p$, $q\in [0,+\infty)$ are fixed numbers such that \\
  (i) when $\kappa>0$, $p, q$ are arbitrary if $d=2$; $p=2$, $q$ is arbitrary if $d=3$;\\
  (ii) when $\kappa=0$, $p$ is arbitrary if $d=2$ and $p=2$ if $d=3$; $q=0$ for $d=2,3$.
\end{itemize}

When global strong solutions for the case with surface diffusion (i.e., $\kappa>0$) is concerned, the growth condition (\textbf{A3}) can be replaced by an alternative assumption:
\begin{itemize}
\item[(\textbf{A4})] there exist some positive constants $\rho_1$, $\rho_2$ such that
$$
|\widetilde{F}'(y)|\leq \rho_1|\widetilde{G}'(y)|+\rho_2,\quad \forall\,y\in \mathbb{R}.
$$
\end{itemize}

\begin{remark}
 In this paper, we don't aim to pursue minimal assumptions on the potentials $F$, $G$ in order to avoid additional technical details.
 Nevertheless, the above assumptions cover some typical cases in the literature.

 (1) It is easy to verify that the classical double well potential $F(y)=\frac14(y^2-1)^2$ satisfies (\textbf{A1})--(\textbf{A3}).

 (2) According to (\textbf{A1})--(\textbf{A3}), the surface potential function $G$  is allowed to be a polynomial of even degree with a positive leading coefficient provided that $\kappa>0$. When $\kappa=0$, i.e., the surface diffusion is absent, the assumption on $G$ turns out to be more restrictive. Nevertheless, it covers the typical form of the fluid-solid interfacial free energy in the moving contact line problem (see \cite[Section 4]{Q1}):
 $$
 \frac12\phi^2+G(\phi)= \frac{\gamma}{2} \cos \theta_s \sin \left(\frac{\pi}{2}\phi\right)
 $$
 where $\theta_s$ is the static contact angle and $\gamma$ is the surface tension coefficient on $\Gamma$.

  (3) Assumption (\textbf{A4}) removes the restriction on the growth of potential functions $F$ and $G$. Instead, it requires that the boundary potential $G$ plays a dominating role.
  This assumption can be viewed as a compatibility condition between $F$ and $G$, and it can be further extended to the case with physically relevant singular potentials (cf. \cite{CF15a, CGS14}). A similar assumption in the opposite direction such that the bulk potential $F$ is dominative can be found in \cite[(2.35)--(2.36)]{GMS09}.
\end{remark}

\subsection{Preliminaries}
We assume that $\Omega\subset \mathbb{R}^d$ ($d=2,3$) is a bounded domain with smooth boundary $\Gamma:=\partial \Omega$, $\mathbf{n}$ is the unit outward normal vector on $\Gamma$ and
 $\partial_\mathbf{n}$ denotes the outer normal derivative on $\Gamma$.
Then we introduce the functional framework that will be used in the remaining part of this paper.
For a Banach space $\mathcal{X}$, its norm is denoted by $\|\cdot\|_{\mathcal{X}}$.
The symbol $\langle\cdot, \cdot\rangle_{\mathcal{X}^*,\mathcal{X}}$ stands for a duality pairing between $\mathcal{X}$ and its dual space $\mathcal{X}^*$.
We denote by $L^p(\Omega)$ and $L^p(\Gamma)$ $(p\geq 1)$ the standard Lebesgue spaces with respective norms $\|\cdot\|_{L^p(\Omega)}$
and $\|\cdot\|_{L^p(\Gamma)}$. When $p=2$, the inner products in the Hilbert spaces $L^2(\Omega)$ and $L^2(\Gamma)$ will be denoted by
$(\cdot, \cdot)_{L^2(\Omega)}$ and $(\cdot, \cdot)_{L^2(\Gamma)}$, respectively.
For $s\in \mathbb{R}$, $p\geq 1$, $W^{s,p}(\Omega)$ and $W^{s,p}(\Gamma)$
stand for the Sobolev spaces with corresponding norms $\|\cdot\|_{W^{s,p}(\Omega)}$ and $\|\cdot\|_{W^{s,p}(\Gamma)}$. If $p=2$, we simply denote $W^{s,p}(\Omega)=H^s(\Omega)$ and $W^{s,p}(\Gamma)=H^s(\Gamma)$.
%
We put the product space
$$
\mathcal{H}=L^2(\Omega)\times L^2(\Gamma),
$$
which is a Hilbert space that can be viewed as the completion of $C^0(\overline{\Omega})$ with respect to the norm
 \begin{align}
\|(\phi, \psi)\|_{\mathcal{H}}^2=\int_{\Omega} |\phi|^2\, \d x
+\int_{\Gamma} |\psi|^2 \,\d S,\quad \forall\ (\phi, \psi) \in \mathcal{H}.
\label{V0no}
 \end{align}
 We note that any element $h=(\phi,\psi) \in \mathcal{H}$ will be thought as a pair of functions
belonging, respectively, to $L^2(\Omega)$ and to $L^2(\Gamma)$.
If no additional regularity is imposed, the second component of $h$ (i.e., $\psi$) is not necessary to be
the trace of the first one (i.e., $\phi$).
We recall that the Dirichlet trace map $\gamma : \{\phi|_{\overline{\Omega}}:\ \phi \in C^\infty(\mathbb{R}^d)\}\to C^\infty(\Gamma)$,
defined by $\gamma\phi=\phi|_{\Gamma}$, extends to a linear continuous operator $\gamma : H^s(\Omega)\to H^{s-\frac12}(\Gamma)$, for all
$s >\frac12$ (cf. \cite[Theorem 3.37]{ML00}). In the following text, we shall always use the notion $\phi|_\Gamma$ to indicate the trace operator defined in a suitable sense. Thus, when we consider a function $\phi\in H^s(\Omega)$ (with $s>\frac12$), the symbol $\phi$ will be intended, as a pair $(\phi, \psi)$ formed by
the function $\phi$ in $\Omega$ and its trace $\psi:=\phi|_\Gamma$ on $\Gamma$. In this context, we introduce the notions
$$
V^s=\big\{(\phi, \psi)\in H^s(\Omega)\times H^{s-\frac12}(\Gamma):\ \psi=\phi|_\Gamma \big\}, \quad \forall\, s>\frac12,
$$
with the equivalent norm given by $\|\phi\|_{H^s(\Omega)}$.
Besides, we denote
$$
\mathcal{V}^s=\big\{(\phi, \psi)\in H^s(\Omega)\times H^{s}(\Gamma):\ \psi=\phi|_\Gamma \big\}, \quad \forall\, s>\frac12,
$$
with the induced graph norm given by
 $\|(\phi, \psi)\|^2_{\mathcal{V}^s}=\|\phi\|^2_{H^s(\Omega)}+\|\psi\|^2_{H^s(\Gamma)}$. We note that $\mathcal{V}^s$
can be identified with a closed subspace of the product space $H^s(\Omega)\times H^s(\Gamma)$ and for any $s_1>s_2>\frac12$, the dense and compact embeddings $\mathcal{V}^{s_1}\hookrightarrow \mathcal{V}^{s_2}$ hold.
For later convenience, we set, for a parameter $\kappa\geq 0$
$$
\mathds{V}_\kappa^s:=\mathcal{V}^s\ \ \text{if}\ \kappa>0,
\qquad \mathds{V}^s_\kappa:=V^s\ \ \text{if}\ \kappa=0.
$$
We see that for any fixed $\kappa\geq 0$, $\mathds{V}_\kappa^{1}$ is a Hilbert space, which can be viewed as the
completion of $C^1(\overline{\Omega})$ with respect to the following equivalent norm
\begin{align}
 \|(\phi,\psi)\|_{\mathds{V}_\kappa^1}^2
 =\int_{\Omega} |\nabla \phi|^2\, \d x
+\int_{\Gamma}(\kappa|\nabla_{\Gamma} \psi|^2 + |\psi|^2)\, \d S,
\quad \forall\ (\phi, \psi) \in \mathds{V}_\kappa^1.
\label{V1no}
\end{align}

We denote by $|\Omega|$ the Lebesgue measure of the domain $\Omega$ and by $|\Gamma|$ the $d-1$-dimensional measure of its boundary $\Gamma$. For every $g\in (H^1(\Omega))^*$ (resp. $g\in (H^1(\Gamma))^*$), we denote by $\langle g\rangle_\Omega$ (resp. $\langle g\rangle_\Gamma$) the generalized average of $g$ over $\Omega$ (resp. $\Gamma$) such that
\begin{align*}
&\langle{g}\rangle_\Omega=\frac{1}{|\Omega|}\langle g,1\rangle_{(H^1(\Omega))^*,H^1(\Omega)},
\quad \langle{g}\rangle_\Gamma=\frac{1}{|\Gamma|}\langle g,1\rangle_{(H^1(\Gamma))^*,H^1(\Gamma)}.
\end{align*}
If $g\in L^2(\Omega)$ (resp. $g\in L^2(\Gamma)$), the above mean values simply reduce to
\begin{align*}
&\langle{g}\rangle_\Omega=\frac{1}{|\Omega|}\int_\Omega g\, \d x,\quad \langle{g}\rangle_\Gamma=\frac{1}{|\Gamma|}\int_\Gamma g\, \d S.
\end{align*}
Then we set
\begin{align*}
 \dot{\mathcal{H}}& =\big\{(\phi, \psi)\in \mathcal{H}: \ \langle \phi \rangle_\Omega=\langle \psi \rangle_\Gamma=0\big\}, \\
 \dot{V}^s& =\big\{(\phi, \psi)\in V^s: \ \langle \phi \rangle_\Omega=\langle \psi \rangle_\Gamma=0\big\},\quad \forall\, s>\frac12,\\
 \dot{\mathcal{V}}^s& =\big\{(\phi, \psi)\in \mathcal{V}^s: \ \langle \phi \rangle_\Omega=\langle \psi \rangle_\Gamma=0\big\},\quad \forall\, s>\frac12.
\end{align*}
The minus Laplace operator in $\Omega$ subject to the homogeneous Neumann boundary condition and its domain are denoted by
\begin{align*}
&A_\Omega=-\Delta :D(A_\Omega)\subset L^2(\Omega) \to L^2(\Omega), \\
&\qquad \text{with}\ \  D(A_\Omega) = \{\phi \in H^2(\Omega):\partial_\mathbf{n} \phi=0 \ \text{on}\ \Gamma\}.
\end{align*}
We denote by $A_\Omega^0$ the restriction of $A_\Omega$ to the linear subspace of $L^2(\Omega)$ with zero mean values $\dot{L^2}(\Omega):=\{\phi\in L^2(\Omega):\ \langle \phi \rangle_\Omega=0\}$. Note that $A_\Omega^0$ is a positive linear operator and $(A_\Omega^0)^{-1}$ is compact on $\dot{L^2}(\Omega)$.
Set
$$
(H^1(\Omega))^*_0=\{\phi\in (H^1(\Omega))^*:\ \langle \phi\rangle_\Omega=0\}.
$$
 For any function $\phi\in (H^1(\Omega))^*_0$, the Neumann problem for $A_\Omega$ in $\Omega$ with source $\phi$ admits a unique weak solution $u=(A_\Omega^0)^{-1} \phi\in H^1(\Omega)\cap \dot{L^2}(\Omega)$.
As a consequence, for any $u, w\in (H^1(\Omega))^*_0$, we have
\begin{align*}
\langle u, (A_\Omega^0)^{-1} w\rangle_{(H^1(\Omega))^*,H^1(\Omega)}
& = \langle w, (A_\Omega^0)^{-1} u\rangle_{(H^1(\Omega))^*, H^1(\Omega)}\\
& =\int_\Omega \nabla  (A_\Omega^0)^{-1} u\cdot \nabla  (A_\Omega^0)^{-1} w \d x.
\end{align*}
Concerning the minus Laplace--Beltrami operator $A_\Gamma:= -\Delta_\Gamma$ defined on $\Gamma$, it is well known that $A_\Gamma$ is a
nonnegative self-adjoint operator in $L^2(\Gamma)$. Thus, the Sobolev spaces $H^s(\Gamma)$, $s\in \mathbb{R}$, can
 be identified as $H^s(\Gamma)=D((I+A_\Gamma)^\frac{s}{2})$. In particular, $\|\psi\|_{L^2(\Gamma)}+\|A_\Gamma \psi\|_{L^2(\Gamma)}$ is an
equivalent norm on $H^2(\Gamma)$ and $\|\psi\|_{L^2(\Gamma)}+\|\nabla_\Gamma \psi\|_{L^2(\Gamma)}$ is an
equivalent norm on $H^1(\Gamma)$ (cf. \cite{LM68}). The restriction of $A_\Gamma$ to the linear subspace  $\dot{L^2}(\Gamma):=\{\phi\in L^2(\Gamma):\, \langle \phi \rangle_\Gamma=0\}$ is denoted by $A_\Gamma^0$, which is a positive linear operator.
Besides, its inverse $(A_\Gamma^0)^{-1}$ is a compact operator on $\dot{L^2}(\Gamma)$. Like before, we set
$$
(H^1(\Gamma))^*_0=\{\psi\in (H^1(\Gamma))^*:\ \langle \psi\rangle_\Gamma=0\}.
$$
Then for any function $\psi\in (H^1(\Gamma))^*_0$, the Poisson equation for $A_\Gamma$ on $\Gamma$ (i.e., a compact $d-1$ manifold without boundary) with source $\psi$ admits a unique weak solution $v=(A_\Gamma^0)^{-1} \psi\in H^1(\Gamma)\cap \dot{L^2}(\Gamma)$.
As a consequence, for any $v, w\in (H^1(\Gamma))^*_0$, we have
\begin{align*}
\langle v, (A_\Gamma^0)^{-1} w\rangle_{(H^1(\Gamma))^*,H^1(\Gamma)}
& = \langle w, (A_\Gamma^0)^{-1} v\rangle_{(H^1(\Gamma))^*, H^1(\Gamma)}\\
& = \int_\Gamma \nabla_\Gamma  (A_\Gamma^0)^{-1} v\cdot \nabla_\Gamma  (A_\Gamma^0)^{-1} w\, \d S.
\end{align*}
We can endow the spaces $(H^1(\Omega))^*$, $(H^1(\Omega))^*_0$, $(H^1(\Gamma))^*$ and $(H^1(\Gamma))^*_0$ with the following equivalent norms
\begin{align*}
&\|\phi\|^2_{(H^1(\Omega))^*}=\|\nabla (A_\Omega^0)^{-1}(\phi-\langle \phi\rangle_\Omega)\|_{L^2(\Omega)}^2+ |\langle \phi\rangle_\Omega|^2,\qquad\ \, \forall\, \phi\in (H^1(\Omega))^*,\\
&\|\phi\|^2_{(H^1(\Omega))^*_0}=\|\nabla (A_\Omega^0)^{-1}\phi\|_{L^2(\Omega)}^2,\qquad\qquad\qquad\qquad\qquad \ \  \forall\,\phi\in (H^1(\Omega))^*_0,\\
&\|\psi\|^2_{(H^1(\Gamma))^*}=\|\nabla_\Gamma(A_\Gamma^0)^{-1}(\psi-\langle \psi\rangle_\Gamma)\|_{L^2(\Gamma)}^2+ |\langle \psi\rangle_\Gamma|^2,\qquad \forall\,\psi\in (H^1(\Gamma))^*,\\
&\|\psi\|^2_{(H^1(\Gamma))^*_0}=\|\nabla_\Gamma(A_\Gamma^0)^{-1}\psi\|_{L^2(\Gamma)}^2,
\qquad\qquad\qquad\qquad\qquad \  \forall\, \psi\in (H^1(\Gamma))^*_0.
\end{align*}
Moreover, we have
\begin{align*}
&\frac12\frac{\d}{\d t}\|\phi\|^2_{(H^1(\Omega))^*_0}=\langle \phi_t, (A_\Omega^0)^{-1} \phi\rangle_{(H^1(\Omega))^*,H^1(\Omega)},\quad \forall\, \phi\in H^1(0,T;(H^1(\Omega))^*_0),\\
&\frac12\frac{\d}{\d t}\|\psi\|^2_{(H^1(\Gamma))^*_0}=\langle \psi_t, (A_\Gamma^0)^{-1} \psi\rangle_{(H^1(\Gamma))^*,H^1(\Gamma)},\ \ \
\forall\, \psi\in H^1(0,T;(H^1(\Gamma))^*_0).
\end{align*}

Finally, for reader's convenience, we collect some useful results in functional analysis that will be frequently used later.
\begin{itemize}
\item \emph{The Gagliardo--Nirenberg Inequality} (see e.g., \cite{Ni}):
\be
\|D^j f\|_{L^p(\Omega)}\leq C
\|f\|_{L^q(\Omega)}^{1-a} \| f\|_{W^{m,r}(\Omega)}^a ,
\quad \forall \, f\in W^{m,r}(\Omega)\cap L^q(\Omega),\nonumber
\ee
where $j, m$ are arbitrary integers satisfying
$0\leq j< m$ and $\frac{j}{m}\leq a\leq 1$, and
$1\leq q, r\leq +\infty$  such that
$$
\frac{1}{p}-\frac{j}{d}=a\left(\frac{1}{r}-\frac{m}{d}\right)+(1-a)\frac{1}{q}.
$$
If $1<r<+ \infty$ and $m-j-\frac{n}{r}$ is a nonnegative integer, then the
above inequality holds only for $\frac{j}{m}\leq a<1$.

\item \emph{Trace Theorem} (cf. \cite[Part I, Chapter 2, Theorems 2.24, 2.27]{BG}).

(1) Assume that $1\leq p\leq\infty$, $s>\frac{1}{p}$ and $s-\frac{1}{p}$ is not an integer, then there exists a unique operator $\gamma_0\in \mathcal{L}(W^{s,p}(\Omega); W^{s-\frac{1}{p}, p}(\Gamma))$ such that $\gamma_0 v=v|_\Gamma$, $\forall\, v\in C^\infty(\overline{\Omega})\cap W^{s,p}(\Omega)$.

(2) Let $A=-\Delta$. Consider the space $W^{s,p}_{A,j}(\Omega)=\{v\in W^{s,p}(\Omega),\, Av\in W^{-2+j+\frac{1}{p}, p}(\Omega)\}$, $j=0,1$, $s\in \mathbb{R}$, $1<p<\infty$, and either $p=2$ or $s-\frac{1}{p}$ is not an integer. Then there exists unique operators $\gamma_0\in \mathcal{L}(W^{s,p}_{A,0}(\Omega); W^{s-\frac{1}{p}, p}(\Gamma))$ and $\gamma_A\in \mathcal{L}(W^{s,p}_{A,1}(\Omega); W^{s-1-\frac{1}{p}, p}(\Gamma))$ such that $\gamma_0 v=v|_\Gamma$ and $\gamma_A v=\partial_\mathbf{n} v|_\Gamma$, $\forall\, v\in C^\infty(\overline{\Omega})\cap W^{s,p}_{A,j}(\Omega)$.

\item \emph{Inverse Trace Theorem} (cf. \cite[Theorem 4.2.3]{HWb}).

Let $s>\frac12$. There exists a linear bounded right inverse $\mathcal{R}$ to the Dirichlet trace operator $\gamma_0$ with $\mathcal{R}: H^{s-\frac12}(\Gamma)\to H^s(\Omega)$, $\gamma_0(\mathcal{R}v)=v$ for all $v\in H^{s-\frac12}(\Gamma)$. Moreover,
\begin{align}
\|\mathcal{R}v\|_{ H^s(\Omega)}\leq c_\mathcal{R}\|v\|_{H^{s-\frac12}(\Gamma)},\label{cZ}
\end{align}
with $c_\mathcal{R}>0$ being independent of $v$.

\item \emph{Generalized First Green's Formula} (cf. \cite[Lemma 5.1.1]{HWb}).

Let $A=-\Delta$ and $H^1_{A}(\Omega)=\{u\in H^1(\Omega),\, Au\in (H^1(\Omega))^*\}$ equipped with the graph norm $\|u\|_{H^1_{A}(\Omega)}:=\|u\|_{H^1(\Omega)}+\|Au\|_{(H^1(\Omega))^*}$.
For any fixed $u\in  H^1_{A}(\Omega)$, the mapping
$$
v\mapsto \langle\tau u, v\rangle_{H^{-\frac12}(\Gamma), H^\frac12(\Gamma)}:=\int_\Omega \nabla u\cdot \nabla (\mathcal{R}v)\, \d x-\langle Au, \mathcal{R}v\rangle_{(H^1(\Omega))^*, H^1(\Omega)}
$$
is a continuous linear functional $\tau u$ on arbitrary $v\in H^\frac12(\Gamma)$ that coincides for $u \in H^2(\Omega)$ with $\partial_\mathbf{n}u$, i.e., $\tau u=\partial_\mathbf{n} u|_\Gamma$. The mapping $\tau:  H^1_{A}(\Omega)\to H^{-\frac12}(\Gamma)$ with $u\to \tau u$ is continuous. Here, $\mathcal{R}$ is a right inverse to the trace operator $\gamma_0$ defined above. However, the resulting operator $\tau u=\partial_\mathbf{n} u$ does not depend on the special choice of $\mathcal{R}$. Besides,
\begin{align}
\|\partial_\mathbf{n} u\|_{H^{-\frac12}(\Gamma)}\leq c_\mathcal{R}\Big(\|\nabla u\|_{L^2(\Omega)}+\|Au\|_{(H^1(\Omega))^*}\Big),\label{cZ1}
\end{align}
where $c_\mathcal{R}$ is given in \eqref{cZ} being independent of $u$. Finally, the following generalized first Green's formula holds
$$
\langle Au, w\rangle_{(H^1(\Omega))^*, H^1(\Omega)}=\int_\Omega \nabla u\cdot \nabla w\, \d x-\langle\tau u, \gamma_0 w\rangle_{H^{-\frac12}(\Gamma), H^\frac12(\Gamma)},
$$
for every $u\in H^1_{A}(\Omega)$ and $w\in H^1(\Omega)$.
\end{itemize}

\subsection{Main results}
As in Section 2, the total free energy for problem \eqref{CH} is given by
\begin{align}
E(\phi, \psi)&=\frac12\int_{\Omega} |\nabla \phi|^2\, \d x
+ \frac12 \int_{\Gamma}\big(\kappa|\nabla_{\Gamma} \psi|^2 + |\psi|^2\big)\, \d S\nonumber\\
&\quad  +\int_\Omega F(\phi)\,\d x
+\int_\Gamma G(\psi)\,\d S, \quad\text{for some}\ \kappa\geq 0. \label{E}
\end{align}
Next, we introduce the notion of (global) weak/strong solutions to problem \eqref{CH}:
\begin{definition}
\label{defweak}
Let $\kappa\geq 0$, $T\in (0,+\infty)$. For any initial data $(\phi_0, \psi_0)\in \mathcal{V}^1$, a pair $(\phi,\psi)$
is called a weak solution to problem \eqref{CH} on $[0,T]$, if it satisfies
\begin{align*}
&(\phi, \psi)\in L^\infty(0,T; \mathds{V}^1_\kappa)\cap L^2(0,T; \mathds{V}^r_\kappa),\\
&(\mu, \mu_\Gamma)\in L^2(0,T; H^1(\Omega)\times H^1(\Gamma)),\\
&(\phi_t, \psi_t)\in L^2(0, T; (H^1(\Omega))^*\times (H^1(\Gamma))^*),
\end{align*}
with $r=3$ if $\kappa>0$ and $r=\frac52$ if $\kappa=0$. The following weak formulations are satisfied
\begin{align}
&\langle \phi_t(t), \zeta\rangle_{(H^1(\Omega))^*, H^1(\Omega)}
+\int_\Omega \nabla \mu (t)\cdot \nabla \zeta\, \d x=0,
\label{we1} \\
&\langle \psi_t(t), \eta\rangle_{(H^1(\Gamma))^*, H^1(\Gamma)}
+\int_\Gamma \nabla_\Gamma \mu_\Gamma(t) \cdot \nabla_\Gamma\eta\, \d S=0,
\label{we1a}
\end{align}
for every $\zeta\in H^1(\Omega)$ and $\eta\in H^1(\Gamma)$ and almost every $t\in (0,T)$, with
\begin{align}
&\mu=-\Delta \phi +F'(\phi),\qquad\qquad \qquad\quad\ \,  \text{a.e. in } \Omega\times (0,T),\label{we2a}\\
&\mu_\Gamma= -\kappa\Delta_\Gamma \psi +  \psi+\partial_\mathbf{n}\phi +G'(\psi),
\quad \text{a.e. on } \Gamma\times (0,T).
\label{we2}
\end{align}
Besides, the initial conditions are fulfilled
\begin{align}
\phi|_{t=0}=\phi_0(x)\quad \text{in}\ \Omega, \quad \text{and}\quad \psi|_{t=0}=\psi_0(x)\quad \text{on}\ \Gamma,
\label{ini}
\end{align}
and the bulk/surface mass conservation properties hold
\begin{align}
\langle\phi(t)\rangle_\Omega=\langle\phi_0\rangle_\Omega,\quad \langle\psi(t)\rangle_\Gamma=\langle\psi_0\rangle_\Gamma,\quad \forall\,t\in [0,T].
\label{massconwe}
\end{align}
Moreover, $(\phi,\psi)$ satisfies the energy inequality
\begin{align}
&E(\phi(t), \psi(t))
+\int_0^{t}\Big(\|\nabla\mu(\tau)\|_{L^2(\Omega)}^2+\|\nabla_\Gamma \mu_\Gamma(\tau) \|_{L^2(\Gamma)}^2\Big)\,\d \tau
\leq E(\phi_0, \psi_0),\quad \forall\, t\in [0,T],
\label{aenergyin}
\end{align}
where $E$ is defined in \eqref{E}.
\end{definition}
\begin{definition}
\label{defstrong}
Let $\kappa\geq 0$, $T\in (0,+\infty)$. For any initial datum $(\phi_0, \psi_0)\in \mathcal{V}^3$, a pair $(\phi,\psi)$
is called a strong solution to problem \eqref{CH} on $[0,T]$, if it satisfies the additional regularity
\begin{align*}
&(\phi, \psi)\in L^\infty(0,T; \mathds{V}^{r_1}_\kappa)\cap L^2(0,T; \mathds{V}^{r_2}_\kappa),\\
&\mu\in L^\infty(0,T; H^1(\Omega))\cap L^2(0,T; H^3(\Omega)),\\
& \mu_\Gamma \in L^\infty(0,T; H^1(\Gamma))\cap L^2(0, T; H^{r_3}(\Gamma)),\\
&(\phi_t, \psi_t)\in L^\infty(0,T; (H^1(\Omega))^*\times(H^1(\Gamma))^*)\cap L^2(0, T;\mathds{V}^1_\kappa),
\end{align*}
with $r_1=3$, $r_2=5$, $r_3=3$ if $\kappa>0$ and $r_1=\frac52$, $r_2=\frac72$, $r_3=2$ if $\kappa=0$.
The equations and boundary conditions in problem \eqref{CH} are satisfied a.e. in $\Omega\times (0,T)$ and on $\Gamma\times(0,T)$, respectively.
The initial conditions and the mass conservation \eqref{massconwe} hold as well. Besides, the following energy identity is satisfied
\begin{align}
&\frac{\d}{\d t} E(\phi(t), \psi(t))+\|\nabla\mu\|_{L^2(\Omega)}^2+\|\nabla_\Gamma\mu_\Gamma \|_{L^2(\Gamma)}^2=0, \quad \text{for a.e. } t\in (0,T).\label{BELa}
\end{align}
\end{definition}
\smallskip

We are in a position to state the main results of this paper. \smallskip

Our first result regards the global well-posedness of problem \eqref{CH}. We distinguish two
cases with surface diffusion ($\kappa>0$) or without surface diffusion ($\kappa=0$).

\begin{theorem}[Global weak/strong solutions for $\kappa>0$]
\label{main1}
Let $\kappa>0$, $T\in (0,+\infty)$, $\Omega\subset \mathbb{R}^d$ ($d=2,3$) be a bounded domain with smooth boundary $\Gamma$.

 (1) If assumptions (\textbf{A1})--(\textbf{A3}) are satisfied, then for any initial datum $(\phi_0, \psi_0)\in \mathcal{V}^1$, problem \eqref{CH} admits a unique global weak solution $(\phi, \psi)$ on $[0,T]$.

 (2) If assumptions (\textbf{A1}), (\textbf{A2}) are satisfied, besides, either  (\textbf{A3}) or (\textbf{A4}) is fulfilled,   then for any initial datum $(\phi_0, \psi_0)\in \mathcal{V}^3$,  problem \eqref{CH} admits a unique global strong solution $(\phi, \psi)$ on $[0,T]$.
\end{theorem}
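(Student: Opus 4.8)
The plan is to follow the viscous regularization scheme outlined in the introduction rather than a Galerkin or maximal‑regularity construction, since the genuinely fourth‑order surface equation coupled through $\partial_\mathbf{n}\phi$ breaks the Lopatinskii--Shapiro structure. For fixed $\alpha>0$ I would introduce the regularized chemical potentials $\mu^\alpha=-\Delta\phi+F'(\phi)+\alpha\phi_t$ and $\mu_\Gamma^\alpha=-\kappa\Delta_\Gamma\psi+\psi+\partial_\mathbf{n}\phi+G'(\psi)+\alpha\psi_t$, solve this viscous system locally via Lemma \ref{exLpa} and the contraction argument of Proposition \ref{exeACH}, derive bounds uniform in $\alpha$, and then send $\alpha\to0^+$. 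The engine of every estimate is the energy identity: testing $\phi_t=\Delta\mu$ by $\mu$, integrating by parts and using $\partial_\mathbf{n}\mu=0$ produces a boundary term $-\int_\Gamma\psi_t\,\partial_\mathbf{n}\phi\,\d S$, while testing $\psi_t=\Delta_\Gamma\mu_\Gamma$ by $\mu_\Gamma$ produces exactly the opposite term $+\int_\Gamma\psi_t\,\partial_\mathbf{n}\phi\,\d S$. These cancel, so the coupling drops out and one recovers \eqref{BELa}/\eqref{aBEL}. This cancellation is precisely the force‑balance structure from Section 2, and I would record it first as it is what makes the coupled bulk/surface problem energetically well‑behaved.

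Next I would turn the energy identity into uniform bounds. By (\textbf{A2}) the potentials $F,G$ are quadratic perturbations of the strictly convex $\widetilde F,\widetilde G$ of \eqref{tF}--\eqref{tG}, so together with the mass conservation \eqref{massconwe} and a Poincar\'e inequality the energy controls $(\phi,\psi)$ in $L^\infty(0,T;\mathds{V}^1_\kappa)$ and gives $\nabla\mu\in L^2(0,T;L^2(\Omega))$, $\nabla_\Gamma\mu_\Gamma\in L^2(0,T;L^2(\Gamma))$; the viscous contribution $\alpha(\|\phi_t\|^2+\|\psi_t\|^2)$ is nonnegative and is simply discarded. To upgrade $\mu,\mu_\Gamma$ to full $L^2H^1$ I control the means $\langle\mu\rangle_\Omega$, $\langle\mu_\Gamma\rangle_\Gamma$ by testing the chemical‑potential relations with constants and bounding $\int_\Omega F'(\phi)$, $\int_\Gamma G'(\psi)$ through the subcritical growth (\textbf{A3}) and the $\mathds{V}^1_\kappa$ bound; then $\phi_t\in L^2(0,T;(H^1(\Omega))^*)$ and $\psi_t\in L^2(0,T;(H^1(\Gamma))^*)$ follow from the weak formulations. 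The higher regularity $L^2(0,T;\mathds{V}^3_\kappa)$ comes from a bootstrap for the coupled elliptic system $-\Delta\phi=\mu-F'(\phi)$ in $\Omega$ with $\phi|_\Gamma=\psi$ and $-\kappa\Delta_\Gamma\psi+\partial_\mathbf{n}\phi=\mu_\Gamma-\psi-G'(\psi)$ on $\Gamma$: elliptic regularity for the Laplace--Beltrami operator gives $\psi\in H^3(\Gamma)$, which controls $\partial_\mathbf{n}\phi$ as Neumann data and feeds back into $\phi\in H^3(\Omega)$. Closing this loop is delicate, since the boundary datum for the bulk problem is itself governed by a surface PDE.

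With these $\alpha$‑independent bounds, the Aubin--Lions--Simon lemma yields strong convergence of $(\phi^\alpha,\psi^\alpha)$ in $C([0,T];\mathcal{H})\cap L^2(0,T;\mathds{V}^1_\kappa)$ along a subsequence, and (\textbf{A3}) with compact Sobolev embeddings lets me pass to the limit in $F'(\phi^\alpha)$, $G'(\psi^\alpha)$; the viscous terms vanish, identifying a weak solution, while \eqref{aenergyin} follows from weak lower semicontinuity. For uniqueness, take two weak solutions with equal data; the difference $(\overline\phi,\overline\psi)$ has zero bulk and surface mean, so I may test the bulk difference equation with $(A_\Omega^0)^{-1}\overline\phi$ and the surface one with $(A_\Gamma^0)^{-1}\overline\psi$. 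This produces $\tfrac12\tfrac{\d}{\d t}(\|\overline\phi\|^2_{(H^1(\Omega))^*_0}+\|\overline\psi\|^2_{(H^1(\Gamma))^*_0})$; the coupling term $\partial_\mathbf{n}\overline\phi$ again cancels between the two equations, the principal parts contribute $\|\nabla\overline\phi\|^2+\kappa\|\nabla_\Gamma\overline\psi\|^2+\|\overline\psi\|^2$, and the increments $F'(\phi_1)-F'(\phi_2)$, $G'(\psi_1)-G'(\psi_2)$ are handled by the one‑sided bounds $F''\geq-\widetilde C_F$, $G''\geq-\widetilde C_G$ from (\textbf{A2}) together with an interpolation of $L^2$ between the gradient and the dual norm, so Gronwall closes the argument.

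For part (2), with $(\phi_0,\psi_0)\in\mathcal{V}^3$ I would differentiate the system in time and estimate the linearized problem for $(\phi_t,\psi_t)$: testing by the inverse‑Laplacian‑weighted differences as above gives $(\phi_t,\psi_t)\in L^\infty(0,T;(H^1(\Omega))^*\times(H^1(\Gamma))^*)\cap L^2(0,T;\mathds{V}^1_\kappa)$, which through the elliptic bootstrap promotes $(\phi,\psi)$ to $L^\infty\mathds{V}^3_\kappa\cap L^2\mathds{V}^5_\kappa$ and $(\mu,\mu_\Gamma)$ to the stated regularity, yielding the energy identity \eqref{BELa}. Under the alternative assumption (\textbf{A4}) one drops the growth restriction of (\textbf{A3}): here the bulk nonlinearity is estimated by dominating $\widetilde F'(\phi)$ with $\widetilde G'(\psi)$ through the trace and the presence of $G'(\psi)$ in $\mu_\Gamma$, so that the surface dissipation controls the bulk term. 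I expect the main obstacle to be precisely the coupled bulk/surface elliptic bootstrap together with the time‑derivative estimate uniformly in $\alpha$: the normal derivative $\partial_\mathbf{n}\phi$ links a fourth‑order bulk operator to a fourth‑order surface operator, so no single standard elliptic or parabolic theory applies, and one must iterate between $\Omega$ and $\Gamma$ while keeping every constant independent of the regularization parameter.
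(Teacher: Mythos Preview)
Your proposal is correct and follows essentially the same route as the paper: viscous regularization via Proposition \ref{exeACH}, the energy identity with the $\partial_\mathbf{n}\phi$ cancellation, $\alpha$-uniform a priori estimates, Aubin--Lions compactness, and uniqueness by testing against $(A_\Omega^0)^{-1}\overline\phi$, $(A_\Gamma^0)^{-1}\overline\psi$. Two small points where the paper differs in execution: (i) Proposition \ref{exeACH} needs $\mathcal{V}^2$ data, so for weak solutions with $(\phi_0,\psi_0)\in\mathcal{V}^1$ the paper first approximates the initial datum by a sequence in $\mathcal{V}^3$ and passes to the limit jointly in $n$ and $\alpha$, rather than running the viscous scheme directly at low regularity; (ii) the ``delicate'' bulk/surface elliptic bootstrap you describe is in the paper handled in one shot by the coupled elliptic estimate of Lemma \ref{esLepH2} (from \cite{MZ05}), which treats $-\Delta\phi=h_1$ in $\Omega$ with $-\kappa\Delta_\Gamma\psi+\psi+\partial_\mathbf{n}\phi=h_2$ on $\Gamma$ simultaneously and avoids iterating between $\Omega$ and $\Gamma$.
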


\begin{theorem}[Global weak/strong solutions for $\kappa=0$]\label{main1a}
 Let $\kappa=0$, $T\in (0,+\infty)$ and assumptions (\textbf{A1})--(\textbf{A3}) be satisfied. Suppose that $\Omega\subset \mathbb{R}^d$ ($d=2,3$) is a bounded domain with smooth boundary $\Gamma$ satisfying the condition
 \begin{align}
 c_\mathcal{R}|\Gamma|^\frac12|\Omega|^{-1}<1,\label{geo}
 \end{align} where $c_\mathcal{R}>0$ is the constant given in the inverse trace theorem (see \eqref{cZ}).
 For any initial datum $(\phi_0, \psi_0)\in \mathcal{V}^1$ (resp. $(\phi_0, \psi_0)\in \mathcal{V}^3$), problem \eqref{CH} admits a unique global weak (resp. strong) solution $(\phi, \psi)$ on $[0,T]$.
\end{theorem}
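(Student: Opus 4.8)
The plan is to obtain the $\kappa=0$ solution as the \emph{vanishing surface-diffusion limit} of the solutions already produced by Theorem \ref{main1}. For each $\kappa\in(0,1]$ let $(\phi^\kappa,\psi^\kappa)$ be the unique global weak (resp. strong) solution of \eqref{CH} with diffusion coefficient $\kappa$ and the \emph{same} datum $(\phi_0,\psi_0)\in\mathcal{V}^1$ (resp. $\mathcal{V}^3$); this is legitimate because (\textbf{A1})--(\textbf{A3}) hold and $\psi_0=\phi_0|_\Gamma\in H^1(\Gamma)$, so that the $\kappa$-dependent initial energy
\[
E_\kappa(\phi_0,\psi_0)=\tfrac12\|\nabla\phi_0\|_{L^2(\Omega)}^2+\tfrac{\kappa}{2}\|\nabla_\Gamma\psi_0\|_{L^2(\Gamma)}^2+\tfrac12\|\psi_0\|_{L^2(\Gamma)}^2+\int_\Omega F(\phi_0)\,\d x+\int_\Gamma G(\psi_0)\,\d S
\]
stays bounded as $\kappa\to0^+$. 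The energy relation \eqref{BELa} (resp. \eqref{aenergyin}) then gives, uniformly in $\kappa$, control of $\phi^\kappa$ in $L^\infty(0,T;H^1(\Omega))$, of $\psi^\kappa$ in $L^\infty(0,T;L^2(\Gamma))$, of $\sqrt{\kappa}\,\nabla_\Gamma\psi^\kappa$ in $L^\infty(0,T;L^2(\Gamma))$, and of $\nabla\mu^\kappa$, $\nabla_\Gamma\mu_\Gamma^\kappa$ in $L^2(0,T;L^2)$; together with the two separately conserved means \eqref{massconwe} and Poincar\'e's inequality this bounds $\mu^\kappa,\mu_\Gamma^\kappa$ in $L^2(0,T;H^1)$ and, through \eqref{we1}--\eqref{we1a}, $(\phi^\kappa_t,\psi^\kappa_t)$ in $L^2(0,T;(H^1(\Omega))^*\times(H^1(\Gamma))^*)$.

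The heart of the matter, and the step I expect to be the main obstacle, is upgrading these to \emph{higher-order} bounds that are uniform in $\kappa$. For fixed $\kappa>0$ the surface diffusion $-\kappa\Delta_\Gamma\psi^\kappa$ by itself delivers $\psi^\kappa\in H^2(\Gamma)$, but that mechanism degenerates as $\kappa\to0^+$, so the regularity of the trace $\psi^\kappa=\phi^\kappa|_\Gamma$ must instead be recovered entirely from the bulk. I would run suitable higher-order testing of the pair $\mu^\kappa=-\Delta\phi^\kappa+F'(\phi^\kappa)$ and \eqref{we2} (differentiating the flow in time in the strong case); any such estimate produces a boundary integral in which $\mu^\kappa$ and the normal derivative $\partial_\mathbf{n}\phi^\kappa$ are coupled. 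Controlling $\partial_\mathbf{n}\phi^\kappa$ is the crux: substituting the surface equation for it, representing it through $\phi^\kappa$ via the generalized first Green's formula, and estimating the resulting boundary lift by the inverse trace theorem yields a contribution carrying the factor $c_\mathcal{R}|\Gamma|^{1/2}|\Omega|^{-1}$ in front of a top-order norm of $\phi^\kappa$. It is exactly here that the smallness hypothesis \eqref{geo} is indispensable: it makes this coefficient strictly less than $1$, so the offending term can be absorbed into the left-hand side. Elliptic regularity for the Dirichlet problem $-\Delta\phi^\kappa=\mu^\kappa-F'(\phi^\kappa)$, $\phi^\kappa|_\Gamma=\psi^\kappa$, then turns the gained $L^2$-control of $\Delta\phi^\kappa$ into a bound for $\phi^\kappa$ in $L^2(0,T;H^{5/2}(\Omega))$, hence for $\psi^\kappa$ in $L^2(0,T;H^2(\Gamma))$, \emph{independent of $\kappa$}; the subcritical growth (\textbf{A3}) (with $q=0$, so that $G'$ is globally Lipschitz) is used to dominate $F'(\phi^\kappa)$ and $G'(\psi^\kappa)$ by the energy bounds and interpolation. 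For strong solutions one iterates the scheme on the time-differentiated system to reach the indices $r_1=\frac52$, $r_2=\frac72$, $r_3=2$ of Definition \ref{defstrong}.

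With the $\kappa$-uniform bounds in hand I would extract, along a subsequence $\kappa\to0^+$, weak-$*$ limits $(\phi,\psi)$ and $(\mu,\mu_\Gamma)$, and invoke the Aubin--Lions--Simon lemma to get strong convergence of $\phi^\kappa$ in $L^2(0,T;H^s(\Omega))$ for every $s<\frac52$, whence $\psi^\kappa=\phi^\kappa|_\Gamma\to\psi=\phi|_\Gamma$ strongly on $\Gamma$. Strong convergence and the continuity of $F'$, $G'$ identify the nonlinear terms, while the uniform bound $\psi^\kappa\in L^2(0,T;H^2(\Gamma))$ forces $\kappa\Delta_\Gamma\psi^\kappa\to0$ in $L^2(0,T;L^2(\Gamma))$, so the limit satisfies \eqref{we2a}--\eqref{we2} with $\kappa=0$. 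Passing to the limit in \eqref{we1}--\eqref{we1a} is then routine, and lower semicontinuity yields the energy inequality \eqref{aenergyin}; this produces a global weak (resp. strong) solution of \eqref{CH} for $\kappa=0$.

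Finally, uniqueness I would establish directly on the $\kappa=0$ problem by the energy method rather than in the limit. Given two solutions with the same datum, their difference $(\bar\phi,\bar\psi)$ has zero bulk and surface mean by \eqref{massconwe}, so $(A_\Omega^0)^{-1}$ and $(A_\Gamma^0)^{-1}$ may be applied; testing the differences of \eqref{we1}--\eqref{we1a} with these inverse-Laplacian multipliers reduces the estimate to the $(H^1)^*$-type norms, the convex-perturbation structure from (\textbf{A2}) controls the potential differences, and the coupling through $\partial_\mathbf{n}\bar\phi$ is again handled by the generalized first Green's formula, where \eqref{geo} secures the required absorption. A Gronwall argument then closes the uniqueness and completes the proof.
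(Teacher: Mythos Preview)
Your overall strategy---take the solutions from Theorem~\ref{main1} with $\kappa\in(0,1]$ and pass to the limit $\kappa\to0^+$---matches the paper's (the paper actually lets $(\alpha,\kappa)\to(0^+,0^+)$ jointly, but this is immaterial).  However, you misplace the role of the geometric condition~\eqref{geo}, and this creates a real gap.

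You write that ``together with the two separately conserved means~\eqref{massconwe} and Poincar\'e's inequality this bounds $\mu^\kappa,\mu_\Gamma^\kappa$ in $L^2(0,T;H^1)$.''  This is false: Poincar\'e requires control of $\langle\mu^\kappa\rangle_\Omega$ and $\langle\mu_\Gamma^\kappa\rangle_\Gamma$, which are \emph{not} conserved.  Integrating $\mu^\kappa=-\Delta\phi^\kappa+F'(\phi^\kappa)$ over $\Omega$ gives $|\Omega|\langle\mu^\kappa\rangle_\Omega=-\int_\Gamma\partial_\mathbf{n}\phi^\kappa\,\d S+\int_\Omega F'(\phi^\kappa)\,\d x$, and the normal-derivative integral cannot be bounded $\kappa$-independently from the energy alone (the energy only gives $\psi^\kappa\in L^2(\Gamma)$, not enough for a trace argument on $\partial_\mathbf{n}\phi^\kappa$).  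This is exactly where~\eqref{geo} enters in the paper: one uses $-\Delta\phi^\kappa=\mu^\kappa-F'(\phi^\kappa)$ together with the generalized Green's formula~\eqref{cZ1} to get $|\int_\Gamma\partial_\mathbf{n}\phi^\kappa|\le c_\mathcal{R}|\Gamma|^{1/2}(\|\nabla\phi^\kappa\|_{L^2(\Omega)}+\|\mu^\kappa\|_{(H^1(\Omega))^*}+\ldots)$, and since $\|\mu^\kappa\|_{(H^1(\Omega))^*}$ itself contains $|\langle\mu^\kappa\rangle_\Omega|$, the resulting self-referential inequality is $|\Omega|\,|\langle\mu^\kappa\rangle_\Omega|\le c_\mathcal{R}|\Gamma|^{1/2}|\langle\mu^\kappa\rangle_\Omega|+\text{(controlled terms)}$.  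Condition~\eqref{geo} is what allows absorption here, yielding~\eqref{memu1b} and then~\eqref{memug1b}.  So the smallness hypothesis is needed \emph{before} any higher-order estimate, precisely to get the $H^1$-bounds on the chemical potentials that you assumed for free; your later placement of it inside a ``higher-order testing'' step is off target.  Once $\mu^\kappa,\mu_\Gamma^\kappa\in L^2(0,T;H^1)$ is in hand, the $\kappa$-uniform spatial regularity of $\phi^\kappa$ follows from elliptic estimates for the \emph{Robin} problem $-\Delta\phi^\kappa=\mu^\kappa-F'(\phi^\kappa)$, $\partial_\mathbf{n}\phi^\kappa+\psi^\kappa=\mu_\Gamma^\kappa-G'(\psi^\kappa)+\kappa\Delta_\Gamma\psi^\kappa$ (cf.~\eqref{CHEPk}), not a Dirichlet problem---the Dirichlet datum $\psi^\kappa$ is only in $L^\infty_tL^2_x$ uniformly.

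Your uniqueness sketch also overcomplicates matters: testing the difference equations with $(A_\Omega^0)^{-1}\bar\phi$ and $(A_\Gamma^0)^{-1}\bar\psi$ and inserting the expressions for $\bar\mu,\bar\mu_\Gamma$, the boundary term $\int_\Gamma\partial_\mathbf{n}\bar\phi\,\bar\psi\,\d S$ appears once with each sign (from integrating $-\Delta\bar\phi$ by parts in the bulk, and directly in $\bar\mu_\Gamma$) and \emph{cancels exactly}.  No absorption via~\eqref{geo} is needed there; the Gronwall argument closes from~(\textbf{A2}) alone, identically to the $\kappa>0$ case.
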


\begin{remark}
(1) When $\kappa=0$, the results are less satisfactory since the requirement on the function space for initial datum is slightly stronger than the solution itself and an additional geometric assumption \eqref{geo} on the domain is needed. This is mainly due to some technical difficulties when we try to obtain uniform estimates independent of $\kappa$ (cf. Section 5).

(2) In our current system, we were addressing the situations when the physics on the boundary is allowed to be somewhat independent to those in the bulk and the resulting dynamics happen in different spatial scales. Thus the usual isoperimetric relations or scaling (from the bulk, of the whole domain) may not hold anymore. Nevertheless, by a simple scaling argument, we see that typical domains like a ball (in $3D$) or a circle (in $2D$) with a sufficiently large radius $R$ are admissible to the condition \eqref{geo}.
\end{remark}
\begin{remark}
 In a very recent preprint \cite{GK18}, the authors proved the existence and uniqueness of global weak solutions to problem \eqref{CH} without using the assumption \eqref{geo} when $\kappa=0$.
 We note that the notion of weak solution introduced in \cite[Definition 2]{GK18} is weaker than that in our Definition \ref{defweak}.
 Instead of the pointwise expressions \eqref{we2a}--\eqref{we2} for the chemical potentials $(\mu, \mu_\Gamma)$, due to the lower spatial regularity on $(\phi, \psi)$ obtained in \cite{GK18}, there the pair $(\mu, \mu_\Gamma)$ only satisfies a suitable weak formulation, which warrants the uniqueness of $(\mu, \mu_\Gamma)$ up to some constants.
\end{remark}
Our second result concerns the long-time behavior of problem \eqref{CH}.
The following additional assumption on the potentials is required:
\begin{itemize}
\item[(\textbf{AN})] $F$ and $G$ are real analytic functions on $\mathbb{R}$.
\end{itemize}

\begin{theorem}[Uniqueness of asymptotic limit as $t\to+\infty$]
\label{main2}
Suppose that the assumptions of Theorem \ref{main1} or Theorem \ref{main1a} are satisfied. In addition, we assume (\textbf{AN}).
Then for any initial datum $(\phi_0, \psi_0)\in \mathcal{V}^1$ (or $\in \mathcal{V}^3$), the corresponding global weak (or strong) solution $(\phi, \psi)$ of problem \eqref{CH} converges to a single equilibrium $(\phi_\infty, \psi_\infty)$ as time goes to infinity such that
\begin{align}
\lim_{t\to+\infty}\|(\phi(t), \psi(t))-(\phi_\infty, \psi_\infty)\|_{\mathds{V}^{r-\epsilon}_\kappa}=0,
\label{conv1}
\end{align}
for any $\epsilon\in (0,1)$, where $r=3$ if $\kappa>0$, $r=\frac52$ if $\kappa=0$, $(\phi_\infty, \psi_\infty)\in \mathds{V}_\kappa^r$ satisfies the following nonlinear nonlocal elliptic problem
\begin{equation}
\left\{
\begin{aligned}
&-\Delta \phi_\infty+F'(\phi_\infty)=\lambda_1,
&\text{in}\ \Omega, \\
&-\kappa\Delta_\Gamma \psi_\infty +  \psi_\infty +\partial_\mathbf{n}\phi_\infty + G'(\psi_\infty)=\lambda_2,
&\text{on}\ \Gamma,\\
&\quad \text{with}\ \ \langle \phi_\infty\rangle_\Omega = \langle \phi_0\rangle_\Omega,\quad \langle \psi_\infty\rangle_\Gamma = \langle \psi_0\rangle_\Gamma,
\end{aligned}
\right.
\label{sta}
\end{equation}
with the constants $\lambda_1, \lambda_2$ given by
\begin{equation}
\left\{
\begin{aligned}
&\lambda_1=-|\Omega|^{-1}|\Gamma|\langle \partial_\mathbf{n}\phi_\infty\rangle_\Gamma+ \langle F'(\phi_\infty)\rangle_\Omega,\\
&\lambda_2=\langle \partial_\mathbf{n}\phi_\infty\rangle_\Gamma+\langle\psi_\infty\rangle_\Gamma+\langle G'(\psi_\infty)\rangle_\Gamma.
\end{aligned}
\right.
\label{staL}
\end{equation}
Moreover, we have the following estimate on convergence rate
\begin{align}
\|(\phi(t), \psi(t))-(\phi_\infty, \psi_\infty)\|_{\mathds{V}_\kappa^1}\leq C(1+t)^{-\frac{\theta}{1-2\theta}},\quad \forall\, t\geq 0,\label{convrate}
\end{align}
where the constant $\theta\in (0, \frac12)$ depends on $(\phi_\infty, \psi_\infty)$ and $C$ is a positive constant depending on $\|(\phi_0, \psi_0)\|_{\mathcal{V}^1}$, $\|(\phi_\infty, \psi_\infty)\|_{\mathds{V}_\kappa^1}$, $\Omega$, $\Gamma$ and $\kappa$.
\end{theorem}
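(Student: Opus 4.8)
The plan is to run the Łojasiewicz--Simon approach, which is designed precisely for gradient-flow systems whose equilibrium set may be a continuum. As a preliminary step I would record that, by the energy identity \eqref{BELa} (resp. the energy inequality \eqref{aenergyin}) together with the lower bound in (\textbf{A2}), the map $t\mapsto E(\phi(t),\psi(t))$ is non-increasing and bounded below, hence converges to some $E_\infty$, and the total dissipation $\int_0^{+\infty}\big(\|\nabla\mu\|_{L^2(\Omega)}^2+\|\nabla_\Gamma\mu_\Gamma\|_{L^2(\Gamma)}^2\big)\,\d t$ is finite. Combining this with the uniform-in-time bounds in $\mathds{V}^r_\kappa$ from Sections 5--6 and the compact embedding $\mathds{V}^r_\kappa\hookrightarrow\mathds{V}^{r-\epsilon}_\kappa$, the trajectory is precompact, so the $\omega$-limit set is non-empty, compact and connected. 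Along any sequence $t_n\to+\infty$ the integrability of the dissipation forces $\nabla\mu(t_n)\to0$ and $\nabla_\Gamma\mu_\Gamma(t_n)\to0$, so $\mu,\mu_\Gamma$ tend to constants; passing to the limit in \eqref{we2a}--\eqref{we2} identifies every $\omega$-limit point as a solution of the stationary problem \eqref{sta}, and integrating the two equations over $\Omega$ and $\Gamma$ (using $\int_\Gamma\Delta_\Gamma\psi_\infty\,\d S=0$ and the mass constraints \eqref{massconwe}) yields exactly the formulas \eqref{staL} for $\lambda_1,\lambda_2$.

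The heart of the proof is a Łojasiewicz--Simon inequality for $E$ near a fixed equilibrium $(\phi_\infty,\psi_\infty)$ in the $\omega$-limit set. A short computation shows that, on the constraint manifold $\{\langle\phi\rangle_\Omega=\langle\phi_0\rangle_\Omega,\ \langle\psi\rangle_\Gamma=\langle\psi_0\rangle_\Gamma\}$, the $\mathcal{H}$-gradient of $E$ is exactly $(\mu,\mu_\Gamma)$, with $\mu_\Gamma$ carrying the bulk--surface coupling through $\partial_\mathbf{n}\phi$. I would therefore aim to prove that there exist $\theta\in(0,\tfrac12)$, $C>0$ and a $\mathds{V}^1_\kappa$-neighbourhood $\mathcal{U}$ of $(\phi_\infty,\psi_\infty)$ such that
\begin{align}
|E(\phi,\psi)-E(\phi_\infty,\psi_\infty)|^{1-\theta}
\le C\big\|\big(\mu-\langle\mu\rangle_\Omega,\ \mu_\Gamma-\langle\mu_\Gamma\rangle_\Gamma\big)\big\|_{\mathcal{H}},
\quad \forall\,(\phi,\psi)\in\mathcal{U}.\nonumber
\end{align}
By assumption (\textbf{AN}) the potentials $F,G$ are real analytic, so $E$ is an analytic functional on $\mathds{V}^1_\kappa$; the linearization of the gradient at $(\phi_\infty,\psi_\infty)$ is the map $(\xi,\eta)\mapsto\big(-\Delta\xi+F''(\phi_\infty)\xi,\ -\kappa\Delta_\Gamma\eta+\eta+\partial_\mathbf{n}\xi+G''(\psi_\infty)\eta\big)$ with $\eta=\xi|_\Gamma$, which is a compact (lower-order) perturbation of the elliptic, self-adjoint principal bulk--surface operator and hence Fredholm of index zero on the mean-zero subspace. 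This places us within the abstract analytic Łojasiewicz--Simon framework (cf. \cite{S83} and the references cited in the Introduction), which then delivers the inequality.

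With the inequality in hand, convergence follows from the now-classical energy argument. Since $\|\phi_t\|_{(H^1(\Omega))^*}\lesssim\|\nabla\mu\|_{L^2(\Omega)}$ and $\|\psi_t\|_{(H^1(\Gamma))^*}\lesssim\|\nabla_\Gamma\mu_\Gamma\|_{L^2(\Gamma)}$, differentiating $(E(t)-E_\infty)^\theta$ and using \eqref{BELa} together with the Łojasiewicz--Simon inequality gives, on any interval where the trajectory remains in $\mathcal{U}$,
\begin{align}
-\frac{\d}{\d t}\big(E(t)-E_\infty\big)^\theta
\ge c\,\big\|(\phi_t,\psi_t)\big\|_{(H^1(\Omega))^*\times(H^1(\Gamma))^*}.\nonumber
\end{align}
A standard continuation argument (once the trajectory enters $\mathcal{U}$ near an $\omega$-limit point it cannot leave) then shows $(\phi_t,\psi_t)$ is integrable in time, so the trajectory has finite length and converges to the \emph{single} limit $(\phi_\infty,\psi_\infty)$; precompactness in $\mathds{V}^{r-\epsilon}_\kappa$ together with uniqueness of the limit upgrades this to convergence in the topology of $\mathds{V}^{r-\epsilon}_\kappa$, giving \eqref{conv1}. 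For the rate, the Łojasiewicz inequality combined with $\frac{\d}{\d t}(E-E_\infty)=-(\|\nabla\mu\|_{L^2(\Omega)}^2+\|\nabla_\Gamma\mu_\Gamma\|_{L^2(\Gamma)}^2)$ yields an ODE comparison $y'\le -c\,y^{2(1-\theta)}$ for $y=E-E_\infty$, whence $y(t)\lesssim(1+t)^{-1/(1-2\theta)}$; integrating the velocity bound from $t$ to $+\infty$ and invoking the uniform regularity bounds then converts this into the stated decay \eqref{convrate} in $\mathds{V}^1_\kappa$.

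I expect the main obstacle to be the Łojasiewicz--Simon inequality itself, specifically verifying the Fredholm property of the linearized bulk--surface operator and setting up the functional-analytic framework in which $E$ is analytic and its gradient is a well-defined map into the appropriate dual space. The genuine coupling between the bulk and the surface equations through the trace constraint $\psi=\phi|_\Gamma$ and the normal-derivative term $\partial_\mathbf{n}\phi$, the presence of two independent mass constraints, and the need to handle the two distinct scales $\kappa>0$ (where $\eta\in H^1(\Gamma)$ is available) and $\kappa=0$ (where only trace regularity survives) make this step considerably more delicate than in the single-domain case.
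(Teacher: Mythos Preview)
Your proposal is correct and follows essentially the same \L ojasiewicz--Simon strategy as the paper: precompactness of the trajectory, characterization of the $\omega$-limit set as equilibria, the gradient inequality for the analytic energy $E$ on the mass-constrained manifold, the derivative estimate $-\frac{\d}{\d t}(E-E_\infty)^\theta \geq c\|(\phi_t,\psi_t)\|_{(H^1(\Omega))^*\times(H^1(\Gamma))^*}$, and the ODE comparison for the rate. The only cosmetic difference is that the paper handles localization by a finite covering of the compact $\omega$-limit set (following \cite{HR99}) rather than your single-neighborhood trapping argument, and states the \L ojasiewicz--Simon inequality with $\|E'(\phi,\psi)\|_{(\mathds{V}^1_\kappa)^*}$ on the right instead of the $\mathcal{H}$-norm of the mean-free chemical potentials; both choices are standard and interchangeable via Poincar\'e's inequality and the estimate \eqref{ls2}.
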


\begin{remark}
Combining the smoothing effect of solutions to problem \eqref{CH} (see e.g., Lemma \ref{comp}) with the energy method in \cite{WH07}, the same estimate on convergence rate as \eqref{convrate} can be obtained in higher-order spaces $\mathds{V}_\kappa^m$, for all $m\in \mathbb{N}$, namely,
\begin{align}
\|(\phi(t), \psi(t))-(\phi_\infty, \psi_\infty)\|_{\mathds{V}_\kappa^m}\leq C(1+t)^{-\frac{\theta}{1-2\theta}},\quad \forall\, t\geq 1,\nonumber
\end{align}
where
$C$ is a positive constant depending on $\|(\phi_0, \psi_0)\|_{\mathcal{V}^1}$, $\|(\phi_\infty, \psi_\infty)\|_{\mathds{V}_\kappa^m}$, $\Omega$, $\Gamma$ and $\kappa$.
\end{remark}

Finally, for any given $\kappa>0$, we are able to characterize the stability for local energy minimizers $(\phi^*, \psi^*)$ of $E(\phi, \psi)$ over the set
\begin{align}
\mathcal{K}_1=\{(\phi, \psi)\in \mathcal{V}^1:\ \langle\phi\rangle_\Omega=\langle \hat{\phi}\rangle_\Omega,
\quad \langle\psi\rangle_\Gamma=\langle \hat{\psi}\rangle_\Gamma\},\label{K1}
\end{align}
where the pair $(\hat{\phi}, \hat{\psi})\in \mathcal{V}^1$ is arbitrary but fixed. Namely, $(\phi^*, \psi^*)\in \mathcal{K}_1$ and
$$E(\phi^*, \psi^*)=\inf\{E(\phi, \psi):\ (\phi, \psi)\in \mathcal{K}_1\cap \mathbf{B}_{\mathcal{V}^1}((\phi^*, \psi^*); \sigma)\},\quad \text{for some}\ \sigma>0.$$
Here, $\mathbf{B}_{\mathcal{V}^1}((\phi, \psi); \sigma)$ denotes the open ball in $\mathcal{V}^1$ with radius $\sigma$ centered at $(\phi, \psi)$.

\begin{theorem}[Stability criterion]
\label{main3}
Suppose that $\kappa>0$, $\Omega\subset \mathbb{R}^d$ ($d=2,3$) is a bounded domain with smooth boundary $\Gamma$, moreover, the assumptions (\textbf{A2}), (\textbf{A3}) and (\textbf{AN}) are satisfied.

(1) Let $(\phi^*, \psi^*)\in \mathcal{K}_1$ be a local energy minimizer of $E(\phi, \psi)$ over the set $\mathcal{K}_1$. Then $(\phi^*, \psi^*)$ is Lyapunov stable. Namely, for any $\epsilon>0$, there exists $\delta\in (0,\sigma)$ such that if the initial datum $(\phi_0, \psi_0)\in \mathcal{K}_1$ satisfies $\|(\phi_0, \psi_0)-(\phi^*, \psi^*)\|_{\mathcal{V}^1}<\delta$, then the global weak solution $(\phi, \psi)$ to problem \eqref{CH} (cf. Theorem \ref{main1}) satisfies $$\|(\phi(t), \psi(t))-(\phi^*, \psi^*)\|_{\mathcal{V}^1}<\epsilon,\quad \forall\,t\geq 0.$$

(2) Let $(\phi^*, \psi^*)\in \mathcal{K}_1$ be a stationary point that is a weak solution of the stationary problem \eqref{sta}--\eqref{staL} (where the mass constraints are changed corresponding to the definition of the set $\mathcal{K}_1$). If $(\phi^*, \psi^*)$ does not attain any local minimum of $E(\phi, \psi)$ over $\mathcal{K}_1$, then $(\phi^*, \psi^*)$ is not Lyapunov stable.
\end{theorem}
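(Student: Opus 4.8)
The plan is to base both assertions on the \L ojasiewicz--Simon inequality established in the course of proving Theorem \ref{main2}, combined with the energy dissipation structure of the flow. The essential analytic input is that, in a suitable $\mathcal{V}^1$-neighborhood of $(\phi^*,\psi^*)$ restricted to the constraint set $\mathcal{K}_1$, there exist $C>0$ and $\theta\in(0,\tfrac12)$ with
$$
|E(\phi,\psi)-E(\phi^*,\psi^*)|^{1-\theta}\leq C\big(\|\mu-\langle\mu\rangle_\Omega\|_{L^2(\Omega)}+\|\mu_\Gamma-\langle\mu_\Gamma\rangle_\Gamma\|_{L^2(\Gamma)}\big),
$$
where $\mu,\mu_\Gamma$ are the chemical potentials associated with $(\phi,\psi)$. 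Applying the Poincar\'e inequality to the zero-mean parts, the right-hand side is dominated by $\mathcal{D}^{1/2}$, where $\mathcal{D}=\|\nabla\mu\|_{L^2(\Omega)}^2+\|\nabla_\Gamma\mu_\Gamma\|_{L^2(\Gamma)}^2$ is exactly the dissipation in \eqref{BELa}. Hence along any trajectory staying in the neighborhood one has $\mathcal{D}\geq c\,|E-E(\phi^*,\psi^*)|^{2(1-\theta)}$, which is the workhorse for both parts.

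For part (1) I would argue by contradiction. If $(\phi^*,\psi^*)$ were not stable, there would be $\epsilon_0\in(0,\sigma)$ and initial data $(\phi_0^n,\psi_0^n)\to(\phi^*,\psi^*)$ in $\mathcal{V}^1$ whose solutions first reach $\mathcal{V}^1$-distance $\epsilon_0$ from $(\phi^*,\psi^*)$ at times $t_n$. On $[0,t_n)$ the solution remains in the $\epsilon_0$-ball, hence in the \L ojasiewicz neighborhood, and by local minimality $E\geq E(\phi^*,\psi^*)$ there. Differentiating $(E-E(\phi^*,\psi^*))^\theta$ and using $\frac{\d}{\d t}E=-\mathcal{D}$ together with the lower bound on $\mathcal{D}$ and $\|\phi_t\|_{(H^1(\Omega))^*}+\|\psi_t\|_{(H^1(\Gamma))^*}\leq C\mathcal{D}^{1/2}$ yields the arc-length estimate
$$
\int_0^{t_n}\big(\|\phi_t\|_{(H^1(\Omega))^*}+\|\psi_t\|_{(H^1(\Gamma))^*}\big)\,\d\tau\leq C\big(E(\phi_0^n,\psi_0^n)-E(\phi^*,\psi^*)\big)^\theta\to 0,
$$
so $(\phi^n(t_n),\psi^n(t_n))\to(\phi^*,\psi^*)$ in the dual norm. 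To upgrade to $\mathcal{V}^1$, note that the energy bound keeps $(\phi^n(t_n),\psi^n(t_n))$ bounded in $\mathcal{V}^1$, hence weakly convergent to $(\phi^*,\psi^*)$, while $E(\phi^n(t_n),\psi^n(t_n))\to E(\phi^*,\psi^*)$; the potential parts of $E$ converge by the compact Sobolev embeddings under (\textbf{A3}), so the $\mathds{V}^1_\kappa$-norms converge, and weak convergence plus norm convergence gives strong $\mathcal{V}^1$-convergence, contradicting $\|(\phi^n(t_n),\psi^n(t_n))-(\phi^*,\psi^*)\|_{\mathcal{V}^1}=\epsilon_0$.

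For part (2) I would again argue by contradiction, assuming $(\phi^*,\psi^*)$ is stable and applying the definition with $\epsilon=\sigma$ to produce $\delta>0$. Since $(\phi^*,\psi^*)$ is not a local minimizer, I can choose $(\phi_0,\psi_0)\in\mathcal{K}_1$ with $\|(\phi_0,\psi_0)-(\phi^*,\psi^*)\|_{\mathcal{V}^1}<\delta$ and $E(\phi_0,\psi_0)<E(\phi^*,\psi^*)$. By stability the solution stays in the $\sigma$-neighborhood for all time, so the \L ojasiewicz inequality holds throughout; meanwhile \eqref{aenergyin} gives $E(\phi(t),\psi(t))\leq E(\phi_0,\psi_0)$, so that $E(\phi^*,\psi^*)-E(\phi(t),\psi(t))\geq d_0:=E(\phi^*,\psi^*)-E(\phi_0,\psi_0)>0$ for all $t$. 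The lower bound on $\mathcal{D}$ then forces $-\frac{\d}{\d t}E=\mathcal{D}\geq c\,d_0^{2(1-\theta)}>0$, hence $E(\phi(t),\psi(t))\to-\infty$, contradicting the lower bound on $E$ guaranteed by (\textbf{A2}). Therefore $(\phi^*,\psi^*)$ is not Lyapunov stable.

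The main obstacle is the constrained \L ojasiewicz--Simon inequality centered at the possibly non-isolated critical point $(\phi^*,\psi^*)$: since $E$ couples bulk and surface through the trace and the normal-derivative term $\partial_\mathbf{n}\phi$, the relevant linearization is a nonlocal bulk--surface operator incorporating the mass constraints of $\mathcal{K}_1$, and one must verify its Fredholm and analyticity properties before invoking the abstract theorem. A secondary difficulty is that stability is measured in $\mathcal{V}^1$ while the arc-length estimate only controls the weaker dual norm; the upgrade relies on the smoothing effect (Lemma \ref{comp}), which makes the solution regular enough for $t>0$ to apply the \L ojasiewicz inequality and the energy identity, together with the compactness argument that converts weak plus norm convergence into strong $\mathcal{V}^1$-convergence.
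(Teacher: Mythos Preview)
Your proposal is correct for both parts but takes genuinely different routes from the paper, and the differences are worth noting.

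\textbf{Part (1).} Both arguments rest on the arc-length bound
\[
\int_0^{t}\big(\|\phi_t\|_{(H^1(\Omega))^*}+\|\psi_t\|_{(H^1(\Gamma))^*}\big)\,\d\tau\leq C\big(E(\phi_0,\psi_0)-E(\phi^*,\psi^*)\big)^\theta,
\]
valid while the trajectory stays in the \L ojasiewicz neighborhood. The divergence is in how one upgrades the dual-norm control to $\mathcal{V}^1$. You use a compactness argument: energy boundedness gives weak $\mathcal{V}^1$-convergence, while $E(\phi^n(t_n),\psi^n(t_n))\to E(\phi^*,\psi^*)$ (sandwich between local minimality and energy dissipation) forces convergence of the quadratic part of $E$, hence norm convergence, hence strong convergence. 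The paper instead argues constructively: it fixes a positive ``warm-up'' time $t_0$ (uniform over nearby initial data) after which the smoothing estimate of Lemma~\ref{comp} gives a $\mathcal{V}^3$-bound, and then uses the interpolation $\|\cdot\|_{\mathcal{V}^1}\leq C\|\cdot\|_{\mathcal{V}^3}^{1/2}\|\cdot\|_{(\mathcal{V}^1)^*}^{1/2}$ directly. The paper's route is more quantitative (one could read off an explicit $\delta(\epsilon)$), while yours is softer and avoids the two-time-scale bookkeeping; both are valid.

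\textbf{Part (2).} Here the contrast is sharper. You obtain a contradiction by a direct energy argument: if the trajectory remains in the \L ojasiewicz neighborhood with $E<E(\phi^*,\psi^*)-d_0$, then $\mathcal{D}\geq c\,d_0^{2(1-\theta)}>0$ forever, so $E\to-\infty$, contradicting (\textbf{A2}). The paper instead invokes Theorem~\ref{main2}: each nearby trajectory converges to some equilibrium $(\phi_\infty^n,\psi_\infty^n)$ which, by stability, lies in the \L ojasiewicz ball of $(\phi^*,\psi^*)$; since $E'(\phi_\infty^n,\psi_\infty^n)=0$, the \L ojasiewicz inequality forces $E(\phi_\infty^n,\psi_\infty^n)=E(\phi^*,\psi^*)$, contradicting $E(\phi_\infty^n,\psi_\infty^n)\leq E(\phi_0^n,\psi_0^n)<E(\phi^*,\psi^*)$. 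Your argument is more elementary because it does not use the convergence-to-equilibrium result at all; the paper's argument, on the other hand, generalizes immediately to settings where one only has an energy inequality rather than the identity $\tfrac{\d}{\d t}E=-\mathcal{D}$.

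One small correction: in Part (2) you write ``applying the definition with $\epsilon=\sigma$'', but $\sigma$ is the local-minimizer radius from Part (1) and has no meaning here; you should take $\epsilon$ equal to (or smaller than) the \L ojasiewicz radius $\beta$ of Lemma~\ref{LS} at $(\phi^*,\psi^*)$.
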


\begin{remark}
Theorem \ref{main2} implies the long-time stabilization of problem \eqref{CH} for arbitrary large initial datum, while Theorem \ref{main3} provides a stability criterion for steady states that are allowed to be non-isolated. Furthermore, we easily infer from Theorems \ref{main2}, \ref{main3} that any isolated local energy minimizer of $E(\phi, \psi)$ is indeed (locally) asymptotic stable.
It remains an open question whether a stability result similar to Theorem \ref{main3} holds for the case $\kappa=0$.
\end{remark}

\section{A Regularized Problem: the Cahn--Hilliard Equation with Viscous Terms}
\setcounter{equation}{0}

In this section, we study an approximating problem for the original Cahn--Hilliard equation \eqref{CH} with surface diffusion (i.e., $\kappa>0$) by adding artificial viscous terms in both equations for $\phi$ and $\psi$.
More precisely, for any given $\alpha\in (0,1]$ and $\kappa>0$, we consider the following regularized problem:
\begin{equation}
\left\{
\begin{aligned}
&\phi_t^\alpha=\Delta \mu^\alpha,
\quad \text{with}\ \ \mu^\alpha=-\Delta \phi^\alpha +\alpha \phi_t^\alpha +F'(\phi^\alpha),
&\text{in}\  \Omega\times (0,T),\\
&\partial_\mathbf{n} \mu^\alpha=0,
&\text{on}\ \Gamma\times (0,T),\\
&\phi^\alpha|_\Gamma=\psi^\alpha,
&\text{on}\ \Gamma\times (0,T),\\
&\psi_t^\alpha=\Delta_\Gamma\mu_\Gamma^\alpha,
&\text{on}\ \Gamma\times (0,T),\\
&\quad \text{with}\ \ \mu_\Gamma^\alpha= -\kappa\Delta_\Gamma \psi^\alpha +  \psi^\alpha+ \alpha\psi^\alpha_t+\partial_\mathbf{n}\phi^\alpha+ G'(\psi^\alpha),
&\text{on}\ \Gamma\times (0,T),\\
&\phi^\alpha|_{t=0}=\phi_0(x),
&\text{in}\ \Omega,\\
&\psi^\alpha|_{t=0}=\psi_0(x):=\phi_0(x)|_{\Gamma},
&\text{on}\ \Gamma.
\end{aligned}
\right.
\label{ACH}
\end{equation}

\subsection{Results on related linear systems}
Below we report technical tools related to some specific linear elliptic and parabolic problems, which will be necessary in the subsequent analysis.

First, we recall the following regularity result for a linear elliptic boundary value problem (see, for instance, \cite[Corollary A.1]{MZ05}).
\begin{lemma}\label{esLepH2}
Let $\kappa>0$, $\Omega\subset \mathbb{R}^d$ ($d=2,3$) be a bounded domain with smooth boundary $\Gamma$. Consider the following linear elliptic problem
\begin{equation}
\left\{
\begin{aligned}
&-\Delta \phi=h_1,&\text{in}\ \Omega,\\
&\phi|_\Gamma=\psi,&\text{on}\ \Gamma,\\
&-\kappa\Delta_\Gamma \psi+\psi+\partial_\mathbf{n}\phi=h_2, &\text{on}\ \Gamma,\\
\end{aligned}
\right.
\label{Lep}
\end{equation}
where $(h_1, h_2)\in H^s(\Omega)\times H^s(\Gamma)$ for any $s\geq 0$ and $s+\frac12\notin \mathbb{N}$.
Then every solution $(\phi, \psi)$ to problem \eqref{Lep} satisfies the following estimate
\begin{align}
\|\phi\|_{H^{s+2}(\Omega)}+\|\psi\|_{H^{s+2}(\Gamma)}\leq C(\|h_1\|_{H^s(\Omega)}+\|h_2\|_{H^s(\Gamma)}),
\label{esLeph2}
\end{align}
for some constant $C>0$ that may depend on $\kappa$, $s$, $\Omega$ and $\Gamma$, but is independent of the solution $(\phi, \psi)$.
\end{lemma}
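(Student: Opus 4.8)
The plan is to read \eqref{Lep} as a second-order elliptic equation in $\Omega$ coupled to a second-order elliptic equation on the closed manifold $\Gamma$ through the Dirichlet--Neumann map, crucially exploiting that for $\kappa>0$ the surface diffusion term $-\kappa\Delta_\Gamma\psi$ is the \emph{leading-order} operator on the boundary. As a preliminary I would note that the associated bilinear form
\begin{align}
\mathcal{B}\big((\phi,\psi),(\zeta,\eta)\big)=\int_\Omega \nabla\phi\cdot\nabla\zeta\,\d x+\int_\Gamma\big(\kappa\,\nabla_\Gamma\psi\cdot\nabla_\Gamma\eta+\psi\eta\big)\,\d S \nonumber
\end{align}
equals $\|(\phi,\psi)\|_{\mathds{V}_\kappa^1}^2$ on the diagonal, hence is continuous and coercive on $\mathcal{V}^1=\mathds{V}_\kappa^1$ (see \eqref{V1no}); by Lax--Milgram \eqref{Lep} has a unique weak solution depending continuously on the data, so the task reduces to the a priori bound \eqref{esLeph2}.

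For the regularity gain I would decouple the system. Split $\phi=\phi_1+\phi_2$, where $\phi_1$ solves the inhomogeneous Dirichlet problem $-\Delta\phi_1=h_1$ in $\Omega$, $\phi_1|_\Gamma=0$, and $\phi_2$ is the harmonic extension of $\psi$, i.e. $-\Delta\phi_2=0$ in $\Omega$, $\phi_2|_\Gamma=\psi$. Classical elliptic regularity for the Dirichlet Laplacian on the smooth domain $\Omega$ gives $\phi_1\in H^{s+2}(\Omega)$ with $\|\phi_1\|_{H^{s+2}(\Omega)}\le C\|h_1\|_{H^s(\Omega)}$, and the trace theorem then yields $\partial_\mathbf{n}\phi_1\in H^{s+\frac12}(\Gamma)$. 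The Dirichlet--Neumann operator $\Lambda:\psi\mapsto\partial_\mathbf{n}\phi_2$ is a classical first-order, nonnegative, self-adjoint pseudodifferential operator on $\Gamma$ (principal symbol $|\xi|_g$), so $\Lambda\in\mathcal{L}(H^{t}(\Gamma);H^{t-1}(\Gamma))$ for every $t$.

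Inserting $\partial_\mathbf{n}\phi=\partial_\mathbf{n}\phi_1+\Lambda\psi$ into the third line of \eqref{Lep} reduces the boundary equation to the scalar problem on $\Gamma$
\begin{align}
P_\kappa\psi:=\big(-\kappa\Delta_\Gamma+I+\Lambda\big)\psi=h_2-\partial_\mathbf{n}\phi_1=:\widetilde{h}_2. \nonumber
\end{align}
Since $\kappa>0$, $P_\kappa$ is a second-order elliptic pseudodifferential operator with leading symbol $\kappa|\xi|_g^2$; being positive and self-adjoint it is an isomorphism $H^{t+2}(\Gamma)\to H^t(\Gamma)$ for all $t$ (equivalently, one inverts it via the functional calculus of $A_\Gamma$, treating $\Lambda$ as a lower-order perturbation). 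As $\widetilde{h}_2\in H^s(\Gamma)$ (using $\partial_\mathbf{n}\phi_1\in H^{s+\frac12}(\Gamma)\hookrightarrow H^s(\Gamma)$), this gives $\psi\in H^{s+2}(\Gamma)$ with $\|\psi\|_{H^{s+2}(\Gamma)}\le C(\|h_1\|_{H^s(\Omega)}+\|h_2\|_{H^s(\Gamma)})$.

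Finally, with $\psi\in H^{s+2}(\Gamma)\hookrightarrow H^{s+\frac32}(\Gamma)$ I would solve the Dirichlet problem $-\Delta\phi=h_1$, $\phi|_\Gamma=\psi$ and invoke elliptic regularity to obtain $\phi\in H^{s+2}(\Omega)$ with $\|\phi\|_{H^{s+2}(\Omega)}\le C(\|h_1\|_{H^s(\Omega)}+\|\psi\|_{H^{s+\frac32}(\Gamma)})$; adding the two estimates yields \eqref{esLeph2}. The excluded values $s+\frac12\in\mathbb{N}$ are precisely the borderline cases where the trace and elliptic-shift theorems invoked above would lose regularity, which is why they are ruled out. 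The heart of the argument, and the main obstacle, is the mapping-property analysis of $P_\kappa$: one must verify that $\kappa>0$ makes the surface operator genuinely second order and dominant over the first-order Dirichlet--Neumann perturbation, which is exactly what furnishes the \emph{two}-derivative gain $\psi\in H^{s+2}(\Gamma)$ needed to close the loop. (When $\kappa=0$ this gain degenerates, which is why that case is excluded here and handled instead by the uniform-in-$\kappa$ estimates of Section 5.) A pseudodifferential-calculus-free alternative would replace the third step by a tangential difference-quotient bootstrap on $\Gamma$ coupled with the interior regularity of the Dirichlet problem, iterating the gain a half-derivative at a time, at the cost of a longer induction.
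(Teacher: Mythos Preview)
Your argument is correct and complete. The paper does not actually prove this lemma; it simply cites it as \cite[Corollary~A.1]{MZ05} (Miranville--Zelik), so there is no in-paper proof to compare against. Your reduction via the Dirichlet--Neumann operator, showing that $P_\kappa=-\kappa\Delta_\Gamma+I+\Lambda$ is a genuinely second-order elliptic operator on $\Gamma$ when $\kappa>0$ (with $\Lambda$ a first-order, hence lower-order, perturbation), is exactly the standard mechanism behind such bulk--surface regularity results and is in the spirit of the argument in \cite{MZ05}. The remark on why the case $\kappa=0$ fails and the role of the excluded half-integer values is accurate and well placed.
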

Next, we state a well-posedness result for a linear fourth-order parabolic equation subject to a fourth-order dynamic boundary condition, which is crucial to solve the nonlinear approximating problem \eqref{ACH}.
\begin{lemma}\label{exLpa}
Let $\alpha\in (0,1]$, $\kappa>0$ and $T\in (0,+\infty)$. For any  initial data $(\phi_0,\psi_0)\in \mathcal{V}^2$ and external source terms $(h_1,h_2)\in L^2(0, T; H^1(\Omega)\times H^1(\Gamma))\cap H^1(0,T; \mathcal{H})$, we consider the following linear system:
\begin{equation}
\left\{
\begin{aligned}
&\phi_t=\Delta \widetilde{\mu},\quad
\text{with}\ \ \widetilde{\mu}=-\Delta\phi+\alpha\phi_t+h_1,
&\text{in}\ \Omega\times(0,T),\\
&\partial_\mathbf{n}\widetilde{\mu}=0,
&\text{on}\ \Gamma\times(0,T),\\
&\phi|_\Gamma=\psi,
&\text{on}\ \Gamma\times(0,T),\\
&\psi_t=\Delta_\Gamma \widetilde{\mu}_\Gamma,
&\text{on}\ \Gamma\times(0,T),\\
&\quad \text{with}\ \ \widetilde{\mu}_\Gamma=-\kappa \Delta_\Gamma\psi+\psi+\partial_\mathbf{n}\phi+\alpha \psi_t+h_2,
&\text{on}\ \Gamma\times(0,T),\\
&\phi|_{t=0}=\phi_0(x),
&\text{in}\ \Omega,\\
&\psi|_{t=0}=\psi_0(x):=\phi_0(x)|_{\Gamma},
&\text{on}\ \Gamma.
\end{aligned}
\label{Lpa}
\right.
\end{equation}
 Then problem \eqref{Lpa} admits a unique global strong solution $(\phi, \psi)$ on $[0,T]$ such that
\begin{align*}
&(\phi, \psi)\in C([0,T]; \mathcal{V}^2)\cap L^2(0,T; \mathcal{V}^3),\\
&(\phi_t, \psi_t)\in L^\infty(0,T; \mathcal{H})\cap L^2(0,T; \mathcal{V}^1).
\end{align*}
Moreover, $(\phi, \psi)$ satisfies the mass conservation properties
\begin{align}
\langle\phi(t)\rangle_\Omega =\langle\phi_0\rangle_\Omega,\quad \langle\psi(t)\rangle_\Gamma =\langle\psi_0\rangle_\Gamma,\quad \forall\, t\in [0,T],\nonumber
\end{align}
and there exists a constant $C > 0$ that may depend on $\Omega$, $\Gamma$ and $\kappa$, but is independent of $\alpha$, $T$, $(\phi, \psi)$ as well as the initial datum $(\phi_0, \psi_0)$, such that the following estimates hold:
\begin{align}
&\|(\phi, \psi)\|_{L^\infty(0,T; \mathcal{V}^1)}^2+\|(\phi_t, \psi_t)\|_{L^2(0,T; (H^1(\Omega))^*\times (H^1(\Gamma))^*)}^2\nonumber\\
&\qquad +\alpha \|(\phi_t, \psi_t)\|_{L^2(0,T; \mathcal{H})}^2+ \|(\phi, \psi)\|_{L^2(0,T; \mathcal{V}^2)}^2\nonumber\\
&\qquad +\|\widetilde{\mu}\|_{L^2(0,T; H^1(\Omega))}^2+ \|\widetilde{\mu}_\Gamma\|_{L^2(0,T; H^1(\Gamma))}^2\nonumber\\
&\quad \leq C(1+T)\|(\phi_0, \psi_0)\|_{\mathcal{V}^1}^2+C(1+\alpha^{-1})\|(h_1, h_2)\|_{L^2(0, T; \mathcal{H})}^2,
\label{esLIN}
\end{align}
and
\begin{align}
&\|(\phi, \psi)\|_{L^\infty(0,T; \mathcal{V}^2)}^2+\|(\phi_t, \psi_t)\|_{L^\infty(0,T; (H^1(\Omega))^*\times(H^1(\Gamma))^*)}^2\nonumber\\
&\qquad +\alpha \|(\phi_t, \psi_t)\|_{L^\infty(0,T; \mathcal{H})}^2
 + \|(\phi_t, \psi_t)\|_{L^2(0,T; \mathcal{V}^1)}^2
 + \|(\phi, \psi)\|_{L^2(0,T; \mathcal{V}^3)}^2 \nonumber\\
&\qquad +\alpha \|\widetilde{\mu}\|_{L^2(0,T; H^2(\Omega))}^2
+ \alpha\|\widetilde{\mu}_\Gamma\|_{L^2(0,T; H^2(\Gamma))}^2\nonumber\\
&\quad \leq Ce^{CT}(1+\alpha^{-1})\|(\phi_0, \psi_0)\|_{\mathcal{V}^2}^2 +Ce^{CT}(1+\alpha^{-1})\|(h_1, h_2)\|_{L^2(0, T; H^1(\Omega)\times H^1(\Gamma))}^2\nonumber\\
&\qquad +Ce^{CT}(1+\alpha^{-1})\|(h_1, h_2)\|_{H^1(0, T; \mathcal{H})}^2.
\label{esLINh}
\end{align}
\end{lemma}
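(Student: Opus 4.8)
The plan is to exploit the viscous regularization: for fixed $\alpha>0$ the principal part of \eqref{Lpa} is effectively second order, so the system can be recast as a coupled pair of second-order parabolic equations and solved by standard means, after which the quantitative bounds \eqref{esLIN} and \eqref{esLINh} follow from energy estimates tailored to track the dependence on $\alpha$. Before anything else I would record the mass conservation: integrating $\phi_t=\Delta\widetilde{\mu}$ over $\Omega$ and using $\partial_\mathbf{n}\widetilde{\mu}=0$ gives $\langle\phi(t)\rangle_\Omega=\langle\phi_0\rangle_\Omega$, while integrating $\psi_t=\Delta_\Gamma\widetilde{\mu}_\Gamma$ over the closed manifold $\Gamma$ gives $\langle\psi(t)\rangle_\Gamma=\langle\psi_0\rangle_\Gamma$. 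In particular $\phi_t$ and $\psi_t$ have zero mean, which licenses the use of the dual norms $\|\cdot\|_{(H^1(\Omega))^*_0}$ and $\|\cdot\|_{(H^1(\Gamma))^*_0}$ in the sequel.

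For existence I would keep $(\widetilde{\mu},\widetilde{\mu}_\Gamma)$ as auxiliary unknowns and substitute the evolution equations into the constitutive relations. This converts the latter into the elliptic identities $(I-\alpha\Delta)\widetilde{\mu}=-\Delta\phi+h_1$ in $\Omega$ with $\partial_\mathbf{n}\widetilde{\mu}=0$, and $(I-\alpha\Delta_\Gamma)\widetilde{\mu}_\Gamma=-\kappa\Delta_\Gamma\psi+\psi+\partial_\mathbf{n}\phi+h_2$ on $\Gamma$; the zeroth-order terms make these coercive, so $\widetilde{\mu},\widetilde{\mu}_\Gamma$ are determined by $(\phi,\psi)$, and the resulting bulk operator $\Delta^2(I-\alpha\Delta)^{-1}$ has symbol of size $\lambda^2/(1+\alpha\lambda)$, which grows only linearly --- this is the precise sense in which the fourth-order problem becomes second-order parabolic once $\alpha>0$. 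Solvability of the reduced system then follows from a Faedo--Galerkin scheme built on the eigenbasis of the self-adjoint, positive operator with compact resolvent $(\phi,\psi)\mapsto(-\Delta\phi,\,-\kappa\Delta_\Gamma\psi+\psi+\partial_\mathbf{n}\phi)$ associated with \eqref{Lep}, whose regularity is precisely Lemma~\ref{esLepH2}; the viscous terms keep the approximate mass matrix invertible, so the Galerkin ODE system is globally solvable, and one passes to the limit with the estimates below.

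The bound \eqref{esLIN} is the basic energy identity. Testing $\phi_t=\Delta\widetilde{\mu}$ with $\widetilde{\mu}$ and $\psi_t=\Delta_\Gamma\widetilde{\mu}_\Gamma$ with $\widetilde{\mu}_\Gamma$, then inserting the constitutive relations and integrating by parts, the two Neumann-trace contributions $\int_\Gamma\psi_t\,\partial_\mathbf{n}\phi\,\d S$ cancel (this cancellation is exactly what the $\partial_\mathbf{n}\phi$ term built into $\widetilde{\mu}_\Gamma$ is designed to produce), leaving
\begin{align*}
&\frac12\frac{\d}{\d t}\|(\phi,\psi)\|_{\mathds{V}_\kappa^1}^2+\alpha\|(\phi_t,\psi_t)\|_{\mathcal{H}}^2+\|\nabla\widetilde{\mu}\|_{L^2(\Omega)}^2+\|\nabla_\Gamma\widetilde{\mu}_\Gamma\|_{L^2(\Gamma)}^2\\
&\qquad =-\int_\Omega h_1\phi_t\,\d x-\int_\Gamma h_2\psi_t\,\d S.
\end{align*}
Absorbing the right-hand side by Young's inequality into the $\alpha\|(\phi_t,\psi_t)\|_{\mathcal{H}}^2$ term produces the factor $(1+\alpha^{-1})$; the control of $\|(\phi_t,\psi_t)\|_{L^2(0,T;(H^1(\Omega))^*\times(H^1(\Gamma))^*)}$ comes from $\|\phi_t\|_{(H^1(\Omega))^*}\le\|\nabla\widetilde{\mu}\|_{L^2(\Omega)}$ and its surface analogue, while the $\mathcal{V}^2$-bound, together with $\|\widetilde{\mu}\|_{H^1(\Omega)}$ and $\|\widetilde{\mu}_\Gamma\|_{H^1(\Gamma)}$, follows by reading the constitutive relations as the elliptic problem \eqref{Lep} with data $\widetilde{\mu}-\alpha\phi_t-h_1$ and $\widetilde{\mu}_\Gamma-\alpha\psi_t-h_2$ and applying Lemma~\ref{esLepH2} with $s=0$. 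To reach \eqref{esLINh} I would differentiate the whole system in time, repeat the same testing with $(\widetilde{\mu}_t,\widetilde{\mu}_{\Gamma,t})$, and use $(h_1,h_2)\in H^1(0,T;\mathcal{H})\cap L^2(0,T;H^1(\Omega)\times H^1(\Gamma))$ together with the $\mathcal{V}^2$ data (with $\phi_t(0)$ defined through \eqref{Lep}); Gronwall's lemma then yields the $e^{CT}$ factor, and a last application of Lemma~\ref{esLepH2} with $s=1$ upgrades the spatial regularity to the $\mathcal{V}^3$ level. Uniqueness is immediate from the basic energy identity applied to the difference of two solutions.

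The hard part will be the bulk--surface coupling through $\partial_\mathbf{n}\phi$: a priori the Neumann trace only lives in $H^{-1/2}(\Gamma)$, so making the above cancellation rigorous at the Galerkin/weak level, and controlling the mean value $\langle\widetilde{\mu}\rangle_\Omega=\langle h_1\rangle_\Omega-|\Omega|^{-1}\int_\Gamma\partial_\mathbf{n}\phi\,\d S$, requires the generalized first Green's formula and a careful use of Lemma~\ref{esLepH2}. A second delicate point is the bookkeeping of the $\alpha$-dependence --- keeping the $\alpha\|(\phi_t,\psi_t)\|_{\mathcal{H}}^2$ terms on the favourable side while extracting exactly the factor $(1+\alpha^{-1})$ --- and respecting the restriction $s+\frac12\notin\mathbb{N}$ in Lemma~\ref{esLepH2} when bootstrapping to $\mathcal{V}^3$ in \eqref{esLINh}.
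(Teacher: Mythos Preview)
Your derivation of the two quantitative estimates is essentially the paper's: testing the evolution equations against the chemical potentials (equivalently, against $(\phi_t,\psi_t)$ in the transformed second-order system) produces exactly the energy identity the paper writes down, with the same cancellation of $\int_\Gamma\psi_t\,\partial_\mathbf{n}\phi$ and the same absorption via Young's inequality leading to the factor $(1+\alpha^{-1})$. The subsequent use of Lemma~\ref{esLepH2} to upgrade to $\mathcal{V}^2$ and $\mathcal{V}^3$ is likewise identical.

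Your existence argument, however, is genuinely different. The paper does \emph{not} use a Faedo--Galerkin scheme; instead it rewrites \eqref{Lpa} as the second-order system $[\alpha+(A^0_\Omega)^{-1}]\phi_t=\Delta\phi+\cdots$, $[\alpha+(A^0_\Gamma)^{-1}]\psi_t=\kappa\Delta_\Gamma\psi+\cdots$, introduces a parabolic shift $\rho(t)=e^{\kappa A_\Gamma}\psi_0$ to fix compatibility at $t=0$, and then decomposes $\phi=u+\mathfrak{T}(\psi-\rho)$ where $\mathfrak{T}$ is the solution operator of an auxiliary Dirichlet problem. This \emph{decouples} the system: $u$ is solved first by parabolic $L^p$ theory, after which $\psi$ satisfies a surface heat equation with a compact perturbation $\mathfrak{K}\psi$ (coming from the Dirichlet--Neumann map $\partial_\mathbf{n}\mathfrak{T}$), handled by Leray--Schauder. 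Your Galerkin route, built on the eigenbasis of the coupled elliptic operator of Lemma~\ref{esLepH2}, is conceptually cleaner and exploits directly the variational structure you identified; the price is that the mass-matrix $[\alpha+(A^0_\Omega)^{-1}]\oplus[\alpha+(A^0_\Gamma)^{-1}]$ does not diagonalize in that basis, and the nonlocal mean terms $\langle\partial_\mathbf{n}\phi\rangle_\Gamma$ need care at the discrete level. The paper's decomposition sidesteps both issues and yields $W^{(1,2)}_p$ regularity as a by-product.

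One genuine slip: for \eqref{esLINh} you propose to differentiate in time and test again with $(\widetilde{\mu}_t,\widetilde{\mu}_{\Gamma,t})$. That identity has the ``energy'' $\tfrac12\|\nabla\phi_t\|_{L^2}^2$ on the left, so it requires $\phi_t(0)\in H^1(\Omega)$, i.e.\ $\widetilde{\mu}(0)\in H^3$, hence $(\phi_0,\psi_0)\in\mathcal{V}^3$ --- stronger than the $\mathcal{V}^2$ hypothesis of the lemma. The paper instead tests the differentiated system with $\big((A^0_\Omega)^{-1}\phi_t,(A^0_\Gamma)^{-1}\psi_t\big)$, which puts $\tfrac12\|\phi_t\|_{(H^1)^*}^2+\tfrac{\alpha}{2}\|\phi_t\|_{L^2}^2$ on the left and only needs $\phi_t(0)\in L^2$; this is exactly what $(\phi_0,\psi_0)\in\mathcal{V}^2$ delivers via the elliptic equations for $\widetilde{\mu}(0),\widetilde{\mu}_\Gamma(0)$, and it produces precisely the norms listed in \eqref{esLINh}.
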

\begin{remark}
The proof of Lemma \ref{exLpa} is rather involved due to the fourth-order dynamic boundary condition for $\psi$. A detailed proof will be postponed to the Appendix.
\end{remark}

\subsection{Local well-posedness of the viscous Cahn--Hilliard equation}

We prove the main result of this section, namely, local well-posedness of the approximating problem \eqref{ACH}.

\begin{proposition}
\label{exeACH}
Let $\alpha\in (0,1]$ and $\kappa>0$ be given. Suppose that $\Omega\subset \mathbb{R}^d$ ($d=2,3$) is a bounded domain with smooth boundary $\Gamma$ and the assumption (\textbf{A1}) is satisfied. For any initial datum $(\phi_0, \psi_0)\in \mathcal{V}^2$, there exists a time $T_\alpha>0$ such that the viscous Cahn--Hilliard problem \eqref{ACH} admits a unique
strong solution $(\phi^\alpha, \psi^\alpha)$ on $[0,T_\alpha]$ satisfying
\begin{align*}
&(\phi^\alpha, \psi^\alpha)\in C([0,T_\alpha]; \mathcal{V}^2)\cap L^2(0,T_\alpha; \mathcal{V}^3)\nonumber\\
&(\phi^\alpha_t, \psi^\alpha_t)\in L^\infty(0,T_\alpha; \mathcal{H})\cap L^2(0,T_\alpha; \mathcal{V}^1),\\
&\mu^\alpha\in L^2(0,T_\alpha; H^2(\Omega)),
\quad \mu^\alpha_\Gamma \in L^2(0, T_\alpha; H^2(\Gamma)),
\end{align*}
with $(\phi^\alpha, \psi^\alpha)|_{t=0}=(\phi_0, \psi_0)$.
The equations and boundary conditions in the system \eqref{ACH} are satisfied almost everywhere in $\Omega\times(0,T_\alpha)$ and on $\Gamma\times (0,T_\alpha)$, respectively.
\end{proposition}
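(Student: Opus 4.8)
The plan is to solve the nonlinear system \eqref{ACH} by a Banach fixed-point argument, using the linear solver of Lemma \ref{exLpa} to invert the principal (linear, fourth-order) part and treating the nonlinearities $F'(\phi^\alpha)$, $G'(\psi^\alpha)$ as prescribed source terms. Fixing $\alpha\in(0,1]$ and $\kappa>0$, I would introduce the solution class
$$
\mathcal{Z}_T=\Big\{(\phi,\psi)\in C([0,T];\mathcal{V}^2)\cap L^2(0,T;\mathcal{V}^3):\ (\phi_t,\psi_t)\in L^\infty(0,T;\mathcal{H})\cap L^2(0,T;\mathcal{V}^1)\Big\},
$$
endowed with the natural graph norm $\|\cdot\|_{\mathcal{Z}_T}$, which makes it a Banach space. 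Given $(\bar\phi,\bar\psi)\in\mathcal{Z}_T$ with $(\bar\phi,\bar\psi)|_{t=0}=(\phi_0,\psi_0)$, I set $h_1:=F'(\bar\phi)$, $h_2:=G'(\bar\psi)$ and define $\mathcal{T}(\bar\phi,\bar\psi):=(\phi,\psi)$ to be the unique strong solution of the linear problem \eqref{Lpa} produced by Lemma \ref{exLpa}; a fixed point of $\mathcal{T}$ in a suitable closed ball is precisely the desired local strong solution of \eqref{ACH}. To see that $\mathcal{T}$ is well defined one must check $(h_1,h_2)\in L^2(0,T;H^1(\Omega)\times H^1(\Gamma))\cap H^1(0,T;\mathcal{H})$, as required by Lemma \ref{exLpa}. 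Since $d\le 3$, the embeddings $H^2(\Omega)\hookrightarrow L^\infty(\Omega)$ and $H^2(\Gamma)\hookrightarrow L^\infty(\Gamma)$ confine $(\bar\phi,\bar\psi)$ to a fixed bounded interval, whence assumption (\textbf{A1}) bounds $F',F'',F'''$ and $G',G'',G'''$ there; the spatial regularity then follows from $\nabla F'(\bar\phi)=F''(\bar\phi)\nabla\bar\phi$ and its surface analogue, and the temporal regularity from the chain rule $\partial_t F'(\bar\phi)=F''(\bar\phi)\bar\phi_t$, $\partial_t G'(\bar\psi)=G''(\bar\psi)\bar\psi_t$ together with $(\bar\phi_t,\bar\psi_t)\in L^2(0,T;\mathcal{H})$.

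Next I would fix $R$ depending only on $\|(\phi_0,\psi_0)\|_{\mathcal{V}^2}$ and the constant in \eqref{esLINh}, and show that for $T=T_\alpha$ small enough $\mathcal{T}$ maps the closed ball $B_R\subset\mathcal{Z}_T$ (around the free solution of \eqref{Lpa} with $h_1=h_2=0$, so that the initial condition is respected) into itself and is a contraction there. The self-mapping estimate is read off from \eqref{esLINh}: the point is that every contribution of the nonlinear source carries a positive power of $T$. The $L^2(0,T;H^1)$–norm of $(h_1,h_2)$ is bounded by $T^{1/2}$ times a sup-in-time quantity, while the delicate term $\partial_t h_1=F''(\bar\phi)\bar\phi_t$ is controlled in $L^2(0,T;\mathcal{H})$ via
$$
\|F''(\bar\phi)\bar\phi_t\|_{L^2(0,T;\mathcal{H})}\le C(R)\,T^{1/2}\,\|\bar\phi_t\|_{L^\infty(0,T;\mathcal{H})},
$$
where the $L^\infty(0,T;\mathcal{H})$–bound on $\bar\phi_t$ (hence the $T^{1/2}$ gain) is exactly what the choice $(\phi_0,\psi_0)\in\mathcal{V}^2$ makes available through \eqref{esLINh}. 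For the contraction, $\mathcal{T}(\bar\phi_1,\bar\psi_1)-\mathcal{T}(\bar\phi_2,\bar\psi_2)$ solves \eqref{Lpa} with zero initial data and source $(F'(\bar\phi_1)-F'(\bar\phi_2),\,G'(\bar\psi_1)-G'(\bar\psi_2))$; applying \eqref{esLINh} once more, together with the mean-value bounds $|F'(\bar\phi_1)-F'(\bar\phi_2)|\le C(R)|\bar\phi_1-\bar\phi_2|$ and $|F''(\bar\phi_1)\bar\phi_{1,t}-F''(\bar\phi_2)\bar\phi_{2,t}|\le C(R)\big(|\bar\phi_1-\bar\phi_2|\,|\bar\phi_{2,t}|+|\bar\phi_{1,t}-\bar\phi_{2,t}|\big)$ (valid on $B_R$ by (\textbf{A1}) and the $L^\infty$ embeddings), each term again carries a factor $T^{1/2}$. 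Choosing $T_\alpha$ small yields a contraction constant below one, and the Banach fixed-point theorem furnishes a unique $(\phi^\alpha,\psi^\alpha)\in B_R$. Its regularity, the validity of the equations and boundary conditions almost everywhere, and the initial conditions are inherited from Lemma \ref{exLpa}, while $\mu^\alpha\in L^2(0,T_\alpha;H^2(\Omega))$ and $\mu_\Gamma^\alpha\in L^2(0,T_\alpha;H^2(\Gamma))$ follow from the bounds on $\widetilde\mu,\widetilde\mu_\Gamma$ in \eqref{esLINh}.

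The main obstacle is precisely the treatment of the time derivatives of the nonlinear sources. Because the viscous terms $\alpha\phi_t$, $\alpha\psi_t$ force $\partial_t h_1,\partial_t h_2$ into the data requirements of Lemma \ref{exLpa}, the naive bound $\|F''(\bar\phi)\bar\phi_t\|_{L^2(0,T;\mathcal{H})}\le C(R)\|\bar\phi_t\|_{L^2(0,T;\mathcal{H})}$ carries no smallness in $T$ and would obstruct both self-mapping and contraction for arbitrary $R$. The remedy, which is the technical heart of the argument, is to estimate all such quadratic-in-time quantities through the $L^\infty(0,T;\mathcal{H})$–norm of the time derivatives, available exactly because the initial data lie in $\mathcal{V}^2$; this converts every $L^2$-in-time integral of a nonlinearity into $T^{1/2}$ times a bounded quantity and closes the argument. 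All constants above are permitted to depend on $\alpha$ and $\kappa$, which is harmless since the existence time $T_\alpha$ is itself allowed to depend on $\alpha$.
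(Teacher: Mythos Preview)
Your proposal is correct and follows essentially the same route as the paper: a contraction-mapping argument in the space $\mathcal{Z}_T$ (the paper's $\mathcal{Y}_T$), using Lemma~\ref{exLpa} to invert the linear part and treating $F'(\bar\phi),G'(\bar\psi)$ as source terms, with the crucial smallness in $T$ coming from bounding $\partial_t h_1,\partial_t h_2$ via the $L^\infty(0,T;\mathcal{H})$-norm of the time derivatives supplied by \eqref{esLINh}. The paper's proof differs only cosmetically (choosing the radius $M$ first and then $T$, and working with squared norms so the gain appears as $T$ rather than $T^{1/2}$).
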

\begin{proof}
The proof of Proposition \ref {exeACH} relies on Lemma \ref{exLpa} for the linear parabolic system \eqref{Lpa} together with the contraction mapping principle.

For any given $T\in (0,+\infty)$ and $(\phi_0, \psi_0)\in \mathcal{V}^2$, we introduce the space
\begin{align}
\mathcal{Y}_T&:=\{(\phi^\alpha, \psi^\alpha): (\phi^\alpha, \psi^\alpha)\in C([0,T]; \mathcal{V}^2)\cap L^2(0,T; \mathcal{V}^3),\nonumber\\
&\qquad (\phi^\alpha_t, \psi^\alpha_t)\in L^\infty(0,T; \mathcal{H})\cap L^2(0,T; \mathcal{V}^1), \ (\phi^\alpha, \psi^\alpha)|_{t=0}=(\phi_0,\psi_0)\},
\label{ZT}
\end{align}
with the following equivalent norm
\begin{align}
\|(\phi, \psi)\|_{\mathcal{Y}_T}^2&=\|(\phi, \psi)\|_{L^\infty(0,T; \mathcal{V}^2)}^2+\|(\phi, \psi)\|_{L^2(0,T; \mathcal{V}^3)}^2\nonumber\\
& \quad +\|(\phi_t, \psi_t)\|_{L^\infty(0,T; \mathcal{H})}^2+\|(\phi_t, \psi_t)\|_{L^2(0,T; \mathcal{V}^1)}^2.
\label{normZ}
\end{align}
 Then for every given pair $(\widehat{\phi}^\alpha, \widehat{\psi}^\alpha)\in \mathcal{Y}_T$, we look for the unique strong solution $(\phi^\alpha, \psi^\alpha)\in \mathcal{Y}_T$ to the following auxiliary linear problem
\begin{equation}
\left\{
\begin{aligned}
&\phi^\alpha_t-\Delta \widetilde{\mu}^\alpha=0,
\quad \text{with}\ \ \widetilde{\mu}^\alpha=-\Delta\phi^\alpha+\alpha\phi^\alpha_t+h^\alpha_1,
&\text{in}\ \Omega\times(0,T),\\
&\partial_\mathbf{n}\widetilde{\mu}^\alpha=0,
&\text{on}\ \Gamma\times(0,T),\\
&\phi^\alpha|_\Gamma=\psi^\alpha,
&\text{on}\ \Gamma\times(0,T),\\
&\psi^\alpha_t-\Delta_\Gamma \widetilde{\mu}^\alpha_\Gamma=0,
&\text{on}\ \Gamma\times(0,T),\\
&\quad \text{with}\ \  \widetilde{\mu}^\alpha_\Gamma=-\kappa\Delta_\Gamma\psi^\alpha+\psi^\alpha+\alpha \psi^\alpha_t+\partial_\mathbf{n}\phi^\alpha+h^\alpha_2,
&\text{on}\ \Gamma\times(0,T),\\
&\phi^\alpha|_{t=0}=\phi_0(x),
&\text{in}\ \Omega,\\
&\psi^\alpha|_{t=0}=\psi_0(x):=\phi_0(x)|_{\Gamma},
&\text{on}\ \Gamma,
\end{aligned}
\label{Lpaalp}
\right.
\end{equation}
where
$$
h^\alpha_1=F'(\widehat{\phi}^\alpha)\quad \text{and}\quad h^\alpha_2=G'(\widehat{\psi}^\alpha).
$$
 By assumption (\textbf{A1}), the Sobolev embedding theorem and the fact $(\widehat{\phi}^\alpha, \widehat{\psi}^\alpha)\in \mathcal{Y}_T$, we can easily verify that the external source terms $(h^\alpha_1,h^\alpha_2)$ satisfy
 $$
 (h^\alpha_1,h^\alpha_2)\in L^2(0,T; H^1(\Omega)\times H^1(\Gamma))\cap H^1(0,T; \mathcal{H}).
 $$
 Therefore, it follows from Lemma \ref{exLpa} that the linear problem \eqref{Lpaalp} admits a unique strong solution $(\phi^\alpha, \psi^\alpha)\in \mathcal{Y}_T$. As a consequence, the mapping
$$
\mathfrak{S}: \mathcal{Y}_T\to\mathcal{Y}_T \quad \text{defined by}\quad \mathfrak{S}(\widehat{\phi}^\alpha, \widehat{\psi}^\alpha)=(\phi^\alpha, \psi^\alpha)
$$
is well-defined.

Next, we introduce a closed bounded subset of $\mathcal{Y}_T$ denoted by
$$
\mathcal{M}_T:=\{(\phi, \psi)\in \mathcal{Y}_T:\ \|(\phi, \psi)\|_{\mathcal{Y}_T}\leq M\},
$$
where $M\geq 1$ is a sufficiently large constant to be chosen later (see \eqref{M1}). Our aim is to prove that for some $T>0$ being sufficiently small, $\mathfrak{S}$ maps the set $\mathcal{M}_T$ into itself and it is indeed a contraction mapping on $\mathcal{M}_T$.

To this end, for any pair $(\widehat{\phi}^\alpha, \widehat{\psi}^\alpha)\in \mathcal{M}_T$, we already know that $(\phi^\alpha, \psi^\alpha)=\mathfrak{S}(\widehat{\phi}^\alpha, \widehat{\psi}^\alpha)\in \mathcal{Y}_T$.
Applying the estimate \eqref{esLINh}, we get
\begin{align}
&\|(\phi^\alpha, \psi^\alpha)\|_{L^\infty(0,T; \mathcal{V}^2)}^2
+\|(\phi_t^\alpha, \psi_t^\alpha)\|_{L^\infty(0,T; (H^1(\Omega))^*\times(H^1(\Gamma))^*)}^2\nonumber\\
&\qquad +\alpha \|(\phi_t^\alpha, \psi_t^\alpha)\|_{L^\infty(0,T; \mathcal{H})}^2
+ \|(\phi_t^\alpha, \psi_t^\alpha)\|_{L^2(0,T; \mathcal{V}^1)}^2
+ \|(\phi^\alpha, \psi^\alpha)\|_{L^2(0,T; \mathcal{V}^3)}^2 \nonumber\\
&\qquad +\alpha \|\widetilde{\mu}^\alpha\|_{L^2(0,T; H^2(\Omega))}^2
+ \alpha\|\widetilde{\mu}^\alpha_\Gamma\|_{L^2(0,T; H^2(\Gamma))}^2\nonumber\\
&\quad \leq C_1e^{C_1T}(1+\alpha^{-1})\|(\phi_0, \psi_0)\|_{\mathcal{V}^2}^2 \nonumber\\
&\qquad +C_1e^{C_1T}(1+\alpha^{-1})\|(h_1^\alpha, h_2^\alpha)\|_{L^2(0, T; H^1(\Omega)\times H^1(\Gamma))}^2\nonumber\\
&\qquad +C_1e^{C_1T}(1+\alpha^{-1})\|(h_1^\alpha, h_2^\alpha)\|_{H^1(0, T; \mathcal{H})}^2,
\label{esLINZh}
\end{align}
where the constant $C_1\geq 1$ is independent of $\alpha$ and $T$.
Besides, using assumption (\textbf{A1}) and the Sobolev embedding theorem, we obtain the following estimates
\begin{align}
&\|(h^\alpha_1,h^\alpha_2)\|_{L^2(0,T; H^1(\Omega)\times H^1(\Gamma))}^2\nonumber\\
&\quad \leq T\left(\|F'(\widehat{\phi}^\alpha)\|_{L^\infty(0,T;H^1(\Omega))}^2
+\|G'(\widehat{\psi}^\alpha)\|_{L^\infty(0,T;H^1(\Gamma))}^2\right)\nonumber\\
&\quad
\leq T|\Omega|\|F'(\widehat{\phi}^\alpha)\|_{L^\infty(0,T; L^\infty(\Omega))}^2
+T|\Gamma| \|G'(\widehat{\phi}^\alpha)\|_{L^\infty(0,T; L^\infty(\Gamma))}^2\nonumber\\
&\qquad +T\|F''(\widehat{\phi}^\alpha)\|_{L^\infty(0,T; L^\infty(\Omega))}^2\|\nabla \widehat{\phi}^\alpha\|_{L^\infty(0,T; L^2(\Omega))}^2 \nonumber\\
&\qquad +T \|G''(\widehat{\psi}^\alpha)\|_{L^\infty(0,T; L^\infty(\Gamma))}^2\|\nabla_\Gamma \widehat{\psi}^\alpha\|_{L^\infty(0,T; L^2(\Gamma))}^2\nonumber\\
&\quad \leq C_2T \max_{|y|\leq C_2'M}\Big(|F'(y)|^2+|G'(y)|^2+|F''(y)|^2M^2 +|G''(y)|^2 M^2\Big)\nonumber\\
&\quad \leq C_2T M^2 \max_{|y|\leq C_2'M}\Big(|F'(y)|^2+|G'(y)|^2+|F''(y)|^2 +|G''(y)|^2\Big)
\label{eshh}
\end{align}
and
\begin{align}
&\|(\partial_th^\alpha_1, \partial_t h^\alpha_2)\|_{L^2(0,T; \mathcal{H})}^2\nonumber\\
&\quad \leq T\|F''(\widehat{\phi}^\alpha)\|_{L^\infty(0,T; L^\infty(\Omega))}^2\|\widehat{\phi}^\alpha_t\|_{L^\infty(0,T; L^2(\Omega))}^2 \nonumber\\
&\qquad + T\|G''(\widehat{\psi}^\alpha)\|_{L^\infty(0,T; L^\infty(\Gamma))}^2
\|\widehat{\psi}^\alpha_t\|_{L^\infty(0,T; L^2(\Gamma))}^2\nonumber\\
& \quad  \leq C_2 T M^2\max_{|y|\leq C_2'M}\Big(|F''(y)|^2 +|G''(y)|^2\Big),
\label{eshha}
\end{align}
where the positive constants $C_2, C_2'\geq 1$ are independent of $\alpha$, $M$ and $T$.
As a consequence, it follows from \eqref{esLINZh}--\eqref{eshha} and the assumption $\alpha\in (0,1]$ that
\begin{align}
&\|(\phi^\alpha(t), \psi^\alpha(t))\|_{\mathcal{Y}_T}^2
 \leq C_3\alpha^{-2}e^{C_1T}\Big(\|(\phi_0, \psi_0)\|_{\mathcal{V}^2}^2 + T M^2 P_1(M)\Big),\label{esZt1}
\end{align}
where $C_3\geq 1$ is independent of $\alpha$, $M$ and $T$, $P_1$ is a monotone increasing function given by
$$
P_1(M)=\max_{|y|\leq C_2'M}\Big(|F'(y)|^2+|G'(y)|^2+|F''(y)|^2 +|G''(y)|^2\Big).
$$
In view of \eqref{esZt1}, we can choose $M$ to be a sufficiently large constant such that
\begin{align}
M^2\geq  4C_3\alpha^{-2}e^{C_1}\|(\phi_0, \psi_0)\|_{\mathcal{V}^2}^2.\label{M1}
\end{align}
Then for any $T$ satisfying
\begin{align}
0<T\leq \min\left\{1,\ \frac{\alpha^2}{4C_3e^{C_1}P_1(M)}\right\},\label{T1}
\end{align}
it easily follows from \eqref{esZt1}--\eqref{T1} that $\|(\phi^\alpha(t), \psi^\alpha(t))\|_{\mathcal{Y}_T}^2\leq \frac12 M^2$, which further implies $(\phi^\alpha, \psi^\alpha)=\mathfrak{S}(\widehat{\phi}^\alpha, \widehat{\psi}^\alpha)\in \mathcal{M}_T$. Namely,
 $$
 \mathfrak{S}:\ \mathcal{M}_T \to \mathcal{M}_T.
 $$

Next, we show that for certain sufficiently small $T>0$, the mapping $\mathfrak{S}:\ \mathcal{M}_T \to \mathcal{M}_T$ is indeed a contraction with respective to the metric induced by the norm of $\mathcal{Y}_T$ (cf. \eqref{normZ}).
Let $(\widehat{\phi}^\alpha_i, \widehat{\psi}^\alpha_i)\in \mathcal{M}_T$ and $(\phi^\alpha_i, \psi^\alpha_i)=\mathfrak{S}(\widehat{\phi}^\alpha_i, \widehat{\psi}^\alpha_i)$, $i=1,2$.
Their differences are denoted by
\begin{align*}
&\widehat{\phi}^\alpha=\widehat{\phi}^\alpha_1-\widehat{\phi}^\alpha_2,
\quad  \widehat{\psi}^\alpha=\widehat{\psi}^\alpha_1-\widehat{\psi}^\alpha_2,\\
&\overline{\phi}^\alpha= \phi^\alpha_1- \phi^\alpha_2,
\quad  \overline{\psi}^\alpha=\psi^\alpha_1-\psi^\alpha_2.
\end{align*}
Then $(\overline{\phi}^\alpha, \overline{\psi}^\alpha)$ is a strong solution of the following system
\begin{equation}
\left\{
\begin{aligned}
&\overline{\phi}^\alpha_t-\Delta \overline{\mu}^\alpha=0,
\quad \text{with}\ \ \overline{\mu}^\alpha=-\Delta\overline{\phi}^\alpha
+\alpha\overline{\phi}^\alpha_t+\overline{h}^\alpha_1,
&\text{in}\ \Omega\times (0,T),\\
&\partial_\mathbf{n}\overline{\mu}^\alpha=0,
&\text{on}\ \Gamma\times (0,T),\\
&\overline{\phi}^\alpha|_\Gamma=\overline{\psi}^\alpha,
&\text{on}\ \Gamma\times (0,T),\\
&\overline{\psi}^\alpha_t-\Delta_\Gamma \overline{\mu}^\alpha_\Gamma=0,
&\text{on}\ \Gamma\times (0,T),\\
&\quad \text{with}\ \ \overline{\mu}^\alpha_\Gamma=-\kappa \Delta_\Gamma\overline{\psi}^\alpha+\overline{\psi}^\alpha+\alpha \overline{\psi}^\alpha_t+\partial_\mathbf{n}\overline{\phi}^\alpha+\overline{h}^\alpha_2,
&\text{on}\ \Gamma\times (0,T),\\
&\overline{\phi}^\alpha|_{t=0}=0,
&\text{in}\ \Omega,\\
&\overline{\psi}^\alpha|_{t=0}=0,
&\text{on}\ \Gamma,
\end{aligned}
\label{Lpaalpd}
\right.
\end{equation}
where
\begin{align*}
\overline{h}^\alpha_1=F'(\widehat{\phi}^\alpha_1)-F'(\widehat{\phi}^\alpha_2),
\quad\overline{h}^\alpha_2=G'(\widehat{\psi}^\alpha_1)-G'(\widehat{\psi}^\alpha_2).
\end{align*}
We infer from the linear estimate \eqref{esLINh} that
\begin{align}
&\|(\overline{\phi}^\alpha, \overline{\psi}^\alpha)\|_{L^\infty(0,T; \mathcal{V}^2)}^2
+ \|(\overline{\phi}^\alpha, \overline{\psi}^\alpha)\|_{L^2(0,T; \mathcal{V}^3)}^2\nonumber\\
&\qquad  + \|(\overline{\phi}^\alpha_t, \overline{\psi}^\alpha_t)\|_{L^\infty(0,T; \mathcal{H})}^2
+ \|(\overline{\phi}^\alpha_t, \overline{\psi}^\alpha_t)\|_{L^2(0,T; \mathcal{V}^1)}^2
 \nonumber\\
&\quad \leq C_4e^{C_4T}\alpha^{-2}\Big(\|(\overline{h}^\alpha_1, \overline{h}^\alpha_2)\|_{L^2(0, T; H^1(\Omega)\times H^1(\Gamma))}^2
+\|(\overline{h}^\alpha_1, \overline{h}^\alpha_2)\|_{H^1(0, T; \mathcal{H})}^2\Big).
\label{esLDi}
\end{align}
where $C_4\geq 1$ is a positive constant independent of $M$, $T$ and $\alpha$. By assumption (\textbf{A1}), the fact $(\widehat{\phi}^\alpha_i, \widehat{\psi}^\alpha_i)\in \mathcal{M}_T$ ($i=1,2$) and the Sobolev embedding theorem, we see that
\begin{align}
&\|F'(\widehat{\phi}^\alpha_1)-F'(\widehat{\phi}^\alpha_2)\|_{L^2(0,T; L^2(\Omega))}^2\nonumber\\
&\quad =\left\|\int_0^1 F''(\widehat{\phi}^\alpha_2+ s(\widehat{\phi}^\alpha_1-\widehat{\phi}^\alpha_2))(\widehat{\phi}^\alpha_1-\widehat{\phi}^\alpha_2) \d s \right\|_{L^2(0,T;L^2(\Omega))}^2\nonumber\\
&\quad \leq T\max_{|y|\leq C_2'M}|F''(y)|^2\|\widehat{\phi}^\alpha\|_{L^\infty(0,T;L^2(\Omega))}^2,\nonumber
\end{align}
\begin{align}
&\|F''(\widehat{\phi}^\alpha_1)\nabla \widehat{\phi}^\alpha_1-F''(\widehat{\phi}^\alpha_2)\nabla \widehat{\phi}^\alpha_2\|_{L^2(0,T;L^2(\Omega))}^2\nonumber\\
&\quad \leq 2\|(F''(\widehat{\phi}^\alpha_1)-F''(\widehat{\phi}^\alpha_2))\nabla \widehat{\phi}^\alpha_1\|_{L^2(0,T;L^2(\Omega))}^2
   +2\|F''(\widehat{\phi}^\alpha_2)\nabla \widehat{\phi}^\alpha\|_{L^2(0,T;L^2(\Omega))}^2\nonumber\\
&\quad \leq CT\max_{|y|\leq C_2'M}|F'''(y)|^2\|\nabla \widehat{\phi}^\alpha_1\|_{L^\infty(0,T;L^2(\Omega))}^2\|\widehat{\phi}^\alpha\|_{L^\infty(0,T;L^\infty(\Omega))}^2\nonumber\\
&\qquad + CT\max_{|y|\leq C_2'M}|F''(y)|^2\|\nabla \widehat{\phi}^\alpha\|_{L^\infty(0,T;L^2(\Omega))}^2\nonumber\\
&\quad\leq CT\Big(\max_{|y|\leq C_2'M}|F''(y)|^2+\max_{|y|\leq C_2'M}|F'''(y)|^2M^2\Big)\|\widehat{\phi}^\alpha\|_{L^\infty(0,T;H^2(\Omega))}^2,\nonumber
\end{align}
\begin{align}
&\|F''(\widehat{\phi}^\alpha_1)\partial_t\widehat{\phi}^\alpha_1
-F''(\widehat{\phi}^\alpha_2)\partial_t\widehat{\phi}^\alpha_2\|_{L^2(0,T;L^2(\Omega))}^2\nonumber\\
&\quad \leq 2\|(F''(\widehat{\phi}^\alpha_1)-F''(\widehat{\phi}^\alpha_2))\partial_t\widehat{\phi}^\alpha_1\|_{L^2(0,T;L^2(\Omega))}^2
   +2\|F''(\widehat{\phi}^\alpha_2)\widehat{\phi}^\alpha_t\|_{L^2(0,T;L^2(\Omega))}^2\nonumber\\
&\quad \leq CT\max_{|y|\leq C_2'M}|F'''(y)|^2\|\partial_t\widehat{\phi}^\alpha_1\|_{L^\infty(0,T;L^2(\Omega))}^2\|\widehat{\phi}^\alpha\|_{L^\infty(0,T;L^\infty(\Omega))}^2\nonumber\\
&\qquad + CT\max_{|y|\leq C_2'M}|F''(y)|^2\|\widehat{\phi}^\alpha_t\|_{L^\infty(0,T;L^2(\Omega))}^2\nonumber\\
&\quad\leq CT\max_{|y|\leq C_2'M}|F'''(y)|^2M^2\|\widehat{\phi}^\alpha\|_{L^\infty(0,T;H^2(\Omega))}^2\nonumber\\
&\qquad +CT\max_{|y|\leq C_2'M}|F''(y)|^2\|\widehat{\phi}^\alpha_t\|_{L^\infty(0,T;L^2(\Omega))}^2,\nonumber
\end{align}
and in a similar manner,
\begin{align}
&\|G'(\widehat{\psi}^\alpha_1)-G'(\widehat{\psi}^\alpha_2)\|_{L^2(0,T; L^2(\Gamma))}^2
\leq T\max_{|y|\leq C_2'M}|G''(y)|^2\|\widehat{\psi}^\alpha\|_{L^\infty(0,T;L^2(\Gamma))}^2,\nonumber
\end{align}
\begin{align}
&\|G''(\widehat{\psi}^\alpha_1)\nabla \widehat{\psi}^\alpha_1-G''(\widehat{\psi}^\alpha_2)\nabla \widehat{\psi}^\alpha_2\|_{L^2(0,T;L^2(\Gamma))}^2\nonumber\\
&\quad\leq CT\Big(\max_{|y|\leq C_2'M}|G''(y)|^2+\max_{|y|\leq C_2'M}|G'''(y)|^2M^2\Big)\|\widehat{\psi}^\alpha\|_{L^\infty(0,T;H^2(\Gamma))}^2.\nonumber
\end{align}
\begin{align}
&\|G''(\widehat{\psi}^\alpha_1)\partial_t\widehat{\psi}^\alpha_1
-G''(\widehat{\psi}^\alpha_2)\partial_t\widehat{\psi}^\alpha_2\|_{L^2(0,T;L^2(\Gamma))}^2\nonumber\\
&\quad\leq CT\max_{|y|\leq C_2'M}|G'''(y)|^2M^2\|\widehat{\psi}^\alpha\|_{L^\infty(0,T;H^2(\Gamma))}^2\nonumber\\
&\qquad + CT\max_{|y|\leq C_2'M}|G''(y)|^2\|\widehat{\psi}^\alpha_t\|_{L^\infty(0,T;L^2(\Gamma))}^2.\nonumber
\end{align}
Combining the above estimates with \eqref{esLDi}, we infer that
\begin{align}
&\|(\overline{\phi}^\alpha, \overline{\psi}^\alpha)\|_{L^\infty(0,T; \mathcal{V}^2)}^2+ \|(\overline{\phi}^\alpha, \overline{\psi}^\alpha)\|_{L^2(0,T; \mathcal{V}^3)}^2
 \nonumber\\
&\qquad + \|(\overline{\phi}^\alpha_t, \overline{\psi}^\alpha_t)\|_{L^\infty(0,T; \mathcal{H})}^2
 + \|(\overline{\phi}^\alpha_t, \overline{\psi}^\alpha_t)\|_{L^2(0,T; \mathcal{V}^1)}^2 \nonumber\\
&\quad \leq C_5e^{C_4T}\alpha^{-2}T P_2(M)\Big(\|(\widehat{\phi}^\alpha, \widehat{\psi}^\alpha)\|_{L^\infty(0,T; \mathcal{V}^2)}^2+\|(\widehat{\phi}^\alpha_t, \widehat{\psi}^\alpha_t)\|_{L^\infty(0,T;\mathcal{H})}^2\Big),\nonumber
\end{align}
where $C_5\geq 1$ is independent of $M$, $T$ as well as $\alpha$,
$P_2$ is a monotone increasing function given by
$$
P_2(M)=\max_{|y|\leq C_2'M}\big(|F''(y)|^2+|F'''(y)|^2M^2+|G''(y)|^2+|G'''(y)|^2M^2\big).
$$
Thus, for any $T$ satisfying
\begin{align}
0<T\leq  \min\left\{1,\ \frac{\alpha^{2}}{4C_5e^{C_4}P_2(M)}\right\},\label{T2}
\end{align}
we see that
$$
\|(\overline{\phi}^\alpha(t), \overline{\psi}^\alpha(t))\|_{\mathcal{Y}_T}\leq \frac12 \|(\widehat{\phi}^\alpha(t), \widehat{\psi}^\alpha(t))\|_{\mathcal{Y}_T}.
$$

In summary, we can conclude from \eqref{T1} and \eqref{T2} that for any given $\alpha\in (0,1]$, there exists a time $T_\alpha$ satisfying
\begin{align}
0<T_\alpha \leq  \min\left\{1,\ \frac{\alpha^2}{4C_3e^{C_1}P_1(M)},\ \frac{\alpha^{2}}{4C_5e^{C_4}P_2(M)}\right\},\nonumber
\end{align}
such that the mapping $\mathfrak{S}$ is a contraction from the set $\mathcal{M}_{T_\alpha}$ into itself. Thanks to the classical contraction mapping principle, we deduce that the regularized problem \eqref{ACH} admits a unique local strong solution $(\phi^\alpha, \psi^\alpha)\in \mathcal{Y}_{T_\alpha}$ for every $\alpha\in (0,1]$.
Finally, it is easy to verify that $\mu^\alpha\in L^2(0,T_\alpha; H^2(\Omega))$, $\mu^\alpha_\Gamma\in L^2(0, T_\alpha; H^2(\Gamma))$ by using the estimate \eqref{esLINZh} with the choices $h^\alpha_1=F'(\phi^\alpha)$ and  $h^\alpha_2=G'(\psi^\alpha)$. Besides, the continuity property $(\phi^\alpha, \psi^\alpha)\in C([0,T_\alpha]; \mathcal{V}^2)$ simply follows from the interpolation theorem (see, e.g., \cite{SI}).

The proof of Proposition \ref{exeACH} is complete.
\end{proof}

\section{Global Well-posedness}
\setcounter{equation}{0}

This section is devoted to the proof of Theorems \ref{main1} and \ref{main1a}.

\subsection{A priori estimates}
The crucial step is to derive some uniform global-in-time \emph{a priori} estimates for the regularized problem \eqref{ACH} with respective to the parameters $\alpha$ and $\kappa$.
Within this subsection,  we always assume that
$$
\kappa\in (0,\bar{\kappa}]\quad\text{and}\quad  \alpha\in (0,1],
$$
where $\bar{\kappa}>0$ is an arbitrary but fixed positive number. \medskip

\textbf{First estimate: basic energy estimate}. Testing the first and fourth equations in the regularized system \eqref{ACH}
by $\mu^\alpha$ and $\mu_\Gamma^\alpha$ respectively, adding the resultants together, we have for $t\geq 0$,
\begin{align}
&\frac{\mathrm{d}}{\mathrm{d}t}E(\phi^\alpha(t), \psi^\alpha(t))
+\|\nabla\mu^\alpha\|_{L^2(\Omega)}^2
+\|\nabla_\Gamma\mu_\Gamma^\alpha \|_{L^2(\Gamma)}^2+\alpha\|(\phi^\alpha_t, \psi^\alpha_t)\|_{\mathcal{H}}^2=0,
\label{BELb}
\end{align}
where
\begin{align}
E(\phi^\alpha(t), \psi^\alpha(t))
&=\frac12\int_{\Omega} |\nabla \phi^\alpha(t)|^2\, \d x
+ \frac12 \int_{\Gamma}(\kappa|\nabla_{\Gamma} \psi^\alpha(t)|^2 + |\psi^\alpha(t)|^2)\, \d S\nonumber\\
&\quad +\int_\Omega F(\phi^\alpha(t))\, \d x
+\int_\Gamma G(\psi^\alpha(t))\,\d S.
\label{Ealp}
\end{align}
Integrating \eqref{BELb} with respect to time, from \eqref{Ealp} and assumption (\textbf{A2}) we deduce the following uniform estimate
\begin{align}
&\frac12\int_{\Omega} |\nabla \phi^\alpha(t)|^2\, \d x
+ \frac12 \int_{\Gamma}(\kappa|\nabla_{\Gamma} \psi^\alpha(t)|^2 + |\psi^\alpha(t)|^2)\, \d S\nonumber\\
&\qquad + \int_0^{t}\Big(\|\nabla\mu^\alpha(\tau)\|_{L^2(\Omega)}^2
+\|\nabla_\Gamma\mu_\Gamma^\alpha(\tau) \|_{L^2(\Gamma)}^2\Big)\,\d \tau
+\alpha\int_0^{t}\|(\phi^\alpha_t(\tau), \psi^\alpha_t(\tau))\|_{\mathcal{H}}^2 \d \tau\nonumber\\
&\quad \leq E(\phi_0, \psi_0)+C_F|\Omega|+C_G|\Gamma|,\quad \forall\, t\geq 0.\label{esunif1}
\end{align}
On the other hand, using the mass conservation property $\langle\phi^\alpha_t\rangle_\Omega=\langle\psi^\alpha_t\rangle_\Gamma=0$, we infer from the equations for $\phi^\alpha$ and $\psi^\alpha$ that
\begin{align}
\|\phi^\alpha_t\|_{(H^1(\Omega))^*}=\|\nabla \mu^\alpha\|_{L^2(\Omega)},\quad  \|\psi^\alpha_t\|_{(H^1(\Gamma))^*}
=\|\nabla_\Gamma\mu_\Gamma^\alpha \|_{L^2(\Gamma)}.\label{phit}
\end{align}
Hence, \eqref{esunif1} also yields
\begin{align}
\int_0^{+\infty}\Big(\|\phi^\alpha_t(t)\|_{(H^1(\Omega))^*}^2
+\|\psi^\alpha_t(t)\|_{(H^1(\Gamma))^*}^2\Big)\, \d t
\leq E(\phi_0, \psi_0)+C_F|\Omega|+C_G|\Gamma|. \label{phiuni}
\end{align}

\begin{remark}\label{remie}
When $(\phi_0, \psi_0)\in \mathcal{V}^1$, it follows from the growth assumption (\textbf{A3}) and the Sobolev embedding theorem that the initial energy  satisfies $E(\phi_0, \psi_0)\leq C(\|(\phi_0, \psi_0)\|_{\mathcal{V}^1})$.  On the other hand, when $(\phi_0, \psi_0)\in \mathcal{V}^2$, the Sobolev embedding theorem and the assumption (\textbf{A1}) easily imply $E(\phi_0, \psi_0)\leq C(\|(\phi_0, \psi_0)\|_{\mathcal{V}^2})$. For both cases, the estimate on $E(\phi_0, \psi_0)$ is independent of $\alpha\in (0,1]$. Besides, these bounds may depend on the upper bound $\bar{\kappa}$ but are independent of $\kappa$.
\end{remark}

\textbf{Second estimate: on the time derivatives $(\phi_t^\alpha, \psi_t^\alpha)$}.
Differentiating the system \eqref{ACH} with respect to time, we obtain
\begin{equation}
\left\{
\begin{aligned}
&\phi_{tt}^\alpha=\Delta \mu^\alpha_t, \quad \text{with}\ \ \mu^\alpha_t=-\Delta \phi^\alpha_t +\alpha \phi_{tt}^\alpha +F''(\phi^\alpha)\phi^\alpha_t,
&\text{in}\ \Omega\times(0,T),\\
&\partial_\mathbf{n} \mu_t^\alpha=0,
&\text{on}\ \Gamma\times(0,T),\\
&\phi^\alpha_t|_\Gamma=\psi^\alpha_t,
&\text{on}\ \Gamma\times(0,T),\\
&\psi_{tt}^\alpha=\Delta_\Gamma (\mu_\Gamma^\alpha)_t,
&\text{on}\ \Gamma\times(0,T),\\
&\quad \text{with}\ \ (\mu_\Gamma^\alpha)_t
= -\kappa\Delta_\Gamma \psi^\alpha_t +  \psi^\alpha_t+ \alpha\psi^\alpha_{tt}
+\partial_\mathbf{n}\phi_t^\alpha + G''(\psi^\alpha)\psi^\alpha_t,
&\text{on}\ \Gamma\times(0,T),\\
&\phi^\alpha_t|_{t=0}=\Delta \mu^\alpha(0),
&\text{in}\ \Omega,\\
&\psi^\alpha_t|_{t=0}=\Delta_\Gamma \mu_\Gamma^\alpha(0),
&\text{on}\ \Gamma.
\end{aligned}
\label{ACHt}
\right.
\end{equation}
By an analogous observation like in \cite{MZ05}, we see that for the regularized problem \eqref{ACH}, if $\phi^\alpha (t)$ is known for some $t$, then the value of the chemical potential $\mu^\alpha(t)$ can be uniquely determined by solving the linear elliptic problem (with $\alpha>0$):
\begin{equation}
\left\{
\begin{aligned}
&\mu^\alpha(t)-\alpha\Delta \mu^\alpha(t)=-\Delta \phi^\alpha(t)+F'(\phi^\alpha(t)),
&\text{in}\ \Omega,\\
&\partial_\mathbf{n}\mu^\alpha(t)=0,
&\text{on}\ \Gamma.
\label{aepmu}
\end{aligned}
\right.
\end{equation}
In a similar manner, the surface chemical potential $\mu^\alpha_\Gamma(t)$ can be uniquely determined by solving \
\begin{align}
 &\mu_\Gamma^\alpha(t)-\alpha \Delta_\Gamma \mu_\Gamma^\alpha(t)
  =-\kappa \Delta_\Gamma \psi^\alpha(t)+\psi^\alpha(t)+\partial_\mathbf{n}\phi^\alpha(t)
  +G'(\psi^\alpha(t)),\quad \text{on}\ \Gamma.
  \label{aepmua}
\end{align}
Now we prepare initial data for the auxiliary system \eqref{ACHt}. Assume $(\phi_0, \psi_0)\in \mathcal{V}^3$. Taking $t=0$ in \eqref{aepmu}, we infer from assumption (\textbf{A1}) and the Sobolev embedding theorem that
\begin{align}
&\|\nabla \mu^\alpha(0)\|_{L^2(\Omega)}\leq \|\nabla(-\Delta \phi_0+F'(\phi_0))\|_{L^2(\Omega)}\leq C,\nonumber\\
&\alpha^\frac12\|\Delta \mu^\alpha(0)\|_{L^2(\Omega)}\leq \|\nabla(-\Delta \phi_0+F'(\phi_0))\|_{L^2(\Omega)}\leq C,\nonumber
\end{align}
where the constant $C$ only depends on $\|\phi_0\|_{H^3(\Omega)}$ but is independent of $\alpha$ and $\kappa$.
Similarly, we have
\begin{align}
&\|\nabla_\Gamma\mu_\Gamma^\alpha(0)\|_{L^2(\Gamma)}\leq \|\nabla_\Gamma(-\kappa \Delta_\Gamma \psi_0+\psi_0+\partial_\mathbf{n}\phi_0+G'(\psi_0))\|_{L^2(\Gamma)}\leq C,\nonumber\\
&\alpha^\frac12\|\Delta_\Gamma\mu_\Gamma^\alpha(0)\|_{L^2(\Gamma)}\leq \|\nabla_\Gamma(-\kappa \Delta_\Gamma \psi_0+\psi_0+\partial_\mathbf{n}\phi_0+G'(\psi_0))\|_{L^2(\Gamma)}\leq C,\nonumber
\end{align}
where the constant $C$ only depends on $\|(\phi_0,\psi_0)\|_{\mathcal{V}^3}$ and the constant  $\bar{\kappa}$, but is independent of $\alpha$ and $\kappa$.

Next, testing the first equation in \eqref{ACHt} by $(A^0_\Omega)^{-1} \phi^\alpha_t$ and the fourth equation by $(A_\Gamma^0)^{-1}\psi^\alpha_t$, adding the resultants together, we deduce from assumption (\textbf{A2}) that
\begin{align}
&\frac{1}{2}\frac{\d}{\d t}\Big(\|\phi^\alpha_t\|_{(H^1(\Omega))^*}^2+ \alpha \|\phi^\alpha_t\|_{L^2(\Omega)}^2+\|\psi^\alpha_t\|_{(H^1(\Gamma))^*}^2+\alpha \|\psi^\alpha_t\|_{L^2(\Gamma)}^2\Big)\nonumber\\
&\qquad + \|\nabla \phi^\alpha_t\|_{L^2(\Omega)}^2+ \kappa\|\nabla_\Gamma \psi^\alpha_t\|_{L^2(\Gamma)}^2+\|\psi^\alpha_t\|_{L^2(\Gamma)}^2\nonumber\\
&\quad =-\int_\Omega F''(\phi^\alpha)(\phi^\alpha_t)^2\, \d x
-\int_\Gamma G''(\psi^\alpha)(\psi^\alpha_t)^2\,\d S\nonumber\\
&\quad \leq \widetilde{C}_F\|\phi^\alpha_t\|_{L^2(\Omega)}^2
+\widetilde{C}_G\|\psi^\alpha_t\|_{L^2(\Gamma)}^2.
\label{achhight}
\end{align}
It follows from the facts $\langle\phi^\alpha_t\rangle_\Omega=\langle\psi^\alpha_t\rangle_\Gamma=0$, the trace theorem, the interpolation inequality, the Poincar\'e and Young inequalities that for a.e. $t>0$,
\begin{align}
\|\phi^\alpha_t\|_{L^2(\Omega)}^2
&\leq C\|\phi^\alpha_t\|_{(H^1(\Omega))^*}\|\nabla \phi^\alpha_t\|_{L^2(\Omega)}\nonumber\\
&\leq \frac{1}{4\widetilde{C}_F} \|\nabla \phi^\alpha_t\|_{L^2(\Omega)}^2
+C\widetilde{C}_F\|\phi^\alpha_t\|_{(H^1(\Omega))^*}^2,
\label{rett1}\\
\|\psi^\alpha_t\|_{L^2(\Gamma)}^2
&\leq C\|\phi^\alpha_t\|_{H^{\frac12+\epsilon}(\Omega)}^2\nonumber\\
&\leq C\|\phi^\alpha_t\|_{(H^1(\Omega))^*}^{\frac12-\epsilon}\|\nabla \phi^\alpha_t\|_{L^2(\Omega)}^{\frac32+\epsilon}\nonumber\\
&\leq  \frac{1}{4\widetilde{C}_G} \|\nabla \phi^\alpha_t\|_{L^2(\Omega)}^2
+C\widetilde{C}_G\|\phi^\alpha_t\|_{(H^1(\Omega))^*}^2,
\label{rett2}
\end{align}
where $\epsilon\in (0,\frac12)$ and the constant $C$ is independent of $\alpha$ and $\kappa$.
 Integrating \eqref{achhight} with respect to time, we infer from the above estimates and \eqref{phiuni} that
\begin{align}
&\|\phi^\alpha_t(t)\|_{(H^1(\Omega))^*}^2+ \alpha \|\phi^\alpha_t(t)\|_{L^2(\Omega)}^2+\|\psi^\alpha_t(t)\|_{(H^1(\Gamma))^*}^2+\alpha \|\psi^\alpha_t(t)\|_{L^2(\Gamma)}^2\nonumber\\
&\qquad +\int_0^t\Big(\|\nabla \phi^\alpha_t(\tau)\|_{L^2(\Omega)}^2+ \kappa\|\nabla_\Gamma \psi^\alpha_t(\tau)\|_{L^2(\Gamma)}^2+\|\psi^\alpha_t(\tau)\|_{L^2(\Gamma)}^2\Big)\, \d \tau \nonumber\\
&\quad \leq \|\nabla \mu^\alpha(0)\|_{L^2(\Omega)}^2+\alpha\|\Delta \mu^\alpha(0)\|_{L^2(\Omega)}^2
+\|\nabla_\Gamma\mu_\Gamma^\alpha(0)\|_{L^2(\Gamma)}^2
\nonumber\\
&\qquad +\alpha\|\Delta_\Gamma\mu_\Gamma^\alpha(0)\|_{L^2(\Gamma)}^2+C\big(E(\phi_0, \psi_0)+C_F|\Omega|+C_G|\Gamma|\big)\nonumber\\
&\quad \leq C, \quad \forall\, t\geq 0,\label{estt}
\end{align}
where the constant $C$ depends on $\|(\phi_0, \psi_0)\|_{\mathcal{V}^3}$, $C_F$, $\widetilde{C}_F$, $C_G$, $\widetilde{C}_G$, $\Omega$, $\Gamma$ and $\bar{\kappa}$, but is independent of $\alpha$ and $\kappa$.

\medskip

\textbf{Third estimate: on the normal derivative $\partial_\mathbf{n} \phi^\alpha$}.
By the trace theorem, it follows that
\begin{align}
\|\partial_\mathbf{n}\phi^\alpha\|_{H^{-\frac12}(\Gamma)}\leq C\|\partial_\mathbf{n}\phi^\alpha\|_{L^2(\Gamma)}
\leq  C\|\phi^\alpha\|_{H^r(\Omega)},\quad \text{for some }r\in \left(\frac32, 2\right),
\label{espncla}
\end{align}
where $C>0$ only depends on $\Omega$. On the other hand, we also have
\begin{align}
\left| \int_\Gamma \partial_\mathbf{n} \phi^\alpha\, \d S \right|
&=\left|\langle\partial_\mathbf{n} \phi^\alpha,1\rangle_{H^{-\frac12}(\Gamma), H^{\frac12}(\Gamma)}\right|
\leq |\Gamma|^\frac12\|\partial_\mathbf{n} \phi^\alpha\|_{H^{-\frac12}(\Gamma)}.
\label{inttr}
\end{align}
The above estimates on the normal derivative $\partial_\mathbf{n} \phi^\alpha$ will be enough for the case with surface diffusion for any given $\kappa>0$. However, they are not sufficient in order to pass to the limit as $\kappa\to 0^+$. For this purpose, we recall the generalized first Green's formula in Section 3.2 and in particular the inequality \eqref{cZ1} such that for $\phi^\alpha\in H^1(\Omega)$ with $\Delta \phi^\alpha\in (H^1(\Omega))^*$, it holds
\begin{align}
\|\partial_\mathbf{n}\phi^\alpha\|_{H^{-\frac12}(\Gamma)}
&\leq c_\mathcal{R}\big(\|\nabla \phi^\alpha\|_{L^2(\Omega)}+\|\Delta \phi^\alpha\|_{(H^1(\Omega))^*}\big).
\label{ntres}
\end{align}
As a consequence, we infer from \eqref{inttr} that
\begin{align}
\left|\int_\Gamma \partial_\mathbf{n} \phi^\alpha\, \d S\right|&\leq c_\mathcal{R}|\Gamma|^\frac12\big(\|\nabla \phi^\alpha\|_{L^2(\Omega)}
+\|\Delta \phi^\alpha\|_{(H^1(\Omega))^*}\big).
\label{pphig}
\end{align}

\medskip

\textbf{Fourth estimate: on the mean value of chemical potentials}.
We distinguish several cases according to the possible dependence on $\kappa$.

(a) Assume that $(\phi_0, \psi_0)\in \mathcal{V}^1$ and the growth assumption (\textbf{A3}) is satisfied.
Then by  \eqref{espncla}--\eqref{inttr} and the Sobolev embedding theorem $(d=2,3)$, we obtain the following estimates
\begin{align}
|\langle \mu^\alpha \rangle_\Omega|
&\leq |\Omega|^{-1}|\Gamma| |\langle \partial_\mathbf{n}\phi^\alpha\rangle_\Gamma|+|\langle F'(\phi^\alpha)\rangle_\Omega|\nonumber\\
&\leq C\big(1+\|\phi^\alpha\|_{H^r(\Omega)}+\|\phi^\alpha\|_{H^1(\Omega)}^{p+1}\big),
\label{memu1}
\end{align}
\begin{align}
|\langle \mu^\alpha_\Gamma\rangle_\Gamma|
&\leq |\langle\psi^\alpha \rangle_\Gamma|+|\langle \partial_\mathbf{n}\phi^\alpha \rangle_\Gamma|+ |\langle G'(\psi^\alpha)\rangle_\Gamma|
\nonumber\\
& \leq |\langle\psi_0\rangle_\Gamma|+C\big(1+\|\phi^\alpha\|_{H^r(\Omega)}
+\|\psi^\alpha\|_{H^1(\Gamma)}^{q+1}\big),
\label{memug1}
\end{align}
where $r\in (\frac32, 2)$ and $C$ is independent of $\alpha$ and $\kappa$. By the lower-order estimate \eqref{esunif1}, we get
\begin{align}
|\langle \mu^\alpha \rangle_\Omega|+ |\langle \mu^\alpha_\Gamma\rangle_\Gamma|
\leq C(1+\|\phi^\alpha\|_{H^r(\Omega)}),\quad \forall\, t\geq 0,\label{memulo}
\end{align}
where $C$ depends on $\|(\phi_0, \psi_0)\|_{\mathcal{V}^1}$, $C_F$, $\widehat{C}_F$, $C_G$, $\widehat{C}_G$, $\Omega$, $\Gamma$, $\bar{\kappa}$, but is independent of $\alpha$ and $\kappa$.

(b) Assume that $(\phi_0, \psi_0)\in \mathcal{V}^1$ and (\textbf{A3}) is satisfied.
Moreover, we suppose the additional condition $c_\mathcal{R}|\Gamma|^\frac12|\Omega|^{-1}<1$, where $c_\mathcal{R}$ is the constant in \eqref{ntres} (cf. \eqref{cZ1}). Then from \eqref{pphig}, we deduce that
\begin{align}
\left|\int_\Omega \mu^\alpha \, \d x\right|
&\leq \left|\int_\Gamma \partial_\mathbf{n} \phi^\alpha\, \d S\right|
+\left|\int_\Omega F'(\phi^\alpha)\, \d x\right|\nonumber\\
&\leq c_\mathcal{R}|\Gamma|^\frac12\big(\|\nabla \phi^\alpha\|_{L^2(\Omega)}
+\|\Delta \phi^\alpha\|_{(H^1(\Omega))^*}\big)
+\left|\int_\Omega F'(\phi^\alpha)\, \d x\right|\nonumber\\
&\leq c_\mathcal{R}|\Gamma|^\frac12\big(\|\mu^\alpha\|_{(H^1(\Omega))^*}
+\alpha\|\phi^\alpha_t\|_{(H^1(\Omega))^*}
+\|F'(\phi^\alpha)\|_{(H^1(\Omega))^*}\big)\nonumber\\
&\quad + c_\mathcal{R}|\Gamma|^\frac12\|\nabla \phi^\alpha\|_{L^2(\Omega)} +\left|\int_\Omega F'(\phi^\alpha)\, \d x\right|\nonumber\\
&\leq c_\mathcal{R}|\Gamma|^\frac12\big(\|\nabla (A_\Omega^0)^{-1}(\mu^\alpha-\langle \mu^\alpha \rangle_\Omega)\|_{L^2(\Omega)}+ |\langle \mu^\alpha \rangle_\Omega|\big)\nonumber\\
&\quad +C(\alpha\|\phi^\alpha_t\|_{(H^1(\Omega))^*}
+\|F'(\phi^\alpha)\|_{(H^1(\Omega))^*})\nonumber\\
&\quad + c_\mathcal{R}|\Gamma|^\frac12\|\nabla \phi^\alpha\|_{L^2(\Omega)} +\left|\int_\Omega F'(\phi^\alpha)\, \d x\right|,\nonumber
\end{align}
which together with the lower-order estimate \eqref{esunif1} and Poincar\'e's inequality yields
\begin{align}
\left|\langle\mu^\alpha\rangle_\Omega\right|&\leq C|\Omega|^{-1}\big(1-c_\mathcal{R}|\Gamma|^\frac12|\Omega|^{-1}\big)^{-1}\big(\|\nabla \mu^\alpha\|_{L^2(\Omega)}+1\big),
\label{memu1b}
\end{align}
where $C$ depends on $\|(\phi_0, \psi_0)\|_{\mathcal{V}^1}$, $C_F$, $\widehat{C}_F$, $C_G$, $\Omega$, $\Gamma$, $\bar{\kappa}$, but is independent of $\alpha$ and $\kappa$. The uniform estimate \eqref{memu1b} also implies
\begin{align}
|\langle \partial_\mathbf{n} \phi^\alpha\rangle_\Gamma|
& \leq |\Gamma|^{-1}|\Omega|\big(\left|\langle\mu^\alpha\rangle_\Omega\right|+ |\langle F'(\phi^\alpha)\rangle_\Omega|\big)\nonumber\\
&\leq C\big(\|\nabla \mu^\alpha\|_{L^2(\Omega)}+1\big),
\label{meanpn1}
\end{align}
and
\begin{align}
\|\partial_\mathbf{n}\phi^\alpha \|_{H^{-\frac12}(\Gamma)}
&\leq c_\mathcal{R}(\|\nabla \phi^\alpha\|_{L^2(\Omega)}
+\|\Delta \phi^\alpha\|_{(H^1(\Omega))^*})\nonumber\\
&\leq C\big(\|\nabla \mu^\alpha\|_{L^2(\Omega)}+1\big).
\label{espnh-12}
\end{align}
Furthermore, we deduce from \eqref{esunif1} and \eqref{memu1b} that
\begin{align}
|\langle \mu^\alpha_\Gamma\rangle_\Gamma|
&\leq |\langle\psi^\alpha \rangle_\Gamma|+|\langle \partial_\mathbf{n}\phi^\alpha \rangle_\Gamma|+ |\langle G'(\psi^\alpha)\rangle_\Gamma|
\nonumber\\
& \leq C\big(\|\nabla \mu^\alpha\|_{L^2(\Omega)}+1\big),\label{memug1b}
\end{align}
where $C$ depends on $\|(\phi_0, \psi_0)\|_{\mathcal{V}^1}$, $C_F$, $\widehat{C}_F$, $C_G$, $\widehat{C}_G$, $\Omega$, $\Gamma$, $\bar{\kappa}$, but is independent of $\alpha$ and $\kappa$.

(c) Assume that $(\phi_0, \psi_0)\in \mathcal{V}^3$ and assumption (\textbf{A2}) is satisfied. Testing the equations for $\mu^\alpha$, $\mu_\Gamma^\alpha$ in \eqref{ACH} by $\overline{\phi^\alpha}:=\phi^\alpha-\langle \phi^\alpha\rangle_\Omega=\phi^\alpha-\langle \phi_0\rangle_\Omega$, $\overline{\psi^\alpha}:=\psi^\alpha-\langle\psi^\alpha\rangle_\Gamma
=\psi^\alpha-\langle\psi_0\rangle_\Gamma$, respectively, adding the resultants together, we obtain
\begin{align}
&\|\nabla \phi^\alpha(t)\|_{L^2(\Omega)}^2
+\kappa\|\nabla_\Gamma \psi^\alpha(t)\|_{L^2(\Gamma)}^2
+\|\overline{\psi^\alpha}(t)\|_{L^2(\Gamma)}^2
+\int_\Omega F'(\phi^\alpha(t))\overline{\phi^\alpha}(t)\, \d x
\nonumber\\
&\qquad
+ \int_\Gamma G'(\psi^\alpha(t))\overline{\psi^\alpha}(t)\, \d S\nonumber\\
&\quad =\int_\Omega (\mu^\alpha(t)-\langle \mu^\alpha(t)\rangle_\Omega)\overline{\phi^\alpha}(t)\, \d x
+\int_\Gamma (\mu_\Gamma^\alpha(t)-\langle\mu_\Gamma^\alpha(t)\rangle_\Gamma)\overline{\psi^\alpha}\, \d S\nonumber\\
&\qquad -\alpha\int_\Omega \phi_t^\alpha \overline{\phi^\alpha}(t)\, \d x
-\alpha\int_\Gamma \psi^\alpha_t \overline{\psi^\alpha}\, \d S
+(\langle\psi_0\rangle_\Gamma-\langle \phi_0\rangle_\Omega)\int_\Gamma\partial_\mathbf{n}\phi^\alpha(t)\, \d S \nonumber\\
&\quad \leq \|\mu^\alpha(t)-\langle \mu^\alpha(t)\rangle_\Omega\|_{L^2(\Omega)}\|\overline{\phi^\alpha}(t)\|_{L^2(\Omega)}
\nonumber\\
&\qquad +\|\mu_\Gamma^\alpha(t)-\langle\mu_\Gamma^\alpha(t)\rangle_\Gamma\|_{L^2(\Gamma)}
\|\overline{\psi^\alpha}\|_{L^2(\Gamma)}\nonumber\\
&\qquad + \alpha\| \phi_t^\alpha\|_{(H^1(\Omega))^*}\| \overline{\phi^\alpha}(t)\|_{H^1(\Omega)}
+\alpha\|\psi^\alpha_t\|_{(H^1(\Gamma))^*}\|\overline{\psi^\alpha}\|_{H^1(\Gamma)}\nonumber
\\
&\qquad +(|\langle\psi_0\rangle_\Gamma|+|\langle \phi_0\rangle_\Omega|)\left|\int_\Gamma\partial_\mathbf{n}\phi^\alpha(t)\, \d S\right|\nonumber\\
&\quad \leq C\|\nabla \mu^\alpha(t)\|_{L^2(\Omega)}\|\phi^\alpha(t)\|_{H^1(\Omega)} +C\|\nabla_\Gamma\mu_\Gamma^\alpha(t)\|_{L^2(\Gamma)}
\|\psi^\alpha(t)\|_{H^1(\Gamma)}\nonumber\\
&\qquad +(|\langle\psi_0\rangle_\Gamma|+|\langle \phi_0\rangle_\Omega|)\left|\int_\Gamma\partial_\mathbf{n}\phi^\alpha(t)\, \d S\right|.
\label{FFL2}
\end{align}
The above inequality together with \eqref{esunif1}--\eqref{phit}, the higher-order estimate \eqref{estt} and \eqref{espncla}--\eqref{inttr} implies
\begin{align}
&\int_\Omega F'(\phi^\alpha(t))\overline{\phi^\alpha}(t) \d x + \int_\Gamma G'(\psi^\alpha(t))\overline{\psi^\alpha}(t) \d S \leq C \big(1+\|\phi^\alpha\|_{H^r(\Omega)}\big),\quad \forall\, t\geq 0.
\label{FGm}
\end{align}
On the other hand, recalling  (\textbf{A2}) and the transformations \eqref{tF}--\eqref{tG}, we deduce the following simple facts
\begin{align}
&\widetilde{F}(0)=\widetilde{F}'(0)=\tilde{G}(0)=\widetilde{G}'(0)=0,\\
&\widetilde{F}''(y)\geq 1,\quad  \widetilde{G}''(y)\geq 1,\quad \forall\, y\in \mathbb{R},\label{convex}\\
&y\widetilde{F}'(y)\geq 0,\quad  y\widetilde{G}'(y)\geq 0, \quad \widetilde{F}'(y)\widetilde{G}'(y)\geq 0,
\quad \forall\, y\in \mathbb{R}.
\label{FFG}
\end{align}
The convexity condition \eqref{convex} easily yields (see, e.g., \cite{MZ05})
\begin{align*}
&\frac12|\widetilde{F}'(y)|(1+|y|)\leq \widetilde{F}'(y)(y-\langle \phi_0\rangle_\Omega)+C(F,\langle \phi_0\rangle_\Omega),\\
&\frac12|\widetilde{G}'(y)|(1+|y|)\leq \widetilde{G}'(y)(y-\langle \psi_0\rangle_\Gamma)+C(G,\langle \psi_0\rangle_\Gamma).
\end{align*}
Therefore, we infer from the above relations and \eqref{FGm} that
\begin{align}
|\langle F'(\phi^\alpha(t))\rangle_\Omega|+ |\langle G'(\psi^\alpha(t))\rangle_\Gamma|\leq C \big(1+\|\phi^\alpha\|_{H^r(\Omega)}\big),
\quad \forall\, t\geq 0.
\label{FGm1}
\end{align}
Then by \eqref{espncla}, \eqref{inttr}  and  \eqref{FGm1}, we obtain
\begin{align}
|\langle \mu^\alpha \rangle_\Omega|+ |\langle \mu^\alpha_\Gamma\rangle_\Gamma|
\leq C(1+\|\phi^\alpha\|_{H^r(\Omega)}),\quad \forall\, t\geq 0,\label{memuhi}
\end{align}
where the constant $C>0$ depends on $\|(\phi_0, \psi_0)\|_{\mathcal{V}^3}$, $C_F$, $\widetilde{C}_F$, $C_G$, $\widetilde{C}_G$, $\Omega$, $\Gamma$ and $\bar \kappa$, but is independent of $\alpha$.

\medskip
\textbf{Fifth estimate: further estimate on chemical potentials}.
Again we distinguish several cases according to the possible dependence on $\kappa$.

(a) Assume that $(\phi_0, \psi_0)\in \mathcal{V}^1$ and the growth assumption (\textbf{A3}) is satisfied.
Combining  \eqref{esunif1}, \eqref{memulo} and Poincar\'e's inequality, we deduce that
\begin{align}
&\|\mu^\alpha(t)\|_{L^2(0,T; H^1(\Omega))}
+\|\mu^\alpha_\Gamma(t)\|_{L^2(0,T; H^1(\Gamma))}\leq C(1+T)\big(1+\|\phi^\alpha\|_{L^2(0,T;H^r(\Omega))}\big),
\label{esL2H1mulo}
\end{align}
where the constant $C>0$ depends on $\|(\phi_0, \psi_0)\|_{\mathcal{V}^1}$, $C_F$, $\widehat{C}_F$, $C_G$, $\widehat{C}_G$, $\Omega$, $\Gamma$, $\bar{\kappa}$, but is independent of $\alpha$ and $\kappa$.

(b) Assume that $(\phi_0, \psi_0)\in \mathcal{V}^1$, (\textbf{A3}) is satisfied and  $c_\mathcal{R}|\Gamma|^\frac12|\Omega|^{-1}<1$ holds.
It follows from \eqref{esunif1}, \eqref{memu1b} and \eqref{memug1b} that
\begin{align}
&\|\mu^\alpha(t)\|_{L^2(0,T; H^1(\Omega))}
+\|\mu^\alpha_\Gamma(t)\|_{L^2(0,T; H^1(\Gamma))}\leq C(1+T),
\label{esL2H1mulok}
\end{align}
where the constant $C>0$ depends on $\|(\phi_0, \psi_0)\|_{\mathcal{V}^1}$, $C_F$, $\widehat{C}_F$, $C_G$, $\widehat{C}_G$, $\Omega$, $\Gamma$, $\bar{\kappa}$ and $c_\mathcal{R}$, but is independent of $\alpha$ and $\kappa$.

If we further assume $(\phi_0, \psi_0)\in \mathcal{V}^3$, then from \eqref{estt}, \eqref{memu1b} and \eqref{memug1b} we obtain
\begin{align}
&\|\mu^\alpha(t)\|_{L^\infty(0,T; H^1(\Omega))}
+\|\mu^\alpha_\Gamma(t)\|_{L^\infty(0,T; H^1(\Gamma))}\leq C,
\label{esL2H1mulokb}\\
&\|\mu^\alpha(t)\|_{L^2(0,T; H^3(\Omega))}
+\|\mu^\alpha_\Gamma(t)\|_{L^2(0,T; H^2(\Gamma))}\leq C(1+T),
\label{esL2H1mulokba}
\end{align}
where the constant $C>0$ may depend on $\|(\phi_0, \psi_0)\|_{\mathcal{V}^3}$, $C_F$, $\widetilde{C}_F$, $\widehat{C}_F$, $C_G$, $\widetilde{C}_G$, $\widehat{C}_G$, $\Omega$, $\Gamma$, $\bar{\kappa}$ and $c_\mathcal{R}$, but is independent of $\alpha$ and $\kappa$.

(c) Assume that $(\phi_0, \psi_0)\in \mathcal{V}^3$ and assumption (\textbf{A2}) is satisfied. We infer from \eqref{estt}, \eqref{memuhi} and Poincar\'e's inequality that
\begin{align}
&\|\mu^\alpha(t)\|_{H^1(\Omega)}+\|\mu^\alpha_\Gamma(t)\|_{H^1(\Gamma)}
\leq C(1+\|\phi^\alpha(t)\|_{H^r(\Omega)}),
\quad \forall\, t\geq 0,
\label{esL2H1muhi}
\end{align}
and
\begin{align}
&\|\mu^\alpha(t)\|_{L^2(0,T; H^3(\Omega))}+ \|\mu^\alpha_\Gamma(t)\|_{L^2(0,T; H^3(\Gamma))}
\leq C(1+T)\big(1+\|\phi^\alpha\|_{L^\infty(0,T;H^r(\Omega))}\big),
\label{esL2H23muhi}
\end{align}
where the constant $C>0$ depends on $\|(\phi_0, \psi_0)\|_{\mathcal{V}^3}$, $C_F$, $\widetilde{C}_F$, $C_G$, $\widetilde{C}_G$, $\Omega$, $\Gamma$ and $\kappa$, but is independent of $\alpha$.

\medskip
\textbf{Sixth estimate: $\mathcal{V}^2$-estimate for $(\phi^\alpha, \psi^\alpha)$}.
Using the facts $\langle\phi^\alpha_t\rangle_\Omega=\langle\psi^\alpha_t\rangle_\Gamma=0$,
we deduce that
\begin{align}
-(A^0_\Omega)^{-1}\phi_t^\alpha
&=\mu^\alpha-\langle\mu^\alpha\rangle_\Omega
=\mu^\alpha+\frac{|\Gamma|}{|\Omega|}\langle\partial_\mathbf{n}\phi^\alpha\rangle_\Gamma-
\langle F'(\phi^\alpha)\rangle_\Omega,\nonumber
\end{align}
and
\begin{align}
-(A^0_\Gamma)^{-1}\psi^\alpha_t
&=\mu^\alpha_\Gamma
-\langle\mu^\alpha_\Gamma\rangle_\Gamma\nonumber\\
&=\mu^\alpha_\Gamma-\langle\psi^\alpha\rangle_\Gamma
-\langle\partial_\mathbf{n}\phi^\alpha\rangle_\Gamma
-\langle G'(\psi^\alpha)\rangle_\Gamma.\nonumber
\end{align}
Therefore, the evolution system \eqref{ACH} can be transformed into the following nonlinear elliptic boundary value problem:
\begin{equation}
\left\{
\begin{aligned}
&-\Delta\phi^\alpha =-[\alpha+ (A^0_\Omega)^{-1}]\phi^\alpha_t
-\frac{|\Gamma|}{|\Omega|}\langle\partial_\mathbf{n}\phi^\alpha\rangle_\Gamma
-F'(\phi^\alpha)+\langle F'(\phi^\alpha)\rangle_\Omega,
&\text{in}\ \Omega,\\
&\phi^\alpha|_\Gamma=\psi^\alpha,
&\text{on}\ \Gamma,\\
&-\kappa\Delta_\Gamma \psi^\alpha+\psi^\alpha +\partial_\mathbf{n}\phi^\alpha
= -[\alpha+ (A^0_\Gamma)^{-1}]\psi^\alpha_t+ \langle\psi^\alpha\rangle_\Gamma+\langle \partial_\mathbf{n}\phi^\alpha\rangle_\Gamma\\
& \qquad\ \quad\qquad\qquad\qquad\qquad
-G'(\psi^\alpha)+\langle G'(\psi^\alpha)\rangle_\Gamma,
&\text{on}\ \Gamma.
\end{aligned}
\label{ellpal}
\right.
\end{equation}
Applying the elliptic estimate in Lemma \ref{esLepH2}, we obtain
\begin{align}
&\|\phi^\alpha(t)\|_{H^2(\Omega)}+\|\psi^\alpha(t)\|_{H^2(\Gamma)}\nonumber\\
&\quad \leq C\alpha\big(\|\phi^\alpha_t\|_{L^2(\Omega)}+ \|\psi^\alpha_t\|_{L^2(\Gamma)}\big)+C \big(\|\phi^\alpha_t\|_{(H^1(\Omega))^*}+ \|\psi^\alpha_t\|_{(H^1(\Gamma))^*}\big)\nonumber\\
&\qquad +C\|\langle\psi^\alpha\rangle_\Gamma\|_{L^2(\Gamma)}+C\big(\|\langle\partial_\mathbf{n}\phi^\alpha\rangle_\Gamma\|_{L^2(\Omega)}
+\|\langle\partial_\mathbf{n}\phi^\alpha\rangle_\Gamma\|_{L^2(\Gamma)}\big)\nonumber\\
&\qquad +C\|F'(\phi^\alpha)-\langle F'(\phi^\alpha)\rangle_\Omega\|_{L^2(\Omega)} +C\|G'(\psi^\alpha)-\langle G'(\psi^\alpha)\rangle_\Gamma\|_{L^2(\Gamma)},
\label{esalpH2}
\end{align}
where the constant $C>0$ depends on $\Omega$, $\Gamma$ and $\kappa$, but is independent of $\alpha$.

(a) Assume that $(\phi_0, \psi_0)\in \mathcal{V}^1$ and the growth assumption  (\textbf{A3}) is satisfied. By the Sobolev embedding theorem and H\"older's inequality, we obtain
\begin{align}
&\|F'(\phi^\alpha)\|_{L^2(\Omega)} +|\langle F'(\phi^\alpha)\rangle_\Omega|\leq C(1+\|\phi^\alpha\|_{H^1(\Omega)}^{p+1}),\nonumber\\
&\|G'(\psi^\alpha)\|_{L^2(\Gamma)} +|\langle G'(\psi^\alpha)\rangle_\Gamma| \leq C(1+\|\psi^\alpha\|_{H^1(\Gamma)}^{q+1}).\nonumber
\end{align}
Combining the above estimates with \eqref{esunif1}, \eqref{espncla} and \eqref{esalpH2}, we deduce from the interpolation inequality
 \begin{align}
 \|\phi^\alpha(t)\|_{H^r(\Omega)}\leq \epsilon \|\phi^\alpha(t)\|_{H^2(\Omega)}+C_\epsilon \|\phi^\alpha(t)\|_{H^1(\Omega)},\quad \forall\, \epsilon>0,\label{intpo}
 \end{align}
  that
\begin{align}
\|(\phi^\alpha(t), \psi^\alpha(t))\|_{L^2(0,T; \mathcal{V}^2)}&\leq C(1+T),\label{esL2H2}
\end{align}
where $C>0$ depends on $\|(\phi_0, \psi_0)\|_{\mathcal{V}^1}$, $C_F$, $\widehat{C}_F$, $C_G$, $\widehat{C}_G$, $\Omega$, $\Gamma$ and $\kappa$, but is independent of $\alpha$.

(b) Assume that $(\phi_0, \psi_0)\in \mathcal{V}^3$ and assumptions (\textbf{A3}) is satisfied. From \eqref{esunif1}, \eqref{estt}, \eqref{espncla}, \eqref{esalpH2} and \eqref{intpo}, we easily get
\begin{align}
\|(\phi^\alpha(t), \psi^\alpha(t))\|_{L^\infty(0,T; \mathcal{V}^2)}
&\leq C,\label{esLiH2aa}
\end{align}
where $C>0$ depends on $\|(\phi_0, \psi_0)\|_{\mathcal{V}^3}$, $C_F$, $\widetilde{C}_F$, $\widehat{C}_F$, $C_G$, $\widetilde{C}_G$, $\widehat{C}_G$, $\Omega$, $\Gamma$ and $\kappa$, but is independent of $\alpha$.

(c) Assume that $(\phi_0, \psi_0)\in \mathcal{V}^3$ and assumption (\textbf{A4}) is satisfied.
For sufficient regular $\phi^\alpha$, we observe that $\widetilde{F}(\phi^\alpha)|_\Gamma=\widetilde{F}(\psi^\alpha)$, where $\widetilde{F}$ is given by \eqref{tF}.
On the other hand, by assumption (\textbf{A4}), the sign condition \eqref{FFG} and the Cauchy--Schwarz inequality, we infer that
\begin{align}
&\int_\Gamma \widetilde{G}'(\psi^\alpha) \widetilde{F}'(\psi^\alpha)\, \d S
\geq \frac{1}{2\rho_1}\int_\Gamma |\widetilde{F}'(\psi^\alpha)|^2\, \d S-\frac{\rho_2^2}{2\rho_1}|\Gamma|.
\label{eee}
\end{align}
Then testing the equations for $\mu^\alpha$, $\mu_\Gamma^\alpha$ in \eqref{ACH} by $\widetilde{F}'(\phi^\alpha)$ and $\widetilde{F}'(\psi^\alpha)$, respectively, adding the resultants together, we obtain
\begin{align}
 &\int_\Omega \widetilde{F}''(\phi^\alpha)|\nabla \phi^\alpha|^2\, \d x
 +\int_\Omega |\widetilde{F}'(\phi^\alpha)|^2 \, \d x
 + \kappa\int_\Gamma \widetilde{F}''(\psi^\alpha)|\nabla_\Gamma \psi^\alpha|^2 \, \d S\nonumber\\
 &\qquad
 +\int_\Gamma \widetilde{G}'(\psi^\alpha) \widetilde{F}'(\psi^\alpha)\, \d S\nonumber\\
 &\quad
 =-\alpha\int_\Omega \phi^\alpha_t \widetilde{F}'(\phi^\alpha)\, \d x
 -\alpha\int_\Gamma \psi^\alpha_t \widetilde{F}'(\psi^\alpha)\, \d S
 + \int_\Omega \mu^\alpha  \widetilde{F}'(\phi^\alpha)\, \d x\nonumber\\
 &\qquad
 +\int_\Gamma \mu_\Gamma^\alpha \widetilde{F}'(\psi^\alpha)\, \d S
 -\int_\Gamma \psi^\alpha \widetilde{F}'(\psi^\alpha)\, \d S\nonumber\\
 &\qquad
 -\int_\Omega (F'(\phi^\alpha)-\widetilde{F}'(\phi^\alpha))\widetilde{F}'(\phi^\alpha) \, \d x  \nonumber\\
 &\qquad
 -\int_\Gamma (G'(\psi^\alpha)-\widetilde{G}'(\psi^\alpha))\widetilde{F}'(\psi^\alpha) \, \d S\nonumber\\
 &\quad \leq
 \frac12\int_\Omega |\widetilde{F}'(\phi^\alpha)|^2 \, \d x
  +\frac{3\alpha^2}{2} \|\phi_t^\alpha\|_{L^2(\Omega)}^2
  +\frac32\|\mu^\alpha\|_{L^2(\Omega)}^2 \nonumber\\
 &\qquad +\frac32\int_\Omega |F'(\phi^\alpha)-\widetilde{F}'(\phi^\alpha)|^2 \, \d x
  +\frac{1}{4\rho_1}\int_\Gamma |\widetilde{F}'(\psi^\alpha)|^2\, \d S
  \nonumber\\
 &\qquad + 4\rho_1\alpha^2 \|\psi^\alpha_t\|_{L^2(\Gamma)}^2 +4\rho_1\|\mu_\Gamma^\alpha\|_{L^2(\Gamma)}^2
 +4\rho_1\|\psi^\alpha\|^2_{L^2(\Gamma)}\nonumber\\
 &\qquad + 4\rho_1\int_\Gamma |G'(\psi^\alpha)-\widetilde{G}'(\psi^\alpha)|^2\, \d S.
 \label{dfdgin}
\end{align}
Exploiting \eqref{dfdgin} with \eqref{tF}, \eqref{esunif1}, \eqref{estt}, \eqref{esL2H1muhi} and \eqref{eee}, we have
\begin{align}
\|F'(\phi^\alpha)\|_{L^\infty(0,T; L^2(\Omega))}
+\|F'(\psi^\alpha)\|_{L^\infty(0,T; L^2(\Gamma))}
\leq C(1+\|\phi^\alpha\|_{L^\infty(0,T; H^r(\Omega))}).
\label{L2F}
\end{align}
Similarly, testing the equation for $\mu_\Gamma^\alpha$ in \eqref{ACH} by $\widetilde{G}'(\psi^\alpha)$, we get
\begin{align}
&  \kappa\int_\Gamma \widetilde{G}''(\psi^\alpha)|\nabla_\Gamma \psi^\alpha|^2 \, \d S +\int_\Gamma |\widetilde{G}'(\psi^\alpha)|^2 \,\d S\nonumber\\
&\quad = - \alpha\int_\Gamma \psi^\alpha_t \widetilde{G}'(\psi^\alpha) \,\d S
+\int_\Gamma \mu_\Gamma^\alpha \widetilde{G}'(\psi^\alpha)\, \d S
-\int_\Gamma \psi^\alpha \widetilde{G}'(\psi^\alpha)\, \d S\nonumber\\
&\qquad -\int_\Gamma \partial_\mathbf{n} \phi^\alpha \widetilde{G}'(\psi^\alpha)\, \d S
-\int_\Gamma (G'(\psi^\alpha)-\widetilde{G}'(\psi^\alpha))\widetilde{G}'(\psi^\alpha) \, \d S\nonumber\\
&\quad \leq \frac12\int_\Gamma |\widetilde{G}'(\psi^\alpha)|^2 \,\d S
+\frac52\alpha^2\|\psi_t^\alpha\|^2_{L^2(\Gamma)}
+\frac52 \|\mu_\Gamma^\alpha\|_{L^2(\Gamma)}^2
+\frac52\|\psi^\alpha\|^2_{L^2(\Gamma)}
\nonumber\\
&\qquad +\frac52\|\partial_\mathbf{n}\phi^\alpha\|_{L^2(\Gamma)}^2
+\frac52 \int_\Gamma |G'(\psi^\alpha)-\widetilde{G}'(\psi^\alpha)|^2\, \d S.
\label{dfdgina}
\end{align}
Using \eqref{tG}, \eqref{esunif1}, \eqref{espncla}, \eqref{esL2H1muhi} and \eqref{dfdgina}, we have
\begin{align}
\|G'(\psi^\alpha)\|_{L^\infty(0,T; L^2(\Gamma))}\leq C\big(1+\|\phi^\alpha\|_{L^\infty(0,T; H^r(\Omega))}\big).
\label{L2G}
\end{align}
Finally, we deduce from \eqref{espncla}, \eqref{FGm1},  \eqref{esalpH2}, \eqref{L2F}, \eqref{L2G} and the interpolation inequality \eqref{intpo} that
\begin{align}
\|(\phi^\alpha(t), \psi^\alpha(t))\|_{L^\infty(0,T; \mathcal{V}^2)}&\leq C,
\label{esLiH2}
\end{align}
where $C>0$ depends on $\|(\phi_0, \psi_0)\|_{\mathcal{V}^3}$, $C_F$, $\widetilde{C}_F$, $C_G$, $\widetilde{C}_G$, $\Omega$, $\Gamma$, $\rho_1$, $\rho_2$ and $\kappa$, but is independent of $\alpha$.

\medskip

\textbf{Seventh estimate: $\mathcal{V}^3$-estimate for $(\phi^\alpha, \psi^\alpha)$}.
Assume that $(\phi_0, \psi_0)\in \mathcal{V}^3$ and either (\textbf{A3}) or (\textbf{A4}) is satisfied.
Applying the elliptic estimate in Lemma \ref{esLepH2} once again, we obtain
\begin{align}
&\|\phi^\alpha(t)\|_{H^3(\Omega)}+\|\psi^\alpha(t)\|_{H^3(\Gamma)}\nonumber\\
&\quad \leq C\alpha\big(\|\phi^\alpha_t\|_{H^1(\Omega)}+ \|\psi^\alpha_t\|_{H^1(\Gamma)}\big)+C \big(\|\phi^\alpha_t\|_{(H^1(\Omega))^*}+ \|\psi^\alpha_t\|_{(H^1(\Gamma))^*}\big)\nonumber\\
&\qquad +C\|\langle\psi^\alpha\rangle_\Gamma\|_{H^1(\Gamma)}+C\big(\|\langle\partial_\mathbf{n}\phi^\alpha\rangle_\Gamma\|_{H^1(\Omega)}
+\|\langle\partial_\mathbf{n}\phi^\alpha\rangle_\Gamma\|_{H^1(\Gamma)}\big)\nonumber\\
&\qquad +C\|F'(\phi^\alpha)-\langle F'(\phi^\alpha)\rangle_\Omega\|_{H^1(\Omega)} +C\|G'(\psi^\alpha)-\langle G'(\psi^\alpha)\rangle_\Gamma\|_{H^1(\Gamma)}.
\label{esalpH3}
\end{align}
Then it follows from \eqref{esunif1}, \eqref{phiuni}, \eqref{estt}--\eqref{inttr}, \eqref{esLiH2aa}, \eqref{esLiH2} and the fact $\alpha\in (0,1]$ that
\begin{align}
\|(\phi^\alpha(t), \psi^\alpha(t))\|_{L^2(0,T; \mathcal{V}^3)}&\leq C(1+T),
\label{esL2H3}
\end{align}
where $C>0$ may depend on $\|(\phi_0, \psi_0)\|_{\mathcal{V}^3}$, $C_F$, $\widetilde{C}_F$, $\widehat{C}_F$,  $C_G$, $\widetilde{C}_G$, $\widehat{C}_G$, $\Omega$, $\Gamma$, $\rho_1$, $\rho_2$ and $\kappa$, but is independent of $\alpha$.

\subsection{Proof of Theorem \ref{main1}: the case with surface diffusion $\kappa>0$}

Combining the local well-posedness result for the regularized problem \eqref{ACH} (see Proposition \ref{exeACH}) and the uniform estimates obtained in the previous section, we are in a position to prove Theorem \ref{main1}, with $\kappa>0$ being an arbitrary but fixed constant.\smallskip

 (1) \textbf{Existence of global strong solutions}.
 Assume that $(\phi_0, \psi_0)\in \mathcal{V}^3$ and assumptions (\textbf{A1}), (\textbf{A2}) are satisfied. In addition, we suppose that either (\textbf{A3}) or (\textbf{A4}) is fulfilled.
  The uniform estimates \eqref{esunif1}, \eqref{phiuni}, \eqref{estt}, \eqref{esL2H1muhi}, \eqref{esL2H23muhi}, \eqref{esLiH2aa}, \eqref{esLiH2} and \eqref{esL2H3} imply that the unique local strong solution $(\phi^\alpha, \psi^\alpha)$ to problem \eqref{ACH} can be extended beyond $T_\alpha$ to the whole interval $[0, T]$ for arbitrary $T>0$. Moreover, these uniform estimates allow us to find a convergent subsequence of the approximating solutions still denoted by $(\phi^\alpha, \psi^\alpha, \mu^\alpha, \mu^\alpha_\Gamma)$ without abusing of notion as well as some limit functions $(\phi, \psi, \mu, \mu_\Gamma)$, such that as $\alpha\to 0^+$
\begin{equation}
\begin{aligned}
&(\phi^\alpha, \psi^\alpha) \rightharpoonup (\phi, \psi)\quad \qquad \text{weakly star in}\ L^\infty(0,T; \mathcal{V}^2),\\
&(\phi^\alpha, \psi^\alpha) \rightharpoonup (\phi, \psi)\quad\qquad \text{weakly in}\ L^2(0,T; \mathcal{V}^3),\\
&(\phi^\alpha_t,\psi^\alpha_t)  \rightharpoonup (\phi_t,\psi_t)\quad\quad \  \text{weakly in}\ L^2(0,T; \mathcal{V}^1),\\
&(\phi^\alpha_t,\psi^\alpha_t)  \rightharpoonup (\phi_t,\psi_t)\quad\quad \  \text{weakly star in}\ L^\infty(0,T; (H^1(\Omega))^*\times (H^1(\Gamma))^*),\\
&(\mu^\alpha,\mu^\alpha_\Gamma) \rightharpoonup (\mu, \mu_\Gamma) \ \ \qquad \text{weakly in}\ L^2(0,T; H^3(\Omega)\times H^3(\Gamma)),\\
&\partial_\mathbf{n}\phi^\alpha \rightharpoonup \partial_\mathbf{n}\phi \quad\ \qquad \qquad \text{weakly in}\ L^2(0,T; H^\frac32(\Gamma)),\\
&(\alpha \phi^\alpha_t, \alpha \psi^\alpha_t)  \to (0,0)\, \qquad\text{strongly in}\ L^2(0,T; \mathcal{H}),
\end{aligned}
\nonumber
\end{equation}
By the well-known compactness lemma (cf. e.g., \cite[Section 8, Corollary 4]{SI}), we obtain
\begin{align}
&(\phi^\alpha, \psi^\alpha) \to (\phi, \psi)\quad \text{strongly in}\ C([0,T]; \mathcal{V}^{2}),
\nonumber
\end{align}
which implies
$$
\phi|_{t=0}=\phi_0,\quad \text{ a.e. in}\ \Omega, \quad \psi|_{t=0}=\psi_0,\quad \text{ a.e. in}\ \Gamma,
$$
and
$$\phi^\alpha \to \phi,\ \ \psi^\alpha \to \psi,\quad \text{a.e. in }\Omega\times(0,T)\ \text{and}\ \Gamma\times(0,T),\text{ respectively.}$$
From the pointwise convergence of $(\phi^\alpha, \psi^\alpha)$ and the uniform estimates  \eqref{L2F}, \eqref{L2G}, we are able to show the convergence of nonlinear terms $F', G'$ such that
\begin{align*}
(F'(\phi^\alpha), G'(\psi^\alpha))\rightharpoonup (F'(\phi), G'(\psi)) \quad \text{weakly in}\ L^2(0,T; \mathcal{H}),
\end{align*}
and then identify $\mu$ and $\mu_\Gamma$ as
\begin{align}
&\mu=-\Delta \phi+F'(\phi) \qquad\qquad\qquad \quad\ \   \text{in}\ L^2(0,T; L^2(\Omega)),\label{lmu}\\
&\mu_\Gamma=-\kappa \Delta_\Gamma\psi+\psi+\partial_\mathbf{n}\phi+G'(\psi) \quad \text{in}\ L^2(0,T; L^2(\Gamma)).\label{lmug}
\end{align}
The above subsequential convergence results enable us to pass to the limit in the regularized system \eqref{ACH} as $\alpha \to 0^+$ to conclude that the limit functions $(\phi, \psi, \mu, \mu_\Gamma)$ satisfy the original system \eqref{CH} for a.e. $t\in (0,T)$.
Moreover, the spatial regularity of $(\phi, \psi)$ can be improved by using elliptic estimates. Applying Lemma \ref{esLepH2} to the following system (cf. \eqref{lmu}--\eqref{lmug})
\begin{equation}
\left\{
\begin{aligned}
&-\Delta \phi=\mu-F'(\phi),
&\text{in}\ \Omega,\\
&\phi|_\Gamma=\psi,&\text{on}\ \Gamma,\\
&-\kappa \Delta_\Gamma\psi+\psi+\partial_\mathbf{n}\phi
=\mu_\Gamma-G'(\psi),
&\text{on}\ \Gamma,
\end{aligned}
\label{CHEP}
\right.
\end{equation}
 we deduce that for a.e. $t\in (0,T)$,
\begin{align}
&\|\phi\|_{H^{s+2}(\Omega)}+\|\psi\|_{H^{s+2}(\Gamma)}\nonumber\\
&\quad \leq C(\|\mu\|_{H^s(\Omega)}+\|\mu_\Gamma\|_{H^s(\Gamma)}
+\|F'(\phi)\|_{H^s(\Omega)}+\|G'(\psi)\|_{H^s(\Gamma)}),\quad s=1,3.\label{esH3}
\end{align}
By assumption (\textbf{A1}), \eqref{esL2H1muhi}, \eqref{esL2H23muhi}, \eqref{esLiH2aa}, \eqref{esLiH2}, \eqref{esL2H3} and the Sobolev embedding theorem, we have
\begin{align*}
\|(\phi, \psi)\|_{L^\infty(0,T; \mathcal{V}^3)}\leq C \quad\text{and}\quad \|(\phi, \psi)\|_{L^2(0,T; \mathcal{V}^5)}\leq C.
\end{align*}
In summary, we can conclude that the limit function $(\phi, \psi)$ is a global strong solution to the original problem \eqref{CH}. Besides, its regularity enables us to derive the
energy identity \eqref{BELa}.\smallskip

(2) \textbf{Existence of global weak solutions}.
Assume that $(\phi_0, \psi_0)\in \mathcal{V}^1$ and assumptions (\textbf{A1})--(\textbf{A3}) are satisfied. By the classical density argument, we first find an approximating sequence $\{(\phi_0^n, \psi_0^n)\}_{n=1}^\infty\subset \mathcal{V}^3$ such that $(\phi_0^n, \psi_0^n)$ strongly converge to $(\phi_0, \psi_0)$ in $\mathcal{V}^1$ as $n\to +\infty$. For every $n\in \mathbb{N}$ and $\alpha \in (0,1]$,  problem \eqref{ACH} admits a global strong solution denoted by $(\phi^{n,\alpha}, \psi^{n,\alpha})$. Then by the uniform estimates \eqref{esunif1}, \eqref{phiuni}, \eqref{esL2H1mulo} and \eqref{esL2H2}, we have the following convergence as $n\to+\infty$ and $\alpha\to 0^+$ (up to a subsequence, not relabelled for simplicity):
\begin{align*}
&(\phi^{n,\alpha}, \psi^{n,\alpha}) \rightharpoonup (\phi, \psi)\qquad \quad  \text{weakly star in}\ L^\infty(0,T; \mathcal{V}^1),\\
&(\phi^{n,\alpha}, \psi^{n,\alpha}) \rightharpoonup (\phi, \psi)\qquad \quad \text{weakly in}\ L^2(0,T; \mathcal{V}^2),\\
&(\phi^{n,\alpha}_t, \psi^{n,\alpha}_t)  \rightharpoonup (\phi_t, \psi_t) \ \qquad  \text{weakly in}\ L^2(0,T; (H^1(\Omega))^*\times (H^1(\Gamma))^*),\\
&(\mu^{n,\alpha},  \mu^{n,\alpha}_\Gamma) \rightharpoonup (\mu, \mu_\Gamma) \ \ \qquad \text{weakly in}\ L^2(0,T; H^1(\Omega)\times H^1(\Gamma)),\\
&\partial_\mathbf{n}\phi^{n,\alpha} \rightharpoonup \partial_\mathbf{n}\phi\ \ \qquad \qquad \quad\ \text{weakly in}\ L^2(0,T; H^\frac12(\Gamma)),\\
&(\alpha \phi^{n,\alpha}_t,\alpha \psi^{n,\alpha}_t)  \to (0, 0)\, \qquad\text{strongly in}\ L^2(0,T; \mathcal{H}),
\end{align*}
and the convergence on initial data (cf. e.g., \cite[Lemma 3.1.7]{Z04})
\begin{align}
(\phi^{n,\alpha}, \psi^{n,\alpha})|_{t=0} \rightharpoonup (\phi, \psi)|_{t=0}\quad \text{weakly in}\ (H^1(\Omega))^*\times (H^1(\Gamma))^*,\nonumber
\end{align}
such that $(\phi, \psi)|_{t=0}=(\phi_0, \psi_0)$.
Using the Aubin--Lions compactness lemma (cf. e.g., \cite{SI}), we have
\begin{align}
&(\phi^{n,\alpha}, \psi^{n,\alpha}) \to (\phi, \psi)\quad \text{strongly in}\ L^2(0,T; \mathcal{V}^{1}),
\nonumber
\end{align}
which together with the growth assumption (\textbf{A3}) and the Sobolev embedding theorem yields
\begin{align*}
(F'(\phi^{n,\alpha}), G'(\psi^{n,\alpha}))\rightharpoonup (F'(\phi), G'(\psi)) \quad \text{weakly in}\ L^2(0,T; \mathcal{H}).
\end{align*}
Thus we still have the identities \eqref{lmu} and \eqref{lmug}. Using (\textbf{A3}) again, we deduce that
\begin{align}
&\|F'(\phi)\|_{H^1(\Omega)}\nonumber\\
&\quad \leq \|F''(\phi)\nabla \phi\|_{L^2(\Omega)}+\|F'(\phi)\|_{L^2(\Omega)}\nonumber\\
&\quad \leq C\|\nabla \phi\|_{L^{2p+2}(\Omega)}\|1+|\phi|^p\|_{L^\frac{2p+2}{p}(\Omega)}+ C\|1+|\phi|^{p+1}\|_{L^{2}(\Omega)}\nonumber\\
&\quad \leq C\|\phi\|_{H^2(\Omega)}\Big(1+\|\phi\|_{H^1(\Omega)}^p\Big)+C\|\phi\|_{H^1(\Omega)}^{p+1}+C,
\label{esFF}
\end{align}
and in a similar manner
\begin{align}
\|G'(\psi)\|_{H^1(\Gamma)}
&\leq C\|\psi\|_{H^2(\Gamma)}\Big(1+\|\psi\|_{H^1(\Gamma)}^q\Big)+C\|\psi\|_{H^1(\Gamma)}^{q+1}+C.
\label{esGG}
\end{align}
As a consequence, it follows from the above estimates and \eqref{esunif1}, \eqref{esL2H2}, \eqref{CHEP}, \eqref{esH3} that
$$\|(\phi, \psi)\|_{L^2(0,T;\mathcal{V}^3)}\leq C.$$
In summary, the limit function $(\phi, \psi)$ is a global weak solution to problem \eqref{CH} satisfying the weak formulation \eqref{we1}--\eqref{we1a}.
It is standard to show that the weak solution fulfills the energy inequality \eqref{aenergyin} by passing to the limit $n\to+\infty$, $\alpha \to 0^+$ in the energy equality for strong solutions $(\phi^{n, \alpha}, \psi^{n, \alpha})$.
\smallskip

(3) \textbf{Continuous dependence and uniqueness}.
Let $(\phi_i, \psi_i)$, $i=1,2$, be two global weak solutions to problem \eqref{CH} subject to initial data $(\phi_{0i}, \psi_{0i})\in \mathcal{V}^1$ with $\langle\phi_{01} \rangle_\Omega=\langle\phi_{02}\rangle_\Omega$, $\langle\psi_{01} \rangle_\Gamma=\langle\psi_{02}\rangle_\Gamma$. Denote the differences
\begin{align*}
&\overline{\phi}=\phi_1-\phi_2,\quad  \overline{\psi}=\psi_1-\psi_2,\quad \overline{\phi}_0= \phi_{01}- \phi_{02},\quad  \overline{\psi}_0=\psi_{01}-\psi_{02}.
\end{align*}
Then the pair $(\overline{\phi}, \overline{\psi})$ satisfies (keeping the weak formulations in Definition \ref{defweak} in mind)
\begin{equation}
\left\{
\begin{aligned}
&\overline{\phi}_t=\Delta \overline{\mu},\quad
\text{with}\ \ \overline{\mu}=-\Delta\overline{\phi}+F'(\phi_1)-F'(\phi_2),
&\text{in}\ \Omega\times (0,T),\\
&\partial_\mathbf{n}\overline{\mu}=0,
&\text{on}\ \Gamma\times (0,T),\\
&\overline{\phi}|_\Gamma=\overline{\psi},
&\text{on}\ \Gamma\times (0,T),\\
&\overline{\psi}_t=\Delta_\Gamma \overline{\mu}_\Gamma,
&\text{on}\ \Gamma\times (0,T),\\
&\quad \text{with}\ \  \overline{\mu}_\Gamma=-\kappa\Delta_\Gamma\overline{\psi}
+\overline{\psi}+\partial_\mathbf{n}\overline{\phi}+G'(\psi_1)-G'(\psi_2),
&\text{on}\ \Gamma\times (0,T),\\
&\overline{\phi}|_{t=0}=\overline{\phi}_0,
&\text{in}\ \Omega,\\
&\overline{\psi}^\alpha|_{t=0}=\overline{\psi}_0,
&\text{on}\ \Gamma,
\end{aligned}
\label{diffCH}
\right.
\end{equation}
Since $\langle\overline{\phi}(t)\rangle_\Omega=\langle\overline{\psi}(t)\rangle_\Gamma=0$ for all $t\in [0,T]$, we can test the first and fourth equations in \eqref{diffCH} by $(A_\Omega^0)^{-1} \overline{\phi}$ and $(A_\Gamma^0)^{-1} \overline{\psi}$, respectively, adding the resultants together, we have
\begin{align}
&\frac12\frac{\d}{\d t}\big(\|\overline{\phi}\|_{(H^1(\Omega))^*}^2
+ \|\overline{\psi}\|_{(H^1(\Gamma))^*}^2\big)
+ \|\nabla \overline{\phi}\|_{L^2(\Omega)}^2
+ \kappa \|\nabla_\Gamma \overline{\psi}\|_{L^2(\Gamma)}^2
+ \|\overline{\psi}\|_{L^2(\Gamma)}^2\nonumber\\
&\quad
=-\int_\Omega (F'(\phi_1)-F'(\phi_2)) \overline{\phi}\, \d x
-\int_\Gamma (G'(\psi_1)-G'(\psi_2)) \overline{\psi}\, \d S\nonumber\\
&\quad
=-\int_\Omega\int_0^1 F''(\phi_2+s(\phi_1-\phi_2))\overline{\phi}^2\, \d s\d x\nonumber\\
&\qquad -\int_\Gamma\int_0^1 G''(\psi_2+s(\psi_1-\phi_2))\overline{\psi}^2\, \d s\d S\nonumber\\
&\quad \leq \widetilde{C}_F\|\overline{\phi}\|_{L^2(\Omega)}^2
+\widetilde{C}_G\|\overline{\psi}\|_{L^2(\Gamma)}^2,\nonumber
\end{align}
where in the last line we have used (\textbf{A2}).
Exploiting the interpolation inequalities and the trace theorem
\begin{align*}
\|\overline{\phi}\|_{L^2(\Omega)}^2
&\leq C\|\overline{\phi}\|_{H^1(\Omega)}\|\overline{\phi}\|_{(H^1(\Omega))^*},\\ \|\overline{\psi}\|_{L^2(\Gamma)}^2
&\leq C\|\overline{\phi}\|_{H^{\frac12+\epsilon}(\Omega)}^2\leq C\|\overline{\phi}\|_{H^1(\Omega)}^{\frac32+\epsilon}\|\overline{\phi}\|_{(H^1(\Omega))^*}^{\frac12-\epsilon},\quad
\text{for some}\ \epsilon \in (0,\frac12),
\end{align*}
we deduce from Poincar\'e's inequality and Young's inequality that
\begin{align}
\frac{\mathrm{d}}{\mathrm{d}t}\big(\|\overline{\phi}\|_{(H^1(\Omega))^*}^2+ \|\overline{\psi}\|_{(H^1(\Gamma))^*}^2\big)+\|(\overline{\phi}, \overline{\psi})\|_{\mathcal{V}^1}^2 \leq C\|\overline{\phi}\|_{(H^1(\Omega))^*}^2.\nonumber
\end{align}
Thanks to Gronwall's lemma, we obtain
\begin{align}
&\|\overline{\phi}(t)\|_{(H^1(\Omega))^*}^2+ \|\overline{\psi}(t)\|_{(H^1(\Gamma))^*}^2
+\int_0^t \|(\overline{\phi}(\tau), \overline{\psi}(\tau))\|_{\mathcal{V}^1}^2 \, \d \tau\nonumber\\
&\quad \leq  C_T \big(\|\overline{\phi}_0\|_{(H^1(\Omega))^*}^2+ \|\overline{\psi}_0\|_{(H^1(\Gamma))^*}^2\big),
\quad \forall\, t\in(0,T),
\label{conti}
\end{align}
where $C_T>0$ depends on $T$, but is independent of the initial data.
The continuous dependence result \eqref{conti} easily yields the uniqueness of global weak solutions to problem \eqref{CH}.
We note that the inequality \eqref{conti} automatically holds for global strong solutions and in that case actually continuous dependence in stronger norms can be obtained in a similar manner.

The proof of Theorem \ref{main1} is complete.

\subsection{Proof of Theorem \ref{main1a}: the case without surface diffusion $\kappa=0$}
We observe that the surface diffusion term $-\kappa\Delta_\Gamma \psi$ (with $\kappa>0$) can be regarded as a regularization on the boundary.
Thus, we first solve the approximating system \eqref{ACH} with $\alpha, \kappa\in (0, 1]$, obtaining uniform estimates with respect to both  $\alpha$, $\kappa$  and then pass to the limit as $(\alpha,\kappa)\to (0^+, 0^+)$.
The procedure is similar to the previous section. Hence, we just sketch the essential steps.

Assume $(\phi_0, \psi_0)\in \mathcal{V}^3$.
 We solve the regularized problem \eqref{ACH} with $\alpha, \kappa\in (0,1]$.
 The approximating global strong solutions are denoted by $(\phi^{\alpha, \kappa},\psi^{\alpha,\kappa})$ with bulk/surface chemical potentials $(\mu^{\alpha,\kappa}, \mu^{\alpha,\kappa}_\Gamma)$.
Then by the uniform estimates \eqref{esunif1}, \eqref{phiuni}, \eqref{estt}, \eqref{espnh-12}, \eqref{esL2H1mulokb} and \eqref{esL2H1mulokba}, we can find a convergent subsequence, still denoted by $(\phi^{\alpha, \kappa},\psi^{\alpha,\kappa})$ without abusing of notion, such that as $(\alpha,\kappa)\to (0^+, 0^+)$
\begin{align*}
&(\phi^{\alpha, \kappa}, \psi^{\alpha, \kappa}) \rightharpoonup (\phi, \psi)\qquad\qquad  \quad \!\!\!\text{weakly star in}\ L^\infty(0,T; V^1),\\
&(\phi^{\alpha, \kappa}_t,\psi^{\alpha, \kappa}_t)  \rightharpoonup (\phi_t,\psi_t)\qquad\qquad  \! \text{weakly star in}\ L^\infty(0,T; (H^1(\Omega))^*\times (H^1(\Gamma))^*),\\
&(\phi^{\alpha, \kappa}_t,\psi^{\alpha, \kappa}_t)
\rightharpoonup (\phi_t, \psi_t)\qquad\qquad  \! \text{weakly in}\ L^2(0,T; H^1(\Omega)\times L^2(\Gamma)),\\
&(\mu^{\alpha, \kappa}, \mu^{\alpha, \kappa}_\Gamma) \rightharpoonup (\mu, \mu_\Gamma)
\, \qquad \qquad \text{weakly star in}\ L^\infty(0,T; H^1(\Omega)\times H^1(\Gamma)),\\
&(\mu^{\alpha, \kappa}, \mu^{\alpha, \kappa}_\Gamma) \rightharpoonup (\mu, \mu_\Gamma)
\, \qquad \qquad \text{weakly in}\ L^2(0,T; H^3(\Omega)\times H^2(\Gamma)),\\
&\partial_\mathbf{n}\phi^{\alpha, \kappa} \rightharpoonup \partial_\mathbf{n}\phi \qquad\qquad \qquad \quad\ \, \text{weakly star in}\ L^\infty(0,T; H^{-\frac12}(\Gamma)),\\
&(\alpha \phi^{\alpha, \kappa}_t,\alpha \psi^{\alpha, \kappa}_t)  \to (0, 0) \qquad\ \ \quad \text{strongly in}\ L^2(0,T; \mathcal{H}),\\
&\kappa\nabla_\Gamma \psi^{\alpha, \kappa}\to 0\qquad\qquad\qquad\qquad\!\!  \text{strongly in}\ L^\infty(0,T; L^2(\Gamma)).
\end{align*}
By the Aubin--Lions compactness lemma, up to a subsequence,
$$ (\phi^{\alpha, \kappa}, \psi^{\alpha, \kappa}) \to (\phi, \psi)\qquad\qquad  \text{strongly in}\ C([0,T]; V^{1-\epsilon}),\quad \epsilon\in (0,\frac12),$$
which again implies the almost everywhere convergence $(\phi^{\alpha, \kappa}, \psi^{\alpha, \kappa}) \to (\phi, \psi)$.
Then by the growth assumption (\textbf{A3}) (for the case $\kappa=0$) and the Sobolev embedding theorem, we easily have
\begin{align*}
(F'(\phi^{\alpha, \kappa}), G'(\psi^{\alpha, \kappa}))\rightharpoonup (F'(\phi), G'(\psi)) \quad \text{weakly in}\ L^2(0,T; \mathcal{H}).
\end{align*}
From the expression of $\mu^{\alpha, \kappa}$, we see that $\Delta\phi^{\alpha, \kappa}$ is uniformly bounded in $L^2(0,T; L^2(\Omega))$ and thus
\begin{align*}
&\Delta \phi^{\alpha, \kappa} \rightharpoonup  \Delta \phi \qquad \ \text{weakly in}\ L^2(0,T; L^2(\Omega)).
\end{align*}
Hence, we can identify the bulk chemical potential $\mu$ as
\begin{align}
&\mu=-\Delta \phi+F'(\phi) \qquad \text{in}\ L^2(0,T; L^2(\Omega)).\label{lmuk}
\end{align}
On the other hand, for any $\xi\in L^2(0,T;H^1(\Gamma))$, we have
\begin{align*}
&\int_0^T\int_\Gamma \mu^{\alpha, \kappa}_\Gamma \xi\, \d S \d t\nonumber\\
&\quad = \int_0^T\int_\Gamma \Big(\kappa \nabla_\Gamma \psi^{\alpha, \kappa} \cdot \nabla_\Gamma\xi +\psi^{\alpha, \kappa} \xi+ \partial_\mathbf{n}\phi^{\alpha, \kappa}\xi+ G'(\psi^{\alpha, \kappa})\xi\Big) \, \d S\d t,
\end{align*}
then the previous weak convergence results yield that as $(\alpha, \kappa)\to (0^+, 0^+)$, it holds
$$
\int_0^T\int_\Gamma \mu_\Gamma \xi\, \d S\d t
= \int_0^T\int_\Gamma \big(\psi +\partial_\mathbf{n}\phi+ G'(\psi)\big) \xi\, \d S\d t.
$$
Thus we can identify surface chemical potential $\mu_\Gamma$ as
\begin{align}
\mu_\Gamma= \psi+\partial_\mathbf{n}\phi+G'(\psi)\qquad\text{in}\ L^2(0,T; H^{-1}(\Gamma)).
\label{gmuk}
\end{align}
From the regularity of $\mu_\Gamma$, $\psi$ and $G'(\psi)$, by comparison, we see that $\partial_\mathbf{n}\phi \in L^2(0,T; L^2(\Gamma))$ and \eqref{gmuk} indeed holds in $L^2(0,T; L^2(\Gamma))$.

The above subsequential convergence results enable us to pass to the limit as $(\alpha, \kappa)\to (0^+, 0^+)$ to conclude that the limit functions $(\phi, \psi, \mu, \mu_\Gamma)$ satisfy the weak formulations \eqref{we1}--\eqref{we1a} for a.e. $t\in (0,T)$, while \eqref{we2a}, \eqref{we2} are satisfied a.e. in $\Omega\times(0,T)$ and on $\Gamma\times(0,T)$, respectively.
Besides, we also have $(\phi, \psi)|_{t=0}=(\phi_0, \psi_0)$.

The growth assumption (\textbf{A3}) for $\kappa=0$ yields
\begin{align}
\|G'(\psi)\|_{H^1(\Gamma)}
&\leq \|G''(\psi)\nabla_\Gamma \psi\|_{L^2(\Gamma)}+\|G'(\psi)\|_{L^2(\Gamma)}\nonumber\\
&\leq \widehat{C}_G \|\nabla_\Gamma \psi\|_{L^2(\Gamma)} + C\big(\|\psi\|_{L^2(\Gamma)}+1\big).
\label{esGGa}
\end{align}
Then applying the classical elliptic estimate to the Robin problem
\begin{equation}
\left\{
\begin{aligned}
&-\Delta \phi=\mu-F'(\phi),
&\text{in}\ \Omega,\\
&\phi|_\Gamma=\psi,
&\text{on}\ \Gamma,\\
&\partial_\mathbf{n}\phi+\psi=\mu_\Gamma-G'(\psi),
&\text{on}\ \Gamma,
\end{aligned}
\label{CHEPk}
\right.
\end{equation}
 by the trace theorem $\|\psi\|_{H^1(\Gamma)}\leq C\|\phi\|_{H^{r}(\Omega)}$ for some $r\in (\frac32, 2)$, \eqref{esFF}, \eqref{esGGa}, the interpolation inequality \eqref{intpo}, the Sobolev embedding theorem and Young's inequality, we have for a.e. $t\in (0,T)$,
\begin{align}
\|\phi\|_{H^2(\Omega)}
&\leq C\big(\|\mu\|_{L^2(\Omega)}+\|F'(\phi)\|_{L^2(\Omega)}
+\|\mu_\Gamma\|_{H^\frac12(\Gamma)}
+\|G'(\psi)\|_{H^\frac12(\Gamma)}\big)\nonumber\\
&\leq C\big(\|\mu\|_{L^2(\Omega)}+\|\mu_\Gamma\|_{H^1(\Gamma)}+ \|\phi\|_{L^{2p+2}(\Omega)}^{p+1}+ \|G'(\psi)\|_{H^1(\Gamma)}+1\big)\nonumber\\
&\leq C\big(\|\mu\|_{L^2(\Omega)}+\|\mu_\Gamma\|_{H^1(\Gamma)}+ \|\phi\|_{H^1(\Omega)}^{p+1}+ \|\phi\|_{H^{r}(\Omega)}+ \|\psi\|_{L^2(\Gamma)}+1\big)\nonumber\\
&\leq \frac12\|\phi\|_{H^2(\Omega)}+C\big(\|\mu\|_{L^2(\Omega)}
+\|\mu_\Gamma\|_{H^1(\Gamma)}\big)\nonumber\\
&\quad +C\big(\|\phi\|_{H^1(\Omega)}^{p+1}+ \|\phi\|_{H^{1}(\Omega)}+1\big).
\label{esH3k}
\end{align}
As a consequence,
\begin{align}
\|(\phi, \psi)\|_{L^\infty(0,T;V^2)}\leq C.\label{infH2}
\end{align}
From the elliptic estimate \cite[Chapter 2, Theorem 5.4]{LM68} (see also \cite[Part I, Chapter 3, Theorem 3.2]{BG}), the fact $(\mu, \mu_\Gamma)\in L^\infty(0,T; H^1(\Omega)\times H^1(\Gamma))$ and \eqref{infH2}, we further infer that
\begin{align}
\|\phi\|_{H^\frac52(\Omega)}
&\leq C\big(\|\mu\|_{H^\frac12(\Omega)}+\|F'(\phi)\|_{H^\frac12(\Omega)}
+\|\mu_\Gamma\|_{H^1(\Gamma)}
+\|G'(\psi)\|_{H^1(\Gamma)}\big)\nonumber\\
&\leq C,
\label{esH3ka}
\end{align}
namely, $(\phi,\psi)\in L^\infty(0,T; V^\frac52)$. In a similar manner, from
$(\mu, \mu_\Gamma)\in L^2(0,T; H^3(\Omega)\times H^2(\Gamma))$ and \eqref{infH2} we can deduce that  $(\phi, \psi)\in L^2(0,T; V^\frac72)$.

As far as the global weak solution is concerned, i.e., the initial datum satisfies $(\phi_0, \psi_0)\in \mathcal{V}^1$, we only have weaker uniform estimates \eqref{esunif1}, \eqref{phiuni}, \eqref{espnh-12} and \eqref{esL2H1mulok}.
By a similar density argument like before, we can conclude the existence of a global weak solution to problem \eqref{CH} with excepted regularity properties.

Finally, the continuous dependence result follows from the same argument as for \eqref{conti} in the case $\kappa>0$ (replacing $\mathcal{V}^1$-norm for $(\overline{\phi},\overline{\psi})$ with $V^1$-norm), which implies the uniqueness of global weak/strong solution when $\kappa=0$.

The proof of Theorem \ref{main1a} is complete.

\section{Long-time Behavior}
\setcounter{equation}{0}

\subsection{Uniform-in-time estimates}
First, we derive some uniform-in-time estimates for global weak/strong solutions to problem
\eqref{CH} that are helpful for the investigation of their long-time behavior.

First, it follows from the energy inequality \eqref{aenergyin} (for weak solutions) or the energy identity \eqref{BELa} (for strong solutions) that
\begin{lemma}
Suppose that the assumptions of Theorems \ref{main1} or \ref{main1a} are satisfied. Let $(\phi, \psi)$ be a global weak/strong solution to problem \eqref{CH}.
Then the following estimates hold:
\begin{align}
&\|(\phi(t),\psi(t)\|_{\mathds{V}^1_\kappa}\leq C,\quad \forall\, t\geq 0,
\label{uniH1}\\
&\int_0^{+\infty}\left(\|\nabla\mu(t)\|_{L^2(\Omega)}^2
+\|\nabla_\Gamma\mu_\Gamma(t)
\|_{L^2(\Gamma)}^2 \right)\,\d t\leq C,
\label{unimu}
\end{align}
where the constant $C$ may depend on $E(\phi_0, \psi_0)$, $C_F$, $C_G$, $\Omega$ and $\Gamma$.
\end{lemma}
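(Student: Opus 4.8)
The plan is to read both bounds directly off the dissipative energy relation, using only the one-sided bounds on the potentials furnished by assumption (\textbf{A2}). The key observation is that the quadratic part of the energy $E$ in \eqref{E} is exactly $\tfrac12\|(\phi,\psi)\|_{\mathds{V}^1_\kappa}^2$ in view of the norm \eqref{V1no}, so controlling $E$ from above while controlling the potential integrals from below immediately controls the $\mathds{V}^1_\kappa$-norm.

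First I would start from the energy inequality \eqref{aenergyin} (for weak solutions) or the time-integrated energy identity \eqref{BELa} (for strong solutions). Since the dissipation integral is nonnegative, dropping it gives $E(\phi(t),\psi(t))\le E(\phi_0,\psi_0)$ for every $t$. Writing
$$E(\phi(t),\psi(t))=\tfrac12\|(\phi(t),\psi(t))\|_{\mathds{V}^1_\kappa}^2+\int_\Omega F(\phi(t))\,\d x+\int_\Gamma G(\psi(t))\,\d S,$$
and invoking the lower bounds $F\ge -C_F$, $G\ge -C_G$ from (\textbf{A2}), the two potential integrals are bounded below by $-C_F|\Omega|-C_G|\Gamma|$. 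Rearranging yields
$$\tfrac12\|(\phi(t),\psi(t))\|_{\mathds{V}^1_\kappa}^2\le E(\phi_0,\psi_0)+C_F|\Omega|+C_G|\Gamma|,\qquad \forall\,t\ge 0,$$
which is precisely \eqref{uniH1}, once we note that $E(\phi_0,\psi_0)$ is finite under the standing hypotheses (as recorded in Remark \ref{remie}, using (\textbf{A3}) when $(\phi_0,\psi_0)\in\mathcal{V}^1$ and (\textbf{A1}) together with the Sobolev embedding when $(\phi_0,\psi_0)\in\mathcal{V}^2$ or $\mathcal{V}^3$).

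For \eqref{unimu} I would instead retain the dissipation term. Rearranging \eqref{aenergyin} and using the same lower bound $E(\phi(t),\psi(t))\ge -C_F|\Omega|-C_G|\Gamma|$ gives, for every finite $t$,
$$\int_0^{t}\!\Big(\|\nabla\mu(\tau)\|_{L^2(\Omega)}^2+\|\nabla_\Gamma\mu_\Gamma(\tau)\|_{L^2(\Gamma)}^2\Big)\,\d\tau\le E(\phi_0,\psi_0)+C_F|\Omega|+C_G|\Gamma|.$$
The right-hand side is independent of $t$, and the global solution is defined on $[0,T]$ for arbitrary $T\in(0,+\infty)$ with a $T$-independent bound, so the nonnegative integrand obeys a uniform bound on every finite horizon; letting $t\to+\infty$ by monotone convergence delivers \eqref{unimu}. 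I expect no genuine obstacle here, as the statement is a direct corollary of the basic energy law; the only points requiring care are the passage to the infinite time horizon (legitimate precisely because all constants are independent of $T$) and the finiteness of the initial energy, both of which are settled as above.
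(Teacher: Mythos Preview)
Your proposal is correct and follows essentially the same approach as the paper: the lemma is a direct consequence of the energy inequality \eqref{aenergyin} (resp.\ the integrated energy identity \eqref{BELa}), combined with the lower bounds $F\ge -C_F$, $G\ge -C_G$ from (\textbf{A2}) to control the $\mathds{V}^1_\kappa$-norm and the dissipation integral by $E(\phi_0,\psi_0)+C_F|\Omega|+C_G|\Gamma|$; this is exactly the ``basic energy estimate'' already displayed as \eqref{esunif1} for the approximating problem.
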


Next, we establish a smoothing type estimate.

\begin{lemma}\label{comp}
Suppose that the assumptions of Theorems \ref{main1} or \ref{main1a} are satisfied. Let $(\phi, \psi)$ be a global weak/strong solution to problem \eqref{CH}.

(1) If $\kappa>0$, then we have
\begin{align}
\|\phi(t)\|_{H^3(\Omega)}+\|\psi(t)\|_{H^3(\Gamma)}
\leq C\left(\frac{1+t}{t}\right)^\frac12,\quad \forall\, t>0,
\label{regH3a}
\end{align}
where the constant $C$ may depend on $E(\phi_0, \psi_0)$, $\Omega$, $\Gamma$, $C_F$, $\widetilde{C}_F$, $\widehat{C}_F$, $C_G$,  $\widetilde{C}_G$, $\widehat{C}_G$ and $\kappa$.

(2) If $\kappa=0$, then we have
\begin{align}
\|\phi(t)\|_{H^\frac52(\Omega)}\leq  C\left(\frac{1+t}{t}\right)^\frac12,
\quad \forall\, t>0,
\label{regH2k0}
\end{align}
where the constant $C$ may depend on $E(\phi_0, \psi_0)$, $\Omega$, $\Gamma$, $C_F$, $\widetilde{C}_F$, $\widehat{C}_F$, $C_G$,  $\widetilde{C}_G$ and $\widehat{C}_G$.
\end{lemma}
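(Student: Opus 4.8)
The plan is to exploit the parabolic smoothing built into the gradient-flow structure of \eqref{CH}: at any positive time the solution should gain two spatial derivatives, with a bound that degenerates like $t^{-1/2}$ as $t\to 0^+$ but stays bounded as $t\to+\infty$. The starting point is the observation, used repeatedly in Sections~5--6, that once $\|\nabla\mu(t)\|_{L^2(\Omega)}$ and $\|\nabla_\Gamma\mu_\Gamma(t)\|_{L^2(\Gamma)}$ are controlled pointwise in time, elliptic regularity for the stationary system \eqref{CHEP} (via Lemma~\ref{esLepH2} when $\kappa>0$, or the classical Robin estimate behind \eqref{esH3k}--\eqref{esH3ka} when $\kappa=0$) upgrades $(\phi,\psi)$ to $H^3\times H^3$ (resp.\ $H^{5/2}$). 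Since by \eqref{phit} one has $\|\nabla\mu\|_{L^2(\Omega)}=\|\phi_t\|_{(H^1(\Omega))^*}$ and $\|\nabla_\Gamma\mu_\Gamma\|_{L^2(\Gamma)}=\|\psi_t\|_{(H^1(\Gamma))^*}$, the whole problem reduces to a single time-weighted estimate on the time derivative $(\phi_t,\psi_t)$ in the dual norm.

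To obtain that estimate I would work on the regularized problem \eqref{ACH} (so that differentiating in time is legitimate) and reproduce the differential inequality \eqref{achhight}. Writing $Y_\alpha(t):=\|\phi_t^\alpha\|_{(H^1(\Omega))^*}^2+\alpha\|\phi_t^\alpha\|_{L^2(\Omega)}^2+\|\psi_t^\alpha\|_{(H^1(\Gamma))^*}^2+\alpha\|\psi_t^\alpha\|_{L^2(\Gamma)}^2$ and absorbing the right-hand side of \eqref{achhight} into the dissipation by the interpolation--trace bounds \eqref{rett1}--\eqref{rett2}, one arrives at $Y_\alpha'(t)\le 2C_0\,Y_\alpha(t)$ with $C_0$ independent of $\alpha$ (and of $\kappa$). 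Integrating $Y_\alpha(t)\le Y_\alpha(s)e^{2C_0(t-s)}$ over $s\in(\max\{0,t-1\},t)$ yields $\min\{t,1\}\,Y_\alpha(t)\le e^{2C_0}\int_0^{\infty}Y_\alpha(s)\,\d s$, and the integral on the right is uniformly bounded by \eqref{esunif1} and \eqref{phiuni}. The time weight $\min\{t,1\}$ is precisely what removes any dependence on $Y_\alpha(0)$ (which would require $\mathcal{V}^3$ data), so the estimate survives for merely $\mathcal{V}^1$ initial data. Passing to the limit $\alpha\to0^+$ (and, in part~(2), $\kappa\to0^+$) with weak lower semicontinuity, and using $\max\{1/t,1\}\le (1+t)/t$, gives
\[
\|\phi_t(t)\|_{(H^1(\Omega))^*}^2+\|\psi_t(t)\|_{(H^1(\Gamma))^*}^2\le C\,\frac{1+t}{t},\qquad \forall\,t>0,
\]
which by \eqref{phit} is exactly the desired pointwise bound on $\|\nabla\mu(t)\|_{L^2(\Omega)}^2+\|\nabla_\Gamma\mu_\Gamma(t)\|_{L^2(\Gamma)}^2$.

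The final step is to convert this into the claimed $H^3$ / $H^{5/2}$ bounds. Using the uniform-in-time estimate \eqref{uniH1} together with the growth condition (\textbf{A3}), the nonlinear terms $F'(\phi),G'(\psi)$ are bounded in $L^2$, and the means $\langle\mu\rangle_\Omega,\langle\mu_\Gamma\rangle_\Gamma$ are controlled as in the fourth estimate of Section~5 (see \eqref{memulo}, \eqref{memuhi}); Poincar\'e's inequality then promotes the gradient bound to $\|\mu(t)\|_{H^1}+\|\mu_\Gamma(t)\|_{H^1}\le C((1+t)/t)^{1/2}+C\|\phi\|_{H^r}$ for some $r\in(3/2,2)$. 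Feeding this into the elliptic estimate for \eqref{CHEP} and absorbing $\varepsilon\|\phi\|_{H^2}$ via the interpolation inequality \eqref{intpo} closes a bound $\|\phi(t)\|_{H^2}+\|\psi(t)\|_{H^2}\le C((1+t)/t)^{1/2}$; then \eqref{esFF}--\eqref{esGG} control $F'(\phi),G'(\psi)$ in $H^1$, and a second application of Lemma~\ref{esLepH2} (with $s=1$) gives \eqref{regH3a}. For $\kappa=0$ the same scheme runs with the Robin elliptic regularity behind \eqref{esH3k}--\eqref{esH3ka} for problem \eqref{CHEPk}, which delivers only $H^{5/2}$ and requires the geometric condition \eqref{geo} to bound the means, yielding \eqref{regH2k0}.

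The part I expect to be the genuine obstacle is the passage between the dual-norm dissipation estimate and the full $H^2$ control, because the mean values of $\mu$ and $\mu_\Gamma$ depend, through the normal-derivative term $\partial_\mathbf{n}\phi$, on a norm of $\phi$ that is itself of order close to $H^2$. Breaking this apparent circularity is exactly what the interpolation inequality \eqref{intpo} and the a priori means estimates of Section~5 are for, and one must keep every constant independent of $\alpha$ (and of $\kappa$ in part~(2)) throughout, so that the limit procedure remains valid.
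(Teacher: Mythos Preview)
Your plan is correct and follows the same overall strategy as the paper: obtain a time-weighted pointwise bound on $\|\phi_t\|_{(H^1(\Omega))^*}^2+\|\psi_t\|_{(H^1(\Gamma))^*}^2$ (equivalently on $\|\nabla\mu\|_{L^2}^2+\|\nabla_\Gamma\mu_\Gamma\|_{L^2}^2$), then bootstrap via the elliptic estimates for \eqref{CHEP}/\eqref{CHEPk} exactly as you describe.

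The one methodological difference worth noting is how the time-differentiated inequality is justified. You propose to work on the $\alpha$-regularized system \eqref{ACH} and pass to the limit; the paper instead applies \emph{difference quotients} $\partial_t^h f=\tfrac{1}{h}(f(t+h)-f(t))$ directly to the weak solution, tests by $(A_\Omega^0)^{-1}\partial_t^h\phi$ and $(A_\Gamma^0)^{-1}\partial_t^h\psi$, multiplies the resulting analogue of \eqref{achhight} by $t$, integrates in time, and only then lets $h\to 0^+$. The payoff of the paper's route is that the pointwise-in-$t$ bound survives the limit trivially (one just uses $\|\partial_t^h\phi\|_{L^2(0,\infty;(H^1)^*)}\le\|\phi_t\|_{L^2(0,\infty;(H^1)^*)}$ and lower semicontinuity at each fixed $t$), whereas in your scheme passing the pointwise estimate $Y_\alpha(t)\le C(1+t)/t$ through $\alpha\to 0^+$ requires an extra word (weak-$*$ convergence in $L^\infty_{\mathrm{loc}}((0,\infty);(H^1)^*\times(H^1)^*)$, not just $L^2$). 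Your ``integrate Gronwall over a unit window'' device and the paper's ``multiply by $t$ and integrate'' are equivalent uniform-Gronwall tricks. One small correction: for $\kappa=0$ the paper controls the means via the (\textbf{A3})-based estimate \eqref{memuhibb} (using \eqref{espncla}--\eqref{inttr}) rather than the geometric condition \eqref{geo}; the latter is of course available by hypothesis, but is not the mechanism actually used here.
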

\begin{proof}
We proceed with a standard approximating procedure (cf. for instance, \cite{AW07, LSU, MZ05}).
Given $h>0$, let us introduce the difference quotient of a function by
$$
\partial_t^h f=\frac1h\Big(f(t+h)-f(t)\Big),
\quad \forall \, t\geq 0.
$$
Owing to Definition \ref{defweak}, the difference quotient of a weak solution satisfies the weak formulation
\begin{align}
&\langle (\partial_t^h\phi)_t, \zeta\rangle_{(H^1(\Omega))^*, H^1(\Omega)}
+\int_\Omega \nabla \partial_t^h\mu\cdot \nabla \zeta\, \d x=0,
\label{we1dd} \\
&\langle (\partial_t^h\psi)_t, \eta\rangle_{(H^1(\Gamma))^*, H^1(\Gamma)}
+\int_\Gamma \nabla_\Gamma \partial_t^h \mu_\Gamma \cdot \nabla_\Gamma\eta\, \d S=0,
\label{we1add}
\end{align}
for every $\zeta\in H^1(\Omega)$ and $\eta\in H^1(\Gamma)$ and almost every $t\in (0,+\infty)$, with
\begin{align}
&\partial_t^h \mu=-\Delta \partial_t^h \phi + \frac{1}{h} (F'(\phi(t+h))-F'(\phi(t))),\label{we2add}
\end{align}
for almost every $(x,t)\in \Omega\times (0,+\infty)$ and
\begin{align}
&\partial_t^h \mu_\Gamma= -\kappa\Delta_\Gamma \partial_t^h \psi +  \partial_t^h\psi+ \partial_\mathbf{n} \partial_t^h \phi +\frac{1}{h}(G'(\psi(t+h))-G'(\psi(t))),
\label{we2dd}
\end{align}
for almost every $(x,t)\in \Gamma\times (0,+\infty)$.

\textbf{Case 1. $\kappa>0$}.  Using the facts $\langle\partial_t^h\phi\rangle_\Omega=\langle\partial_t^h\psi\rangle_\Gamma=0$, we test \eqref{we1dd} and \eqref{we1add} by $(A_\Omega^0)^{-1}\partial_t^h\phi$, $(A_\Gamma^0)^{-1}\partial_t^h\psi$, respectively, add the resultants together, then we deduce from \eqref{we2add} and \eqref{we2dd} that
\begin{align}
&\frac12\frac{\d}{\d t}\left(\|\partial_t^h\phi\|_{(H^1(\Omega))^*}^2
+\|\partial_t^h\psi\|_{(H^1(\Gamma))^*}^2\right)
+\|\nabla \partial_t^h \phi\|_{L^2(\Omega)}^2 \nonumber\\
&\qquad +\kappa\|\nabla_\Gamma \partial_t^h\psi\|_{L^2(\Gamma)}^2 + \|\partial_t^h\psi\|_{L^2(\Gamma)}^2\nonumber\\
&\quad = -\frac{1}{h} \int_\Omega (F'(\phi(t+h))-F'(\phi(t))) \partial_t^h\phi\,\d x\nonumber\\
&\qquad -\frac{1}{h} \int_\Gamma (G'(\psi(t+h))-G'(\psi(t))) \partial_t^h\psi\, \d S.
\label{hi1}
\end{align}
Using assumption (\textbf{A2}), Poincar\'e's inequality and Young's inequality, by a similar argument for \eqref{rett1}--\eqref{rett2}, we deduce from \eqref{hi1} that
\begin{align}
&\frac{\d}{\d t}\left(\|\partial_t^h\phi\|_{(H^1(\Omega))^*}^2
+\|\partial_t^h\psi\|_{(H^1(\Gamma))^*}^2\right)
+\|\nabla \partial_t^h\phi\|_{L^2(\Omega)}^2\nonumber\\
&\qquad +\kappa\|\nabla_\Gamma \partial_t^h\psi\|_{L^2(\Gamma)}^2+ \|\partial_t^h\psi\|_{L^2(\Gamma)}^2\nonumber\\
&\quad
\leq C\left(\|\partial_t^h\phi\|_{(H^1(\Omega))^*}^2
+\|\partial_t^h\psi\|_{(H^1(\Gamma))^*}^2\right),
\label{hi3}
\end{align}
where the constant $C>0$ depends on $\Omega$, $\Gamma$, $\widetilde{C}_F$, $\widetilde{C}_G$, but is independent of $\kappa$ and $h$.
We infer from $\phi_t\in L^2(0,+\infty; (H^1(\Omega))^*)$ that for almost every $\tau>0$, it holds
$$\|\partial_t^h\phi(\tau)\|_{(H^1(\Omega))^*}\leq \frac{1}{h}\int_{\tau}^{\tau+h} \|\phi_t(s)\|_{(H^1(\Omega))^*}\,\d s\to \|\phi_t(\tau)\|_{(H^1(\Omega))^*}\quad \text{as}\ h\to 0^+$$
and thus
$\|\partial_t^h\phi\|_{L^2(0,+\infty; (H^1(\Omega))^*)}\leq \|\phi_t\|_{L^2(0,+\infty; (H^1(\Omega))^*)}$ (cf. \cite{AW07}). In a similar manner, we have  $\|\partial_t^h\psi\|_{L^2(0,+\infty; (H^1(\Gamma))^*)}\leq \|\psi_t\|_{L^2(0,+\infty; (H^1(\Gamma))^*)}$.
Multiplying \eqref{hi3} by $t$ and integrating with respect to time, using the above estimates and \eqref{unimu}, we get
\begin{align}
&t\left(\|\partial_t^h\phi(t)\|_{(H^1(\Omega))^*}^2
+\|\partial_t^h\psi(t)\|_{(H^1(\Gamma))^*}^2\right)
+\int_0^t \tau \|(\partial_t^h\phi, \partial_t^h\psi)\|_{\mathcal{V}^1}^2 \,\d\tau \nonumber\\
&\quad \leq \int_0^t(1+C\tau)\left(\|\partial_t^h\phi(\tau)\|_{(H^1(\Omega))^*}^2
+\|\partial_t^h\psi(\tau)\|_{(H^1(\Gamma))^*}^2\right)\, \d\tau\nonumber\\
&\quad \leq C \big(E(\phi_0, \psi_0)+C_F|\Omega|+C_G|\Gamma|\big)(1+t),\quad \forall\, t>0,
\label{regNmu}
\end{align}
which gives
\begin{align}
\|\partial_t^h\phi(t)\|_{(H^1(\Omega))^*}^2
+\|\partial_t^h\psi(t)\|_{(H^1(\Gamma))^*}^2\leq C \left(\frac{1+t}{t}\right),\quad \forall\, t>0,
\label{regNpth}
\end{align}
where the constant $C>0$ is independent of $h$. Then passing to the limit $h\to 0^+$ in \eqref{regNpth}, we infer that
\begin{align}
\|\phi_t(t)\|_{(H^1(\Omega))^*}^2
+\|\psi_t(t)\|_{(H^1(\Gamma))^*}^2\leq C \left(\frac{1+t}{t}\right),\quad \forall\, t>0.
\label{regNpt}
\end{align}
Next, by the growth assumption (\textbf{A3}) and the trace theorem, we can estimate the mean values of $\mu$ and $\mu_\Gamma$ in the same way as \eqref{memu1}--\eqref{memug1}.
Combining them with \eqref{espncla}, \eqref{inttr} and Poincar\'e's inequality, we get
\begin{align}
&\|\mu(t)\|_{H^1(\Omega)}+ \| \mu_\Gamma(t)\|_{H^1(\Gamma)}\nonumber\\
&\quad \leq C\big(\|\nabla \mu(t)\|_{L^2(\Omega)} + \|\nabla_\Gamma\mu_\Gamma(t)\|_{L^2(\Gamma)}
+ \|\phi(t)\|_{H^r(\Omega)}+1\big),
\quad \forall\, t\geq 0.
\label{memuhibb}
\end{align}
Recalling the elliptic problem \eqref{CHEP}, we infer from Lemma \ref{esLepH2}, \eqref{uniH1}, \eqref{memuhibb} and the interpolation inequality \eqref{intpo} that
\begin{align}
&\|\phi(t)\|_{H^2(\Omega)}+\|\psi(t)\|_{H^2(\Gamma)}\nonumber\\
&\quad \leq C\big(\|\mu(t)\|_{L^2(\Omega)}+\|\mu_\Gamma(t)\|_{L^2(\Gamma)}+\|F'(\phi(t))\|_{L^2(\Omega)}+\|G'(\psi(t))\|_{L^2(\Gamma)}\big)\nonumber\\
&\quad \leq C\|\nabla \mu(t)\|_{L^2(\Omega)} + C\|\nabla \mu_\Gamma(t)\|_{L^2(\Gamma)}
+ C\|\phi(t)\|_{H^r(\Omega)}\nonumber\\
&\qquad + C\|\phi(t)\|_{H^1(\Omega)}^{p+1} + C\|\psi(t)\|_{H^1(\Gamma)}^{q+1} + C\nonumber\\
&\quad \leq \frac12\|\phi(t)\|_{H^2(\Omega)}+C\|\nabla \mu(t)\|_{L^2(\Omega)}+ C\|\nabla_\Gamma \mu_\Gamma(t)\|_{L^2(\Gamma)}+C,\nonumber
\end{align}
which together with \eqref{regNpt} implies
\begin{align}
\|\phi(t)\|_{H^2(\Omega)}+\|\psi(t)\|_{H^2(\Gamma)}\leq C\left(\frac{1+t}{t}\right)^\frac12,
\quad \forall\, t>0.
\label{regH2}
\end{align}
Furthermore, we deduce from \eqref{memuhibb}, \eqref{regH2} together with \eqref{esFF} and \eqref{esGG} that
\begin{align}
&\|\phi(t)\|_{H^3(\Omega)}+\|\psi(t)\|_{H^3(\Gamma)}\nonumber\\
&\quad  \leq C\|\nabla \mu(t)\|_{L^2(\Omega)}+ C\|\nabla_\Gamma\mu_\Gamma(t)\|_{L^2(\Gamma)} +C\|\phi(t)\|_{H^2(\Omega)}\nonumber\\
&\qquad + C\|\psi(t)\|_{H^2(\Gamma)}+C\nonumber\\
&\quad  \leq C\left(\frac{1+t}{t}\right)^\frac12,
\quad \forall\, t>0.
\label{regH3}
\end{align}
In \eqref{regH2} and \eqref{regH3}, the constant $C$ may depend on $\|(\phi_0, \psi_0)\|_{\mathcal{V}^1}$, $\Omega$, $\Gamma$, $C_F$, $\widetilde{C}_F$, $\widehat{C}_F$, $C_G$, $\widetilde{C}_G$, $\widehat{C}_G$ and $\kappa$.

If the growth assumption (\textbf{A3}) is replaced  by (\textbf{A4}), then using the same argument as for \eqref{FFL2}, \eqref{FGm1}, \eqref{memuhi}
and Poincar\'e's inequality, we still have the estimate \eqref{memuhibb}. Besides, in analogy to \eqref{L2F}, \eqref{L2G}, we infer from \eqref{memuhibb} that
\begin{align}
&\|F'(\phi)\|_{L^2(\Omega)}+\|G'(\psi)\|_{L^2(\Gamma)}\nonumber\\
&\quad \leq C\big(\|\nabla \mu(t)\|_{L^2(\Omega)} + \|\nabla_\Gamma\mu_\Gamma(t)\|_{L^2(\Gamma)}
+ \|\phi\|_{H^r(\Omega)}+1\big),\quad \forall\, t\geq 0.
\label{FGFGL2}
\end{align}
Combining  \eqref{uniH1}, \eqref{memuhibb}, \eqref{FGFGL2} with Lemma \ref{esLepH2}  and the interpolation inequality \eqref{intpo}, we can conclude \eqref{regH2} as well as \eqref{regH3}, with the constant $C$ depending on $E(\phi_0, \psi_0)$, $\Omega$, $\Gamma$,  $C_F$, $\widetilde{C}_F$, $C_G$, $\widetilde{C}_G$ and $\kappa$.

\textbf{Case 2. $\kappa=0$}. We first observe that the estimate \eqref{regNpt} on temporal derivatives is still valid under (\textbf{A2}), since the derivation of \eqref{hi3} does not depend on $\kappa$. On the other hand, using (\textbf{A3}) we also have the estimate \eqref{memuhibb} for chemical potentials. Recalling the $H^2$-estimate \eqref{esH3k} for the Robin problem \eqref{CHEPk}, then we obtain
\begin{align}
\|\phi(t)\|_{H^2(\Omega)}\leq  C\left(\frac{1+t}{t}\right)^\frac12,
\quad \forall\, t>0,
\label{regH2k0a}
\end{align}
which together with \eqref{esH3ka} and \eqref{memuhibb} yields the estimate \eqref{regH2k0}.

The proof is complete.
\end{proof}

When the surface diffusion is present on the boundary $\Gamma$ (i.e., $\kappa>0$), based on Theorem \ref{main1} and Lemma \ref{comp} we can deduce the following further characterization of problem \eqref{CH}.

\bc\label{semigroup}
Under the assumptions of Theorem \ref{main1} (1), for any initial datum $(\phi_0, \psi_0)\in \mathcal{V}^1$, the unique global weak solution $(\phi, \psi)$ to problem
\eqref{CH} defines a strongly continuous semigroup $\mathfrak{S}(t): \mathcal{V}^1 \to \mathcal{V}^1$ such that
$$ \mathfrak{S}(t)(\phi_0, \psi_0)=(\phi(t), \psi(t)), \quad \forall \, t\geq 0.$$
Let $\mathcal{S}$ be the family of operators such that $\mathcal{S}:=\{\mathfrak{S}(t)\}_{t\geq 0}$. Then $\mathcal{S}$ is a dynamical system in the sense of \cite[Definition 4.1.1]{HEN}.
Besides, $E(\phi(t), \psi(t)):\mathcal{V}^1\to \mathbb{R}$ serves as a Lyapunov functional for $\mathcal{S}$.
\ec
\begin{proof}
First, we note that $\mathfrak{S}(0)=I\in C(\mathcal{V}^1; \mathcal{V}^1)$. Next, let $(\phi_i, \psi_i)$, $i=1,2$, be two weak solutions to problem \eqref{CH} subject to initial data $(\phi_{0i}, \psi_{0i})\in \mathcal{V}^1$ with $\langle\phi_{01} \rangle_\Omega=\langle\phi_{02}\rangle_\Omega$, $\langle\psi_{01} \rangle_\Gamma=\langle\psi_{02}\rangle_\Gamma$. Combining \eqref{conti} and Lemma \ref{comp}, we have
\begin{align}
&\|\phi_1(t)-\phi_2(t)\|_{H^1(\Omega)} + \|\psi_1(t)-\psi_2(t)\|_{H^1(\Gamma)}\nonumber\\
&\quad \leq C\|\phi_1(t)-\phi_2(t)\|_{H^3(\Omega)}^\frac12\|\phi_1(t)-\phi_2(t)\|_{(H^1(\Omega))^*}^\frac12\nonumber\\
&\qquad +C\|\psi_1(t)-\psi_2(t)\|_{H^3(\Gamma)}^\frac12\|\psi_1(t)-\psi_2(t)\|_{(H^1(\Gamma))^*}^\frac12\nonumber\\
&\quad \leq C\left(\frac{1+t}{t}\right)^\frac14 e^{Ct}\big(\|\phi_{01}-\phi_{02}\|_{(H^1(\Omega))^*}^\frac12+\|\psi_{01}-\psi_{02}\|_{(H^1(\Gamma))^*}^\frac12\big),
\quad \forall\, t>0,
\nonumber
\end{align}
which yields that $\mathfrak{S}(t)\in C(\mathcal{V}^1; \mathcal{V}^1)$ for any fixed $t>0$.
Besides, we infer from the fact $(\phi, \psi)\in L^2(0,T; \mathcal{V}^3)\cap H^1(0,T; (H^1(\Omega))^*\times (H^1(\Gamma))^*)$ and the interpolation theory (cf. \cite{LM68}) that $(\phi, \psi)\in C([0,T]; \mathcal{V}^1)$, which implies that for every $(\phi_0, \psi_0)\in \mathcal{V}^1$, the map $t\to \mathfrak{S}(t)(\phi_0, \psi_0)$ is continuous.
The total free energy $E(\phi, \psi)$ (see \eqref{E}) is well defined on $\mathcal{V}^1$ due to (\textbf{A3}) and it is uniformly bounded from below because of (\textbf{A2}). Moreover, thanks to the smoothing property Lemma \ref{comp}, the energy identity \eqref{BELa} holds for $t>0$. As a consequence, we see that $E(\phi, \psi)$ is a strict Lyapunov functional for the dynamic system $\mathcal{S}$.

The proof is complete.
\end{proof}

\subsection{The stationary problem}
Since we are interested in the long-time behavior of of problem \eqref{CH}, we proceed to investigate its stationary points $(\phi^*, \psi^*, \mu^*, \mu_\Gamma^*)$, which are characterized by the following equations
\begin{equation}
\left\{
\begin{aligned}
&\Delta \mu^*=0, &\text{in}\ \Omega,\\
&\mu^*=-\Delta \phi^*+F'(\phi^*),\ &\text{in}\ \Omega,\\
&\partial_\mathbf{n}\mu^*=0, &\text{on}\ \Gamma,\\
&\phi^*|_\Gamma=\psi^*,&\text{on}\ \Gamma,\\
&\Delta_\Gamma\mu_\Gamma^*=0, &\text{on}\ \Gamma,\\
&\mu_\Gamma^*=-\kappa\Delta_\Gamma \psi^*+\psi^*+\partial_\mathbf{n}\phi^*+G'(\psi^*), &\text{on}\ \Gamma.
\end{aligned}
\right.
\label{STA1}
\end{equation}
It easily follows that the chemical potentials $\mu^*$ and $\mu_\Gamma^*$ are just constants.
Then the system \eqref{STA1} reduces to a semilinear nonlocal second order elliptic problem
\begin{equation}
\left\{
\begin{aligned}
& -\Delta \phi^*+F'(\phi^*)=\lambda_1,
& \text{in}\ \Omega,\\
& \phi^*|_\Gamma=\psi^*,
&\text{on}\ \Gamma,\\
& -\kappa\Delta_\Gamma \psi^*+\psi^*+\partial_\mathbf{n}\phi^*+G'(\psi^*)=\lambda_2,
&\text{on}\ \Gamma,
\end{aligned}
\label{equi3}
\right.
\end{equation}
where the constants $\lambda_1$, $\lambda_2$ can be determined by testing the above system by the pair $(1,1)$ such that
\begin{equation}
\left\{
\begin{aligned}
&\lambda_1=-|\Omega|^{-1}|\Gamma|\langle \partial_\mathbf{n}\phi^*\rangle_\Gamma+ \langle F'(\phi^*)\rangle_\Omega,\\
&\lambda_2=\langle \partial_\mathbf{n}\phi^*\rangle_\Gamma+\langle\psi^*\rangle_\Gamma+\langle G'(\psi^*)\rangle_\Gamma.
\end{aligned}
\right.
\label{llambda}
\end{equation}
The relation \eqref{llambda} implies that $(\lambda_1, \lambda_2)$ should be solved together with the solution $(\phi^*, \psi^*)$.

Besides, associating the stationary problem \eqref{STA1} with our evolution problem \eqref{CH}, for any given initial datum $(\phi_0, \psi_0)$ as stated in Theorems \ref{main1} or \ref{main1a}, due to the mass conservation property, we have the following two constraints
\begin{align}
&\langle\phi^*\rangle_\Omega=\langle\phi_0\rangle_\Omega,
\quad \langle\psi^*\rangle_\Gamma=\langle\psi_0\rangle_\Gamma.
\label{equi2}
\end{align}
Then we denote the set of associate stationary points by
\begin{align}
\mathcal{E}=\big\{(\phi^*, \psi^*)\in \mathds{V}^2_\kappa:\ (\phi^*, \psi^*) \ \ \text{solves the system}\ \ \text{\eqref{equi3}--\eqref{equi2}}\big\}.
\nonumber
\end{align}
\begin{remark}
The constants $\lambda_1$, $\lambda_2$ serve as two Lagrange multipliers for the bulk/surface mass constraint \eqref{equi2}.
Let us consider the Lagrangian
$$
L((\phi, \psi), \lambda_1, \lambda_2)
=E(\phi, \psi)-\lambda_1|\Omega|\big(\langle\phi\rangle_\Omega-\langle\phi_0\rangle_\Omega\big)
-\lambda_2|\Gamma|\big(\langle\psi\rangle_\Gamma-\langle\psi_0\rangle_\Gamma\big).
$$
Suppose $(\phi^*, \psi^*)\in \mathcal{E}$, it is straightforward to check that
\begin{align}
&\left.\frac{\d}{\d\varepsilon} L((\phi^*, \psi^*)+\varepsilon (\zeta, \eta), \lambda_1, \lambda_2)\right|_{\varepsilon=0}\nonumber\\
&\quad = \int_\Omega \big(\nabla \phi^*\cdot\nabla \zeta +F'(\phi^*)\zeta-\lambda_1 \zeta\big)\,\d x\nonumber\\
&\qquad + \int_\Gamma \big(\kappa \nabla_\Gamma\psi^*\cdot\nabla_\Gamma \eta+\psi^*\eta+G'(\psi^*)\eta-\lambda_2\eta\big) \,\d S\nonumber\\
&\quad =0,\quad \forall\, (\zeta, \eta)\in \mathds{V}^1_\kappa.
\label{Fdre}
\end{align}
As a consequence, every stationary point $(\phi_*,\psi_*) \in \mathcal{E}$ is a critical point of the energy $E(\phi, \psi)$ subject to the mass constraint \eqref{equi2}.
\end{remark}
\begin{remark}\label{equalM}
The constants $\lambda_1$, $\lambda_2$ also satisfy some further relations.
Testing the system \eqref{equi3} by $(\phi^*,\psi^*)$ in $\mathcal{H}$, from \eqref{llambda} and \eqref{equi2} we observe that the pair $(\lambda_1, \lambda_2)$ fulfills the linear system
\begin{align}
\begin{pmatrix}
1 & 1 \\
\langle\phi_0\rangle_\Omega & \langle\psi_0\rangle_\Gamma
\end{pmatrix}
\begin{pmatrix}
|\Omega| & 0 \\
0 & |\Gamma|
\end{pmatrix}
\begin{pmatrix}
\lambda_1 \\
\lambda_2
\end{pmatrix}=
\begin{pmatrix}
l_1 \\
l_2
\end{pmatrix},
\label{equi4}
\end{align}
with
\begin{align}
&l_1=\int_\Omega F'(\phi^*) \,\d x+\int_\Gamma \big(\psi^*+G'(\psi^*)\big) \,\d S,\nonumber\\
&l_2=\int_\Omega \big(|\nabla \phi^*|^2 +F'(\phi^*)\phi^*\big) \,\d x
+\int_\Gamma\big(\kappa|\nabla_\Gamma \psi^*|^2+|\psi^*|^2+G'(\psi^*)\psi^*\big) \,\d S.\nonumber
\end{align}

(1) The condition $\langle\phi_0\rangle_\Omega\neq \langle\psi_0\rangle_\Gamma$ is necessary for the unique solvability of \eqref{equi4}. If $\langle\phi_0\rangle_\Omega=\langle\psi_0\rangle_\Gamma$ and $(\phi^*, \psi^*)\in \mathds{V}^1_\kappa$ is a stationary solution, then the compatibility condition $l_2=l_1\langle\phi_0\rangle_\Omega$ should be satisfied.

(2) When $\langle\phi_0\rangle_\Omega= \langle\psi_0\rangle_\Gamma:=M_0$, it is obvious that the stationary problem \eqref{equi3}--\eqref{equi2} admits a trivial solution $(\phi^*,\psi^*)=(M_0,M_0)$ with $\lambda_1= F'(M_0)$ and $\lambda_2=M_0+G'(M_0)$.
\end{remark}

Existence of the stationary point $(\phi^*, \psi^*)$ (i.e., nonemptyness of the set $\mathcal{E}$) can be obtained under suitable assumptions on the nonlinearities $F$, $G$.
For instance, by the direct method in calculus of variations, it is standard to prove the following result:
\begin{proposition}\label{exemini}
Let $\kappa\geq 0$, $\Omega\subset \mathbb{R}^d$ ($d=2,3$) be a bounded domain with smooth boundary $\Gamma$.
Assume that (\textbf{A1})--(\textbf{A3}) are satisfied.
Then for any $(\hat{\phi}, \hat{\psi})\in \mathds{V}^1_\kappa$, the energy functional $E(\phi, \psi)$ admits at least one minimizer $(\phi^*, \psi^*)$ in the set
$\{(\phi, \psi)\in \mathds{V}^1_\kappa:\ \langle\phi\rangle_\Omega=\langle \hat{\phi}\rangle_\Omega,
\ \langle\psi\rangle_\Gamma=\langle \hat{\psi}\rangle_\Gamma\}$
such that
\begin{align}
\int_\Omega \big(\nabla \phi^*\cdot\nabla \zeta +F'(\phi^*)\zeta\big)\,\d x+ \int_\Gamma \big(\kappa \nabla_\Gamma\psi^*\cdot\nabla_\Gamma \eta+\psi^*\eta+G'(\psi^*)\eta\big) \,\d S=0,
\nonumber
\end{align}
for any $(\zeta, \eta)\in \dot{\mathds{V}}^1_\kappa$.
\end{proposition}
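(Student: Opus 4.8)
The plan is to apply the direct method in the calculus of variations to the constrained minimization problem. First I would verify that $E$ is well defined, finite, and bounded below on the affine constraint set
\[
\mathcal{K}=\big\{(\phi,\psi)\in\mathds{V}^1_\kappa:\ \langle\phi\rangle_\Omega=\langle\hat{\phi}\rangle_\Omega,\ \langle\psi\rangle_\Gamma=\langle\hat{\psi}\rangle_\Gamma\big\}.
\]
The quadratic gradient and $\tfrac12|\psi|^2$ contributions are controlled by the $\mathds{V}^1_\kappa$-norm, while the growth assumption (\textbf{A3}) yields $|F(y)|\le C(1+|y|^{p+2})$ and $|G(y)|\le C(1+|y|^{q+2})$. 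The exponents in (\textbf{A3}) are calibrated precisely so that the Sobolev embeddings guarantee $\phi\in L^{p+2}(\Omega)$ and $\psi\in L^{q+2}(\Gamma)$ in both dimensions $d=2,3$ and for both $\kappa>0$ and $\kappa=0$; hence $E(\phi,\psi)\in\mathbb{R}$ on $\mathds{V}^1_\kappa$. The pointwise lower bounds $F\ge-C_F$, $G\ge-C_G$ from (\textbf{A2}) then give $\inf_{\mathcal{K}}E>-\infty$.

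Next I would take a minimizing sequence $(\phi_n,\psi_n)\subset\mathcal{K}$ and establish its boundedness in $\mathds{V}^1_\kappa$. The energy controls $\|\nabla\phi_n\|_{L^2(\Omega)}$, $\|\psi_n\|_{L^2(\Gamma)}$, and (when $\kappa>0$) $\sqrt{\kappa}\,\|\nabla_\Gamma\psi_n\|_{L^2(\Gamma)}$; combined with the fixed mean value $\langle\phi_n\rangle_\Omega=\langle\hat{\phi}\rangle_\Omega$ and the Poincar\'e--Wirtinger inequality, this bounds $\|\phi_n\|_{H^1(\Omega)}$, and thereby $\|\psi_n\|_{H^1(\Gamma)}$ in the case $\kappa>0$. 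By reflexivity of $\mathds{V}^1_\kappa$, along a subsequence $(\phi_n,\psi_n)\rightharpoonup(\phi^*,\psi^*)$ weakly in $\mathds{V}^1_\kappa$. Since the mean-value functionals $\langle\cdot\rangle_\Omega$ and $\langle\cdot\rangle_\Gamma$ are weakly continuous, the constraints pass to the limit and $(\phi^*,\psi^*)\in\mathcal{K}$.

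The heart of the argument is weak lower semicontinuity of $E$. The three quadratic terms are convex and continuous, hence weakly lower semicontinuous. For the nonconvex potentials I would use the compact embeddings $\mathds{V}^1_\kappa\hookrightarrow\hookrightarrow\mathcal{H}$ together with Rellich--Kondrachov and trace compactness, so that along a further subsequence $\phi_n\to\phi^*$, $\psi_n\to\psi^*$ strongly in $L^2$ and almost everywhere in $\Omega$ and on $\Gamma$. Writing $F=\widetilde{F}-\tfrac{\widetilde{C}_F+1}{2}y^2+F'(0)y+F(0)$ as in \eqref{tF} and the analogous decomposition \eqref{tG} for $G$, the convex parts $\int_\Omega\widetilde{F}(\phi)\,\d x$ and $\int_\Gamma\widetilde{G}(\psi)\,\d S$ are weakly lower semicontinuous, the quadratic corrections converge via the strong $L^2$ convergence, and the linear and constant parts are weakly continuous. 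This gives $E(\phi^*,\psi^*)\le\liminf_{n\to\infty}E(\phi_n,\psi_n)=\inf_{\mathcal{K}}E$, so $(\phi^*,\psi^*)$ is a minimizer.

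Finally, to obtain the stated weak Euler--Lagrange identity I would perform admissible variations: for any $(\zeta,\eta)\in\dot{\mathds{V}}^1_\kappa$, the perturbation $(\phi^*,\psi^*)+\varepsilon(\zeta,\eta)$ preserves both mean-value constraints and stays in $\mathcal{K}$, so differentiating $\varepsilon\mapsto E((\phi^*,\psi^*)+\varepsilon(\zeta,\eta))$ at $\varepsilon=0$ and using minimality forces the first variation to vanish, which is exactly the claimed identity. Differentiation under the integral for the potential terms is justified by (\textbf{A1}) and the bounds $|F'(y)|\le C(1+|y|^{p+1})$, $|G'(y)|\le C(1+|y|^{q+1})$ from (\textbf{A3}), which yield $F'(\phi^*)\in L^2(\Omega)$, $G'(\psi^*)\in L^2(\Gamma)$ and hence integrability against the test functions. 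The main obstacle is the weak lower semicontinuity of the nonconvex potential terms; this is resolved by the compactness furnished by the subcritical growth exponents in (\textbf{A3}), which is exactly why those exponents are imposed.
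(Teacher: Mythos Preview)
Your proposal is correct and follows precisely the direct method in the calculus of variations that the paper invokes; since the paper states this result as ``standard'' and gives no detailed proof beyond citing the direct method, your outline simply fills in the expected steps (boundedness below via (\textbf{A2}), compactness via (\textbf{A3}) and Sobolev embeddings, weak lower semicontinuity via the convex decomposition \eqref{tF}--\eqref{tG}, and the Euler--Lagrange identity from admissible zero-mean variations).
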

\begin{remark}\label{stauni}
By the elliptic regularity theory, we deduce that $(\phi^*, \psi^*)\in \mathds{V}^2_\kappa$ and thus $(\phi^*, \psi^*) \in \mathcal{E}$ (where we take $(\hat{\phi}, \hat{\psi})=(\phi_0, \psi_0)$).
Moreover, if the potential functions $F, G$ are smooth, a bootstrap argument easily yields that $(\phi^*, \psi^*)\in C^\infty(\Omega)\times C^\infty(\Gamma)$.
\end{remark}

\subsection{Characterization of the $\omega$-limit set}
  The property that $\mathcal{E}$ is a nonempty set can also be guaranteed by a dynamical approach based on Theorems \ref{main1}, \ref{main1a} and Lemma \ref{comp} for the evolution problem \eqref{CH}.
  Let $(\phi_0, \psi_0)$ be an initial datum for problem \eqref{CH} considered in Theorems \ref{main1}, \ref{main1a} and $(\phi(t), \psi(t))$ be its corresponding global weak/strong solution.
  We introduce the associated $\omega$-limit set denoted by $\omega(\phi_0, \psi_0)$:
\begin{align}
&\omega(\phi_0, \psi_0):=\big\{(\phi^*, \psi^*)\in \mathds{V}^2_\kappa:\ \text{there exists}\ \{t_n\}\nearrow +\infty\ \text{such that}\nonumber\\
&\qquad\qquad \qquad \quad (\phi(t_n), \psi(t_n))\to (\phi^*, \psi^*)\ \text{in}\ \mathds{V}^{2}_\kappa\big\}.
\label{omega}
\end{align}
Then we have
\begin{proposition}\label{ome}
Assume that the assumptions of Theorem \ref{main1} (or Theorem \ref{main1a}) are satisfied.
For any initial datum $(\phi_0, \psi_0)$ therein, its $\omega$-limit set $\omega(\phi_0, \psi_0)$ is a nonempty, compact set in $\mathds{V}^2_\kappa$ and satisfies that $\omega(\phi_0, \psi_0)\subset \mathcal{E}$.
Furthermore,  the energy functional $E(\phi, \psi)$ is constant on $\omega(\phi_0, \psi_0)$.
\end{proposition}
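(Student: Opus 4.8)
```latex
\textbf{Plan of proof.}
The plan is to establish the four assertions of Proposition \ref{ome} in the natural logical order: nonemptiness of $\omega(\phi_0,\psi_0)$, its compactness in $\mathds{V}^2_\kappa$, the inclusion $\omega(\phi_0,\psi_0)\subset\mathcal{E}$, and finally the constancy of $E$ on the limit set. The whole argument rests on two ingredients already available in the excerpt: the uniform-in-time regularity bound of Lemma \ref{comp} (the estimate \eqref{regH3a} for $\kappa>0$, respectively \eqref{regH2k0} for $\kappa=0$), which furnishes precompactness, and the energy dissipation encoded in \eqref{unimu}, which drives the solution toward equilibria.

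\textbf{Nonemptiness and compactness.}
First I would fix $t\ge 1$ and invoke Lemma \ref{comp}: the bound $\|\phi(t)\|_{H^3(\Omega)}+\|\psi(t)\|_{H^3(\Gamma)}\le C((1+t)/t)^{1/2}\le C'$ shows that the orbit $\{(\phi(t),\psi(t)):t\ge 1\}$ is bounded in $\mathds{V}^3_\kappa$ (in the $\kappa=0$ case one uses \eqref{regH2k0}, giving boundedness in $V^{5/2}$). Since the embedding $\mathds{V}^3_\kappa\hookrightarrow\mathds{V}^2_\kappa$ is compact (as recorded in the Preliminaries, $\mathcal{V}^{s_1}\hookrightarrow\mathcal{V}^{s_2}$ is compact for $s_1>s_2>\tfrac12$), any sequence $t_n\nearrow+\infty$ admits a subsequence along which $(\phi(t_n),\psi(t_n))$ converges strongly in $\mathds{V}^2_\kappa$. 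Hence $\omega(\phi_0,\psi_0)$ is nonempty. Compactness then follows by a standard diagonal/extraction argument: any sequence of points in $\omega(\phi_0,\psi_0)$ is itself bounded in $\mathds{V}^3_\kappa$, so it has a $\mathds{V}^2_\kappa$-convergent subsequence whose limit is again reached along a time sequence tending to infinity, i.e. lies in $\omega(\phi_0,\psi_0)$; thus the set is sequentially compact, hence compact, in $\mathds{V}^2_\kappa$.

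\textbf{The inclusion $\omega(\phi_0,\psi_0)\subset\mathcal{E}$.}
Let $(\phi^*,\psi^*)\in\omega(\phi_0,\psi_0)$ with $(\phi(t_n),\psi(t_n))\to(\phi^*,\psi^*)$ in $\mathds{V}^2_\kappa$. The key consequence of \eqref{unimu} is that $\int_0^{+\infty}\big(\|\nabla\mu\|_{L^2(\Omega)}^2+\|\nabla_\Gamma\mu_\Gamma\|_{L^2(\Gamma)}^2\big)\,\d t<\infty$, so along a suitable sequence (and using the uniform $H^1$ bounds on $\mu,\mu_\Gamma$ from \eqref{memuhibb}) one has $\nabla\mu(t_n)\to 0$ in $L^2(\Omega)$ and $\nabla_\Gamma\mu_\Gamma(t_n)\to 0$ in $L^2(\Gamma)$; equivalently $\mu(t_n)\to\lambda_1$ and $\mu_\Gamma(t_n)\to\lambda_2$ for some constants, so $\Delta\mu^*=0$ and $\Delta_\Gamma\mu_\Gamma^*=0$ in the limit. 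I would pass to the limit $n\to\infty$ in the weak formulations \eqref{we2a}, \eqref{we2} (and in the boundary relation for $\partial_\mathbf{n}\phi$), using the strong $\mathds{V}^2_\kappa$-convergence together with assumption (\textbf{A1}) and the Sobolev embedding to handle the nonlinear terms $F'(\phi(t_n))\to F'(\phi^*)$, $G'(\psi(t_n))\to G'(\psi^*)$. This identifies $(\phi^*,\psi^*)$ as a solution of the stationary problem \eqref{equi3}, with $(\lambda_1,\lambda_2)$ given by \eqref{llambda}. The mass constraint \eqref{equi2} is inherited from the mass conservation \eqref{massconwe} passed to the limit, so $(\phi^*,\psi^*)\in\mathcal{E}$.

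\textbf{Constancy of $E$.}
Finally, $t\mapsto E(\phi(t),\psi(t))$ is nonincreasing (by \eqref{aenergyin}/\eqref{BELa}) and bounded below (by (\textbf{A2})), hence converges to a limit $E_\infty$ as $t\to+\infty$. For any $(\phi^*,\psi^*)\in\omega(\phi_0,\psi_0)$ reached along $t_n\nearrow+\infty$, the strong $\mathds{V}^2_\kappa$-convergence together with the continuity of $E$ on $\mathds{V}^1_\kappa$ (guaranteed by (\textbf{A3}) and the Sobolev embedding, as noted in Corollary \ref{semigroup}) gives $E(\phi^*,\psi^*)=\lim_n E(\phi(t_n),\psi(t_n))=E_\infty$. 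Since $E_\infty$ is independent of the particular sequence and of the particular limit point, $E\equiv E_\infty$ on $\omega(\phi_0,\psi_0)$. I expect the main obstacle to be the limit passage in the inclusion step, specifically controlling the normal derivative $\partial_\mathbf{n}\phi$ and the surface chemical potential $\mu_\Gamma$ in the correct trace space so that the boundary equation closes in the limit; the uniform bounds \eqref{espnh-12}, \eqref{memuhibb} and the trace/Green's-formula machinery of Section 3.2 are what make this rigorous.
```
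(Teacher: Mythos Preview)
Your treatment of nonemptiness, compactness, and the constancy of $E$ is correct and matches the paper's reasoning. The genuine gap lies in the inclusion step $\omega(\phi_0,\psi_0)\subset\mathcal{E}$.

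You begin with a \emph{fixed} $(\phi^*,\psi^*)\in\omega(\phi_0,\psi_0)$ and a \emph{fixed} sequence $t_n\nearrow+\infty$ along which $(\phi(t_n),\psi(t_n))\to(\phi^*,\psi^*)$ in $\mathds{V}^2_\kappa$. You then assert that ``along a suitable sequence'' one has $\nabla\mu(t_n)\to 0$ and $\nabla_\Gamma\mu_\Gamma(t_n)\to 0$. But the $L^2$-in-time bound \eqref{unimu} does \emph{not} force pointwise decay along a prescribed sequence: finiteness of $\int_0^\infty\|\nabla\mu\|_{L^2}^2\,\d t$ is perfectly compatible with $\|\nabla\mu(t_n)\|_{L^2}$ staying bounded away from zero along the particular $t_n$ you were handed. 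If instead you pass to a new ``suitable'' sequence $s_n$ chosen so that the gradients vanish, you must still explain why $(\phi(s_n),\psi(s_n))$ converges to the \emph{same} limit $(\phi^*,\psi^*)$; otherwise you have identified an equilibrium but not the one you started with, and the inclusion does not follow.

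The paper closes this gap by a time-averaging device (in the spirit of \cite{HT01}). From \eqref{unimu} one obtains
\[
\int_0^1\Big(\|\nabla\mu(t_n+\tau)\|_{L^2(\Omega)}^2+\|\nabla_\Gamma\mu_\Gamma(t_n+\tau)\|_{L^2(\Gamma)}^2\Big)\,\d\tau\;\longrightarrow\;0,
\]
and the analogous $L^2$-in-time control of $(\phi_t,\psi_t)$ in $(H^1(\Omega))^*\times(H^1(\Gamma))^*$ forces $(\phi(t_n+\tau),\psi(t_n+\tau))\to(\phi^*,\psi^*)$ \emph{uniformly} for $\tau\in[0,1]$ in the dual topology, hence (by Lemma~\ref{comp} and interpolation) in $\mathds{V}^2_\kappa$. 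One then tests the identities for $\mu-\langle\mu\rangle_\Omega$ and $\mu_\Gamma-\langle\mu_\Gamma\rangle_\Gamma$ against an arbitrary $(\zeta,\eta)\in\dot{\mathds{V}}^1_\kappa$, integrates in $\tau$ over $[0,1]$, and passes to the limit $n\to\infty$ using Poincar\'e's inequality and dominated convergence. The averaging decouples the need to control $\nabla\mu$ pointwise at the specific instants $t_n$. Your pointwise strategy can be repaired (e.g.\ pick $s_n\in[t_n,t_n+1]$ with small dissipation and then prove $(\phi(s_n),\psi(s_n))\to(\phi^*,\psi^*)$ via the same integral control of $(\phi_t,\psi_t)$ plus interpolation with Lemma~\ref{comp}), but this step must be made explicit; as written, it is the missing idea.
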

\begin{proof}
For the case $\kappa>0$ and $(\phi_0, \psi_0)\in \mathcal{V}^1$ as in Theorem \ref{main1}, the conclusion immediately follows from Corollary \ref{semigroup} and well-known
results for dynamical systems (e.g., \cite[Theorem 4.3.3]{HEN} or \cite[Theorem 9.2.7]{CH}).
Below we present an argument that also applies to other cases under consideration.
From the uniform estimates obtained in Lemma \ref{comp} and the compact embedding theorem, we easily see that $\omega(\phi_0, \psi_0)$ is a nonempty, compact set in $\mathds{V}^2_\kappa$.
Next, thanks to Lemma \ref{comp}, the energy identity \eqref{BELa} holds for both global weak/strong solution when $t>0$. This monotonicity property and (\textbf{A2}) yields
\begin{align}
\lim_{t\to +\infty} E(\phi(t), \psi(t))=E_\infty,\label{limE}
\end{align}
for some finite constant $E_\infty$.
By the definition \eqref{omega}, it follows that $E(\phi, \psi)\equiv E_{\infty}$ on the set $\omega(\phi_0, \psi_0)$.

It remains to prove $\omega(\phi_0, \psi_0)\subset \mathcal{E}$. This follows from an argument in \cite{HT01}. For any $(\phi^*,\psi^*)\in \omega(\phi_0, \psi_0)$, it satisfies the mass constraint \eqref{equi2} and there exists an unbounded increasing sequence $t_n\to+\infty$ such that $\|(\phi(t_n), \psi(t_n))-(\phi^*, \psi^*)\|_{\mathds{V}_\kappa^2}\to 0$ as $n\to+\infty$.
Without loss of generality, we assume $t_{n+1}\geq
t_n+1,\ n\in\mathbb{N}$. Then from \eqref{unimu}, we see that
\begin{align}
&\lim_{n\to+\infty}\int_0^1\left(\|\nabla \mu(t_n+\tau)\|_{L^2(\Omega)}^2
+\|\nabla_\Gamma \mu_\Gamma(t_n+\tau)
\|_{L^2(\Gamma)}^2 \right)\, \d \tau=0,
\label{convmu1}
\end{align}
as well as
\begin{align}
&\lim_{n\to+\infty}\int_0^1\left(\|\phi_t(t_n+\tau)\|_{(H^1(\Omega))^*}^2
+\|\psi_t(t_n+\tau)
\|_{(H^1(\Gamma))^*}^2 \right)\, \d \tau=0.
\nonumber
\end{align}
The latter strong convergence property further implies
\begin{align}
 \|\phi(t_n+\tau_1)-\phi(t_n+\tau_2)\|_{(H^1(\Omega))^*}+\|\psi(t_n+\tau_1)-\psi(t_n+\tau_2)\|_{(H^1(\Gamma))^*}
 \rightarrow 0,
 \nonumber
 \end{align}
uniformly for all $\tau_1, \tau_2\in [0,1]$. Then from Lemma \ref{comp} and the interpolation inequality, it holds
\begin{align}
 \lim_{n\to+\infty}\|(\phi(t_n+\tau)-\phi^*, \psi(t_n+\tau)-\psi^*)\|_{\mathds{V}_\kappa^2}
 =0,\quad \forall\, \tau\in [0,1].
 \label{sestrc}
 \end{align}
For any $(\zeta, \eta)\in \dot{\mathds{V}}_\kappa^1$, using \eqref{convmu1}, \eqref{sestrc}, the Lebesgue dominated convergence
theorem and Poincar\'e's inequality, we deduce that
\begin{align}
&\left| \int_\Omega (\nabla \phi^* \cdot \nabla \zeta+ F'(\phi^*)\zeta-\langle F'(\phi^*)\rangle_\Omega\zeta)\,\d x\right. \nonumber\\
&\qquad \left. +\int_\Gamma \big[\kappa \nabla_\Gamma\psi^*\cdot\nabla_\Gamma \eta+\psi^*\eta+G'(\psi^*)\eta-(\langle\psi^*\rangle_\Gamma+\langle G'(\psi^*)\rangle_\Gamma)\eta\big] \,\d S \right|\nonumber\\
&\quad = \lim_{n\to+\infty}\left|\int^{1}_{0} \int_\Omega\left(\nabla \phi(t_n+\tau)\cdot \nabla \zeta+F'(\phi(t_n+\tau))\zeta-\langle F'(\phi(t_n+\tau))\rangle_\Omega\zeta\right) \,\d x \d \tau\right.\nonumber\\
&\qquad\qquad \ +\int_0^1\int_\Gamma \left[\kappa \nabla_\Gamma\psi(t_n+\tau)\cdot\nabla_\Gamma \eta+\psi(t_n+\tau)\eta+G'(\psi(t_n+\tau))\eta\right]\,\d S\d\tau\nonumber\\
&\qquad\qquad \ \left.-\int_0^1\int_\Gamma \left(\langle\psi(t_n+\tau)\rangle_\Gamma+\langle G'(\psi(t_n+\tau))\rangle_\Gamma\right)\eta \,\d S\d\tau \right|\nonumber\\
&= \lim_{n\to+\infty}\left|\int^{1}_{0}\int_\Omega \left(\mu(t_n+\tau)-\langle\mu(t_n+\tau)\rangle_\Omega\right)\zeta \,\d x\d \tau \right.\nonumber\\
&\qquad \qquad \ \left.
+\int^{1}_{0}\int_\Gamma \left(\mu_\Gamma(t_n+\tau)-\langle\mu_\Gamma(t_n+\tau)\rangle_\Gamma\right)\eta \,\d S\d \tau
\right|\nonumber\\
&\leq  \lim_{n\to+\infty}\int^{1}_{0}
  \|\mu(t_n+\tau)-\langle \mu(t_n+\tau)\rangle_\Omega\|_{L^2(\Omega)}\|\zeta\|_{L^2(\Omega)}\, \d\tau\nonumber\\
  &\qquad +\lim_{n\to+\infty} \int^{1}_{0}
  \|\mu_\Gamma(t_n+\tau)-\langle \mu_\Gamma(t_n+\tau)\rangle_\Gamma\|_{L^2(\Gamma)}\|\eta\|_{L^2(\Gamma)}\, \d\tau\nonumber\\
  &\leq C\lim_{n\to+\infty}\left(\int^{1}_{0}
  \|\nabla \mu(t_n+\tau)\|_{L^2(\Omega)}^2 \,\d\tau \right)^\frac12 \|\zeta\|_{L^2(\Omega)}\nonumber\\
  &\qquad
  +C\lim_{n\to+\infty}\left(\int^{1}_{0}
  \|\nabla_\Gamma \mu_\Gamma(t_n+\tau)\|_{L^2(\Gamma)}^2 \,\d\tau \right)^\frac12 \|\eta\|_{L^2(\Gamma)}\nonumber\\
  &= 0.\nonumber
\end{align}
As a consequence, we can conclude that $(\phi^*, \psi^*)\in \mathcal{E}$. The proof is complete.
\end{proof}

\begin{remark}
(1) Propositions \ref{exemini}, \ref{ome} imply that the set of stationary points $\mathcal{E}$ is nonempty under the assumptions (\textbf{A1}), (\textbf{A2}) together with either (\textbf{A3}) or (\textbf{A4}).

(2) The uniqueness of stationary points is unclear, since the energy $E(\phi, \psi)$ is allowed to be nonconvex.
It will be an interesting problem to study the structure of $\mathcal{E}$ and profiles of the stationary points (cf. \cite{Z86, GN95, WM1998} for the Cahn--Hilliard equation subject to classical Neumann boundary conditions).
\end{remark}

Since the set $\mathcal{E}$ may have a rather complicated structure, in general, we cannot conclude that each global weak/strong solution of problem \eqref{CH} obtained in Theorems \ref{main1}, \ref{main1a} converges to a single equilibrium as $t\to +\infty$, in other words, the nonempty set $\omega(\phi_0, \psi_0)$ consists of only one element.
This difficulty can be overcome by the well-known \L ojasiewicz--Simon approach (cf. \cite{S83, Ch03}) under the additional assumption (\textbf{AN}) on analyticity of the potential functions $F$, $G$. We refer to \cite{AW07,CFJ06, GaW08, GMS, GMS11, HR99, WH07, WZ04} for applications to the Cahn--Hilliard equation subject to various different type of boundary conditions.

Denote
$$
\mathcal{K}_2=\{(\phi, \psi)\in \mathds{V}_\kappa^2:\ \langle\phi\rangle_\Omega=\langle\hat{\phi}\rangle_\Omega,\ \langle\psi\rangle_\Gamma=\langle\hat{\psi}\rangle_\Gamma\},
$$
where the pair $(\hat{\phi}, \hat{\psi})\in \mathds{V}_\kappa^1$ is arbitrary but given.
By a similar argument as in \cite[Lemma 6.2]{CFJ06}, we can verify that the energy functional $E(\phi, \psi)$ is twice continuously Fr\'echet differentiable in $\mathcal{K}_2$
such that
for any $(\phi, \psi)\in \mathcal{K}_2$ and $(\zeta, \eta)\in \dot{\mathds{V}}^1_\kappa$,
\begin{align}
&\langle E'(\phi, \psi), (\zeta, \eta)\rangle_{(\mathds{V}^1_\kappa)^*, \mathds{V}_\kappa^1}\nonumber\\
&\quad
= \int_\Omega \left(\nabla \phi\cdot\nabla \zeta + F'(\phi)\zeta\right)\,\d x\nonumber\\
&\qquad + \int_\Gamma\left(\kappa\nabla_\Gamma\psi\cdot\nabla_\Gamma \eta+\psi\eta+G'(\psi)\eta\right) \,\d S,
\label{Fdre1}
\end{align}
while for any $(\phi, \psi)\in \mathcal{K}_2$ and $(\zeta_i, \eta_i)\in \dot{\mathds{V}}^1_\kappa$, $i=1,2$,
\begin{align}
&\langle E''(\phi, \psi)(\zeta_1, \eta_1), (\zeta_2, \eta_2)\rangle_{(\mathds{V}^1_\kappa)^*, \mathds{V}_\kappa^1}\nonumber\\
&\quad
= \int_\Omega \left(\nabla \zeta_1\cdot\nabla \zeta_2 + F''(\phi)\zeta_1\zeta_2\right)\,\d x\nonumber\\
&\qquad + \int_\Gamma\left(\kappa\nabla_\Gamma\eta_1\cdot\nabla_\Gamma \eta_2+\eta_1\eta_2+G''(\psi)\eta_1\eta_2\right) \,\d S.
\label{Fdre2}
\end{align}
Then we introduce the following gradient inequality of \L ojasiewicz--Simon type:
\begin{lemma}[\L ojasiewicz--Simon inequality]
\label{LS}
Suppose that the assumption (\textbf{AN}) is satisfied.
Let $(\phi^*,\psi^*)\in \mathcal{K}_2$ be a critical point of the energy functional $E(\phi, \psi)$ over $\mathcal{K}_2$.
There exist constants $\theta\in(0,\frac{1}{2}]$, $\Lambda>0$ and $\beta>0$ depending on $(\phi^*,\psi^*)$ such that for any $(\phi, \psi)\in \mathcal{K}_2$
and $\|(\phi, \psi)-(\phi_*,\psi_*)\|_{\mathds{V}_\kappa^1}< \beta$, it holds
\begin{align}
\Lambda \|E'(\phi, \psi)\|_{(\mathds{V}^1_\kappa)^*} \geq |E(\phi, \psi)-E(\phi_*, \psi_*)|^{1-\theta}.
\label{ls}
\end{align}
\end{lemma}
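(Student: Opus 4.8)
The plan is to establish the \L ojasiewicz--Simon inequality \eqref{ls} by reducing the constrained problem on $\mathcal{K}_2$ to an unconstrained analytic gradient inequality in a Hilbert space setting, following the abstract framework of Simon \cite{S83} and its functional-analytic refinements in \cite{Ch03}. The key structural fact I would exploit is that, by \eqref{Fdre1}, the energy $E$ restricted to the affine mass-constrained set $\mathcal{K}_2$ has its first variation $E'(\phi,\psi)$ naturally living in $(\dot{\mathds{V}}^1_\kappa)^*$, since test functions $(\zeta,\eta)$ range over the zero-mean space $\dot{\mathds{V}}^1_\kappa$; correspondingly the critical point condition is exactly \eqref{Fdre}. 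The work therefore takes place on the tangent space $\dot{\mathds{V}}^1_\kappa$, and the first step is to recast \eqref{ls} with $\|E'(\phi,\psi)\|_{(\mathds{V}^1_\kappa)^*}$ replaced (up to equivalence) by the dual norm on $\dot{\mathds{V}}^1_\kappa$, which is legitimate because the mean-value components are frozen by the constraint.

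First I would verify the three hypotheses of the abstract \L ojasiewicz--Simon theorem. Set $X=\dot{\mathds{V}}^1_\kappa$ as the base Hilbert space and introduce the nonlinear map $\mathcal{M}(\phi,\psi):=E'(\phi,\psi)\in X^*$, viewed as a map defined near the critical point $(\phi^*,\psi^*)$. I must check: (i) $\mathcal{M}$ is real analytic from a neighborhood of $(\phi^*,\psi^*)$ in $X$ into $X^*$ — this follows from assumption (\textbf{AN}), since $F,G$ real analytic together with the growth condition (\textbf{A3}) and the Sobolev embeddings (recall $\kappa>0$, so we are in $\mathcal{V}^s$ and $H^1(\Omega)\hookrightarrow L^p$ for the relevant $p$) guarantee that the Nemytskii operators $\phi\mapsto F'(\phi)$ and $\psi\mapsto G'(\psi)$ are analytic between the appropriate spaces; the remaining terms in \eqref{Fdre1} are linear and bounded; (ii) the linearization $\mathcal{M}'(\phi^*,\psi^*)=E''(\phi^*,\psi^*)$, given explicitly by \eqref{Fdre2}, is a bounded symmetric Fredholm operator of index zero from $X$ to $X^*$ — here I would write $E''(\phi^*,\psi^*)=\mathcal{L}+\mathcal{C}$, where $\mathcal{L}$ is the coercive leading part (the Dirichlet/Laplace--Beltrami forms plus the $\psi\eta$ term, coercive on $\dot{\mathds{V}}^1_\kappa$ by the Poincar\'e-type inequality and \eqref{V1no}) and $\mathcal{C}$ is the lower-order multiplication by $F''(\phi^*)$, $G''(\psi^*)$, which is compact by the compact embedding $\mathds{V}^1_\kappa\hookrightarrow \mathcal{H}$; coercive-plus-compact yields the Fredholm property; (iii) the standard abstract conclusion then delivers \eqref{ls} with some $\theta\in(0,\tfrac12]$.

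The main obstacle I anticipate is the interplay between the mass constraint and the Fredholm/projection argument: the operator $E''(\phi^*,\psi^*)$ is self-adjoint on $\dot{\mathds{V}}^1_\kappa$, and its kernel $\mathcal{N}$ (corresponding to the degenerate directions of the possibly non-isolated critical point) must be handled by the Lyapunov--Schmidt reduction that underlies Simon's proof. One decomposes $X=\mathcal{N}\oplus\mathcal{N}^\perp$, solves the $\mathcal{N}^\perp$ component via the analytic implicit function theorem (using that $E''$ is invertible on $\mathcal{N}^\perp$), and reduces to a finite-dimensional analytic function on $\mathcal{N}$ to which the classical \L ojasiewicz inequality for real-analytic functions applies. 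Carrying this through rigorously on the constrained space requires care that the projection onto $\mathcal{N}$ respects the zero-mean constraint and that all Nemytskii estimates used in the reduction are uniform on a $\mathds{V}^1_\kappa$-ball; I would therefore devote the bulk of the argument to the analyticity of $\mathcal{M}$ and the Fredholm property of its derivative, and then invoke the abstract result (as in \cite[Corollary 3.11]{Ch03}) to conclude. Once \eqref{ls} is obtained with $\Lambda,\beta,\theta$ depending on $(\phi^*,\psi^*)$, the proof is complete.
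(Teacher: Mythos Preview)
Your approach is essentially the same as the paper's: reduce to the abstract \L ojasiewicz--Simon framework of Chill \cite[Corollary 3.11]{Ch03} (as adapted in \cite[Proposition 6.6]{CFJ06}), verify analyticity of $E'$ and the Fredholm property of $E''(\phi^*,\psi^*)$, and handle the double mass constraint by working on the zero-mean tangent space. The paper does not give a full proof either; it refers to exactly these sources and emphasizes that the novelty is the two simultaneous constraints in bulk and on the boundary.

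There is, however, one concrete technical slip in your setup. You take $X=\dot{\mathds{V}}^1_\kappa$ and then invoke the growth condition (\textbf{A3}) to get analyticity of the Nemytskii maps $\phi\mapsto F'(\phi)$, $\psi\mapsto G'(\psi)$. But Lemma~\ref{LS} is stated under (\textbf{AN}) alone, with $(\phi,\psi)\in\mathcal{K}_2\subset\mathds{V}^2_\kappa$; no growth hypothesis is available. In dimension $d=3$, $H^1(\Omega)$ does not embed into $L^\infty(\Omega)$, so without (\textbf{A3}) the Nemytskii operators are not analytic (indeed, not even bounded) on $\dot{\mathds{V}}^1_\kappa$. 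The paper exploits precisely the $\mathds{V}^2_\kappa$-regularity of $\mathcal{K}_2$ together with the Sobolev embedding $\mathds{V}^2_\kappa\hookrightarrow C(\overline{\Omega})$ to obtain analyticity of $E'$ without any growth control on $F,G$. The remark immediately following the lemma makes this explicit: only if (\textbf{A3}) is additionally assumed can one downgrade the phase space to $\mathds{V}^1_\kappa$. So the fix is to run the Chill machinery with the domain space built on $\dot{\mathds{V}}^2_\kappa$ (and $E'$ mapping into $(\mathds{V}^1_\kappa)^*$ or $\dot{\mathcal{H}}$), which also makes the Fredholm verification cleaner via Lemma~\ref{esLepH2}; the remainder of your outline (coercive-plus-compact decomposition, Lyapunov--Schmidt reduction) then goes through as written.
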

\begin{remark}\label{rm6.5}
The proof of Lemma \ref{LS} can be carried out in the same manner as \cite[Proposition 6.6]{CFJ06} by adapting the abstract result \cite[Corollary 3.11]{Ch03} (see also \cite{WZ04,WH07,GaW08}). The main difference here is that we have mass conservation both in the bulk $\Omega$ and on the boundary $\Gamma$, which leads to a different phase space to work with, i.e., $\mathcal{K}_2$ defined above. Besides, by the Sobolev embedding theorem $\mathds{V}_\kappa^2\hookrightarrow C(\overline{\Omega})$ for $d=2,3$, here we don't need any growth assumption on $F$, $G$. If we assume in addition that (\textbf{A3}) is satisfied, then in the statement of Lemma \ref{LS} we can replace $\mathcal{K}_2$ by the set $\{(\phi, \psi)\in \mathds{V}_\kappa^1:\ \langle\phi\rangle_\Omega=\langle\hat{\phi}\rangle_\Omega,\ \langle\psi\rangle_\Gamma=\langle\hat{\psi}\rangle_\Gamma\}$ with a weaker topology $\mathds{V}_\kappa^1$.
\end{remark}

\subsection{Proof of Theorem \ref{main2}: uniqueness of asymptotic limit as $t\to+\infty$}
We are in a position to prove Theorem \ref{main2}, by adapting the argument in \cite{HR99}.

Let us consider a trajectory $(\phi(t), \psi(t))$ defined by the global weak (or strong) solutions obtained in Theorems \ref{main1} or \ref{main1a}.
Denote by $E_\infty= E(\phi, \psi)|_{(\phi, \psi)\in \omega(\phi_0, \psi_0)}$ to be the limit value of energy as $t\to+\infty$ (see Proposition \ref{ome} and \eqref{limE}).
From \eqref{BELa}, we see that $E(\phi(t),\psi(t))\geq E_\infty$ for $t\geq 0$.
If there exists certain $t_1>0$ such that $E(\phi(t_1),\psi(t_1))= E_\infty$, then $\|\phi_t(t)\|_{(H^1(\Omega))^*}=\|\psi_t(t)\|_{(H^1(\Gamma))^*}=0$ for $t\geq t_1$ and thus the evolution stops.
Therefore, below we only consider the non-trivial case $E(\phi(t),\psi(t))> E_\infty$ for $t\geq 0$.

 For every element $(\phi_\infty, \psi_\infty)\in\omega(\phi_0, \psi_0)\subset \mathcal{K}_2$ (here we take $(\hat{\phi}, \hat{\psi})=(\phi_0, \psi_0)$ in the definition of the set $\mathcal{K}_2$), thanks to Lemma \ref{LS}, there exist constants  $\beta>0$, $\Lambda>0$ and $\theta\in(0,\frac{1}{2}]$ depending on $(\phi_\infty, \psi_\infty)$ such that
the gradient inequality \eqref{ls} holds for every
\begin{align}
 (\phi, \psi) \in \mathbf{B}_{\beta}(\phi_\infty, \psi_\infty) :=\Big\{(\phi, \psi)\in \mathcal{K}_2: \|(\phi, \psi)-(\phi_\infty, \psi_\infty)\|_{\mathds{V}_\kappa^2}<\beta\Big\}.\non
\end{align}
The union of  balls $\{\mathbf{B}_{\beta}(\phi_\infty, \psi_\infty):\ (\phi_\infty, \psi_\infty)\in\omega(\phi_0, \psi_0)\}$ forms an open cover of $\omega(\phi_0, \psi_0)$ and because of the compactness of $\omega(\phi_0, \psi_0)$ in $\mathds{V}^2_\kappa$ (cf. Proposition \ref{ome}), we can find a finite sub-covering $\{\mathbf{B}_{\beta_i}(\phi_\infty^i, \psi_\infty^i):i=1,2,...,m\}$ for $\omega(\phi_0, \psi_0)$ in $\mathds{V}^2_\kappa$, where the constants $\beta_i, \Lambda_i, \theta_i$ corresponding to $(\phi_\infty^i, \psi_\infty^i)$ in Lemma \ref{LS} are indexed by $i$.
From the definition of $\omega(\phi_0, \psi_0)$, there exists a sufficient large $t_0>0$ such that
\begin{align}
 (\phi(t), \psi(t))\in\mathcal{U}:=\bigcup_{i=1}^m\mathbf{B}_{\beta_i}(\phi_\infty^i, \psi_\infty^{i}), \quad \text{for}\;\; t\geq t_0.
 \end{align}
Taking
$$
\theta=\min_{i=1,...,m}\{\theta_i\}\in(0,\frac{1}{2}],\quad  \Lambda=\max_{i=1,...,m}\{\Lambda_i\},
$$
  we deduce from  Lemma \ref{LS} that for all $t\geq t_0$,
\begin{align}
\Lambda \|E'(\phi(t), \psi(t))\|_{(\mathds{V}^1_\kappa)^*}
\geq |E(\phi(t), \psi(t))-E_\infty|^{1-\theta}.\label{ls1}
\end{align}

Since $(\phi(t),\psi(t))\in \mathcal{V}^3$ if $\kappa>0$ and $(\phi(t),\psi(t))\in V^\frac52$ if $\kappa=0$ are uniformly bounded for $t\geq t_0$, then by the definition \eqref{Fdre1} and Poincar\'e's inequality, we have
\begin{align}
\|\nabla \mu(t)\|_{L^2(\Omega)}+\|\nabla_\Gamma \mu_\Gamma(t)\|_{L^2(\Gamma)}\geq C \|E'(\phi(t), \psi(t))\|_{(\mathds{V}^1_\kappa)^*},\quad \forall\, t\geq t_0.
\label{ls2}
\end{align}
Thus, it follows from \eqref{BELa}, \eqref{ls1} and \eqref{ls2} that for all $t\geq t_0$,
\begin{align}
&\int_t^{\infty} \big(\|\nabla \mu(\tau)\|_{L^2(\Omega)}^2
+\|\nabla_\Gamma \mu_\Gamma(\tau)\|_{L^2(\Gamma)}^2\big) \,\d\tau\nonumber\\
&\quad = E(\phi(t), \psi(t))-E_\infty\nonumber\\
&\quad \leq C\big(\|\nabla \mu(t)\|_{L^2(\Omega)}^2+\|\nabla_\Gamma\mu_\Gamma(t)\|_{L^2(\Gamma)}^2\big)^\frac{1}{2(1-\theta)}.
\label{Des}
\end{align}
Thanks to the elementary result \cite[Lemma 7.1]{FS00}, we deduce from \eqref{Des} that
\begin{align}
\int_{t_0}^{+\infty}\big(\|\nabla \mu(t)\|_{L^2(\Omega)}
+\|\nabla_\Gamma \mu_\Gamma(t)\|_{L^2(\Gamma)}\big)\,\d t<+\infty,\nonumber
\end{align}
which implies
$$
\phi_t\in L^1(t_0, +\infty; (H^1(\Omega))^*),\quad \psi_t\in L^1(t_0, +\infty; (H^1(\Gamma))^*).
$$
The above $L^1$-integrability property on time derivatives entails that the whole trajectory $(\phi(t), \psi(t))$ strongly converges to a single equilibrium  $(\phi_\infty, \psi_\infty)\in \omega(\phi_0, \psi_0)$ in $(H^1(\Omega))^*\times (H^1(\Gamma))^*$, and by definition $\omega(\phi_0, \psi_0)=\{(\phi_\infty, \psi_\infty)\}$.
Next, using the uniform-in-time estimate obtained in Lemma \ref{comp} and the interpolation inequality, we can conclude the strong convergence in higher-order norm, i.e., \eqref{conv1}.

Concerning the convergence rate, it follows from \eqref{ls1} that
\begin{align}
\label{ener1}
\frac{\d}{\d t}(E(\phi(t),\psi(t))-E_\infty)+\|\nabla \mu(t)\|_{L^2(\Omega)}^2+\|\nabla_\Gamma \mu_\Gamma(t)\|_{L^2(\Gamma)}^2\leq 0,\quad\forall\,t\geq t_0.
\end{align}
Similar to \eqref{Des}, we deduce that
\begin{align}
\big(E(\phi(t),\psi(t))-E_\infty\big)^{2(1-\theta)}
&\leq C(\|\nabla \mu(t)\|_{L^2(\Omega)}^2+\|\nabla_\Gamma \mu_\Gamma(t)\|_{L^2(\Gamma)}^2)\nonumber\\
&= - C\frac{\d}{\d t}\big(E(\phi(t),\psi(t))-E_\infty\big).
\nonumber
\end{align}
which easily implies the decay of total free energy (cf. \cite[Lemma 2.6]{HJ01})
\begin{align}
\nonumber E(\phi(t),\psi(t))-E_\infty\leq C(1+t)^{-\frac{1}{1-2\theta}},\quad \forall\, t\geq t_0.
\end{align}
Thus, we obtain that
\begin{align}
 &\int_t^{+\infty} \big(\|\phi_t(\tau)\|_{(H^1(\Omega))^*}+\|\psi_t(\tau)\|_{(H^1(\Gamma))^*} \big)\,\d\tau\nonumber\\
 &\quad = \int_t^{+\infty}\big(\|\nabla \mu(\tau)\|_{L^2(\Omega)}+\|\nabla_\Gamma \mu_\Gamma(\tau)\|_{L^2(\Gamma)}\big)\,\d\tau\nonumber\\
 &\quad = \sum_{j=0}^{+\infty} \int_{2^j t}^{2^{j+1}t} \big(\|\nabla \mu(\tau)\|_{L^2(\Omega)}+\|\nabla_\Gamma \mu_\Gamma(\tau)\|_{L^2(\Gamma)}\big)\,\d\tau \nonumber\\
 &\quad \leq C \sum_{j=0}^{+\infty} (2^jt)^{\frac{1}{2}}\left(\int_{2^j t}^{2^{j+1}t}\mathcal{D}(\tau)d\tau\right)^{\frac{1}{2}}\nonumber\\
 &\quad \leq Ct^\frac12(E(\phi(t),\psi(t))-E_\infty)^{\frac{1}{2}}\sum_{j=0}^{+\infty} 2^{j-1}\nonumber\\
 &\quad \leq C(1+t)^{-\frac{\theta}{1-2\theta}},\quad\forall\, t\geq t_0,\nonumber
 \end{align}
which yields the lower-order convergence rate:
\begin{align}
\|\phi(t)-\phi_{\infty}\|_{(H^1(\Omega))^*}+ \|\psi(t)-\psi_{\infty}\|_{(H^1(\Gamma))^*}
\leq  C(1+t)^{-\frac{\theta}{1-2\theta}},\quad\forall\, t\geq t_0.
\label{ratelow}
\end{align}
Then the higher-order convergence rate \eqref{convrate} can be deduced from  \eqref{BELa}, \eqref{regH3a}, \eqref{regH2k0} and \eqref{ratelow}, by employing the same energy method as in \cite[Part II, Section 4]{WH07}.

The proof of Theorem \ref{main2} is complete. \hfill $\square$

\subsection{Proof of Theorem \ref{main3}: stability criterion}

We proceed to prove Theorem \ref{main3}.

\textbf{Part I. Stability of local energy minimizers}.
The key observation is that by the \L ojasiewicz--Simon inequality \eqref{ls}, the ``drop" of the total free energy $E(\phi(t), \psi(t))$ can control the length of the trajectory $(\phi(t), \psi(t))$ in $(H^1(\Omega))^*\times(H^1(\Gamma))^*$.

Let $(\phi^*, \psi^*)\in \mathcal{K}_1$ be a given local energy minimizer and $\epsilon >0$ be an arbitrary small but fixed constant. We consider the initial datum $(\phi_0, \phi_0)\in \mathcal{K}_1$ that satisfies $\|(\phi_0, \psi_0)-(\phi^*, \psi^*)\|_{\mathcal{V}^1}<\delta$, where $\delta>0$ is a small constant to be determined later.

For any $(\phi_1, \psi_1), (\phi_2,\psi_2)\in \mathcal{K}_1$, $\|(\phi_i, \psi_i)-(\phi^*, \psi^*)\|_{\mathcal{V}^1}\leq 1$, $i=1,2$, by the growth assumption (\textbf{A3}) and the Sobolev embedding theorem, we have
$$ |E(\phi_1, \psi_1)-E(\phi_2,\psi_2)|\leq C_0\|(\phi_1, \psi_1)-(\phi_2,\psi_2)\|_{\mathcal{V}^1},$$
where $C_0>0$ depends on $\|(\phi^*, \psi^*)\|_{\mathcal{V}^1}$, $\Omega$, $\Gamma$, $\widehat{C}_F$, $\widehat{C}_G$ and $\kappa$.
Without loss of generality, we can take $C_0\geq 1$.

Set $$\delta_1=\frac{1}{4C_0} \min\big\{1, \sigma, \beta, \epsilon\big\},$$
where $\sigma>0$ is the constant in the definition of local energy minimizer $(\phi^*, \psi^*)$ and $\beta>0$ is the constant given in Lemma \ref{LS} depending on $(\phi^*, \psi^*)$.
By the continuity of the global weak solution $(\phi, \psi)\in C([0,+\infty); \mathcal{V}^1)$ (see Corollary \ref{semigroup}), we can find $t_0>0$ such that
$$t_0:=\inf_{\|(\phi_0, \psi_0)-(\phi^*, \psi^*)\|_{\mathcal{V}^1}\leq \delta_1} \min\Big\{t>0:\ \|(\phi(t), \psi(t))-(\phi^*, \psi^*)\|_{\mathcal{V}^1}=2\delta_1\Big\}.$$
If $t_0=+\infty$, then we can simply take $\delta =\delta_1$ and the proof is done.

Below we consider the case $t_0\in (0,+\infty)$.
For any $(\phi_0, \phi_0)\in \mathcal{K}_1$ with $\|(\phi_0, \psi_0)-(\phi^*, \psi^*)\|_{\mathcal{V}^1}\leq\delta_1$, by Lemma \ref{comp}, the corresponding solution $(\phi, \psi)$ satisfies
$$\|(\phi(t), \psi(t))\|_{\mathcal{V}^3}\leq C_1,\quad \forall\, t\geq t_0,$$
where $C_1>0$ depends on $\|(\phi^*, \psi^*)\|_{\mathcal{V}^1}$, $t_0$, $\Omega$, $\Gamma$, $C_F$, $\widetilde{C}_F$, $\widehat{C}_F$, $C_G$,  $\widetilde{C}_G$, $\widehat{C}_G$ and $\kappa$.

For $\delta \in (0,\delta_1]$, we denote
$$t_1:=\inf_{\|(\phi_0, \psi_0)-(\phi^*, \psi^*)\|_{\mathcal{V}^1}\leq \delta} \min\Big\{t>0:\ \|(\phi(t), \psi(t))-(\phi^*, \psi^*)\|_{\mathcal{V}^1}=  3\delta_1\Big\}.$$
It is obvious that $t_1> t_0$ by their definitions. Our aim is to prove there exists a $\delta\in (0,\delta_1]$ such that $t_1=+\infty$.

Suppose on the contrary that for any $\delta\in (0,\delta_1]$ the corresponding time $t_1$ defined above satisfies $t_1<+\infty$.
Then for every $\delta$, there exists an initial datum $(\phi_0, \psi_0)\in \mathcal{K}_1$, $\|(\phi_0, \psi_0)-(\phi^*, \psi^*)\|_{\mathcal{V}^1}\leq \delta$ such that
$\|(\phi(t_1), \psi(t_1))-(\phi^*, \psi^*)\|_{\mathcal{V}^1}= 3\delta_1$.
Next, for any $t\in [t_0, t_1]$, it holds
\begin{align}
&\|(\phi(t), \psi(t))-(\phi^*, \psi^*)\|_{\mathcal{V}^1}\nonumber\\
&\quad \leq  \|(\phi(t), \psi(t))-(\phi(t_0), \psi(t_0))\|_{\mathcal{V}^1} +\|(\phi(t_0), \psi(t_0))-(\phi^*, \psi^*)\|_{\mathcal{V}^1}\nonumber\\
&\quad \leq  C \|(\phi(t), \psi(t))-(\phi(t_0), \psi(t_0))\|_{(\mathcal{V}^1)^*}^\frac12\|(\phi(t), \psi(t))-(\phi(t_0), \psi(t_0))\|_{\mathcal{V}^3}^\frac12+ 2\delta_1\nonumber\\
&\quad \leq C_2\left(\int_{t_0}^{t} \big(\|\phi_t(\tau)\|_{(H^1(\Omega))^*}+\|\psi_t(\tau)\|_{(H^1(\Gamma))^*} \big)\,\d\tau\right)^\frac12+ 2\delta_1\nonumber\\
&\quad \leq C_2\left(\int_{0}^{t} \big(\|\phi_t(\tau)\|_{(H^1(\Omega))^*}+\|\psi_t(\tau)\|_{(H^1(\Gamma))^*} \big)\,\d\tau\right)^\frac12+ 2\delta_1,
\end{align}
where $C_2$ depends on $C_1$, $\Omega$ and $\Gamma$.
On the other hand, thanks to the definition of $\delta_1$, we see that the \L ojasiewicz--Simon inequality \eqref{ls} holds for the trajectory $(\phi(t), \psi(t))$ on $[0,t_1]$. Like in the proof of Theorem \ref{main2}, we only need to consider the case $E(\phi(t), \psi(t))> E(\phi^*,\psi^*)$ for any $t\in [0, t_1]$. Then we have
\begin{align}
& -\frac{\d}{\d t} (E(\phi(t), \psi(t))-E(\phi^*,\psi^*))^\theta\nonumber\\
&\quad  = (E(\phi(t), \psi(t))-E(\phi^*,\psi^*))^{\theta-1}\frac{\d}{\d t} E(\phi(t), \psi(t))\nonumber\\
&\quad \geq C_3\big(\|\phi_t(\tau)\|_{(H^1(\Omega))^*}+\|\psi_t(\tau)\|_{(H^1(\Gamma))^*} \big),
\end{align}
where $C_3$ depends on $(\phi^*, \psi^*)$, $\Omega$, $\Gamma$, $C_F$, $\widetilde{C}_F$, $\widehat{C}_F$, $C_G$,  $\widetilde{C}_G$, $\widehat{C}_G$ and $\kappa$.
As a consequence, we obtain
\begin{align}
&\|(\phi(t), \psi(t))-(\phi^*, \psi^*)\|_{\mathcal{V}^1}\nonumber\\
&\quad \leq C_2 C^\frac12_3 (E(\phi_0, \psi_0)-E(\phi^*,\psi^*))^\frac{\theta}{2}+2\delta_1\nonumber\\
&\quad \leq C_2 C^\frac12_3C_0^\frac{\theta}{2}\|(\phi_0, \psi_0)-(\phi^*,\psi^*)\|_{\mathcal{V}^1}^\frac{\theta}{2}+2\delta_1, \quad \forall\, t\in [t_0, t_1].\label{estt1}
\end{align}
In view of \eqref{estt1}, we can choose $\delta\in (0, \delta_1]$ sufficiently small such that $C_2 C^\frac12_3C_0^\frac{\theta}{2}\delta^\frac{\theta}{2}\leq \frac12\delta_1$.
Then it follows that
\begin{align}
&\|(\phi(t_1), \psi(t_1))-(\phi^*, \psi^*)\|_{\mathcal{V}^1}\leq \frac{5}{2}\delta_1<3\delta_1,\nonumber
\end{align}
which contradicts the definition of $t_1$.

Hence, for any given $\epsilon>0$, there exits a $\delta>0$ such that $t_1=+\infty$. This leads to the conclusion that $(\phi^*, \psi^*)$ is locally Lyapunov stable.

\medskip

\textbf{Part II. Instability of stationary points that are not local energy minimizers}.
Let $(\phi^*, \psi^*)\in \mathcal{K}_1$ be a stationary point that is a weak solution of the stationary problem \eqref{equi3}--\eqref{llambda}.
By assumption, $(\phi^*, \psi^*)$ does not attain any local minimum of $E(\phi, \psi)$ over the set $\mathcal{K}_1$.
Then there exists a sequence $\{(\phi_0^n, \psi_0^n)\}\subset \mathcal{K}_1$ such that
$$E(\phi_0^n, \psi_0^n)< E(\phi^*, \psi^*)\quad  \text{and}\quad  \|(\phi_0^n, \psi_0^n)-(\phi^*, \psi^*)\|_{\mathcal{V}^1}\to 0\quad\text{as}\ n\to +\infty.$$
Taking $(\phi_0^n, \psi_0^n)$ as the initial datum, thanks to Theorem \ref{main1}, problem \eqref{CH} admits a unique global weak solution $(\phi^n(t), \psi^n(t))$.
Then for every $n\in \mathbb{N}$ it follows from Theorem \ref{main2} that there exists a stationary point $(\phi_\infty^n, \psi_\infty^n)\in \mathcal{K}_1\cap \mathcal{V}^3$ such that
\begin{align}
\|(\phi^n(t), \psi^n(t))-(\phi^n_\infty, \psi^n_\infty)\|_{\mathcal{V}^1}\to 0\quad\text{as}\ t\to +\infty, \quad \forall\, n\in \mathbb{N}.\label{concon}
\end{align}

Suppose on the contrary that $(\phi^*, \psi^*)$ is Lyapunov stable. Then for $\epsilon_0=\frac12\beta$ (where $\beta$ is the constant in Lemma \ref{LS} determined by $(\phi^*, \psi^*)$, which is the critical point of $E(\phi, \psi)$ in $\mathcal{K}_1$, cf. Remark \ref{rm6.5}), there exists a constant $\delta_0>0$ such that if $\|(\phi_0, \psi_0)-(\phi^*, \psi^*)\|_{\mathcal{V}^1}<\delta_0$, it holds $$\sup_{t\in[0,+\infty)}\|(\phi(t), \psi(t))-(\phi^*, \psi^*)\|_{\mathcal{V}^1}<\epsilon_0.$$
Hence, we see that there exits an integer $n_0\in \mathbb{N}$ such that for all $n\geq n_0$, $\|(\phi_0^n, \psi_0^n)-(\phi^*, \psi^*)\|_{\mathcal{V}^1}<\delta_0$
and moreover, due to \eqref{concon}, it holds $\|(\phi_\infty^n, \psi_\infty^n)-(\phi^*, \psi^*)\|_{\mathcal{V}^1}<\beta$.
Applying Lemma \ref{LS}, we easily have $E(\phi_\infty^n, \psi_\infty^n)=E(\phi^*, \psi^*)$ for $n\geq n_0$, since $E'(\phi_\infty^n, \psi_\infty^n)=0$ by the definition of $(\phi_\infty^n, \psi_\infty^n)$. However, from the above construction and the decreasing property of the energy $E(\phi, \psi)$ along the trajectory $(\phi^n(t), \psi^n(t))$, we have $$E(\phi_\infty^n, \psi_\infty^n)\leq E(\phi_0^n, \psi_0^n)<E(\phi^*, \psi^*).$$
This leads to a contradiction.

The proof of Theorem \ref{main3} is complete. \hfill $\square$

\section{Appendix}
\label{APP}
\setcounter{equation}{0}

\subsection{Calculation of energy variations}

In this section, we provide detailed computations for the variation of the total action functional $\mathcal{A}^\text{total}$ (see \eqref{LAP}). \medskip

\textbf{Step 1. Variation in the bulk}. The deformation gradient of the bulk flow map $x(X,t)$ (see \eqref{flowmap}) is given by
$$\textsf{F}(X,t)=\frac{\partial x(X,t)}{\partial X},$$
which provides the information about how the
configuration is deformed with respect to the reference configuration. As a consequence,
the $d\times d$ tensor $\textsf{F}(X,t)$ carries all the information about structures and patterns of the material.
If we do a push forward for $\textsf{F}(X,t)$, i.e., expressing the deformation gradient by the Eulerian coordinate system such that
$$
\textsf{F}(X,t)=\widetilde{\textsf{F}}(x(X,t),t),
$$
then by the chain rule, one can see that $\widetilde{\textsf{F}}$ satisfies the following fundamental equation:
$$
\widetilde{\textsf{F}}_t + (\mathbf{w}\cdot \nabla_x) \widetilde{\textsf{F}}=(\nabla_x \mathbf{w}) \widetilde{\textsf{F}}.
$$
Besides, since the mass is conserved in the bulk (see \eqref{con1}), a change of coordinates yields that (cf. \cite[Chapter 2, Proposition 12]{FJ13})
\begin{equation}
\phi(x(X,t),t)=\frac{\phi_0(X)}{\mathrm{det} \textsf{F}},\quad  \forall\, X\in \Omega_0^X, \ t\geq 0.
\label{mass1}
\end{equation}

Now we compute the variation of the bulk action functional $\mathcal{A}^{\text{bulk}}$ with respective to the position $x=x(X,t)$.
The continuity equation \eqref{con1} and the relation \eqref{mass1} turn out to be fundamental kinematic assumptions that are necessary for the subsequent energetic variational analysis.
After pulling back to the Lagrangian coordinates, the bulk action functional $\mathcal{A}^{\text{bulk}}(x(X,t))$ (recall \eqref{ac}) can be written as
\begin{align}
\mathcal{A}^{\text{bulk}}(x(X,t))
& = \int_0^T  L^{\text{bulk}}(x(t)) \d x \d t\non\\
& = -\int_0^T\int_{\Omega_0^X} W_\text{b}\left(\frac{\phi_0(X)}{\mathrm{det} \textsf{F}},\ \nabla_X  \Big(\frac{\phi_0(X)}{\mathrm{det} \textsf{F}}\Big)\textsf{F}^{-1}\right)\mathrm{det} \textsf{F}\, \d X \d t.\label{bulkacL}
\end{align}
For any smooth vector $y(X,t)=\tilde{y}(x(X,t),t)$ satisfying $\tilde{y}\cdot \mathbf{n}=0$, we denote $$x^\epsilon= x+\epsilon y,\quad \textsf{F}^\epsilon(X,t)=\frac{\partial x^\epsilon(X,t)}{\partial X}.$$
Then we have
\begin{align}
& \left.\frac{\d}{\d\epsilon} \right|_{\epsilon=0}\mathcal{A}^{\text{bulk}} (x^\epsilon(X,t))\nonumber\\
&\  = -\left.\frac{\d}{\d\epsilon} \right|_{\epsilon=0}
\int_0^T\int_{\Omega_0^X} W_\text{b}\left(\frac{\phi_0(X)}{\mathrm{det} \textsf{F}^\epsilon},\
\nabla_X  \Big(\frac{\phi_0(X)}{\mathrm{det} \textsf{F}^\epsilon}\Big) (\textsf{F}^\epsilon)^{-1}\right)\mathrm{det} \textsf{F}^\epsilon\, \d X \d t\nonumber\\
&\  =-\int_0^T\int_{\Omega_0^X} \left[\frac{\partial W_\text{b}}{\partial\phi}\left(\frac{\phi_0(X)}{\mathrm{det} \textsf{F}},\ \nabla_X  \Big(\frac{\phi_0(X)}{\mathrm{det} \textsf{F}}\Big) \textsf{F}^{-1}\right)\right] \nonumber\\
&\qquad \quad \times \left(-\frac{\phi_0(X)}{(\mathrm{det} \textsf{F})^2}\right)
(\mathrm{det} \textsf{F}) \mathrm{tr}\left(\textsf{F}^{-1} \frac{\partial y}{\partial X}\right)\mathrm{det} \textsf{F}\, \d X\d t\nonumber\\
&\quad\ \ -\int_0^T\int_{\Omega_0^X} \left[\frac{\partial W_\text{b}}{\partial \nabla \phi}\left(\frac{\phi_0(X)}{\mathrm{det} \textsf{F}},\ \nabla_X  \Big(\frac{\phi_0(X)}{\mathrm{det} \textsf{F}}\Big) \textsf{F}^{-1}\right)\right]\nonumber\\
&\qquad \quad \times \left[-\nabla_X \Big(\frac{\phi_0(X)}{\mathrm{det} \textsf{F}} \Big)  \textsf{F}^{-1}\frac{\partial y}{\partial X}\textsf{F}^{-1}\right]\mathrm{det}\textsf{F}\, \d X \d t\nonumber\\
&\quad\ \ -\int_0^T\int_{\Omega_0^X} \left[\frac{\partial W_\text{b}}{\partial \nabla \phi}\left(\frac{\phi_0(X)}{\mathrm{det} \textsf{F}},\ \nabla_X  \Big(\frac{\phi_0(X)}{\mathrm{det} \textsf{F}}\Big) \textsf{F}^{-1}\right)\right]\nonumber\\
&\qquad \quad \times \left[ \nabla_X\left(-\frac{\phi_0(X)}{(\mathrm{det} \textsf{F})^2}(\mathrm{det} \textsf{F}) \mathrm{tr}\left(\textsf{F}^{-1} \frac{\partial y}{\partial X}\right)\right)\textsf{F}^{-1}\right] \mathrm{det} \textsf{F}\, \d X \d t\nonumber\\
&\quad\ \  -\int_0^T\int_{\Omega_0^X} W_\text{b}\left(\frac{\phi_0(X)}{\mathrm{det} \textsf{F}},\ \nabla_X  \Big(\frac{\phi_0(X)}{\mathrm{det} \textsf{F}}\Big) \textsf{F}^{-1}\right)(\mathrm{det} \textsf{F}) \mathrm{tr}\left(\textsf{F}^{-1} \frac{\partial y}{\partial X}\right)\, \d X \d t.\nonumber
\end{align}
After pushing forward the above result to the Eulerian coordinates and performing integration by parts, we have
\begin{align}
& \left.\frac{\d}{\d\epsilon} \right|_{\epsilon=0}\mathcal{A}^{\text{bulk}} (x+\epsilon y)\nonumber\\
&\ \ = -\int_0^T\int_{\Omega_t^x} \left[\frac{\partial W_\text{b}}{\partial\phi} (-\phi)(\nabla_x\cdot \tilde{y})+  \frac{\partial W_\text{b}}{\partial \nabla_x \phi}\cdot \left(-\frac{\partial\tilde{y}}{\partial x}\nabla_x \phi-\nabla_x(\phi \nabla_x \cdot \tilde{y})\right)\right.\nonumber\\
&\qquad \qquad +W_\text{b}(\phi, \nabla_x \phi) (\nabla_x \cdot \tilde{y})\Big]\, \d x\d t\nonumber\\
&\ \ = -\int_0^T\int_{\Omega_t^x} \left[\nabla_x\left(\phi\frac{\partial W_\text{b}}{\partial\phi} \right) + \nabla_x\cdot \left(\frac{\partial W_\text{b}}{\partial \nabla_x \phi} \otimes \nabla_x\phi\right) \right]\cdot \tilde{y}\, \d x\d t\nonumber\\
 &\qquad +\int_0^T\int_{\Omega_t^x} \left[
 \nabla_x \left(\phi\nabla_x\cdot \frac{\partial W_\text{b}}{\partial \nabla_x \phi} \right)+ \nabla_x W_\text{b}(\phi, \nabla_x \phi)\right]\cdot \tilde{y}\, \d x\d t\nonumber\\
 &\qquad - \int_0^T \int_{\Gamma_t^x}\left(-\phi\frac{\partial W_\text{b}}{\partial\phi}+W_\text{b}(\phi, \nabla_x \phi)+ \phi \nabla_x \cdot \frac{\partial W_\text{b}}{\partial \nabla_x \phi} \right)(\tilde{y}\cdot \mathbf{n})\, \d S_x \d t\nonumber\\
 &\qquad + \int_0^T \int_{\Gamma_t^x} \left[\left(\frac{\partial W_\text{b}}{\partial \nabla_x \phi}\cdot \mathbf{n}\right)\phi(\nabla_x\cdot\tilde{y}) + \left(\frac{\partial W_\text{b}}{\partial \nabla_x \phi}\cdot \mathbf{n}\right) (\tilde{y}\cdot\nabla_x)\phi\right]\, \d S_x \d t\nonumber\\
 &\ \ = -\int_0^T\int_{\Omega_t^x} (\phi \nabla_x \mu_\text{b}) \cdot \tilde{y}\, \d x\d t
 + \int_0^T \int_{\Gamma_t^x} \left(\frac{\partial W_\text{b}}{\partial \nabla_x \phi}\cdot \mathbf{n}\right)\nabla_x\cdot( \phi\tilde{y})\, \d S_x \d t,\label{bdaa}
\end{align}
where we recall the definition of the bulk chemical potential
\begin{equation}
\mu_\text{b}=\frac{\delta W_\text{b}(\phi, \nabla_x\phi)}{\delta \phi}
=-\nabla_x\cdot \frac{\partial W_\text{b}}{\partial \nabla_x \phi}+\frac{\partial W_\text{b}}{\partial\phi},\non
\end{equation}
and use the following direct computation
\begin{align}
&\nabla_x\left(\phi\frac{\partial W_\text{b}}{\partial\phi} \right) + \nabla_x\cdot \left(\frac{\partial W_\text{b}}{\partial \nabla_x \phi} \otimes \nabla_x\phi\right)-
 \nabla_x \left(\phi\nabla_x\cdot \frac{\partial W_\text{b}}{\partial \nabla_x \phi} \right)\nonumber\\
 &\qquad  -\nabla_x W_\text{b}(\phi, \nabla_x \phi)\nonumber\\
 &\ \ = \frac{\partial W_\text{b}}{\partial\phi} \nabla_x \phi+ \phi \nabla_x\frac{\partial W_\text{b}}{\partial\phi}+ \left(\nabla_x\cdot \frac{\partial W_\text{b}}{\partial \nabla_x \phi}\right)\nabla_x\phi+ \frac{\partial W_\text{b}}{\partial \nabla_x \phi}\nabla_x^2 \phi\nonumber\\
 &\qquad -\left(\nabla_x\cdot \frac{\partial W_\text{b}}{\partial \nabla_x \phi}\right)\nabla_x\phi-\nabla_x\left(\nabla_x\cdot \frac{\partial W_\text{b}}{\partial \nabla_x \phi}\right)\phi\nonumber\\
 &\qquad -\frac{\partial W_\text{b}}{\partial\phi} \nabla_x \phi-\frac{\partial W_\text{b}}{\partial \nabla_x \phi}\nabla_x^2 \phi\nonumber\\
 &\ \ = \phi\nabla_x \left(-\nabla_x\cdot \frac{\partial W_\text{b}}{\partial \nabla_x \phi}+\frac{\partial W_\text{b}}{\partial\phi}\right).\nonumber
\end{align}

\medskip

 \textbf{Step 2. Variation on the boundary}. Within this step, we only consider the case that the boundary $\Gamma\subset \mathbb{R}^3$ is a closed two dimensional surface. The computations can be done in a similar way when $\Gamma\subset \mathbb{R}^2$ is a closed curve.
 Recalling \cite[Definition 2.9]{KLG}, we introduce the flow map on the surface $\Gamma$
 \begin{equation}
 \left\{
 \begin{aligned}
 &\frac{\d x_\text{s}}{\d t}=(x_\text{s})_t(X, t)=\mathbf{v}_\text{s}(x_\text{s}(X,t),t),\\
 &x_\text{s}|_{t=0}=X_\text{s},
 \end{aligned}
 \right.
 \nonumber
 \end{equation}
 where $X_\text{s}=(X_1, X_2 ,X_3)^T\in \Gamma_0^X$, $x_\text{s}=(x_1, x_2 ,x_3)^T\in \Gamma_t^x$, $\mathbf{v}_\text{s}$ is a velocity determined by the surface flow map $x_\text{s}$.
 The corresponding deformation tensor is given by
 $$\textsf{F}_\text{s}=\frac{\partial x_\text{s}(X_\text{s},t)}{\partial X_\text{s}}.$$
 Here and after within Step 2, we drop the subscript ``s" that stands for ``surface" for the sake of simplicity.

  Next, we introduce the partition of unity (cf. e.g., \cite[Section 2.4]{KLG}): there are $\Gamma_m\subset \Gamma_0^X$, $\Phi_m\in C^\infty(\mathbb{R}^2)$, $U_m\subset \mathbb{R}^2$, $\Psi_m\in C^\infty(\mathbb{R}^3)$, $m=1,2,...,N$ such that
 \begin{align*}
 &\bigcup_{m=1}^N\Gamma_m=\Gamma_0^X,\quad \Gamma_m=\Phi_m(U_m),\quad \bigcup_{m=1}^N U_m:=U,\\
 & \text{supp}\Psi_m\subset \Gamma_m,\quad \|\Psi_m\|_{L^\infty}=1,\quad \sum_{m=1}^N\Psi_m=1\ \text{on} \ \Gamma_0^X.
 \end{align*}
 For arbitrary $X\in \Gamma_0^X$, assume that $X\in \Gamma_m$ for some $m$. Since we can write $X=\Phi_m(\xi)$ for some $\xi=(\xi_1,\xi_2)^T\in U_m$, we set
 $$\hat{x}=\hat{x}(\xi,t)=x(\Phi_m(\xi),t)=x(X, t),$$
 which implies
 \begin{equation}
 \left\{
 \begin{aligned}
 &\frac{\d \hat{x}}{\d t}=\hat{x}_t(\xi, t)=\mathbf{v}(\hat{x}(\xi,t),t),\\
 &\hat{x}|_{t=0}=\Phi_m(\xi).
 \end{aligned}
 \right.
 \nonumber
 \end{equation}
 Denote $\Phi:=\Phi_m$ if $X\in \Gamma_m$. Then for every $X\in\Gamma_0^X$, the surface flow map $\hat{x}=\hat{x}(\xi, t)$ in local coordinates satisfies
 \begin{equation}
 \left\{
 \begin{aligned}
 &\frac{\d \hat{x}}{\d t}=\hat{x}_t(\xi, t)=\mathbf{v}(\hat{x}(\xi,t),t),\\
 &\hat{x}|_{t=0}=\Phi(\xi).
 \end{aligned}
 \right.
 \nonumber
 \end{equation}
 Now we define the corresponding deformation tensor
 \begin{align}
 \widehat{\textsf{F}}(\xi,t):=\frac{\partial \hat{x}(\xi,t)}{\partial \xi}=
 \begin{pmatrix}
 \displaystyle{\frac{\partial\hat{x}_1}{\partial \xi_1}} & \displaystyle{\frac{\partial\hat{x}_1}{\partial \xi_2}}\\[1em]
 \displaystyle{\frac{\partial\hat{x}_2}{\partial \xi_1}} & \displaystyle{\frac{\partial\hat{x}_2}{\partial \xi_2}}\\[1em]
 \displaystyle{\frac{\partial\hat{x}_3}{\partial \xi_1}} & \displaystyle{\frac{\partial\hat{x}_3}{\partial \xi_2}}
 \end{pmatrix}.
 \nonumber
 \end{align}
 For any function $f(\cdot, \cdot)\in C(\mathbb{R}^3\times \mathbb{R})$, using the change of variables, we have (see \cite[Section 2.4]{KLG})
 \begin{align}
 &\int_{\Gamma_t^x} f(x,t) \d S_x\nonumber\\
 &\ \ =\int_{\Gamma_0^X} f(x(X,t),t) \mathrm{det} \textsf{F}\, \d S_X\nonumber\\
 &\ \ =\sum_{m=1}^N\int_{\Gamma_m} \Psi_m(X)f(x(X,t),t) \mathrm{det} \textsf{F}\, \d S_X \nonumber\\
 &\ \ =\sum_{m=1}^N\int_{U_m} \Psi_m(\Phi_m(\xi))f(x(\Phi_m(\xi),t),t) \mathrm{det} \textsf{F} \sqrt{\mathrm{det}\big((\nabla_\xi \Phi_m)^T(\nabla_\xi \Phi_m)\big)}\, \d \xi\nonumber\\
 &\ \ =\sum_{m=1}^N\int_{U_m} \Psi_m(\Phi_m(\xi))f(x(\Phi_m(\xi),t),t) \sqrt{\mathrm{det}\big(\widehat{\textsf{F}}^T\widehat{\textsf{F}}\big)}\, \d \xi\nonumber\\
 &\ \ :=\int_{U} \widehat{\Psi}(\xi)f(\hat{x}(\xi,t),t) \sqrt{\mathrm{det}\big(\widehat{\textsf{F}}^T\widehat{\textsf{F}}\big)}\, \d \xi.\nonumber
 \end{align}
Besides, thanks to the continuity equation \eqref{con2} on the boundary, using the surface flow map $\hat{x}=\hat{x}(\xi,t)$, we have (cf. \cite[Lemma 3.3]{KLG})
\begin{equation}
\phi(\hat{x}(\xi,t),t)=\frac{\tilde{\phi}_0(\xi)}{\sqrt{\mathrm{det} \big(\widehat{\textsf{F}}^T\widehat{\textsf{F}}\big)(\xi,t)}},\quad  \forall\, \xi\in U, \ t\geq 0,
\label{mass2}
\end{equation}
where $$\tilde{\phi}_0(\xi)=\phi_0(\hat{x}(\xi,0))\sqrt{\mathrm{det} \big(\widehat{\textsf{F}}^T\widehat{\textsf{F}}\big)(\xi,0)}.$$
Consider the action functional on the boundary \eqref{ac} (pulled back to the Lagrangian coordinates and written in local coordinates)
\begin{align}
\mathcal{A}^{\text{surf}}(\hat{x}(\xi,t))
&= -\int_0^T\int_{U} \widehat{\Psi}(\xi) \widehat{W}_\text{s} \sqrt{\mathrm{det}\big(\widehat{\textsf{F}}^T\widehat{\textsf{F}}\big)}\, \d \xi\d t,\nonumber
\end{align}
where
\begin{align}
\widehat{W}_\text{s} := W_\text{s}\left(\frac{\tilde{\phi}_0(\xi)}{\sqrt{\mathrm{det}\big( \widehat{\textsf{F}}^T\widehat{\textsf{F}}\big)(\xi,t)} },\ \nabla_\xi\left(  \frac{\tilde{\phi}_0(\xi)}{\sqrt{\mathrm{det}\big( \widehat{\textsf{F}}^T\widehat{\textsf{F}}\big)(\xi,t)} } \right) (\widehat{\textsf{F}}^T\widehat{\textsf{F}})^{-1}\widehat{\textsf{F}}^T \right).
\nonumber
\end{align}
For any smooth $y(X,t)=\tilde{y}(x(X,t),t)$ (with $X\in \Gamma_0^X$) satisfying $\tilde{y}\cdot \mathbf{n}=0$,
we denote
\begin{align}
&x^\epsilon= x+\epsilon y,\quad \textsf{F}^\epsilon(X,t)=\frac{\partial x^\epsilon(X,t)}{\partial X},\nonumber
\end{align}
while in the local coordinates, we  write
\begin{align*}
&\hat{x}^\epsilon(\xi, t)= \hat{x}(\xi, t)+\epsilon \hat{y}(\xi, t)=x^\epsilon(\Phi_m(X),t),\\[1em]
&\widehat{\textsf{F}}^\epsilon(\xi,t):=\frac{\partial \hat{x}^\epsilon(\xi,t)}{\partial \xi}=
 \begin{pmatrix}
 \displaystyle{\frac{\partial\hat{x}^\epsilon_1}{\partial \xi_1}} & \displaystyle{\frac{\partial\hat{x}^\epsilon_1}{\partial \xi_2}}\\[1em]
 \displaystyle{\frac{\partial\hat{x}^\epsilon_2}{\partial \xi_1}} & \displaystyle{\frac{\partial\hat{x}^\epsilon_2}{\partial \xi_2}}\\[1em]
 \displaystyle{\frac{\partial\hat{x}^\epsilon_3}{\partial \xi_1}} & \displaystyle{\frac{\partial\hat{x}^\epsilon_3}{\partial \xi_2}}
 \end{pmatrix}
 \end{align*}
 and correspondingly,
 \begin{align}
 &\widehat{W}^\epsilon_\text{s} := W_\text{s}\left(\frac{\tilde{\phi}_0(\xi)}{\sqrt{\mathrm{det}\big( (\widehat{\textsf{F}}^\epsilon)^T\widehat{\textsf{F}}^\epsilon\big)(\xi,t)} },\ \nabla_\xi\left(  \frac{\tilde{\phi}_0(\xi)}{\sqrt{\mathrm{det} \big((\widehat{\textsf{F}}^\epsilon)^T\widehat{\textsf{F}}^\epsilon\big)(\xi,t)} } \right) \Big((\widehat{\textsf{F}}^\epsilon)^T \widehat{\textsf{F}}^\epsilon\Big)^{-1}(\widehat{\textsf{F}}^\epsilon)^T \right).\nonumber
\end{align}
Using the facts
\begin{align}
&\left.\frac{\d}{\d\epsilon} \right|_{\epsilon=0}\sqrt{\mathrm{det}\big((\widehat{\textsf{F}}^\epsilon)^T\widehat{\textsf{F}}^\epsilon\big)}\nonumber\\
&\quad =\frac12 \sqrt{\mathrm{det}\big(\widehat{\textsf{F}}^T\widehat{\textsf{F}}\big)}\mathrm{tr}\left[ \big(\widehat{\textsf{F}}^T\widehat{\textsf{F}}\big)^{-1}\left( \Big(\frac{\partial \hat{y}}{\partial \xi}\Big)^T \widehat{\textsf{F}}+ \widehat{\textsf{F}}^T\Big(\frac{\partial \hat{y}}{\partial \xi}\Big)\right)\right]\nonumber
\end{align}
and
\begin{align}
&\left.\frac{\d}{\d\epsilon} \right|_{\epsilon=0}\Big((\widehat{\textsf{F}}^\epsilon)^T \widehat{\textsf{F}}^\epsilon\Big)^{-1}(\widehat{\textsf{F}}^\epsilon)^T\nonumber\\
&\quad = \Big(\widehat{\textsf{F}}^T \widehat{\textsf{F}}\Big)^{-1}\Big(\frac{\partial \hat{y}}{\partial \xi}\Big)^T
- \Big(\widehat{\textsf{F}}^T \widehat{\textsf{F}}\Big)^{-1}\left[ \Big(\frac{\partial \hat{y}}{\partial \xi}\Big)^T \widehat{\textsf{F}}+ \widehat{\textsf{F}}^T\Big(\frac{\partial \hat{y}}{\partial \xi}\Big)\right]\Big(\widehat{\textsf{F}}^T \widehat{\textsf{F}}\Big)^{-1}\widehat{\textsf{F}}^T\nonumber\\
&\quad =- \Big(\widehat{\textsf{F}}^T \widehat{\textsf{F}}\Big)^{-1} \widehat{\textsf{F}}^T\Big(\frac{\partial \hat{y}}{\partial \xi}\Big)\Big(\widehat{\textsf{F}}^T \widehat{\textsf{F}}\Big)^{-1}\widehat{\textsf{F}}^T,
\nonumber
\end{align}
we compute that
\begin{align}
& \left.\frac{\d}{\d\epsilon} \right|_{\epsilon=0}\mathcal{A}^{\text{surf}} (\hat{x}^\epsilon(\xi,t))\nonumber\\
&\quad  = -\left.\frac{\d}{\d\epsilon} \right|_{\epsilon=0}
\int_0^T\int_{U} \widehat{\Psi}(\xi) \widehat{W}_\text{s}^\epsilon \sqrt{\mathrm{det}\big((\widehat{\textsf{F}}^\epsilon)^T\widehat{\textsf{F}}^\epsilon\big)}\, \d \xi\d t\nonumber\\
&\quad = -\int_0^T\int_{U} \widehat{\Psi}(\xi) \frac{\partial W_\text{s}}{\partial \phi} \Big(-\tilde{\phi}_0(\xi)\Big)\frac12 \mathrm{tr}\left[ \big(\widehat{\textsf{F}}^T\widehat{\textsf{F}}\big)^{-1}\left( \Big(\frac{\partial \hat{y}}{\partial \xi}\Big)^T \widehat{\textsf{F}}+ \widehat{\textsf{F}}^T\Big(\frac{\partial \hat{y}}{\partial \xi}\Big)\right)\right] \, \d \xi\d t\nonumber\\
&\qquad -\int_0^T\int_{U} \widehat{\Psi}(\xi) \frac{\partial W_\text{s}}{\partial\nabla_\Gamma^x \phi}\cdot \nabla_\xi\left\{  -\frac{\tilde{\phi}_0(\xi)}{\sqrt{\mathrm{det}\big(\widehat{\textsf{F}}^T\widehat{\textsf{F}}\big)}} \frac12 \mathrm{tr}\left[ \big(\widehat{\textsf{F}}^T\widehat{\textsf{F}}\big)^{-1}\left( \Big(\frac{\partial \hat{y}}{\partial \xi}\Big)^T \widehat{\textsf{F}}+ \widehat{\textsf{F}}^T\Big(\frac{\partial \hat{y}}{\partial \xi}\Big)\right)\right] \right\}\nonumber\\
&\qquad\qquad  \times (\widehat{\textsf{F}}^T\widehat{\textsf{F}})^{-1}\widehat{\textsf{F}}^T \sqrt{\mathrm{det}\big(\widehat{\textsf{F}}^T\widehat{\textsf{F}}\big)}\, \d \xi\d t
\nonumber\\
&\qquad -\int_0^T\int_{U} \widehat{\Psi}(\xi) \frac{\partial W_\text{s}}{\partial\nabla_\Gamma^x \phi}\cdot
\nabla_\xi\left(\frac{\tilde{\phi}_0(\xi)}{\sqrt{\mathrm{det}\big(\widehat{\textsf{F}}^T\widehat{\textsf{F}}\big)}} \right)\left[-\Big(\widehat{\textsf{F}}^T \widehat{\textsf{F}}\Big)^{-1} \widehat{\textsf{F}}^T\Big(\frac{\partial \hat{y}}{\partial \xi}\Big)\Big(\widehat{\textsf{F}}^T \widehat{\textsf{F}}\Big)^{-1}\widehat{\textsf{F}}^T\right]
\nonumber\\
&\qquad\qquad  \times \sqrt{\mathrm{det}\big(\widehat{\textsf{F}}^T\widehat{\textsf{F}}\big)}
\, \d \xi\d t
\nonumber\\
&\qquad -\int_0^T\int_{U} \widehat{\Psi}(\xi)\widehat{W}_\text{s}\frac12 \sqrt{\mathrm{det}\big(\widehat{\textsf{F}}^T\widehat{\textsf{F}}\big)}\mathrm{tr}\left[ \big(\widehat{\textsf{F}}^T\widehat{\textsf{F}}\big)^{-1}\left( \Big(\frac{\partial \hat{y}}{\partial \xi}\Big)^T \widehat{\textsf{F}}+ \widehat{\textsf{F}}^T\Big(\frac{\partial \hat{y}}{\partial \xi}\Big)\right)\right]\, \d \xi\d t.\nonumber
\end{align}
Then pushing forward the above result to the Eulerian coordinates and performing integration
by parts, we infer from \cite[Lemma 2.6]{KLG} and $\tilde{y}\cdot \mathbf{n}=0$ that
\begin{align}
& \left.\frac{\d}{\d\epsilon} \right|_{\epsilon=0}\mathcal{A}^{\text{surf}} (x^\epsilon)\nonumber\\
&\quad
=-\int_0^T\int_{\Gamma_t^x}\left[ \frac{\partial W_\text{s}}{\partial \phi} (-\phi)(\nabla_\Gamma^x \cdot \tilde{y})+ W_\text{s}(\phi, \nabla_\Gamma^x\phi)(\nabla_\Gamma^x \cdot \tilde{y}) \right]\, \d S_x\d t
\nonumber\\
&\qquad
-\int_0^T\int_{\Gamma_t^x} \frac{\partial W_\text{s}}{\partial\nabla^x_\Gamma \phi} \cdot
\left[\nabla_\Gamma^x\big(-\phi\nabla_\Gamma^x\cdot \tilde{y}\big)-\nabla_\Gamma^x\phi\nabla_\Gamma^x \tilde{y}\right]\,
\d S_x\d t
\nonumber\\
&\quad = -\int_0^T\int_{\Gamma_t^x} \left[\nabla_\Gamma^x\left(\phi\frac{\partial W_\text{s}}{\partial \phi} \right) +\nabla_\Gamma^x\cdot\left(\frac{\partial W_\text{s}}{\partial\nabla^x_\Gamma \phi}\otimes \nabla_\Gamma^x \phi\right)  \right] \cdot \tilde{y}\, \d S_x\d t\nonumber\\
&\qquad +\int_0^T\int_{\Gamma_t^x} \left[ \nabla_\Gamma^x\cdot \left(\phi \nabla_\Gamma^x\cdot\frac{\partial W_\text{s}}{\partial\nabla^x_\Gamma \phi}\right) +  \nabla_\Gamma^x W_\text{s}(\phi, \nabla_\Gamma^x\phi)\right]\cdot \tilde{y} \, \d S_x\d t
\nonumber\\
&\quad = -\int_0^T\int_{\Gamma_t^x} (\phi \nabla_\Gamma^x \mu_\text{s}) \cdot \tilde{y}\, \d S_x \d t,
 \label{bdab}
\end{align}
where $\mu_\text{s}$ stands for the chemical potential on the boundary such that
\begin{align}
\mu_\text{s} =\frac{\delta W_\text{s}(\phi, \nabla_\Gamma^x\phi)}{\delta \phi}=-\nabla_\Gamma^x\cdot \frac{\partial W_\text{s}}{\partial \nabla_\Gamma^x \phi}+\frac{\partial W_\text{s}}{\partial\phi}.\non
\end{align}

\textbf{Step 3. Variation of the total action functional}.
As a consequence of \eqref{bdaa} and \eqref{bdab}, for the total action functional $\mathcal{A}^{\text{total}}=\mathcal{A}^{\text{bulk}}+\mathcal{A}^{\text{surf}}$, we deduce that
\begin{align}
&\delta_{(x, x_\text{s})} \mathcal{A}^{\text{total}}\nonumber\\
 &\ \ :=\left.\frac{\d}{\d\epsilon} \right|_{\epsilon=0}\mathcal{A}^{\text{bulk}} (x+\epsilon y) +\left.\frac{\d}{\d\epsilon} \right|_{\epsilon=0}\mathcal{A}^{\text{surf}} (x_\text{s}+\epsilon y_\text{s})  \nonumber\\
 &\ \ \ = -\int_0^T\int_{\Omega_t^x} (\phi \nabla_x \mu_\text{b}) \cdot \tilde{y}\, \d x \d t
 + \int_0^T \int_{\Gamma_t^x} \left(\frac{\partial W_\text{b}}{\partial \nabla_x \phi}\cdot \mathbf{n}\right)\nabla_x\cdot( \phi\tilde{y})\, \d S_x \d t\non\\
 &\quad\ \ -\int_0^T\int_{\Gamma_t^x} (\phi \nabla_\Gamma^x \mu_\text{s}) \cdot \tilde{y}_\text{s}\, \d S_x\d t.
 \label{LAPa}
\end{align}
It remains to treat the second term on the right-hand side of \eqref{LAPa}.
From equations \eqref{con1} and \eqref{con2a}, we have the following variational relations with $\delta x=y(X,t)=\tilde{y}(x(X,t),t)$ and $\delta x_\text{s}=y_\text{s}(X,t)=\tilde{y}_\text{s}(x_\text{s}(X,t),t)$:
\begin{equation}
\begin{aligned}
&\delta \phi+\nabla_x\cdot( \phi\delta x)=0,
&\text{in}\ \Omega,\nonumber\\
&\delta (\phi|_\Gamma) + \nabla_\Gamma^x\cdot[(\phi|_\Gamma) \delta x_\text{s}]=0,
&\text{on}\ \Gamma.\nonumber
\end{aligned}
\end{equation}
Besides, for sufficiently smooth phase function $\phi$, we infer that $\partial_t(\phi|_\Gamma)=(\partial_t\phi)|_\Gamma$, which implies $\delta (\phi|_\Gamma)=(\delta \phi)|_\Gamma$.
This compatibility condition on the boundary $\Gamma$ yields that
$$
\big[\nabla_x\cdot( \phi\delta x)\big]\big|_\Gamma=\nabla_\Gamma^x\cdot\big[ (\phi|_\Gamma) \delta x_\text{s}\big].
$$
Then we see that the integrand of the second term on the right-hand side of \eqref{LAPa} can be interpreted as
\begin{align}
\left.\left(\frac{\partial W_\text{b}}{\partial \nabla_x \phi}\cdot \mathbf{n}\right)\right|_\Gamma \big[\nabla_x\cdot( \phi\tilde{y})\big]\big|_\Gamma
 = \left.\left(\frac{\partial W_\text{b}}{\partial \nabla_x \phi}\cdot \mathbf{n} \right)\right|_\Gamma \big[\nabla_\Gamma^x\cdot( (\phi|_\Gamma) \tilde{y}_\text{s})\big], \quad \text{on}\ \Gamma.\nonumber
\end{align}
From the above relation and \eqref{LAPa}, using integration by parts, we deduce  that
\begin{align}
&\delta_{(x, x_\text{s})}\mathcal{A}^{\text{total}}\nonumber\\
 &\ \  = -\int_0^T\int_{\Omega_t^x} (\phi \nabla_x \mu_\text{b}) \cdot \tilde{y}\, \d x\d t
 + \int_0^T \int_{\Gamma_t^x} \left(\frac{\partial W_\text{b}}{\partial \nabla_x \phi}\cdot \mathbf{n}\right)\nabla_\Gamma^x\cdot(\phi\tilde{y}_\text{s})\, \d S_x \d t\non\\
 &\quad\ \ -\int_0^T\int_{\Gamma_t^x} (\phi \nabla_\Gamma^x \mu_\text{s}) \cdot \tilde{y}_\text{s}\, \d S_x \d t\non\\
 &\ \  = -\int_0^T\int_{\Omega_t^x} (\phi \nabla_x \mu_\text{b}) \cdot \tilde{y}\, \d x\d t\nonumber\\
 &\quad\ \   -\int_0^T\int_{\Gamma_t^x} \left[\phi \nabla_\Gamma^x \left(\mu_\text{s}+ \frac{\partial W_\text{b}}{\partial \nabla_x \phi}\cdot \mathbf{n}\right)\right] \cdot \tilde{y}_\text{s}\, \d S_x\d t,
  \label{LAPb}
\end{align}
which leads to the conclusion \eqref{LAP}.

\begin{remark}
(1) The formula \eqref{LAPb} implies that the bulk-boundary interaction is in terms of the chemical potential (via force balance relations). The term $\frac{\partial W_\mathrm{b}}{\partial \nabla_x \phi}\cdot \mathbf{n}$ involving the normal derivative can be regarded as the contribution of the bulk chemical potential acting on the boundary $\Gamma$.

(2) Combining the above calculations and the argument in \cite{FJ13,KLG}, we are able to derive hydrodynamical systems for two-phase flows including nontrivial boundary evolution (e.g., the moving contact line problem), which again fulfill the conservation of mass, energy dissipation and force balance relations. This will be presented in a future work.
\end{remark}

\subsection{Proof of Lemma \ref{exLpa}}
For $T\in (0,+\infty)$, we denote $Q_T=\Omega\times[0,T]$ and $\Sigma_T=\Gamma\times[0,T]$. For any $p\in (1,+\infty)$,  $s, r\geq 0$, $W^{(s,r)}_p(Q_T)=W^{s,p}(0,T; L^p(\Omega))\cap L^p(0,T; W^{r,p}(\Omega))$, $W^{(s,r)}_p(\Sigma_T)=W^{s,p}(0,T; L^p(\Gamma))\cap L^p(0,T; W^{r,p}(\Gamma))$, stand for the anisotropic Sobolev spaces (see, e.g., \cite{LSU}).

The proof of Lemma \ref{exLpa} is inspired by the argument in \cite[Lemmas 2.1, 2.2]{MZ05}.
However, different from \cite{MZ05}, in order to keep certain compatibility between the boundary condition and the initial datum on $\Gamma\times\{t=0\}$ for the decomposed system, we first introduce a parabolic shift function defined on $\Gamma$ (cf. \cite{PRZ06}). For $(\phi_0,\psi_0)\in \mathcal{V}^m$ with certain integer $m\geq 2$, we take
\begin{align}
\rho(t)=e^{\kappa A_\Gamma} \psi_0,\quad t\geq0.\label{rho}
\end{align}
If for some $p\in (1,+\infty)$, $\psi_0\in W^{2(1-\frac1p),p}(\Gamma)$ (valid by choosing suitably large $m$), then by the classical $L^p$-theory for the heat equation, we have $\rho(t)\in W^{(1, 2)}_p(\Sigma_T)$ and
\begin{align}
\|\rho(t)\|_{W^{(1, 2)}_p(\Sigma_T)}\leq C\|\psi_0\|_{W^{2(1-\frac1p),p}(\Gamma)}, \quad \forall\, t\in [0,T]. \label{rhoes}
\end{align}

For any $\alpha\in (0,1]$, we consider the following auxiliary non-homogeneous linear problems:
\begin{equation}
\left\{
\begin{aligned}
&\phi_t=\Delta \widetilde{\mu}, \quad \text{with}\ \widetilde{\mu}=-\Delta \phi+\alpha\phi_t+h(t),
&\text{in}\ \Omega\times(0,T),\\
&\partial_\mathbf{n}\widetilde{\mu}=0,\quad\phi|_\Gamma=\rho,
&\text{on}\ \Gamma\times(0,T),\\
&\phi|_{t=0}=\phi_0,
&\text{in}\ \Omega,
\end{aligned}
\label{LL1}
\right.
\end{equation}
and
\begin{equation}
\left\{
\begin{aligned}
&\phi_t=\Delta \widetilde{\mu}, \quad \text{with}\ \widetilde{\mu}=-\Delta \phi+\alpha\phi_t,
&\text{in}\ \Omega\times(0,T),\\
&\partial_\mathbf{n}\widetilde{\mu}=0,\quad \phi|_\Gamma=\psi-\rho,
&\text{on}\ \Gamma\times(0,T),\\
&\phi|_{t=0}=0,
&\text{in}\ \Omega,
\end{aligned}
\label{LL1a}
\right.
\end{equation}
where $\psi=\psi(x,t)$ is a certain regular function defined on $\Gamma\times[0,T]$ such that $\psi|_{t=0}=\psi_0(x)$ and $\rho$ is given by \eqref{rho}.

If we assume that the source term $h(t)$ in \eqref{LL1} satisfies $\langle h(t)\rangle_\Omega=0$, then using the fact that $A_\Omega^0$ is invertible on $\dot{L^2}(\Omega)$, we can write the above systems into the following equivalent form (cf.  \cite[Section 2]{MZ05}):
\begin{equation}
\left\{
\begin{aligned}
&[\alpha+ (A^0_\Omega)^{-1}]\phi_t=\Delta \phi-\frac{|\Gamma|}{|\Omega|}\langle \partial_\mathbf{n}\phi\rangle_\Gamma-h(t),
&\text{in}\ \Omega\times(0,T),\\
&\phi|_\Gamma=\rho,
&\text{on}\ \Gamma\times(0,T),\\
&\phi|_{t=0}=\phi_0,
&\text{in}\ \Omega,
\end{aligned}
\label{LL2}
\right.
\end{equation}
and
\begin{equation}
\left\{
\begin{aligned}
&[\alpha+ (A^0_\Omega)^{-1}]\phi_t=\Delta \phi-\frac{|\Gamma|}{|\Omega|}\langle \partial_\mathbf{n}\phi\rangle_\Gamma,
&\text{in}\ \Omega\times(0,T),\\
&\phi|_\Gamma=\psi-\rho,
&\text{on}\ \Gamma\times(0,T),\\
&\phi|_{t=0}=0,
&\text{in}\ \Omega.
\end{aligned}
\label{LL2a}
\right.
\end{equation}
 The following well-posedness results in the $L^p$-framework follows form the argument in \cite[Lemma 2.1, Corollary 2.1]{MZ05} with minor modifications (i.e., due to the nonhomogeneous Dirichlet boundary in the system \eqref{LL2}):
\begin{lemma}\label{exeLL1}
Assume that $\alpha\in (0,1]$ and $p \in [2,+\infty)$. $\rho$ is given by \eqref{rho} satisfying $\rho\in W^{(1-\frac{1}{2p},2-\frac{1}{p})}_p(\Sigma_T)$.

(i) If $h\in L^p(Q_T)$ with $\langle h(t)\rangle_\Omega=0$ and $\phi_0\in W^{2(1-\frac1p), p}(\Omega)$ with $\phi_0|_\Gamma=\rho|_{t=0}$, then problem \eqref{LL2} admits a unique solution $\phi\in W^{(1,2)}_p(Q_T)$ and the following estimate holds:
\begin{align}
\|\phi\|_{W^{(1,2)}_p(Q_T)}\leq C\Big(\|\phi_0\|_{W^{2(1-\frac1p), p}(\Omega)}+\|\rho\|_{W^{(1-\frac{1}{2p},2-\frac{1}{p})}_p(\Sigma_T)}+\|h\|_{L^p(Q_T)}\Big),\label{esLL1}
\end{align}
where the constant $C>0$ depends on $T$ and $\alpha$, but is independent of $\phi$,  $\phi_0$ and $h$.

(ii)  If $\psi, \rho\in W^{(1-\frac{1}{2p},2-\frac{1}{p})}_p(\Sigma_T)$, then problem \eqref{LL2a} admits a unique solution $\phi\in W^{(1,2)}_p(Q_T)$ with $\langle \phi \rangle_\Omega=0$ and the following estimates hold:
\begin{align}
&\|\phi\|_{W^{(1,2)}_p(Q_T)}\leq C\left(\|\psi\|_{W^{(1-\frac{1}{2p},2-\frac{1}{p})}_p(\Sigma_T)}
+\|\rho\|_{W^{(1-\frac{1}{2p},2-\frac{1}{p})}_p(\Sigma_T)}\right),
\label{esLL2}\\
&\int_0^t(\partial_\mathbf{n}\phi(\tau), \psi(\tau)-\rho(\tau))_{L^2(\Gamma)} d\tau
\nonumber\\
&\quad =\frac12\|\phi(t)\|^2_{(H^1(\Omega))^*}+\frac{\alpha}{2}\|\phi(t)\|_{L^2(\Omega)}^2+\int_0^t \|\nabla \phi(\tau)\|_{L^2(\Omega)}^2 \d\tau,\quad \forall\, t\in [0,T],
\label{esLL2a}
\end{align}
where the constant $C>0$ in \eqref{esLL2} depends on $T$ and $\alpha$, but is independent of $\psi$.
\end{lemma}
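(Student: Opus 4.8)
The plan is to follow the strategy of \cite[Lemma 2.1, Corollary 2.1]{MZ05}, taking advantage of the fact that the reformulated equations \eqref{LL2} and \eqref{LL2a} are, despite their appearance, genuine second-order parabolic problems subject to a (non-homogeneous) Dirichlet boundary condition. The key structural observation is that the operator $M_\alpha:=\alpha I+(A^0_\Omega)^{-1}$ is bounded, self-adjoint and strictly positive on $\dot{L^2}(\Omega)$, with spectrum contained in $[\alpha,\alpha+\lambda_1^{-1}]$ where $\lambda_1>0$ denotes the first nonzero Neumann eigenvalue of $-\Delta$ on $\Omega$; hence $M_\alpha$ is boundedly invertible, with norms depending on $\alpha$. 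Writing the left-hand side of \eqref{LL2}--\eqref{LL2a} as $M_\alpha\phi_t$, one sees that the principal part of the evolution is governed by the heat operator $\alpha\partial_t-\Delta$ together with Dirichlet data, for which the classical $L^p$ maximal regularity theory in the anisotropic spaces $W^{(1,2)}_p(Q_T)$ applies (cf. \cite{LSU}). The two remaining contributions $(A^0_\Omega)^{-1}\phi_t$ and $\frac{|\Gamma|}{|\Omega|}\langle\partial_\mathbf{n}\phi\rangle_\Gamma$ will be handled as lower-order perturbations: the former is smoothing by two spatial derivatives (since $(A^0_\Omega)^{-1}:\dot{L^2}(\Omega)\to H^2(\Omega)\cap\dot{L^2}(\Omega)$), while the latter is a bounded scalar functional of the trace of $\nabla\phi$.

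Concretely, for part (i) I would first reduce the inhomogeneous Dirichlet condition $\phi|_\Gamma=\rho$ to a homogeneous one by subtracting a suitable lifting of $\rho$ into $\Omega$, using the assumed regularity $\rho\in W^{(1-\frac{1}{2p},2-\frac{1}{p})}_p(\Sigma_T)$ and the inverse trace theorem; the compatibility $\phi_0|_\Gamma=\rho|_{t=0}$ ensures that the new initial datum still belongs to the correct trace space $W^{2(1-\frac1p),p}(\Omega)$. I would then set up a fixed-point map $\phi\mapsto\widetilde\phi$, where $\widetilde\phi$ solves the heat-type problem $\alpha\widetilde\phi_t-\Delta\widetilde\phi=-(A^0_\Omega)^{-1}\phi_t-\frac{|\Gamma|}{|\Omega|}\langle\partial_\mathbf{n}\phi\rangle_\Gamma-h$ with homogeneous Dirichlet data, the mean-zero assumption $\langle h\rangle_\Omega=0$ and the conservation $\langle\phi_t\rangle_\Omega=0$ guaranteeing that the nonlocal terms are well-defined. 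Maximal regularity for the heat semigroup provides a solution operator bounded from $L^p(Q_T)$ into $W^{(1,2)}_p(Q_T)$; because the perturbation terms either gain two spatial derivatives or reduce to a bounded scalar, the map is a contraction on $W^{(1,2)}_p(Q_T)$ for $T$ small, which, by the linearity of the problem, extends to arbitrary finite $T$ and yields a unique solution together with estimate \eqref{esLL1}. The solvability and estimate \eqref{esLL2} in part (ii) follow by the same scheme, now with vanishing source and initial datum and with boundary data $\psi-\rho$; here the conservation of mass forces $\langle\phi(t)\rangle_\Omega=0$.

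For the energy identity \eqref{esLL2a} I would test the reformulated equation \eqref{LL2a} with $(A^0_\Omega)^{-1}\phi$ (legitimate since $\langle\phi(t)\rangle_\Omega=0$) and integrate over $\Omega$. Using the self-adjointness of $(A^0_\Omega)^{-1}$ and the identity $\tfrac12\tfrac{\mathrm{d}}{\mathrm{d}t}\|\phi\|^2_{(H^1(\Omega))^*_0}=\langle\phi_t,(A^0_\Omega)^{-1}\phi\rangle$ recorded in Section 3.2, the left-hand side produces $\tfrac12\tfrac{\mathrm{d}}{\mathrm{d}t}\big(\|\phi\|^2_{(H^1(\Omega))^*}+\alpha\|\phi\|^2_{L^2(\Omega)}\big)$, while the generalized first Green's formula applied to $\int_\Omega\Delta\phi\cdot\phi$ gives $-\|\nabla\phi\|^2_{L^2(\Omega)}+(\partial_\mathbf{n}\phi,\phi|_\Gamma)_{L^2(\Gamma)}$; the nonlocal term drops out because $\langle\phi\rangle_\Omega=0$. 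Recalling $\phi|_\Gamma=\psi-\rho$ and integrating in time from the zero initial datum then yields precisely \eqref{esLL2a}. The main obstacle I anticipate is the mismatch between the Dirichlet boundary condition imposed on $\phi$ and the Neumann-type inverse $(A^0_\Omega)^{-1}$ governing the temporal structure: one must verify that, in the anisotropic $L^p$ framework, the normal-derivative functional $\langle\partial_\mathbf{n}\phi\rangle_\Gamma$ and the smoothing term $(A^0_\Omega)^{-1}\phi_t$ can genuinely be absorbed into the perturbation scheme without destroying the maximal-regularity estimate, and that all trace and normal-derivative terms remain meaningful and continuous in the chosen spaces.
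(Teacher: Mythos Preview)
Your approach matches the paper's: it simply records that the result follows from \cite[Lemma~2.1, Corollary~2.1]{MZ05} after accommodating the nonhomogeneous Dirichlet datum $\rho$, which is exactly the adaptation you outline. One slip to fix in your derivation of \eqref{esLL2a}: the correct test function is $\phi$ itself, not $(A^0_\Omega)^{-1}\phi$. Testing the left-hand side $[\alpha+(A^0_\Omega)^{-1}]\phi_t$ against $\phi$ gives $\tfrac{\alpha}{2}\tfrac{\mathrm{d}}{\mathrm{d}t}\|\phi\|_{L^2(\Omega)}^2+\tfrac12\tfrac{\mathrm{d}}{\mathrm{d}t}\|\phi\|_{(H^1(\Omega))^*}^2$ via the self-adjointness of $(A^0_\Omega)^{-1}$, and your Green's-formula computation on $\int_\Omega\Delta\phi\cdot\phi\,\mathrm{d}x$ is already written for this choice; testing with $(A^0_\Omega)^{-1}\phi$ would instead produce $\tfrac{\alpha}{2}\tfrac{\mathrm{d}}{\mathrm{d}t}\|\phi\|_{(H^1(\Omega))^*}^2+\tfrac12\tfrac{\mathrm{d}}{\mathrm{d}t}\|(A^0_\Omega)^{-1}\phi\|_{L^2(\Omega)}^2$, which is not what you need.
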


Now let us go back to the linear problem \eqref{Lpa}. Put
$$
\widetilde{h}_1=h_1- \langle h_1\rangle_\Omega,\quad \widetilde{h}_2=h_2-  \langle h_2\rangle_\Gamma.
$$
We observe that $\widetilde{h}_1\in L^p(Q_T)$, $\widetilde{h}_2\in L^p(\Sigma_T)$ with null average in $\Omega$ and $\Gamma$, respectively. Since $\langle\phi_t\rangle_\Omega=\langle\psi_t\rangle_\Gamma=0$, then problem \eqref{Lpa} can be written into the following form:
\begin{equation}
\left\{
\begin{aligned}
&[\alpha+ (A^0_\Omega)^{-1}]\phi_t=\Delta\phi-\frac{|\Gamma|}{|\Omega|}\langle\partial_\mathbf{n}\phi\rangle_\Gamma-\widetilde{h}_1,
&\text{in}\ \Omega\times(0,T),\\
&\phi|_\Gamma=\psi,
&\text{on}\ \Gamma\times(0,T),\\
&[\alpha+ (A^0_\Gamma)^{-1}]\psi_t=\kappa\Delta_\Gamma \psi-\psi + \langle\psi\rangle_\Gamma-\partial_\mathbf{n}\phi+\langle \partial_\mathbf{n}\phi\rangle_\Gamma-\widetilde{h}_2,
&\text{on}\ \Gamma\times(0,T),\\
&\phi|_{t=0}=\phi_0(x),
&\text{in}\ \Omega,\\
&\psi|_{t=0}=\psi_0(x):=\phi_0(x)|_{\Gamma},
&\text{on}\ \Gamma.
\end{aligned}
\label{Lpag}
\right.
\end{equation}
We denote by $\mathfrak{T}: W^{(1-\frac{1}{2p},2-\frac{1}{p})}_p(\Sigma_T)\to W^{(1,2)}_p(Q_T)$ the solution operator for problem \eqref{LL2a} and then the normal derivative operator $\partial_\mathbf{n}\mathfrak{T}$ can be viewed as a generalized parabolic Dirichlet--Neumann map. The Dirichlet boundary datum in the auxiliary system \eqref{LL2a} is given by $\psi-\rho$ with $\rho$ being given as in \eqref{rho} and the function $\psi$ will be determined below (see \eqref{trLLg}).
Then we introduce the decomposition
$$\phi=u+\mathfrak{T} (\psi-\rho).$$
 As a consequence,  \eqref{Lpag} can be transformed into the following form for the new unknown variables $(u, \psi)$:
\begin{equation}
\left\{
\begin{aligned}
&[\alpha+ (A^0_\Omega)^{-1}]u_t=\Delta u-\frac{|\Gamma|}{|\Omega|}\langle\partial_\mathbf{n}u\rangle_\Gamma-\widetilde{h}_1,
&\text{in}\ \Omega\times(0,T),\\
&u|_\Gamma=\rho,
&\text{on}\ \Gamma\times(0,T),\\
&[\alpha+ (A^0_\Gamma)^{-1}]\psi_t=\kappa\Delta_\Gamma \psi-\psi + \langle\psi\rangle_\Gamma-\partial_\mathbf{n}(\mathfrak{T} (\psi-\rho))\\
&\qquad \qquad\qquad\quad \ \  +\langle \partial_\mathbf{n}(\mathfrak{T} (\psi-\rho))\rangle_\Gamma-\widehat{h}_2,
&\text{on}\ \Gamma\times(0,T),\\
& \text{with}\ \widehat{h}_2=\partial_\mathbf{n}u-\langle \partial_\mathbf{n}u\rangle_\Gamma+\widetilde{h}_2,
&\text{on}\ \Gamma\times(0,T),\\
&u|_{t=0}=\phi_0(x),
&\text{in}\ \Omega,\\
&\psi|_{t=0}=\psi_0(x):=\phi_0(x)|_{\Gamma},
&\text{on}\ \Gamma.
\end{aligned}
\label{Lpag1}
\right.
\end{equation}
One advantage of the above reformulation is that the system \eqref{Lpag1} is indeed decoupled for $(u, \psi)$. Namely, the equation for $u$ turns out to be independent of $\psi$ and thus it can be solved directly by applying Lemma \ref{exeLL1} (i) such that there exists a unique solution $u$ satisfying
\begin{align}
\|u\|_{W^{(1,2)}_p(Q_T)}\leq C\Big(\|\phi_0\|_{W^{2(1-\frac{1}{p}), p}(\Omega)}+\|\rho\|_{W^{(1-\frac{1}{2p},2-\frac{1}{p})}_p(\Sigma_T)}+\|\widetilde{h}_1\|_{L^p(Q_T)}\Big).\label{esLL1u}
\end{align}
Next, we apply the linear bounded operator $[\alpha+ (A^0_\Gamma)^{-1}]^{-1}$ to the equation of $\psi$ in \eqref{Lpag1} and obtain that
\begin{align}
\alpha\psi_t-\kappa\Delta_\Gamma \psi+\psi= \mathfrak{K}\psi-\alpha[\alpha+ (A^0_\Gamma)^{-1}]^{-1}\widehat{h}_2,\quad\ \ \ \ \text{on}\ \Gamma\times(0,T),\label{trLLg}
\end{align}
where
\begin{align*}
&\mathfrak{K}\psi= -[\alpha+ (A^0_\Gamma)^{-1}]^{-1}(A^0_\Gamma)^{-1} \Big(\kappa\Delta_\Gamma \psi-\psi + \langle\psi\rangle_\Gamma-\partial_\mathbf{n}(\mathfrak{T} (\psi-\rho))+\langle \partial_\mathbf{n}(\mathfrak{T} (\psi-\rho))\rangle_\Gamma\Big)\nonumber\\
&\qquad\quad +\langle\psi\rangle_\Gamma-\partial_\mathbf{n}(\mathfrak{T} (\psi-\rho))+\langle \partial_\mathbf{n}(\mathfrak{T} (\psi-\rho))\rangle_\Gamma.
\end{align*}
Inserting the solution $u$ into the reduced equation \eqref{trLLg} for $\psi$, we easily see from \eqref{esLL1u} that $\widehat{h}_2\in L^p(\Sigma_T)$.
Besides, it follows from \eqref{esLL2} and the trace theorem that
\begin{align}
\|\partial_\mathbf{n}(\mathfrak{T} (\psi-\rho))\|_{L^p(\Sigma_T)}\leq C\Big(\|\psi\|_{W^{(1-\frac{1}{2p}, 2-\frac{1}{p})}_p(\Sigma_T)}+\|\rho\|_{W^{(1-\frac{1}{2p}, 2-\frac{1}{p})}_p(\Sigma_T)}\Big),\nonumber
\end{align}
which further implies
$$
\|\mathfrak{K}\psi\|_{L^p(\Sigma_T)}\leq C\Big(\|\psi\|_{W^{(1-\frac{1}{2p}, 2-\frac{1}{p})}_p(\Sigma_T)}+\|\rho\|_{W^{(1-\frac{1}{2p}, 2-\frac{1}{p})}_p(\Sigma_T)}\Big).
$$
Hence, \eqref{trLLg} can be viewed as a linear heat equation for $\psi$ on $\Gamma$ with compact perturbations. By $L^p$-estimates for parabolic equations, we have
\begin{align}
\|\psi\|_{W^{(1,2)}_p(\Sigma_T)}
& \leq C\Big(\|\psi_0\|_{W^{2(1-\frac{1}{p}), p}(\Gamma)}+\|\psi\|_{W^{(1-\frac{1}{2p}, 2-\frac{1}{p})}_p(\Sigma_T)}\Big)\nonumber\\
&\quad +C\Big(\|\rho\|_{W^{(1-\frac{1}{2p}, 2-\frac{1}{p})}_p(\Sigma_T)}+\|\widehat{h}_2\|_{L^p(\Sigma_T)}\Big).
\label{esLLgp}
\end{align}
Recalling the estimate \eqref{rhoes} for $\rho$ and using the interpolation inequality
$\|\psi\|_{W^{(1-\frac{1}{2p}, 2-\frac{1}{p})}_p(\Sigma_T)}\leq \epsilon \|\psi\|_{W^{(1,2)}_p(\Sigma_T)}+C_\epsilon\|\psi\|_{L^2(\Sigma_T)}$ with $\epsilon>0$ being sufficiently small, we get
\begin{align}
\|\psi\|_{W^{(1,2)}_p(\Sigma_T)}
\leq C\big(\|\psi_0\|_{W^{2(1-\frac{1}{p}), p}(\Gamma)}+\|\psi\|_{L^2(\Sigma_T)}
+\|\widehat{h}_2\|_{L^p(\Sigma_T)}\big).\label{esLLgp1}
\end{align}
In order to estimate the lower-order term $\|\psi\|_{L^2(\Sigma_T)}$ on the right-hand side of \eqref{esLLgp1}, we test \eqref{trLLg} by $\psi_t-\Delta_\Gamma\psi$ on $\Gamma\times (0,t)$ and obtain
\begin{align}
&\frac{\alpha+\kappa}{2}\|\nabla_\Gamma\psi(t)\|_{L^2(\Gamma)}^2
+\frac12\|\psi(t)\|_{L^2(\Gamma)}^2\nonumber\\
&\qquad +\int_0^t\Big(\alpha\|\psi_t\|^2_{L^2(\Gamma)}+\kappa\|\Delta_\Gamma \psi\|_{L^2(\Gamma)}^2+\|\nabla_\Gamma\psi\|_{L^2(\Gamma)}^2\Big)\,\d \tau\nonumber\\
&\quad =\frac{\alpha+\kappa}{2}\|\nabla_\Gamma\psi_0\|_{L^2(\Gamma)}^2
+\frac12\|\psi_0\|_{L^2(\Gamma)}^2
\nonumber\\
&\qquad +\int_0^t\int_\Gamma \Big(\mathfrak{K}\psi-\alpha[\alpha+ (A^0_\Gamma)^{-1}]^{-1}\widehat{h}_2\Big) (\psi_t-\Delta_\Gamma\psi)\, \d S \d\tau,\quad \forall\, t\in [0,T].\nonumber
\end{align}
By H\"older's inequality and the Cauchy--Schwarz inequality, it holds
\begin{align}
&\left|\int_0^t\int_\Gamma \Big(\alpha[\alpha+ (A^0_\Gamma)^{-1}]^{-1}\widehat{h}_2\Big) (\psi_t-\Delta_\Gamma \psi)\, \d S \d\tau\right| \nonumber\\
&\quad \leq C\int_0^t\|\widehat{h}_2\|_{L^2(\Gamma)}\|\psi_t-\Delta_\Gamma\psi\|_{L^2(\Gamma)} \,\d\tau\nonumber\\
&\quad \leq \frac{\kappa}{4}\int_0^t\|\Delta_\Gamma \psi\|_{L^2(\Gamma)}^2\,\d \tau +\frac{\alpha}{4}\int_0^t\|\psi_t\|^2_{L^2(\Gamma)}\,\d \tau
+C\int_0^t\|\widehat{h}_2\|_{L^2(\Gamma)}^2\, \d\tau.
\nonumber
\end{align}
From the trace theorem
$\|\partial_\mathbf{n}(\mathfrak{T} (\psi-\rho))\|_{L^2(\Gamma)}\leq C\|\mathfrak{T} (\psi-\rho)\|_{H^r(\Omega)}$ for some $r\in (\frac32,2)$,
we deduce form \eqref{esLL2}, \eqref{esLLgp1} and the interpolation inequality that
\begin{align}
&\left|\int_0^t\int_\Gamma (\mathfrak{K}\psi) (\psi_t-\Delta_\Gamma\psi)\, \d S \d\tau\right|\nonumber\\
&\quad \leq \frac{\kappa}{8}\int_0^t\|\Delta_\Gamma \psi\|_{L^2(\Gamma)}^2\, \d \tau
+\frac{\alpha}{8}\int_0^t\|\psi_t\|^2_{L^2(\Gamma)}\, \d \tau  \nonumber\\
&\qquad + C\|\mathfrak{T} (\psi-\rho)\|_{W^{(1,2)}_2(Q_T)}^2
+C\int_0^t \|\psi\|_{L^2(\Gamma)}^2 \, \d \tau \nonumber\\
&\quad \leq \frac{\kappa}{8}\int_0^t\|\Delta_\Gamma \psi\|_{L^2(\Gamma)}^2\, \d \tau
+\frac{\alpha}{8}\int_0^t\|\psi_t\|^2_{L^2(\Gamma)}\, \d \tau
+ C\|\psi\|_{W^{(\frac{3}{4},\frac{3}{2})}_2(\Sigma_T)}^2  \nonumber\\
&\qquad + C\|\rho\|_{W^{(\frac{3}{4},\frac{3}{2})}_2(\Sigma_T)}^2 +C\int_0^t \|\psi\|_{L^2(\Gamma)}^2 \, \d \tau \nonumber\\
&\quad \leq \frac{\kappa}{4}\int_0^t\|\Delta_\Gamma \psi\|_{L^2(\Gamma)}^2\, \d \tau
+\frac{\alpha}{4}\int_0^t\|\psi_t\|^2_{L^2(\Gamma)}\,\d \tau
+ C\int_0^t \|\psi\|_{H^1(\Gamma)}^2\, \d\tau\nonumber\\
&\qquad +C\|\psi_0\|^2_{H^1(\Gamma)}+C\int_0^t\|\widehat{h}_2\|_{L^2(\Gamma)}^2\, \d\tau.\nonumber
\end{align}
Using the fact $\langle\psi(t)\rangle_\Gamma=\langle\psi_0\rangle_\Gamma$ and Poincar\'e's inequality, we infer from the above estimates that
\begin{align}
&\frac{\alpha+\kappa}{2}\|\nabla_\Gamma\psi(t)\|_{L^2(\Gamma)}^2
+\frac12\|\psi(t)\|_{L^2(\Gamma)}^2
+\frac{\kappa}{2}\int_0^t\|\Delta_\Gamma \psi\|_{L^2(\Gamma)}^2\,\d \tau\nonumber\\
&\qquad +\frac{\alpha}{2}\int_0^t\|\psi_t\|^2_{L^2(\Gamma)}\,\d \tau\nonumber\\
&\quad \leq \frac{\alpha+\kappa}{2}\|\nabla_\Gamma\psi_0\|_{L^2(\Gamma)}^2
+\frac12\|\psi_0\|_{L^2(\Gamma)}^2+C\|\psi_0\|^2_{H^1(\Gamma)}+ C\int_0^t\|\widehat{h}_2\|_{L^2(\Gamma)}^2\, \d\tau
\nonumber\\
&\qquad
+ C \int_0^t \Big(\|\nabla_\Gamma \psi\|_{L^2(\Gamma)}^2+\|\psi\|_{L^2(\Gamma)}^2\Big)\, \d\tau,\quad \forall\, t\in [0,T],\nonumber
\end{align}
where $C$ is a constant depending on $\kappa$, $\alpha$ and $T$.
Applying Gronwall's lemma, we deduce that
\begin{align}
\|\psi(t)\|_{H^1(\Gamma)}^2&\leq C_T\Big(\|\psi_0\|_{H^1(\Gamma)}^2+\|\widehat{h}_2\|_{L^2(\Sigma_T)}^2\Big),\quad \forall\, t\in [0,T],\nonumber
\end{align}
which together with \eqref{esLLgp1} yields the $L^p$-estimate
\begin{align}
\|\psi\|_{W^{(1,2)}_p(\Sigma_T)}
\leq C\Big(\|\psi_0\|_{W^{2(1-\frac{1}{p}), p}(\Gamma)}+\|\widehat{h}_2\|_{L^p(\Sigma_T)}\Big).
\label{esLLgpa}
\end{align}
By the classical Leray--Schauder fixed point theorem, we can conclude from \eqref{esLLgpa} that problem \eqref{trLLg} has a unique strong solution $\psi$.

Therefore, under the assumptions of Lemma \ref{exLpa}, for $d=2,3$, we can choose $p\in (3, \frac{10}{3})$ such that $H^2(\Omega)\hookrightarrow W^{2(1-\frac{1}{p}), p}(\Omega)$ and $L^2(0, T; H^1(\Omega))\cap L^\infty(0,T; L^2(\Omega))\hookrightarrow L^p(\Omega_T)$. Then we conclude from the above argument and estimates \eqref{esLL1u}, \eqref{esLLgpa} that problem \eqref{Lpag} admits a unique strong solution $(\phi, \psi)\in W^{(1,2)}_p(Q_T)\times W^{(1,2)}_p(\Sigma_T)$ satisfying
 \begin{align}
& \|\phi\|_{W^{(1,2)}_p(Q_T)}+\|\psi\|_{W^{(1,2)}_p(\Sigma_T)}\nonumber\\
 &\quad \leq C\Big(\|\phi_0\|_{W^{2(1-\frac{1}{p}),p}(\Omega)}
 +\|\psi_0\|_{W^{2(1-\frac{1}{p}),p}(\Gamma)}
 +\|\widetilde{h}_1\|_{L^p(Q_T)}+\|\widetilde{h}_2\|_{L^p(\Sigma_T)}\Big),
 \label{esLLgp2}
\end{align}
where $C$ depends on $T$, $\alpha$, $\kappa$, $\Omega$ and $\Gamma$.

It remains to show the estimates \eqref{esLIN}--\eqref{esLINh}. Testing the equations for $\phi$ and $\psi$ in \eqref{Lpag} by $\phi_t$, $\psi_t$, respectively, integrating over $\Omega$ and $\Gamma$, integrating with respect to time and adding the resultants together, we have
\begin{align}
&\frac12\|\nabla \phi(t)\|_{L^2(\Omega)}^2+\frac{\kappa}{2}\|\nabla_\Gamma \psi(t)\|_{L^2(\Gamma)}+\frac12\|\psi(t)\|_{L^2(\Gamma)}^2\nonumber\\
&\qquad + \alpha\int_0^t\left(\|\phi_t(\tau)\|_{L^2(\Omega)}^2+\|\psi_t(\tau)\|_{L^2(\Gamma)}^2\right)\,\d\tau\nonumber\\
&\qquad +\int_0^t\left(\|\phi_t(\tau)\|_{(H^1(\Omega))^*}^2+\|\psi_t(\tau)\|_{(H^1(\Gamma))^*}^2\right) \,\d\tau\nonumber\\
&\quad = \frac12\|\nabla \phi_0\|_{L^2(\Omega)}^2+\frac{\kappa}{2}\|\nabla_\Gamma \psi_0\|_{L^2(\Gamma)}^2+\frac12\|\psi_0\|_{L^2(\Gamma)}^2\nonumber\\
&\qquad +\int_0^t(h_1, \phi_t(\tau))_{L^2(\Omega)}+(h_2, \psi_t(\tau))_{L^2(\Gamma)} \,\d\tau.\nonumber
\end{align}
The last term on the right-hand side can be estimated by using the Cauchy--Schwarz inequality
\begin{align}
&\int_0^t(h_1, \phi_t(\tau))_{L^2(\Omega)}+(h_2, \psi_t(\tau))_{L^2(\Gamma)} \,\d\tau\nonumber\\
&\quad \leq \frac{\alpha}{2}\int_0^t\left(\|\phi_t(\tau)\|_{L^2(\Omega)}^2+\|\psi_t(\tau)\|_{L^2(\Gamma)}^2\right)\,\d\tau\nonumber\\
&\qquad +\frac{1}{2\alpha}\int_0^t(\|h_1\|_{L^2(\Omega)}^2+\|h_2\|_{L^2(\Gamma)}^2)\,\d\tau,\nonumber
\end{align}
and as a result,
\begin{align}
&\|(\phi(t), \psi(t))\|_{\mathcal{V}^1}^2+ \alpha\|(\phi_t,\psi_t)\|_{L^2(0,t; \mathcal{H})}^2+ \|(\phi_t,\psi_t)\|_{L^2(0,t; (H^1(\Omega))^*\times(H^1(\Gamma))^*)}^2\nonumber\\
&\quad \leq  \|(\phi_0, \psi_0)\|_{\mathcal{V}^1}^2 + \alpha^{-1} \|(h_1,h_2)\|_{L^2(0,t; \mathcal{H})}^2, \quad \forall\, t\in [0,T].
\label{esLin1}
\end{align}
For a.e. $t\in (0,T)$, we consider the linear elliptic problem (cf. \eqref{Lpag})
\begin{equation}
\left\{
\begin{aligned}
&-\Delta\phi =-[\alpha+ (A^0_\Omega)^{-1}]\phi_t-\frac{|\Gamma|}{|\Omega|}\langle\partial_\mathbf{n}\phi\rangle_\Gamma
-\widetilde{h}_1,
&\text{in}\ \Omega,\\
&\phi|_\Gamma=\psi,
&\text{on}\ \Gamma,\\
&-\Delta_\Gamma \psi+\psi +\partial_\mathbf{n}\phi= -[\alpha+ (A^0_\Gamma)^{-1}]\psi_t+ \langle\psi\rangle_\Gamma+\langle \partial_\mathbf{n}\phi\rangle_\Gamma-\widetilde{h}_2, &\text{on}\ \Gamma.
\end{aligned}
\label{esLinelli}
\right.
\end{equation}
It follows from Lemma \ref{esLepH2} and  estimates similar to \eqref{espncla}, \eqref{inttr} and \eqref{intpo} that
\begin{align}
\|(\phi,\psi)\|_{\mathcal{V}^2}
&\leq C(\alpha \|(\phi_t, \psi_t)\|_{\mathcal{H}}+\|(\phi_t,\psi_t)\|_{H^1(\Omega))^*\times(H^1(\Gamma))^*})\nonumber\\
&\quad +C(|\langle \partial_\mathbf{n}\phi\rangle_\Gamma|+\|\phi\|_{H^1(\Omega)}+|\langle\psi_0\rangle_\Gamma|+\|(h_1,h_2)\|_{\mathcal{H}})\nonumber\\
&\leq \frac12\|\phi\|_{H^2(\Omega)}+C(\alpha \|(\phi_t, \psi_t)\|_{\mathcal{H}}+\|(\phi_t,\psi_t)\|_{H^1(\Omega))^*\times(H^1(\Gamma))^*})\nonumber\\
&\quad +C(\|\phi\|_{H^1(\Omega)}+|\langle\psi_0\rangle_\Gamma|+\|(h_1,h_2)\|_{\mathcal{H}}),
\label{esLinV2}
\end{align}
for a.e. $t\in [0,T]$, where the constant $C$ is independent of $\alpha$.
Besides, in view of \eqref{Lpa} the following estimates on the mean values hold:
\begin{align}
|\langle \widetilde{\mu}\rangle_\Omega|
&\leq |\Omega|^{-1}|\Gamma| |\langle \partial_\mathbf{n}\phi\rangle_\Gamma|+|\langle h_1\rangle_\Omega|\nonumber\\
&\leq C(\|\phi\|_{H^2(\Omega)}+\|h_1\|_{L^2(\Omega)}),
\label{esmmuo}
\end{align}
\begin{align}
|\langle \widetilde{\mu}_\Gamma\rangle_\Gamma|
&\leq |\langle\psi\rangle_\Gamma|+|\langle \partial_\mathbf{n}\phi\rangle_\Gamma|+ |\langle h_2\rangle_\Gamma|\nonumber\\
&\leq C(|\langle\psi_0\rangle_\Gamma|+\|\phi\|_{H^2(\Omega)}+\|h_2\|_{L^2(\Gamma)}).
\label{esmmug}
\end{align}
Then it follows from the relations
\begin{align}
&\|\nabla \widetilde{\mu}\|_{ L^2(\Omega)}= \|\phi_t\|_{H^1(\Omega))^*},\quad \|\nabla_\Gamma \widetilde{\mu}_\Gamma \|_{L^2(\Gamma)}= \|\psi_t\|_{(H^1(\Gamma))^*},\nonumber
\end{align}
 and Poincar\'e's inequality  that
\begin{align}
\|\widetilde{\mu}\|_{H^1(\Omega)}
   &\leq C(\|\nabla \widetilde{\mu}\|_{L^2(\Omega)}+|\langle \widetilde{\mu}\rangle_\Omega|)\nonumber\\
   &\leq C(\|\phi_t\|_{(H^1(\Omega))^*}+\|\phi\|_{H^2(\Omega)}+\|h_1\|_{L^2(\Omega)}),\label{esLin3}
   \end{align}
   \begin{align}
\|\widetilde{\mu}_\Gamma\|_{H^1(\Gamma)}
    &\leq  C(\|\nabla_\Gamma \widetilde{\mu}_\Gamma\|_{L^2(\Gamma)}+|\langle \widetilde{\mu}_\Gamma \rangle_\Gamma|)\nonumber\\
&  \leq  C(\|\psi_t\|_{(H^1(\Gamma))^*}+|\langle\psi_0\rangle_\Gamma|
+\|\phi\|_{H^2(\Omega)}+\|h_2\|_{L^2(\Gamma)}).\label{esLin4}
\end{align}
Collecting the estimates \eqref{esLin1}--\eqref{esLin4}, we arrive at the conclusion \eqref{esLIN}.

Next, differentiating \eqref{Lpa} with respect to time, we get
\begin{equation}
\left\{
\begin{aligned}
&\phi_{tt}-\Delta \widetilde{\mu}_t=0,
\quad \text{with}\ \ \widetilde{\mu}_t=-\Delta\phi_t+\alpha\phi_{tt}+(h_1)_t,
&\text{in}\ \Omega\times(0,T),\\
&\partial_\mathbf{n}\widetilde{\mu}_t=0,\quad \phi_t|_\Gamma=\psi_t,
&\text{on}\ \Gamma\times(0,T),\\
&\psi_{tt}-\Delta_\Gamma (\widetilde{\mu}_\Gamma)_t=0,
&\text{on}\ \Gamma\times(0,T),\\
&\quad \text{with}\quad (\widetilde{\mu}_\Gamma)_t=-\kappa \Delta_\Gamma\psi_t+\psi_t+\alpha \psi_{tt}+\partial_\mathbf{n}\phi_t+(h_2)_t,
&\text{on}\ \Gamma\times(0,T),\\
&\phi_t|_{t=0}=\Delta \widetilde{\mu}(0),
&\text{in}\ \Omega,\\
&\psi_t|_{t=0}=\Delta_\Gamma \widetilde{\mu}_\Gamma(0),
&\text{on}\ \Gamma.
\end{aligned}
\label{Lpat}
\right.
\end{equation}
Testing the first equation by $(A^0_\Omega)^{-1} \phi_t$, the third equation by $(A_\Gamma^0)^{-1}\psi_t$, adding the resultants together, we obtain
\begin{align}
&\frac{1}{2}\frac{\d}{\d t}\Big(\|\phi_t\|_{(H^1(\Omega))^*}^2
+ \alpha \|\phi_t\|_{L^2(\Omega)}^2+\|\psi_t\|_{(H^1(\Gamma))^*}^2
+\alpha \|\psi_t\|_{L^2(\Gamma)}^2\Big)\nonumber\\
&\qquad + \|\nabla \phi_t\|_{L^2(\Omega)}^2
 +\kappa\|\nabla_\Gamma \psi_t\|_{L^2(\Gamma)}^2+\|\psi_t\|_{L^2(\Gamma)}^2\nonumber\\
&\quad =-\int_\Omega (h_1)_t \phi_t\,\d x-\int_\Gamma (h_2)_t\psi_t\,\d S\nonumber\\
&\quad \leq \frac12\|\phi_t\|_{L^2(\Omega)}^2+ \frac12\|\psi_t\|_{L^2(\Gamma)}^2
+\frac12\|(h_1)_t\|_{L^2(\Omega)}^2+\frac12\|(h_2)_t\|_{L^2(\Gamma)}^2.
\label{highap}
\end{align}
Using the facts $\langle\phi_t\rangle_\Omega=\langle\psi_t\rangle_\Gamma=0$, we infer from Poincar\'e's inequality that the following interpolations hold
\begin{align*}
&\|\phi_t\|_{L^2(\Omega)}^2\leq C\|\phi_t\|_{(H^1(\Omega))^*}\|\nabla \phi_t\|_{L^2(\Omega)},\\
&\|\psi_t\|_{L^2(\Gamma)}^2\leq C\|\psi_t\|_{(H^1(\Gamma))^*}\|\nabla_\Gamma \psi_t\|_{L^2(\Gamma)}.
\end{align*}
Using Young's inequality and \eqref{highap}, we infer from Gronwall's lemma that
\begin{align}
&\|\phi_t(t)\|_{(H^1(\Omega))^*}^2+ \alpha \|\phi_t(t)\|_{L^2(\Omega)}^2
+\|\psi_t(t)\|_{(H^1(\Gamma))^*}^2+\alpha \|\psi_t(t)\|_{L^2(\Gamma)}^2\nonumber\\
&\qquad +\int_0^t \Big(\|\nabla \phi_t\|_{L^2(\Omega)}^2
+ \kappa\|\nabla_\Gamma \psi_t\|_{L^2(\Gamma)}^2+\|\psi_t\|_{L^2(\Gamma)}^2\Big)\,\d \tau\nonumber\\
&\quad \leq Ce^{Ct}\Big(\|\nabla\widetilde{\mu}(0)\|_{L^2(\Omega)}^2
+ \alpha \|\Delta \widetilde{\mu}(0)\|_{L^2(\Omega)}^2
\Big)\nonumber\\
&\qquad +Ce^{Ct}\Big(\|\nabla_\Gamma \widetilde{\mu}_\Gamma(0)\|_{L^2(\Gamma)}^2
+\alpha \|\Delta_\Gamma \widetilde{\mu}_\Gamma(0)\|_{L^2(\Gamma)}^2\Big)\nonumber\\
&\qquad +C e^{Ct} \Big(\|(h_1)_t\|_{L^2(Q_T)}^2+\|(h_2)_t\|_{L^2(\Sigma_T)}^2\Big),\quad\ \forall\, t\in [0,T],
\label{highapt}
\end{align}
where the constant $C$ is independent of $\alpha$.
From the following relations
\begin{align*}
&\widetilde{\mu}(0)-\alpha\Delta \widetilde{\mu}(0)=-\Delta \phi_0+ h_1(0),\quad \partial_\mathbf{n} \widetilde{\mu}(0)|_\Gamma=0,\\
&\widetilde{\mu}_\Gamma(0)
-\alpha\Delta_\Gamma\widetilde{\mu}_\Gamma(0)
=-\kappa\Delta_\Gamma \psi_0+\psi_0+\partial_\mathbf{n}\phi_0+h_2(0)
\end{align*}
we infer that
\begin{align*}
&\|\widetilde{\mu}(0)\|_{L^2(\Omega)}
+\|\widetilde{\mu}_\Gamma(0)\|_{L^2(\Gamma)}
+\alpha^\frac12\|\nabla\widetilde{\mu}(0)\|_{L^2(\Omega)}
+\alpha^\frac12 \|\nabla_\Gamma\widetilde{\mu}_\Gamma(0)\|_{L^2(\Gamma)}\\
&\qquad +\alpha\|\Delta\widetilde{\mu}(0)\|_{L^2(\Omega)}
+\alpha\|\Delta_\Gamma\widetilde{\mu}_\Gamma(0)\|_{L^2(\Gamma)}\\
&\quad \leq C\Big(\|\phi_0\|_{H^2(\Omega)}+ \kappa\|\Delta_\Gamma \psi_0\|_{L^2(\Gamma)}+\|h_1(0)\|_{L^2(\Omega)}+\|h_2(0)\|_{L^2(\Gamma)}\Big),
\end{align*}
where the constant $C$ is independent of $\alpha$. Recalling \eqref{highapt}, we obtain for $t\in [0,T]$,
\begin{align}
&\|\phi_t(t)\|_{(H^1(\Omega))^*}^2+ \alpha \|\phi_t(t)\|_{L^2(\Omega)}^2
+\|\psi_t(t)\|_{(H^1(\Gamma))^*}^2+\alpha \|\psi_t(t)\|_{L^2(\Gamma)}^2\nonumber\\
&\qquad +\int_0^t \Big(\|\nabla \phi_t\|_{L^2(\Omega)}^2+ \kappa\|\nabla_\Gamma \psi_t\|_{L^2(\Gamma)}^2+\|\psi_t\|_{L^2(\Gamma)}^2\Big)\, \d \tau\nonumber\\
&\quad \leq Ce^{Ct}\alpha^{-1}\Big(\|(\phi_0, \psi_0)\|_{\mathcal{V}^2}^2+\|h_1\|_{H^1(0,T; L^2(\Omega))}^2+\|h_2\|_{H^1(0,T; L^2(\Gamma))}^2\Big),
\label{esLint1}
\end{align}
where the constant $C$ is independent of $\alpha$.
For the elliptic problem \eqref{esLinelli}, we infer from Lemma \ref{esLepH2} that
\begin{align}
\|(\phi,\psi)\|_{\mathcal{V}^3}
&\leq C(\alpha \|(\phi_t, \psi_t)\|_{\mathcal{V}^1}+\|(\phi_t,\psi_t)\|_{H^1(\Omega))^*\times(H^1(\Gamma))^*})\nonumber\\
&\quad +C(|\langle \partial_\mathbf{n}\phi\rangle_\Gamma|+|\langle\psi_0\rangle_\Gamma|+\|h_1\|_{H^1(\Omega)}+\|h_2\|_{H^1(\Gamma)}),
\label{esLinV3}
\end{align}
for a.e. $t\in [0,T]$, where the constant $C$ is independent of $\alpha$. Then from \eqref{esmmuo}--\eqref{esmmug}, it follows that
\begin{align}
\|\widetilde{\mu}\|_{H^2(\Omega)}
   &\leq C(\|\Delta \widetilde{\mu}\|_{L^2(\Omega)}+|\langle \widetilde{\mu}\rangle_\Omega|)\nonumber\\
   &\leq C(\|\phi_t\|_{L^2(\Omega)}+\|\phi\|_{H^2(\Omega)}+\|h_1\|_{L^2(\Omega)}),\label{esLin5}
   \end{align}
   \begin{align}
\|\widetilde{\mu}_\Gamma\|_{H^2(\Gamma)}
    &\leq  C(\|\Delta_\Gamma \widetilde{\mu}_\Gamma\|_{L^2(\Gamma)}+|\langle \widetilde{\mu}_\Gamma \rangle_\Gamma|)\nonumber\\
&  \leq  C(\|\psi_t\|_{L^2(\Gamma)}+|\langle\psi_0\rangle_\Gamma|
+\|\phi\|_{H^2(\Omega)}+\|h_2\|_{L^2(\Gamma)}).
\label{esLin6}
\end{align}
Collecting the estimates \eqref{esLinV2} and \eqref{esLint1}--\eqref{esLin6}, we arrive at the conclusion \eqref{esLINh}.

 The proof of Lemma \ref{exLpa} is complete.


\section{Acknowledgements}
The authors would like to thank the anonymous referees for their careful reading
of an initial version of this paper and for several helpful comments that allowed
us to improve the presentation. The authors also want to thank Professors P. Colli, T. Fukao, C. Gal, H. Garcke, T.-Z. Qian and U. Stefanelli for helpful discussions.

\section{Compliance with Ethical Standards}
\begin{itemize}
\item \textbf{Funding}: C. Liu is partially supported by NSF grants DMS-1714401, DMS-1412005.
H. Wu is partially supported by NNSFC grant No. 11631011 and the Shanghai Center for Mathematical Sciences at Fudan University.
\item \textbf{Conflict of Interest}: The authors declare that they have no conflict of interest.
\end{itemize}


\end{document}